\documentclass{amsart}

\makeatletter
\newcommand{\mainsectionstyle}{
  \renewcommand{\@secnumfont}{\bfseries}
}
\makeatother

\usepackage[utf8x]{inputenc}
\usepackage{comment}
%
%
%
%

\textheight = 21cm  
\textwidth = 13cm
\usepackage{amsmath, amssymb, amsgen, amsthm, amscd, xspace, color, mathrsfs, cite, tensor, enumitem, MnSymbol, amsthm, xfrac, url}

\usepackage{mathtools}
\DeclarePairedDelimiter{\ceil}{\lceil}{\rceil}
\DeclarePairedDelimiter\floor{\lfloor}{\rfloor}

\usepackage{tikz-cd}
\usepackage[colorlinks,
            linkcolor=blue,
            anchorcolor=blue,
            citecolor=blue,
            pagebackref
            ]{hyperref}
\setlength{\parskip}{0.3\baselineskip}

\newcommand{\tens}[2]{%
  \mathbin{\mathop{\otimes}\displaylimits_{#1}^{#2}}%
}

\newcommand{\relome}[1]{\Omega^{{}#1}_{X/\Delta}(\log Y)}
\newcommand{\lome}[1]{\Omega^{{}#1}_{X}(\log Y)}

\newcommand{\Addresses}{{
  \bigskip
  \footnotesize

  \textsc{Department of Mathematics, the University of Michigan,
    MI 48109, USA}\par\nopagebreak
  \textit{E-mail address}: \href{mailto:qyc@umich.edu}\texttt{qyc@umich.edu}

}}

\newcommand{\Z}{{\mathbb{Z}}}
\newcommand{\Q}{{\mathbb{Q}}}
\newcommand{\R}{{\mathbb{R}}}
\newcommand{\C}{{\mathbb{C}}}

\newcommand{\sfw}{{\mathsf w}}

\newcommand{\fsl}{{\mathfrak s}{\mathfrak l}}

\newcommand{\cA}{{\mathcal A}}
\newcommand{\bD}{{\mathbf D}}

\newcommand{\sD}{{\mathscr D}}
\newcommand{\cF}{{\mathcal F}}
\newcommand{\cG}{{\mathcal G}}
\newcommand{\cH}{{\mathcal H}}
\newcommand{\sH}{{\mathscr H}}
\newcommand{\bH}{{\mathbf H}}
\newcommand{\sfH}{{\mathsf H}}
\newcommand{\sI}{{\mathscr I}}
\newcommand{\cM}{{\mathcal M}}

\newcommand{\cN}{{\mathcal N}}

\newcommand{\cP}{{\mathcal P}}

\newcommand{\cL}{{\mathcal L}}
\newcommand{\bR}{{\mathbf R}}

\newcommand{\sT}{{\mathscr T}}

\newcommand{\sO}{{\mathscr O}}
\newcommand{\cV}{{\mathcal V}}
\newcommand{\sV}{{\mathscr V}}
\newcommand{\sfX}{{\mathsf X}}
\newcommand{\sfY}{{\mathsf Y}}

\newcommand{\DR}{{\mathrm D}{\mathrm R}}
\newcommand{\Gal}{{\mathrm G}{\mathrm a}{\mathrm l\,}}
\newcommand{\gr}{{\mathrm g}{\mathrm r}}

\newcommand{\Cone}{{\mathrm C}{\mathrm o}{\mathrm n}{\mathrm e}}

\newcommand{\prim}{{\mathrm p}{\mathrm r}{\mathrm i}{\mathrm m\,}}
\newcommand{\End}{{\mathrm E}{\mathrm n}{\mathrm d\,}}

\newcommand{\Ext}{{\mathrm E}{\mathrm x}{\mathrm t}}

\newcommand{\cHom}{{\mathcal H}{o}{m}}

\newcommand{\id}{{\mathrm i}{\mathrm d\,}}

\newcommand{\sign}{{\mathrm s}{\mathrm i}{\mathrm g}{\mathrm n}}

\newcommand{\Red}{{\mathrm R}{\mathrm e}{\mathrm d}}
\newcommand{\Res}{{\mathrm R}{\mathrm e}{\mathrm s}}

 \makeatother

\newtheorem{thm}{Theorem}[section]

\newtheorem{lem}[thm]{Lemma}
\newtheorem{cor}[thm]{Corollary}

\newtheorem{thmA}{Theorem}
\newtheorem{corA}[thmA]{Corollary}

\newtheorem*{theorem}{Theorem}

\theoremstyle{definition}

\newtheorem{rem}[thm]{Remark}

\newtheorem{ex}[thm]{Example}

\begin{document}
\pagenumbering{roman}
\title{Limits of Hodge Structures via holonomic D-modules}
\date{\today}
\centerline{\author{Qianyu Chen}}

\begin{abstract}
We construct the limiting mixed Hodge structure of a degeneration of compact K\"ahler manifolds over the unit disk with a possibly non-reduced simple normal crossing singular central fiber via holonomic $\sD$-modules, generalizing some results of Steenbrink. Our limiting mixed Hodge structure does not carry a $\Q$-structure; instead, we use sesquilinear pairings on $\sD$-modules as a replacement. The limiting mixed Hodge structure can be computed by the cohomology of the cyclic coverings of certain intersections of components of the central fiber. Additionally, we prove the local invariant cycle theorem in this setting.
\end{abstract}

\maketitle

\pagenumbering{arabic}

\section{Introduction}

\subsection{Limits of Hodge structures} 
Consider a degeneration of compact K\"ahler manifolds over the unit disk. The cohomology of each smooth fiber carries a polarizable Hodge structure. It is natural to ask how the family of Hodge structures on the cohomologies of smooth fibers degenerates and how the cohomology of the central fiber relates to that of nearby fibers. These are two classical and central questions in Hodge theory. Before Saito's theory of mixed Hodge modules~\cite{Sai88,Sai90}, Schmid showed the existence of a limiting mixed Hodge structure for an abstract polarized variation of Hodge structures over the unit disk \cite{Schmid}, using Lie theoretic methods. For the variation of Hodge structures coming from a semistable family of compact K\"ahler manifolds over a $1$-dimensional base, the limiting mixed Hodge structure was first established by Steenbrink~\cite{Ste76}. His construction is equivalent to Schmid's in~\cite{Schmid}, but purely geometric.

In this paper, we construct the limits of Hodge structures for a family with the central fiber not necessarily reduced. The new feature of this paper is the usage of $\sD$-modules, which makes everything less topological and more combinatorial. It also removes the need for using derived categories and semistable reduction.

Let us now give a more precise description of the problem and the main results. Let $f:X\to \Delta$ be a proper holomorphic morphism from a K\"ahler manifold of dimension $n+1$ onto the unit disk. Suppose that $0$ is the only singular value and the central fiber $Y$ of $f$ has (possibly nonreduced) simple normal crossings. Denote by $X^*=X\setminus Y$ and $\Delta^*=\Delta\setminus \{0\}$. Then the higher direct image of the relative de Rham complex $\bR^k f_*\Omega^{\bullet+n}_{X^*/\Delta^*}$ over $\Delta^*$ is a vector bundle, where the shifting is needed to adopt the convention of the theory of perverse sheaves and $\sD$-modules; it underlies a polarizable variation of Hodge structure of weight $n$ over the punctured disk $\Delta^*$.  However, the higher direct image of the relative de Rham complex $\Omega^{\bullet+n}_{X/\Delta}$ may not be a vector bundle when $Y$ is singular. Steenbrink discovered a natural extension of the vector bundle $\bR^k f_*\Omega^{\bullet+n}_{X^*/\Delta^*}$ over the origin via the relative log de Rham complex. Let 
\[
\Omega_{X/\Delta}(\log Y)= \Omega_{X}(\log Y)/f^*\Omega_\Delta(\log 0)\text{ \,and\,  }\Omega^p_{X/\Delta}(\log Y)= \bigwedge^p \Omega_{X/\Delta}(\log Y),
\]
where $\Omega_X(\log Y)$ (resp. $\Omega_\Delta(\log 0)$) is the sheaf of meromorphic one-forms on $X$ (resp. $\Delta$) with log poles along $Y$ (resp. $0$). Then the \textit{relative log de Rham complex} is defined to be
\[
\Omega^{\bullet+n}_{X/\Delta}(\log Y)=\{\sO_X\to \Omega_{X/\Delta}(\log Y) \to \cdots \to \Omega^n_{X/\Delta}(\log Y)\}[n].
\]
Steenbrink showed in~\cite{Ste76} that $\bR^k f_*\Omega^{\bullet+n}_{X/\Delta}(\log Y)$ is a locally free integrable logarithmic connection with a pole along the origin and its residue operator $R$ has eigenvalues in $[0,1)\cap \Q$ for each $k\in\Z$. It follows from the projection formula that there exists a canonical isomorphism 
\[
\bR^k f_*\Omega^{\bullet+n}_{X/\Delta}(\log Y)\otimes \C(p) \simeq \bH^k(X,\Omega^{\bullet+n}_{X/\Delta}(\log Y)|_{X_p})
\]
for every fiber $X_p$ over any point $p\in \Delta$, where $\C(p)$ denotes the residue field of $p$. Indeed, the vector bundle $\bR^k f_*\Omega^{\bullet+n}_{X/\Delta}(\log Y)$ is Deligne's canonical extension \cite{DP70} of $\bR^k f_* \Omega^{\bullet+n}_{X^*/\Delta^*}$ with eigenvalues of the residues of the log connection in the interval in $[0,1)$. We can think of the vector space $\bH^k(X,\Omega^{\bullet+n}_{X/\Delta}(\log Y)|_Y)$ as a canonical specialization of $\bH^k(X_p, \Omega^{\bullet+n}_{X_p})$ for general fibers $X_p$.

\begin{theorem}[Steenbrink]
Suppose that the central fiber $Y$ is a reduced divisor with simple normal crossings. Then $\bH^k(X,\relome{\bullet+n}|_Y )$ underlies a limiting mixed Hodge structure with a $\Q$-structure which can be expressed in terms of the cohomology of certain intersections of components of $Y$ via spectral sequences.
\end{theorem} 

When $Y$ is reduced, the residue operator $R$ is nilpotent on the hypercohomology of $\relome{\bullet+n}|_Y$ so it gives a monodromy filtration $W_\bullet=W_\bullet(R)$ on the vector space $\bH^k(X,\relome{\bullet+n}|_Y )$ uniquely characterized by two properties: (1) $R\cdot W_\bullet\subset W_{\bullet-2}$ and (2) $R^r:\gr^W_r\to \gr^W_{-r}$ is an isomorphism for every $r\geq 0$. The filtration $W_\bullet(R)$ is called the monodromy filtration because $\exp\left(-2\pi\sqrt{-1}R\right)$ is the monodromy induced by the generator of the fundamental group of $\Delta^*$. Steenbrink showed that the monodromy filtration is the weight filtration of the limiting mixed Hodge structure when $f$ is projective, and this was later generalized to the K\"ahler case by Guill\'en and Navarro Aznar in~\cite{hl}. 

Steenbrink further pointed out that his limiting mixed Hodge structure only depends on the log structure associated to the semistable family $f:X\to \Delta$~\cite{SteLog}. Fujisawa extended Steenbrink's results in~\cite{Ste76, SteLog} to semistable K\"ahler families over the polydisk and furthermore to the log geometry setting~\cite{Taro99,Taro08, Taro14}. Recently, Nakkajima announced a simpler proof of Fujisawa's results~\cite{Nak21}.

\subsection{Main results}

We revisit Steenbrink's theorem and construct the limiting mixed Hodge structure of the degeneration over the unit disk $\Delta$ via the theory of holonomic $\sD$-modules. One key difference with Steenbrink's result is that we allow the central fiber $Y$ to be nonreduced. 

\begin{thmA}\label{thm:main}
Suppose that the central fiber $Y$ is a divisor with possibly non-reduced simple normal crossings. Let $R_n$ (resp. $R_s$) denote the nilpotent (resp. semisimple) part of the Jordan-Chevalley decomposition of the residue operator $R$ on the graded vector space $\bigoplus_k \bH^k(X,\relome{\bullet+n}|_Y)$. Then, each eigenspace of $R_s$ on
\[
\bigoplus_{k,\ell} \gr^W_\ell \bH^k(X,\relome{\bullet+n}|_Y)
\]
underlies a limiting polarized bigraded Hodge-Lefschetz structure over $\C$ of central weight $n$, where $W_\bullet=W_\bullet(R_n)$ is the monodromy filtration associated with $R_n$. In particular, $\bH^k(X,\relome{\bullet+n}|_Y)$ underlies a limiting mixed Hodge structure for each $k\in \Z$.
\end{thmA}

A polarized bigraded Hodge-Lefschetz structure is essentially a direct sum of polarized Hodge structures of different weights, preserved by an $\fsl_2(\C)\times \fsl_2(\C)$-action. Refer to \S\ref{subsec:pbhl} for the precise definition. In the context of Theorem \ref{thm:main}, the $\fsl_2(\C)\times \fsl_2(\C)$-action is induced by the operator $R_n$ and $2\pi\sqrt{-1}L$, where $L=[\omega]\wedge$ represents the Lefschetz operator for a K\"ahler form $\omega$. Specifically, each component $\gr^W_\ell \bH^k(X,\relome{\bullet+n}|_Y)$ is a Hodge structure of weight $n+k+\ell$, and there are two Hard Lefschetz-type isomorphisms of Hodge structures:

\begin{itemize}
  \item $\left(2\pi\sqrt{-1}L\right)^k:\gr^W_\ell \bH^{-k}(X,\relome{\bullet+n}|_Y)  \to \gr^W_\ell \bH^k(X,\relome{\bullet+n}|_Y)(k)$  \quad for $k\geq 0$ and $\ell\in\Z$;\\
  \item $R^\ell_n: \gr^W_{\ell} \bH^k(X,\relome{\bullet+n}|_Y)  \to \gr^W_{-\ell} \bH^k(X,\relome{\bullet+n}|_Y)(-\ell)$  \quad for $\ell \geq 0$ and $k\in\Z$.
\end{itemize}

Our argument also says that the limiting mixed Hodge structure can be computed in terms of the cohomology of certain cyclic coverings of intersections of components of $Y$ via spectral sequences.

As an application, we establish the local invariant cycle theorem, which is a key component in the Clemens-Schmid sequence \cite{Cle1977}, even when the central fiber $Y$ is non-reduced. 

\begin{thmA}[Local Invariant Cycle Theorem]\label{thm:lic}
Suppose we are in the same setting as in Theorem \ref{thm:main}. Then the following sequence of mixed Hodge structures is exact:
\[
H^{\ell}(Y,\C[n])\to \bH^{\ell}\left(X,\relome{\bullet+n}|_Y\right) \xrightarrow{R} \bH^{\ell}\left(X,\relome{\bullet+n}|_Y\right)(-1).
\]
In particular, all cohomology classes that are invariant under the monodromy action come from the cohomologies of $Y$. 
\end{thmA}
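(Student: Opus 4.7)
The plan is to extract the exactness from the distinguished triangle
\[
f^*\Omega_\Delta\otimes \relome{\bullet+n-1}\to \lome{\bullet+n}\to \relome{\bullet+n}\xrightarrow{\nabla}f^*\Omega_\Delta\otimes\relome{\bullet+n}
\]
displayed in the discussion following Theorem~\ref{thm:main}. First I would apply $\cdot\otimes^L_{\sO_X}\sO_Y$ to restrict to the central fiber, trivialize the rank-one free sheaf $f^*\Omega_\Delta$ via $dt$ (which is what produces the Tate twist $(-1)$ in the Hodge-theoretic interpretation), and take hypercohomology on $X$. Since $[\nabla]$ induces the residue $R$ on $H^\ell(X,\relome{\bullet+n}|_Y)$, the resulting long exact sequence contains
\[
H^\ell(X,\lome{\bullet+n}|_Y)\xrightarrow{\alpha} H^\ell(X,\relome{\bullet+n}|_Y)\xrightarrow{R} H^\ell(X,\relome{\bullet+n}|_Y)(-1),
\]
so $\ker R=\operatorname{Im}\alpha$ as subspaces of the limit cohomology.

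Next I would factor the specialization map through $\alpha$. The natural chain of sheaf morphisms $\C_X\hookrightarrow \Omega^\bullet_X\hookrightarrow \Omega^\bullet_X(\log Y)\twoheadrightarrow \Omega^\bullet_{X/\Delta}(\log Y)$ (the first a quasi-isomorphism by the holomorphic Poincar\'e lemma), after restriction to $Y$ and combined with $H^\ast(X,\C)\cong H^\ast(Y,\C)$ from the proper homotopy retraction of $X$ onto $Y$, yields a factorization $H^\ell(Y,\C)\to H^\ell(X,\lome{\bullet+n}|_Y)\xrightarrow{\alpha} H^\ell(X,\relome{\bullet+n}|_Y)$ that agrees (with the shift conventions built into the paper) with the specialization map in the theorem. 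In particular, $\operatorname{Im}\bigl(H^\ell(Y,\C)\to H^\ell(X,\relome{\bullet+n}|_Y)\bigr)\subseteq\ker R$ is automatic.

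The hard part will be the reverse inclusion. A useful reduction splits $H^\ell(X,\relome{\bullet+n}|_Y)$ into $R_s$-eigenspaces: on any eigenspace with nonzero eigenvalue $R=R_s+R_n$ is invertible, so $\ker R=0$ there; moreover the specialization map lands entirely in the $R_s=0$ (unipotent) eigenspace, since $\C_Y$ only sees the unipotent part of the nearby cycles through the above factorization. On the unipotent eigenspace the claim becomes the classical local invariant cycle statement $H^\ell(Y,\C)\twoheadrightarrow\ker R_n$. In the $\sD$-module framework of the paper, the identification $\DR(\sM)\simeq\relome{\bullet+n}|_Y$ extends to realising $\lome{\bullet+n}|_Y$ as the de Rham complex of an auxiliary $\sD$-module built from the $V$-filtration on $\sM$, and $\alpha$ becomes the canonical ``$i^{\ast}\to\psi_{f,1}$'' morphism fitting into the distinguished triangle
\[
i^*\C_X\to \psi_{f,1}\C_X\xrightarrow{N}\psi_{f,1}\C_X(-1);
\]
the required surjectivity then follows by a direct local calculation on the explicit $\sD$-module model constructed earlier in the paper, parallel to Beilinson's presentation of the unipotent nearby cycles. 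Finally, because every arrow respects the Hodge and weight filtrations by Theorem~\ref{thm:main} together with the explicit nature of the factorization, strictness of morphisms of mixed Hodge structures promotes the exactness of underlying vector spaces to an exact sequence in the category of MHS.
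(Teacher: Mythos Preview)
Your approach diverges from the paper's, and the divergence occurs precisely at the step you label ``the hard part'': you never actually prove the reverse inclusion $\ker R\subseteq\operatorname{Im}\bigl(H^\ell(Y,\C)\to H^\ell(X,\relome{\bullet+n}|_Y)\bigr)$. The claimed distinguished triangle
\[
i^*\C_X\longrightarrow \psi_{f,1}\C_X\xrightarrow{\ N\ }\psi_{f,1}\C_X(-1)
\]
is not a triangle in general: the cone of $N=\mathrm{var}\circ\mathrm{can}$ is an extension built from both $i^*\C_X[1]$ and $i^!\C_X[1]$ (octahedral axiom applied to the two standard triangles for $\psi_f$ and $\phi_f$), so the hypercohomology long exact sequence you want does not follow. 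Relatedly, $\lome{\bullet+n}|_Y$ is a model for $i^{-1}Rj_*\C_U$ rather than for $i^*\C_X$, so the map $\alpha$ you obtain is the restriction from the cohomology of the \emph{open complement} $U=X\setminus Y$, not the specialization from $H^\ell(Y,\C)$; showing that these two maps have the same image is exactly the content of the theorem and is not dispatched by a ``direct local calculation.'' Finally, the paper never constructs a $V$-filtration on $\cM$, so invoking that machinery takes you outside what is available.

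The paper's argument stays entirely within the $\sD$-module framework already built. One observes that $\ker R\subset\cM_0$, and the Lefschetz decomposition of $\gr^W\cM_0$ established in Theorem~\ref{iden}/\ref{idenN} immediately gives $\gr^W_{-j}\ker R\simeq R^j\cP_{0,j}\simeq \tau^{(j+1)}_+\omega_{\tilde Y^{(j+1)}}$ for $j\geq 0$. These are precisely the $E_1$-terms in the Deligne weight spectral sequence for $H^\ell(Y,\C)$; an explicit diagram chase then identifies the differential $d_1$ of the weight spectral sequence for $\DR_X\ker R$ with (a scalar multiple of) the restriction maps $H^\ast(\tilde Y^{(j+1)})\to H^\ast(\tilde Y^{(j+2)})$. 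Since both spectral sequences degenerate at $E_2$, this yields an isomorphism $H^\ell(X,\DR_X\ker R)\cong H^\ell(Y,\C)$ of mixed Hodge structures and hence the exactness. If you want to salvage your approach, the missing ingredient is a genuine proof that $H^\ell(Y,\C)\to H^\ell(X,\lome{\bullet+n}|_Y)$ surjects modulo $\ker\alpha$; in the classical treatments this is where the decomposition theorem (or, equivalently, the degeneration of the perverse Leray spectral sequence) enters, and that is not for free.
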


The local invariant cycle theorem was originally proven by Deligne in an algebraic setting for schemes \cite[Theorem 4.1.1]{Hodge2}, and later studied in \cite{Ste76}, \cite{Cle1977}, and \cite{hl} for semistable K"ahler degenerations. It was also generalized to the theory of mixed Hodge modules by Saito \cite{Sai88,Sai90}.

\subsection{What is new} 
The traditional way to construct the limiting mixed Hodge structure, when the central fiber is not reduced, is to use the semistable reduction theorem. For example, Steenbrink mentioned his hypothesis on the central fiber is very general in~\cite[p230]{Ste76}, and there is a more detailed explanation in his joint books with Peters~\cite[11.2.2]{PS08}. Mumford’s semistable reduction~\cite{KKMS73} performs a sequence of base changes, normalizations, and blow-ups, that turns a given degeneration of compact K\"ahler manifolds over the unit disk into a semistable degeneration. This procedure is not canonical, and is hard to carry out in practice and it gives no information on how the non-reduced central fiber, especially the multiplicities of each component, affects the limiting mixed Hodge structure. Moreover, the information of eigenvalues of the monodromy will be lost after running semistable reduction.

By contrast, our approach via $\sD$-modules is very direct and concrete: the local formulas for the $\sD$-module are very simple and combinatorial so it is easy to do computations with them. We can construct the limiting mixed Hodge structure directly from the given degeneration, without using semistable reduction. In the process, we also get much more information about the action of the monodromy transformation, in particular about its eigenvalues and generalized eigenspaces. These contain interesting geometric information about the family, which is lost by going to a semistable reduction. For example, we can readily get the Jordan block of the residue operator. Also, it is obvious how the multiplicities of the central fiber affect the limiting mixed Hodge structure; in particular, we can see very clearly what the different eigenvalues are. Our approach also enables us to generalize the local invariant cycle theorem into the non-reduced case by local calculations.

\subsection{Strategy Overview}

Steenbrink noticed that $\relome{\bullet+n}|_Y$ can be seen as a realization of the nearby cycles $\psi_f\left(\C_X[n+1]\right)$, which implies that the function $p\mapsto \dim \bH^k(X,\relome{\bullet+n}|_{X_p})$ is constant on $\Delta$. Thanks to Grauert's theorem, it follows that the sheaf $\bR^k f_\Omega^{\bullet+n}{X/\Delta}(\log Y)$ is locally free. The logarithmic connection on $\bR^k f_*\relome{\bullet+n}$ arises as the higher direct image of an operator $\nabla$ fits into a distinguished triangle in $\bD^b(X,\C)$:
\[
f^*\Omega_\Delta\otimes \relome{\bullet+n-1} \to \Omega^{\bullet+n}_X(\log Y) \to \relome{\bullet+n} \xrightarrow{\nabla}  f^*\Omega_\Delta\otimes\relome{\bullet+n}.
\]
Trivializing $f^*\Omega_\Delta=\sO_X\cdot \frac{dt}{t}$, the induced operator 
\[
[\nabla]: \relome{\bullet+n}|_Y\to \relome{\bullet+n}|_Y
\] 
has a characteristic polynomial whose roots lie in the interval $[0,1)\cap\Q$. The action of $[\nabla]$ on the hypercohomology of $\relome{\bullet+n}|_Y$ coincides with the residue operator $R$ of the logarithmic connection. Therefore, the study of the monodromy filtration of $R$ on the cohomology is equivalent to making the monodromy filtration of $[\nabla]$ on the complex $\relome{\bullet+n}|_Y$ explicit. However, one of the main challenges we face is constructing the monodromy filtration on the complex $\relome{\bullet+n}|_Y$ since the operator $[\nabla]$ exists only in the derived category. Steenbrink tackled this problem indirectly by resolving the relative log de Rham complex using a specific double complex, but this approach is limited to the case when $Y$ is reduced. Additionally, he had to demonstrate that the monodromy filtrations are defined over $\Q$ through intricate topological arguments, ensuring that all the data yields a rational cohomological mixed Hodge complex.

Through the Riemann-Hilbert correspondence~\cite{KM84,MZ84}, there exists a regular holonomic $\sD$-module whose de Rham complex is isomorphic to $\relome{\bullet+n}|_Y$ in $\bD^b(X,\C)$ since the nearby cycle functor preserves perversity~\cite{Be87}. This $\sD$-module perspective allows us to derive the monodromy filtration easily through local calculations on a single $\sD$-module, bypassing the need for derived categories. Furthermore, we provide a concrete description of the primitive parts of the associated quotient of the monodromy filtrations. Instead of relying on $\Q$-structures, we consider sesquilinear pairings on $\sD$-modules, which serve as analogs of polarizations on Hodge structures. The polarization on the bigraded Hodge-Lefschetz structure in Theorem~\ref{thm:main} is induced by a sesquilinear pairing. Although some topological information is lost, the sesquilinear pairings we use can be constructed purely algebraically and involve only symbolic calculations. The local calculations and the sesquilinear pairing support the choice of the monodromy filtration of $[\nabla]$ as the appropriate weight filtration. Our method also naturally accommodates the case when $Y$ is non-reduced. Now, let's delve into the construction with more details.

We begin by providing an alternative proof of the fact that $\bR^k f_*\Omega^{\bullet+n}_{X/\Delta}(\log Y)$ is locally free. Our approach relies solely on the eigenvalues of $[\nabla]$ being in the interval $[0,1)$ (Theorem~\ref{mainlogdr}). Next, we proceed to translate the data of the relative log de Rham complex to the $\sD$-module framework (see Section \ref{sec:trand}):
 
\begin{thmA}
There exists a filtered holonomic $\sD_X$-module $(\cM, F_\bullet\cM)$ such that its de Rham complex $\DR_X\cM$ with the induced filtration $F_\bullet\DR_X\cM$ is isomorphic to $\relome{\bullet+n}|_Y$ with the stupid filtration in the derived category of filtered complexes of $\C$-vector spaces. Furthermore, there exists an operator $R:(\cM,F_\bullet\cM)\to (\cM,F_{\bullet+1}\cM)$ whose eigenvalues are in the interval $[0,1)\cap\Q$, and under the above isomorphism, $\DR_XR$ can be identified with $[\nabla]$.
\end{thmA}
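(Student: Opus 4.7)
\medskip

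\noindent\textbf{Proof proposal.}
The plan is to construct $\cM$ as the $\sD_X$-module incarnation of nearby cycles, guided by Steenbrink's isomorphism $\relome{\bullet+n}|_Y \simeq \psi_f(\C_X[n+1])$ and the Riemann-Hilbert correspondence. I would start from the regular holonomic module $\sO_X(\ast Y)$ (whose meromorphic de Rham complex recovers $\Omega^{\bullet+n+1}_X(\log Y)$ via the Deligne-Grothendieck comparison), form the graph embedding $i_f\colon X \hookrightarrow X\times \Delta$, and consider the pushforward $\widetilde{\cM}:=i_{f,+}\sO_X(\ast Y)$ equipped with its canonical Kashiwara-Malgrange V-filtration $V^\bullet$ along $X\times\{0\}$. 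Setting
\[
\cM \;:=\; \bigoplus_{\alpha\in [0,1)\cap \Q} \gr^V_\alpha \widetilde{\cM},
\]
one obtains a regular holonomic $\sD_X$-module supported on $Y$. I would define $R$ via the action of $-\partial_t t$ (with the conventional sign so that the induced monodromy is $\exp(2\pi\sqrt{-1}R)$); it preserves the V-grading and equals $\alpha\cdot\id$ plus a nilpotent on $\gr^V_\alpha$, so its generalized eigenvalues automatically lie in $[0,1)\cap\Q$. The good filtration $F_\bullet \cM$ is induced from the obvious pole-order filtration on $\sO_X(\ast Y)$ and the $\partial_t$-polynomial filtration on $i_{f,+}$, using the strict compatibility between a coherent Hodge filtration and the V-filtration.

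To identify $\DR_X(\cM,F_\bullet)$ with $(\relome{\bullet+n}|_Y, F^{\bullet}_{\mathrm{stupid}})$ in the filtered derived category, I would argue locally. Near each point choose coordinates with $f=\prod_{i=1}^r x_i^{e_i}$, so that $\sO_X(\ast Y)=\sO_X[x_1^{-1},\ldots,x_r^{-1}]$ carries an explicit monomial basis $\{x^a:a\in\Z^r\}$, and the V-grading on $\widetilde{\cM}$ can be read off from the quantity $\sum_i e_i a_i \in \Q$. This yields a concrete description of each $\gr^V_\alpha\widetilde{\cM}$ as a direct sum of twisted copies of $\sO_X$, indexed by fractional parts $\alpha = \{\sum_i e_i a_i /\text{lcm}(e_i)\}$. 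Using this, I would write down a Koszul-type resolution in the variables $x_i,\partial_{x_i}$ whose total de Rham complex manifestly recovers $\relome{\bullet+n}|_Y$ equipped with its stupid filtration; the multiplicities $e_i$ enter exactly in the right way because of the V-grading.

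The matching of $R$ with $[\nabla]$ is then built into the graph-embedding picture: under the canonical isomorphism $i_{f,+}\sO_X(\ast Y)\cong \sO_X(\ast Y)\otimes_\C \C[\partial_t]$, the operator $\partial_t t$ corresponds, after passing to $\gr^V_\bullet$, to differentiation along $f$ modulo the defining ideal of $Y$, i.e.\ precisely the endomorphism $[\nabla]$ determined by the distinguished triangle in the excerpt; this is a direct local computation in the chosen coordinates.

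\medskip

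\noindent\textbf{Main obstacle.} The principal difficulty is producing the comparison at the level of \emph{filtered} complexes rather than merely in the derived category: one must match the stupid filtration on $\relome{\bullet+n}|_Y$ with the filtration induced from $F_\bullet\cM$ and simultaneously ensure that $\DR_X R=[\nabla]$ on the nose, all while carrying the multiplicities $e_i$ of the non-reduced components through the V-grading. Once this explicit local filtered quasi-isomorphism is established, globalization is automatic because both $\cM$ and $\relome{\bullet+n}|_Y$ are intrinsic to the data $(X,Y,f)$, the V-filtration is unique, and the gluing reduces to checking that the locally defined maps agree on overlaps, which follows from functoriality of the graph embedding and the V-filtration.
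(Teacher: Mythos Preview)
Your approach via the V-filtration on the graph embedding is a valid route in principle—it is essentially the Hodge-module construction of nearby cycles—but it is genuinely different from the paper's and considerably heavier. The paper bypasses V-filtrations, graph embeddings, and $\sO_X(*Y)$ entirely: it simply forms the complex of induced $\sD_X$-modules
\[
\Omega^{n+\bullet}_{X/\Delta}(\log Y)|_Y \otimes \sD_X,
\]
observes that its associated graded with respect to the order filtration is locally the Koszul complex of the regular sequence $t, D_1,\ldots,D_n$ over $\gr^F\sD_X$ (where the $D_i$ generate $\sT_{X/\Delta}(\log Y)$), and takes $(\cM,F_\bullet\cM)$ to be its $0$th cohomology with the induced filtration. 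The filtered de Rham comparison then drops out from the double complex $\Omega^{n+\bullet}_{X/\Delta}(\log Y)|_Y \otimes \sD_X \otimes \bigwedge^{-*}\sT_X$, using only that the Spencer complex $\sD_X\otimes\bigwedge^{-*}\sT_X$ is a filtered resolution of $\sO_X$. The operator $R$ is obtained by extending the same mapping-cone diagram that defined $[\nabla]$ to the induced complexes; on $\cM$ it is locally left multiplication by $\frac{1}{e_0}z_0\partial_0$, and the identity $\prod_{i}\prod_{j=0}^{e_i-1}(R-j/e_i)=0$ is a one-line computation with $z\partial(z\partial-1)\cdots(z\partial-\ell)=z^{\ell+1}\partial^{\ell+1}$.

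What your approach would buy is a conceptual connection to the general nearby-cycles formalism and Saito's theory. What the paper's approach buys is that your ``main obstacle''—matching the stupid filtration with the induced Hodge filtration—simply does not arise: the filtered quasi-isomorphism is immediate from Koszul acyclicity, with no appeal to strict compatibility of Hodge and V-filtrations (itself a deep theorem in Saito's work, so invoking it here risks circularity relative to the paper's aims). Two further points on your sketch: first, describing $\gr^V_\alpha\widetilde\cM$ locally as ``direct sums of twisted copies of $\sO_X$'' is not accurate, since these are supported on $Y$ and are cyclic $\sD_X$-modules rather than locally free $\sO_X$-modules; second, defining $\cM$ as the direct sum $\bigoplus_\alpha \gr^V_\alpha$ already builds in an eigenspace splitting, whereas the paper's $\cM$ is a single cyclic module on which $R$ acts with the stated characteristic polynomial, and the eigenspace decomposition (with its own, subtler, filtration) is analyzed only later.
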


Then we investigate the Jordan block of the operator $R$. Let $\cM_{\geq \alpha}$ (resp. $\cM_{>\alpha}$) be the submodule of $\cM$ spanned by the generalized eigen-modules $\ker(R-\lambda)^{\infty}$ for $\lambda\geq\alpha$ (resp. $\lambda >\alpha$).  Let $\cM_{\alpha}=\cM_{\geq\alpha}/\cM_{>\alpha}$. Note that $\cM_\alpha$ is canonically isomorphic to $\ker(R-\alpha)^\infty$ and therefore $R_\alpha=R-\alpha$ acts nilpotently on $\cM_\alpha$. Using an idea of Saito~\cite{Sai90}, we filter $\cM_\alpha$ by
\[
F_\ell\cM_\alpha=\frac{F_\ell\cM\cap\cM_{\geq \alpha}+\cM_{>\alpha}}{\cM_{>\alpha}},\quad \text{ for } \ell\in\Z.
\]
The filtration $F_\bullet\cM_\alpha$ is different from the naive one $F_\bullet\cM\cap \ker(R-\alpha)^\infty$. The reason why we do not use the naive filtration is twofold: $F_\bullet\cM_\alpha$ provides the correct weight and is computationally more tractable. We proceed to prove that any power of the operator $R_\alpha$ strictly respects the filtration  $F_\bullet\cM_\alpha$. In other words, for every $\ell\geq 0$, we have the relation $R_\alpha^\ell F_\bullet\cM_\alpha=F_{\bullet+\ell}\cM\cap R_\alpha^\ell\cM_\alpha$  (Theorem~\ref{strict} for the case $Y$ is reduced and Theorem~\ref{strictN} for the general case).

This indicates that the monodromy filtration $W_\bullet\cM_\alpha$ and $F_\bullet\cM_\alpha$ interact very well. Note that the monodromy filtration associated to $R_\alpha$ is the same as that of $R_n$ on $\cM_\alpha$, which is the nilpotent part of the Jordan-Chevalley decomposition of $R$. We have the induced good filtrations 
\[
F_\bullet W_r\cM_\alpha=F_\bullet\cM\cap W_r\cM_\alpha \quad\text{ and } \quad F_\bullet\gr^W_r\cM_\alpha=F_\bullet W_r\cM_\alpha/F_{\bullet}W_{r-1}\cM_\alpha.
\] 
Denote by $\cP_{\alpha,\ell}=\ker R_\alpha^{\ell+1}\cap \gr^W_\ell\cM_\alpha$ the $\ell$-th primitive for $\ell\geq 0$, which is isomorphic to 
\[
\frac{\ker R_\alpha^{\ell+1}}{\ker R_\alpha^\ell+\mathrm{im\,}R_\alpha\cap \ker R_\alpha^{\ell+1}}.
\] 
We endow it with the induced good filtration $F_\bullet \cP_{\alpha,\ell}= \mathrm{im}\,(F_\bullet\cM\cap\ker R^{\ell+1}_\alpha \to \cP_{\alpha,\ell}).$ A corollary of the strictness of every power of $R_\alpha$ is that the Lefschetz decomposition of $\gr^W\cM_\alpha$ respects the good filtrations, i.e.
\[
F_\bullet\gr^W_r\cM_\alpha=\bigoplus_{\substack{\ell\geq 0,-\frac{r}{2}}} R_\alpha^{\ell}F_{\bullet-\ell}\cP_{\alpha,r+2\ell}.
\]
See Theorem~\ref{FW} for the case $Y$ is reduced and Theorem~\ref{FWN} for the general case. This corollary suggests that it suffices to study the hypercohomology of each primitive part, as then the primitive parts serve as the source for the pure polarized Hodge structures.

We will construct a sesquilinear pairing $S_\alpha:\cM_\alpha\otimes_\C \overline{\cM_\alpha}\to\mathfrak{C}_X$ using the Mellin transformation~\cite{SC02}, where $\overline{\cM_\alpha}$ is the naive conjugation of $\cM_\alpha$ and $\mathfrak C_X$ is the sheaf of currents. Both $\cM_\alpha\otimes_\C \overline{\cM_\alpha}$ and $\mathfrak C_X$ are canonically $\sD_X\otimes_\C \sD_{\overline X}$-modules where $\sD_{\overline X}$ denotes the sheaf of anti-holomorphic differential operators and the sesquilinear pairing is just a morphism of $\sD_X\otimes_\C \overline{\sD_X}$-modules. The sesquilinear pairings on $\cM_\alpha$ is an analogy of polarization on a Hodge structure:  a complex polarized Hodge structure of weight $n$ can be described as a filtered vector space $(V,F^\bullet)$ with a Hermitian pairing $S$ such that $(-1)^{n-p}S$ is a Hermitian inner product on $F^p\cap G^{n-p}$ where $G^{n-p}$ is the $S$-orthogonal complement of $F^{p+1}$. The sesquilinear pairing $S_\alpha$ induces the second filtration on the hypercohomology of $\DR_X\cM_\alpha$. For example, if $Y$ is reduced, the pairing on $\cM$ is induced by 
\[
\mathrm{Res}_{s=0} \frac{\varepsilon(n+2)}{(2\pi\sqrt{-1})^{n+1}}\int_\Delta |t|^{2s}\frac{dt}{t}\wedge\frac{d\bar t}{\bar t} \int_{X_t}: \Omega^{n}_{X/\Delta}(\log Y) \otimes_\C \overline{\Omega^{n}_{X/\Delta}(\log Y)} \to \mathfrak{C}_X,
\]
where the constant scalar ${\varepsilon(n+2)}{(2\pi\sqrt{-1})^{-(n+1)}}$ depending on the dimension is used to make the pairing independent of the choice of orientation. The Mellin transformation is used here to extract the principal part of the asymptotic expansion of fiber-wise integration $\int_{X_t}:\omega_{X_t}\otimes_\C \overline{\omega_{X_t}}\to \mathscr{C}_{X_t}$. We refer to the $\S$\ref{subsec:ds} for the definition of sesquilinear pairings on $\sD$-module.

The operator $R_\alpha$ is self-adjoint with respect to the pairing $S_\alpha:\cM_\alpha \otimes_\C \overline \cM_\alpha\to \mathfrak{C}_X$, i,e, $S_\alpha(-,R_\alpha-)=S_\alpha(R_\alpha-,-)$. 
See $\S$\ref{sec:redses} for the case that $Y$ is reduced $\S$\ref{sec:sesquil} for the general case. This implies we have an induced pairing on the associated graded modules:
\[
S_{\alpha,r}:\gr^W_r\cM_\alpha\otimes_\C \gr^W_{-r}\cM_\alpha \to \mathfrak C_X.
\]
Then $P_{R_\alpha}S_{\alpha,r}=S_{\alpha,r}\circ \left(\id\otimes_\C R_\alpha^r\right)$ defines a sesquilinear pairing on the primitive part $\cP_{\alpha,r}$.

\begin{thmA}\label{D}
The graded vector space
\[
\bigoplus_{\ell\in\Z} \bH^\ell(X,\DR_X\cP_{\alpha,r})
\]
together with the filtration induced by $F_\bullet\cP_{\alpha,r}$ and the sesquilinear pairing induced by $P_{R_\alpha}S_{\alpha,r}$ determine a polarized Hodge-Lefschetz structure of central weight $n+r$ with $\fsl_2(\C)$-action induced by $2\pi\sqrt{-1}L$.
\end{thmA}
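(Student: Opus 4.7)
The plan is to reduce this statement to the classical Hard Lefschetz theorem and Hodge--Riemann bilinear relations on smooth Kähler manifolds, by first giving an explicit geometric description of the primitive $\sD$-module $\cP_{\alpha,r}$ in terms of intersections of components of $Y$ (or cyclic covers of them when $\alpha \neq 0$).

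First I would exploit the strictness of every power of $R_\alpha$ with respect to $F_\bullet \cM_\alpha$ together with the Lefschetz decomposition of $\gr^W \cM_\alpha$ established earlier. A local computation, using the explicit shape of $\cM$ near a stratum of $Y$ coming from the translation theorem, should show that $\cP_{\alpha,r}$ decomposes canonically as a direct sum indexed by subsets $J \subset I$ of a suitable size (related to $r$), of push-forwards of $\sD$-modules that are structure sheaves of the intersections $Y_J = \bigcap_{i \in J} Y_i$ in the case $\alpha = 0$, and of cyclic covers $\widetilde{Y}_J \to Y_J$ branched along the induced boundary in the case $\alpha \in (0,1) \cap \Q$, where the cyclic covers are dictated by the multiplicities $e_i$. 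The key point, and where strictness is used, is that the filtration $F_\bullet \cP_{\alpha,r}$ induced from $F_\bullet \cM$ matches, under this identification, the classical Hodge filtration on the $\widetilde{Y}_J$.

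Second, with this description in hand, the de Rham cohomology $H^\ell(X, \DR_X \cP_{\alpha,r})$ becomes a direct sum of classical cohomology groups of the $\widetilde{Y}_J$ with a shift depending on $r$ and $|J|$; the induced filtration is the classical Hodge filtration and the operator $2\pi\sqrt{-1} L$ is identified, up to sign, with cup-product by the restriction of the Kähler class to each $\widetilde{Y}_J$. Both the purity of weight $n+r+\ell$ for each $H^\ell$ and the Hard Lefschetz isomorphism then reduce term-by-term to classical Kähler Hodge theory, after a bookkeeping check that the codimension shifts combine with $r$ to give the central weight $n+r$.

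The main obstacle is the identification of the polarization. I would compute the sesquilinear pairing $P_{R_\alpha} S_{\alpha,r}$ locally in residue coordinates on a neighborhood of a deep stratum, extract its principal part from the Mellin transform construction, and match the result with the classical intersection pairing on the $\widetilde{Y}_J$. The delicate points here are controlling the numerical constants and signs introduced by applying $R_\alpha^r$ to define the primitive pairing, by the factorials and Gamma-factors produced by the Mellin residue, and by the comparison between the $\sD$-module conjugation on $\overline{\cM_\alpha}$ and the topological complex conjugation on $H^\bullet(\widetilde{Y}_J, \C)$. Once the comparison of pairings is established, the Hodge--Riemann bilinear relations on the $\widetilde{Y}_J$ give the required positivity on the $\fsl_2(\C)$-primitive parts of $\bigoplus_\ell H^\ell(X, \DR_X \cP_{\alpha,r})$, completing the verification of all the axioms of a polarized Hodge--Lefschetz structure of central weight $n+r$.
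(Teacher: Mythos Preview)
Your proposal is correct and follows essentially the same strategy as the paper: identify $\cP_{\alpha,r}$ filtered-isomorphically with a direct sum of pushforwards indexed by subsets $J\subset I_\alpha$ of cardinality $r+1$ (Theorems~\ref{iden} and~\ref{idenN}), match the sesquilinear pairing $P_{R_\alpha}S_{\alpha,r}$ with the standard pairing on each summand via the Mellin-residue computation (Theorems~\ref{main} and~\ref{mainN}), and then invoke classical Hard Lefschetz and Hodge--Riemann on the summands. The only implementation difference is that for $\alpha\neq 0$ the paper does not work with pushforwards from the (possibly singular) cyclic covers $\widetilde{Y}_J$ directly, but instead constructs the summand $\cV_{\alpha,J}$ intrinsically on $Y^J$ from a line bundle with log connection, bringing in the cyclic cover only to import the K\"ahler package (Corollary~\ref{cycHS}).
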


A polarized Hodge-Lefschetz structure basically is a direct sum of Hodge structures of different weights preserving by an $\fsl_2(\C)$-action modeled by the direct sum of all the cohomology groups of a compact K\"ahler manifold. We refer to $\S$\ref{subsec:phl} for the definition of polarized Hodge-Lefschetz structures. To illustrate the idea of Theorem~\ref{D},  assume for a moment that $Y$ is reduced and $Y=\sum_{i\in I} Y_i$ where the $Y_i$'s are smooth components and $I$ a finite index set. Then the endomorphism $R$ is nilpotent and this implies that $\cM=\cM_0$. Denote by $Y^J=\bigcap_{i\in J} Y_i$ for any non-empty subset $J$ of $I$. Let $\tau^J:Y^J \to X$ be the closed embedding and $\tau^{(r+1)}:\tilde Y^{(r+1)}= \coprod_{|J|=r+1}Y^J \to X$ be the natural morphism for every $r\geq 0$. Put $\cP_r=\cP_{0,r}$ for simplicity. We will show that there exists a filtered isomorphism (Theorem~\ref{iden})
\[
\phi_r:(\cP_r,F_\bullet\cP_r)\to \tau^{(r+1)}_+\omega_{\tilde Y^{(r+1)}}(-r).
\]
Here, the Tate twist of a filtered $\sD$-module is $(\cN,F_\bullet\cN)(-r)=(\cN,F_{\bullet+r}\cN)$. Moreover, the isomorphism respects the pairing $P_RS_r$ on $\cP_r$ (Theorem~\ref{main}):
\[
P_RS_r(-,-)= \frac{(-1)^r}{(r+1)!}\tau^{(r+1)}_+S_{{\tilde Y^{(r+1)}}} (\phi_r -,{\phi_r-}),
\]
where $S_{{\tilde Y^{(r+1)}}}$ is the standard pairing on $\omega_{\tilde Y^{(r+1)}}$. Therefore, $\bH^k(X,\DR_X\cP_r)$ is isomorphic to $H^{n-r+k}(\tilde Y^{(r+1)},\C)(-r)$ as polarized Hodge structures of weight $n+r+k$. We get a polarized Hodge-Lefschetz structure of central weight $n+r$ with $\fsl_2(\C)$-action induced by $2\pi\sqrt{-1}L$ on $\bigoplus_{\ell\in\Z} \bH^\ell(X,\DR_X\cP_r)$. For the case when $Y$ is non-reduced, we will identify the primitive parts $\cP_{\alpha,r}$ with certain filtered holonomic $\sD$-modules coming from the cyclic coverings (Theorem~\ref{idenN}), and the identification also respects the sesquilinear pairing (Theorem~\ref{mainN}). As a direct consequence, we obtain

\begin{thmA}
Let $V^\alpha_{\ell,k}=\bH^{\ell}(X,\gr^W_k\DR_X\cM_\alpha)$ be the relabeling of the first page of the weight spectral sequence. Then $V^\alpha=\bigoplus_{k,\ell \in\Z} V^\alpha_{\ell,k}$ is a polarized bigraded Hodge-Lefschetz structure of central weight $n$ with the polarization induced by $S_\alpha$ and $\fsl_2(\C)\times\fsl_2(\C)$-action induced by $2\pi\sqrt{-1}L$ and $R_\alpha$. Moreover, the differential $d_1$ of the first page of the weight spectral is a differential of polarized bigraded Hodge-Lefschetz structure.
\end{thmA}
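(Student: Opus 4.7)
The plan is to bootstrap from Theorem~\ref{D} (polarized Hodge-Lefschetz on each primitive $\cP_{\alpha,r}$) to the full bigraded statement via the filtered Lefschetz decomposition recalled in the introduction, and then to verify the compatibility of the spectral-sequence differential $d_1$ with the resulting structure.

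First I would assemble the two $\fsl_2(\C)$-actions on $V^\alpha$. The operator $2\pi\sqrt{-1}L$ acts cohomologically on $V^\alpha_{\bullet, k}$, while $R_\alpha$ acts vertically in the weight index, Hard Lefschetz for $R_\alpha$ being built into the definition of $W_\bullet(R_\alpha)$. Because $L$ is cup product with the K\"ahler class of $X$ and $R_\alpha$ is induced by an endomorphism of $\cM_\alpha$, the two actions commute. Using the filtered Lefschetz decomposition
\[ F_\bullet \gr^W_r \cM_\alpha = \bigoplus_{\ell \geq 0} R_\alpha^\ell F_{\bullet - \ell}\, \cP_{\alpha, r + 2\ell} \quad (r \geq 0), \]
together with its $R_\alpha$-symmetric counterpart for $r<0$, and applying Theorem~\ref{D} to each primitive summand, I decompose $V^\alpha_{\bullet, r}$ into polarized Hodge-Lefschetz structures for the $L$-action carrying the correct total weights; Hard Lefschetz for $R_\alpha$ across $r$ then glues these pieces into a bigraded Hodge-Lefschetz structure of central weight $n$.

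For the polarization, the sesquilinear pairing $S_\alpha$ descends to $S_{\alpha, r}$ on $\gr^W_r \cM_\alpha$ by the self-adjointness of $R_\alpha$, and restricts via the primitive embedding to $P_{R_\alpha} S_{\alpha, r}$ on $\cP_{\alpha, r}$. Theorem~\ref{D} identifies the induced pairing on the Hodge-Lefschetz structure built from each primitive, and one extends across the Lefschetz decomposition by the usual relation $S(R_\alpha^i x, R_\alpha^j y) = \pm S(x, R_\alpha^{i+j} y)$. Positivity on the bi-primitive subspaces (those annihilated by the lowering operators of both $\fsl_2$-factors) follows primitive-by-primitive from Theorem~\ref{D}, since the two commuting $\fsl_2$-representations refine each other simultaneously. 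The differential $d_1$ arises as the connecting morphism of
\[ 0 \to \gr^W_{r-1}\DR_X \cM_\alpha \to W_r \DR_X\cM_\alpha / W_{r-2}\DR_X\cM_\alpha \to \gr^W_r \DR_X\cM_\alpha \to 0, \]
so it shifts cohomological degree by $+1$ and weight degree by $-1$; it commutes with $L$ by naturality of cup product, is compatible with $R_\alpha$ because the above sequence is $R_\alpha$-equivariant modulo lower weight pieces, and satisfies $S_\alpha(d_1 x, y) + \varepsilon\, S_\alpha(x, d_1 y) = 0$ since $S_\alpha$ is a morphism of $\sD_X \otimes_\C \sD_{\overline X}$-modules.

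The main obstacle is bookkeeping: one must verify that the pairing assembled on $V^\alpha$ from the primitive parts and the Lefschetz decomposition coincides—up to the normalizing factors of the form $(-1)^r/(r+1)!$ arising in Theorem~\ref{D}—with the pairing directly induced by $S_\alpha$ on $\gr^W_\bullet \cM_\alpha$, and that all Tate twists and sign conventions align so that the primitive positivity provided by Theorem~\ref{D} yields positivity on the bi-primitive subspaces. Once this calibration is in place, both the bigraded polarized Hodge-Lefschetz structure on $V^\alpha$ and the $d_1$-compatibility assemble formally from the primitive data and the strictness of $R_\alpha$ with respect to $F_\bullet$.
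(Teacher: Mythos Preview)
Your proposal is correct and follows essentially the same route as the paper's proof (Theorems~\ref{rd1pbhl} and~\ref{d1pbhl}): reduce to the $R_\alpha$-primitive parts via the filtered Lefschetz decomposition (Corollary~\ref{FWN}), invoke Theorem~\ref{D} on each $\cP_{\alpha,r}$ to get Hard Lefschetz for $\sfX_1$ and the polarization on the bi-primitive pieces, and check the $d_1$-axioms formally from the sesquilinear framework. The paper organizes this as a direct verification of the axioms (pbHL1)--(pbHL3) of Theorem~\ref{bphl}, which is exactly the bookkeeping you flag; the normalizing factor $(-1)^r/(r+1)!$ is absorbed because a positive scalar multiple of a polarization is still a polarization and the $(-1)^r$ matches the $(-\sfY_2)^r$ in (pbHL3).
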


By a formal argument of Guill\'en and Navarro Aznar~\cite{hl}, which follows some ideas of Deligne and Saito:

\begin{corA}
We have the following statements: 
\begin{enumerate}
  \item the Hodge spectral sequence degenerates at $\prescript{}{F}{E_1}$;
  \item the weight spectral sequence degenerates at $\prescript{W}{}{E_2}$;
  \item the $\alpha$-generalized eigenspace of the bigraded vector space 
  \[
  \prescript{W}{}{E_2}=\bigoplus_{\ell,k \in\Z}\gr^W_{\ell} \bH^k(Y,\Omega^{\bullet+n}_{X/\Delta}(\log Y)|_Y)
  \] 
  with respect to $R$ underlies a polarized bigraded Hodge-Lefschetz structure of central weight $n$ with polarization induced by $S_\alpha$ and $\fsl_2(\C)\times\fsl_2(\C)$-action induced {by $2\pi\sqrt{-1}L$ and $R_\alpha$}.
\end{enumerate}
\end{corA}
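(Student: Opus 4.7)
The plan is to proceed purely formally, once the preceding theorem is in hand. I would invoke the key lemma of Guill\'en and Navarro Aznar, formalizing earlier ideas of Deligne and Saito: if $(V,d)$ is a polarized bigraded Hodge-Lefschetz structure equipped with a differential $d$ which is simultaneously a morphism of PBHL structures and satisfies $d^2=0$, then the cohomology $H(V,d)$ inherits a canonical PBHL structure. Applied to $V=V^\alpha$ and $d=d_1$, the hypotheses are exactly what the preceding theorem provides.

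The first step is to apply this lemma directly to $(V^\alpha,d_1)$: I would obtain that $H(V^\alpha,d_1) = \prescript{W}{}{E_2}^\alpha$ inherits a PBHL structure of central weight $n$, with polarization induced by $S_\alpha$ and $\fsl_2(\C)\times\fsl_2(\C)$-action induced by $2\pi\sqrt{-1}L$ and $R_\alpha$. This is exactly item (3).

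The second step would handle items (1) and (2). For item (2), I would note that each homogeneous piece of $\prescript{W}{}{E_2}^\alpha$ is pure of a fixed weight determined by the bigrading, whereas a higher differential $d_r$ with $r\geq 2$ shifts the weight index but preserves the total cohomological degree; the compatibility of $d_r$ with the polarization and the Lefschetz isomorphisms then forces $d_r=0$, and iterating yields degeneration at $\prescript{W}{}{E_2}$. For item (1), I would use that a morphism of PBHL structures is strictly compatible with the Hodge filtration $F^\bullet$; combined with the $\prescript{W}{}{E_2}$-degeneration just established, this transfers to strictness of the differentials in $\relome{\bullet+n}|_Y$ with respect to $F^\bullet$, which is equivalent to $\prescript{F}{}{E_1}$-degeneration of the Hodge spectral sequence.

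The main difficulty, however, will sit inside the Guill\'en-Navarro Aznar lemma itself rather than in the corollary proper: one must verify that the primitive Lefschetz decomposition on each $\gr^W$ piece commutes with taking cohomology of $d_1$, and that the induced sesquilinear form on the cohomology remains a polarization on primitives. Once these are granted, the three items reduce to formal manipulations of pure weights and bidegrees in the PBHL category.
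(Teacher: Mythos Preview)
Your proposal is correct and follows essentially the same approach as the paper: item (3) is obtained by applying the Guill\'en--Navarro Aznar lemma (the paper's Theorem~\ref{GA90}) to the differential PBHL structure $(V^\alpha,d_1)$ supplied by the preceding theorem, item (2) follows because each later differential $d_r$ is skew-symmetric for the induced pairing and hence (via Remark~\ref{dmorphism}) a morphism of Hodge structures between pieces of different weight, and item (1) is the standard strictness/dimension-count argument. The only cosmetic difference is ordering: the paper establishes (2) by an explicit induction on the pages before invoking Theorem~\ref{GA90} for (3), and phrases (1) as a direct dimension comparison between $H^\ell(X,\gr^F\gr^W_r\DR_X\cM_\alpha)$ and $V^\alpha_{\ell,r}$ rather than as abstract strictness.
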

Note that the third statement in the above Corollary is equivalent to the Theorem~\ref{thm:main}; therefore, we finish the proof of Theorem~\ref{thm:main}. See Theorem~\ref{rd1pbhl} and Corollary~\ref{rmhc}, when $Y$ is reduced. See Theorem~\ref{d1pbhl} and Corollary~\ref{mhc}, when $Y$ is allowed to be non-reduced,.

\subsection{Outline} We first review basic notions on holonomic filtered $\sD$-modules, integrable logarithmic connections and polarized bigraded Hodge-Lefschetz structures in $\S$\ref{sec:pre}. Then we set up the relative log de Rham complex and construct a log connection on its higher direct images in $\S$\ref{sec:logdr}. We transfer all of the data on the relative log de Rham complex into a filtered holonomic $\sD$-module in $\S$\ref{sec:trand}. To avoid the messy calculations, we first prove everything in the reduced case in $\S$\ref{sec:redstrict} and $\S$\ref{sec:redses}. The idea for the non-reduced case is almost the same but requires some Hodge theory of cyclic coverings. We construct some $\sD$-modules in $\S$\ref{subsec:sp} as the summand of the primitive part and prove their hypercohomology underlie canonical polarized Hodge structures in $\S$\ref{subsec:kpcyc}. Lastly, we prove the local invariant cycle theorem in $\S$\ref{sec:app}.

\subsection{Acknowledgement} The author gratefully acknowledges his advisor, Christian Schnell, for introducing him to this topic and for sharing ideas and insights during their weekly meetings. Many of the ideas presented in this paper should be attributed to him. The author would also like to express his gratitude to Guodu Chen and Nathan Chen for reading a draft of this paper and providing helpful feedback. Mircea Mus\-ta\-\c{t}\u{a}'s and Brad Dirks's insights were also greatly appreciated. The author is grateful to Yilong Zhang and Ruijie Yang for catching some typos in the paper. Finally, the author thanks the anonymous referee for providing valuable suggestions and corrections.

\section{Preliminaries}\label{sec:pre}
\subsection{Filtered $\sD$-modules with sesquilinear pairings}\label{subsec:ds}

We will work with right $\sD$-modules unless further specified. Let $Z$ be a complex manifold of dimension $n$ and denote by $\Omega^p_Z$ the sheaf of holomorphic $p$-forms and $\sT_Z$ the sheaf of holomorphic tangent vectors fields. For a filtered $\sD_Z$-module we mean a pair $(\cN, F_\bullet\cN)$ where $\cN$ is a coherent $\sD_Z$-module and $F_\bullet\cN$ is a good filtration. Occasionally, we will abuse notation and use $\cN$ to also denote the filtered $\sD_Z$-module when the filtration is clear. Denote by $\gr^F\sD_Z=\bigoplus_{\ell\in Z}
\gr^F_{\ell}\sD_X$ the associated graded algebra and $\gr^F\cN=\bigoplus_{\ell\in\mathbb{Z}}\gr^F_{\ell}\cN$ the associated graded module. Note that $\gr^F\cN$ is a coherent $\gr^F\sD_Z$-module. Let $T^*Z=\mathrm{Spec}_Z\left(\gr^F\sD_X\right)$ be the algebraic cotangent bundle and $T^*_V Z$ the geometric conormal bundle of a subvariety $V$ in $Z$. The \textit{characteristic variety} of $\cN$ is the support of $\gr^F\cN$ on $T^*Z$ and is denoted by $char(\cN)$. The \textit{characteristic cycle} of $\cN$ is the cycle associated to the coherent sheaf $\gr^F\cN$ on $T^*Z$ and is denoted by $cc(\cN)$. Neither the characteristic variety nor the characteristic cycle depends on the choice of filtration~\cite{HTT}. For example, the canonical bundle $\omega_Z$ is naturally a holonomic $\sD_Z$-module with action 
\[
m.\xi=-d(\xi\righthalfcup m)
\]
for local sections $\xi$ of $\sT_Z$ and $m$ of $\omega_Z$. It also naturally has a good filtration
\begin{equation}\label{eq:constant}
F_\ell\omega_Z=\left\{
\begin{aligned}
\omega_Z & , & \ell\geq -n; \\
0 & , & \ell< -n.
\end{aligned}
\right.
\end{equation}
Then one can compute $cc(\omega_Z)=[T^*_Z Z]$ which is the cycle of the zero section of the cotangent bundle. We call $\cN$ a \textit{holonomic} $\sD_Z$-module if $\dim char(\cN)=n$. See more details in~\cite{HTT}. A \textit{Tate twist} of filtered $\sD_Z$-module is defined to be $\cN(-r)=(\cN,F_{\bullet+r}\cN)$ for any $r\in\Z$.

Denote by $\bD^b(Z, \C)$ the bounded derived category of complexes with values in finite-dimensional $\C$-vector spaces and $\bD^b(Z,\sD)$ the bounded derived category of $\sD_Z$-modules. Denote by $\bD^b_h(Z,\sD)$ the full subcategory of $\bD^b(Z,\sD)$ whose objects are complexes with holonomic cohomologies. For a morphism $f: Z\to W$ between complex manifolds, denote by $\bR f_*, \bR f_! : \bD^b(Z,\C)\to \bD^b(W,\C)$ the derived pushforward and proper pushforward functors respectively and $\bR^k f_*,\bR^k f_!$ the $k$-th cohomology functors respectively. For any $\cN^\bullet\in \bD^b(Z,\sD)$, the pushforward functor and the proper pushforward functor $f_+,f_\dag: \bD^b(Z,\sD)\to \bD^b(W,\sD)$ are by definition, respectively
\[
f_+\cN^\bullet= \bR f_*\left(\cN^\bullet\tens{\sD_Z}{\mathbf L} \sD_{Z\to W}\right) \quad \text{and} \quad f_\dag\cN^\bullet= \bR f_!\left(\cN^\bullet\tens{\sD_Z}{\mathbf L} \sD_{Z\to W}\right),
\]
where $\sD_{Z\to W}=f^*\sD_W$ is the transfer module. In fact, the functor $f_\dag$ preserves the holonomicty, i.e., $f_\dag: \bD^b_h(Z,\sD)\to \bD^b_h(W,\sD)$ (see~\cite{HTT}). Of course, if $f$ is proper or proper on the support of $\cN$ then $f_+=f_\dag$. The \textit{de Rham complex} of $\cN$ is 
\[
\DR_Z\cN\colon =\cN\otimes\bigwedge^{-\bullet}\sT_Z=\{\cN\otimes \bigwedge^{n}\sT_Z\cN\to \cN\otimes \bigwedge^{n-1}\sT_Z \to \cdots \to\cN\}
\]
placed in degree $-n,\dots,-1,0$. If without further indication, tensor products are always taken over $\sO$-modules.  Some authors also call it the Spencer complex. The de Rham complex of $\omega_Z$ 
\[
\omega_Z\otimes\bigwedge^{-\bullet}\sT_Z=\{\omega_Z\otimes \bigwedge^{n}\sT_Z\to \omega_Z\otimes \bigwedge^{n-1}\sT_Z \to \cdots \to\omega_Z\}
\]
is isomorphic to the usual de Rham complex $\DR_Z\sO_Z=\Omega^{n+\bullet}_Z$ of $Z$ under the isomorphisms 
\begin{equation}\label{drlr}
\omega_Z\otimes \bigwedge^p \sT_Z \to \Omega^{n-p}_Z, \,\,\, \omega\otimes \partial_J\mapsto (-1)^{n-j_1+\cdots+n-j_p}dz_{{\bar J}},
\end{equation}
where $\partial_J$ is a local section of $\bigwedge^p\sT_Z$, $J$ is ordered index set and ${\bar J}$ is the complement with the natural ordering, and $\omega=dz_1\wedge dz_2\wedge\cdots\wedge dz_{n}$. If $F_\bullet\cN$ is a good filtration, the de Rham complex is also filtered by
\[
\begin{aligned}
  F_{\ell}\DR_Z\cN &=F_{\ell+\bullet}\cN\otimes\bigwedge^{-\bullet}\sT_Z \\
  &=\left\{F_{\ell-n}\cN\otimes \bigwedge^{n}\sT_Z\cN\to F_{\ell-n+1}\cN\otimes \bigwedge^{n-1}\sT_Z \to \cdots \to F_{\ell}\cN\right\}.
\end{aligned}
\]
The direct image functor and the de Rham functor commute~\cite[Corollary 4.4.4]{MS}: 
\[
\bR f_!\circ\DR_Z=\DR_W\circ f_\dag.
\] 

A \textit{sesquilinear pairing} $S$ on $\sD_Z$-module $\cN$ is a $\sD_{Z,\overline{Z}}$-module morphism $S:\cN\otimes_\C \overline \cN\to \mathfrak{C}_Z$. Here, $\sD_{Z,\overline{Z}}=\sD_Z\otimes_\C \sD_{\overline Z}$ for $\overline{\sD_Z}$ is the sheaf antiholomorphic differential operators, $\overline \cN$ is the stupid conjugate of $\cN$ as a $\overline \sD_Z$-module and $\mathfrak{C}_Z$ is the sheaf of currents on $Z$ with natural $\sD_{Z,\overline{Z}}$-module structure. We have the proper pushforward functor similarly as above on $\sD_{Z,\overline{Z}}$-modules and also call it $f_\dag$:
\[
f_\dag (-)\colon = \bR f_! (- \tens{\sD_{Z,\overline Z}}{\mathbf L} \sD_{Z,\overline Z\to W,\overline W} ),
\] 
where $\sD_{Z,\overline Z\to W,\overline W}\colon =f^*\sD_{W,\overline W}$ is the transfer module. Because of the natural morphism $f_\dag\mathfrak{C}_Z \to \mathfrak{C}_W$, we can pushforward the sesquilinear pairing to get
\[
\sH^0f_\dag S_k: \sH^kf_\dag\cN\otimes_\C \overline{\sH^{-k}f_\dag \cN}\to \sH^0f_\dag \cN\otimes_\C \overline{\cN}\to \mathfrak{C}_W.
\]
 If $f$ is a closed embedding then $f_+S: f_+\cN\otimes_\C f_+\overline\cN \to \mathfrak{C}_W$. If $W$ is a point, then we have an induced pairing on the complex
\[
f_\dag S: \DR_{Z,\overline Z}\cN\otimes_\C \overline{\cN} \to \DR_{Z,\overline Z}\mathfrak{C}_Z\simeq \C[2n],
\]
where $\DR_{Z,\overline Z}\cN\otimes_\C \overline{\cN} \simeq \DR_Z \cN\otimes_\C \overline{\DR_Z \cN}$. Taking cohomology at $0$-th degree gives, for each $k\in\Z$,
\begin{equation}
\bH^k_c(Z,\DR_Z\cN)\otimes \bH^{-k}_c(Z,\overline{\DR_Z\cN}) \to \bH^0_c(Z,\DR_{Z,\overline Z}\cN\otimes_\C \overline{\cN}) \to H^{2n}_c(Z, \C)\simeq \C.
\end{equation}
\begin{ex}
The $\sD_Z$-module $\omega_Z$ carries a natural pairing $S_Z:\omega_Z\otimes_\C \overline{\omega_Z}\to \mathfrak{C}_Z$,
\begin{equation}\label{np}
\langle S_Z(m', \overline{m''}),\eta\rangle =\frac{\varepsilon(n+1)}{(2\pi\sqrt{-1})^{n}}\int_Z \eta\cdot m'\wedge \overline{m''},
\end{equation}
for $m', m''$ local sections of $\omega_Z$, $\eta$ a test function on $Z$ and $\varepsilon(n+1)=(-1)^{\frac{n(n+1)}{2}}$. The coefficient $\frac{\varepsilon(n+1)}{(2\pi\sqrt{-1})^{n}}$ in the definition is chosen so that $\frac{\varepsilon(n+1)}{(2\pi\sqrt{-1})^{n}}m\wedge\overline{m}=|m|^2$ is a positive current for any local section $m$ of $\omega_Z$ and elimination the choice of orientation (see more details in $\S$\ref{subsec:phl}). The pairing $S_Z:\omega_Z\otimes_\C \overline{\omega_Z}\to \mathfrak{C}_Z$ yields a collection of pairings
\[
\bH^k_c(Z,\DR_Z\omega_Z)\otimes_\C \overline{\bH^{-k}_c(Z,\DR_Z\omega_Z)} \to \C.
\]
\end{ex}

\subsection{Logarithmic connections} 
If $D=\sum a_iD_i$ is a simple normal crossing divisor on a complex manifold $Z$ of $\dim Z=n$ for $a_i\geq 0$, denote by $\Omega_Z(\log D)$ the sheaf of meromorphic differential $1$-forms with logarithmic poles along $D_{\mathrm{red}}=\sum D_i$ and denote by $\Omega^p_Z(\log D)=\bigwedge^p\Omega_Z(\log D)$ the meromophic $p$-forms with logarithmic pole along $D$. Each $\Omega^p_Z(\log D)$ is a locally free $\sO_Z$-module. 

In our convention, the \textit{de Rham complex} of $Z$ is $\DR_Z\sO_Z$
\[
\Omega^{\bullet+n}_{Z}=\{\sO_Z\to \Omega_Z \to \cdots \to \Omega^{n}_Z\}[n].
\]
The \textit{log de Rham complex} is
\[
\Omega^{\bullet+n}_{Z}(\log D)=\{\sO_Z\to \Omega_Z(\log D)\to \cdots \to \Omega^{n}_Z(\log D)\}[n].
\]
We will follow the Koszul sign rule: for a chain complex $C^\bullet$ with differential $d$, the shifted complex $C^{\bullet+n}=C^\bullet [n]$ equipped with differential $(-1)^nd$. We define residue along $D_i$ by (see~\cite[2.5]{EV})
\[
\Res_{D_i}: \Omega^{\bullet+n}_Z(\log D)\to \Omega^{\bullet+n-1}_{D_i}(\log (D-D_i)|_{D_i})\text{,  } \frac{dz_i}{z_i}\wedge\alpha\mapsto \alpha|_{D_i},
\]
where $z_i$ is the local defining equation of $D_i$ and $\frac{dz_i}{z_i}\wedge\alpha$ is a local section of $ \Omega^{\bullet+n}_Z(\log D)$. It factors through
\[
\Omega^{\bullet+n}_Z(\log D)|_{D_i}\to \Omega^{\bullet+n-1}_{D_i}(\log (D-D_i)|_{D_i}).
\]
By abuse of notations, we still call the above morphism $\Res_{D_i}$. Let $D^J=\cap_{j\in J} D^J$ and $D_J=\sum_{j\in J} D_j$. Then we have a collection of residue maps, by choosing an order on the indices and successively applying $\Res_{D_j}$ for $j\in J$, 
\[
\Res_{D^J}: \Omega^{\bullet+n}_Z(\log D) \to \Omega^{\bullet+\dim D^J}_{D^J}(\log (D-D_J)|_{D^J}).
\]

A \textit{log connection} $\nabla$ with poles along $D$ on a coherent $\sO_Z$-module $\cF$ is a $\C$-linear morphism $\nabla: \cF\to \Omega_Z(\log D)\otimes \cF$ satisfying the Leibniz rule $\nabla (f\cdot s)=df\otimes s+f\nabla s$ for $f$ local section of $\sO_Z$ and $s$ local section of $\cF$. One can extend standardly $\nabla$ to a log de Rham complex
\[
\left\{\cF \xrightarrow{\nabla}  \Omega_Z(\log D)\otimes\cF \xrightarrow{\nabla}  \cdots \xrightarrow{\nabla} \Omega^{n}_Z(\log D)\otimes \cF \right\}[n].
\]
If the above is a chain complex, i.e., $\nabla^2=0$ we say $(\cF,\nabla)$ is an \textit{integrable} log connection. We call the morphism $\Res_{D_i}\nabla: \cF\to \cF|_{D_i}$ induced by $\Res_{D_i}: \Omega_{Z}(\log D)\to \sO_{D_i}$ the \textit{residue} of the integrable log connection $\nabla$ along $D_i$. Note that $\Res_{D_i}$ is $\sO_Z$-linear and factors through again $\cF|_{D_i} \to \cF|_{D_i}$. 

Let $\sD_Z(\log D)$ be the sub-algebra of $\sD_Z$ generated locally by the differential operators $P$ such that $P\cdot \sI_D\subset\sI_D$. Here, we denote by $\sI_D$ the ideal sheaf of the normal crossing divisor $D$. An integrable log connection is then the same as a left $\sD_Z(\log D)$-module. We can extend the definition of residues of a log connection as follows. The sheaf $\sO_{D_i}=\sO_Z/\sI_{D_i}$ naturally has a left $\sD_Z(\log D)$-module structure because $\sI_{D_i}$ is also stable under by the $\sD_Z(\log D)$-action by the naive reason. Let $\cF^\bullet$ be a complex of integrable log connections. Then the complex 
\[
\cF^\bullet \tens{\sO_Z}{\mathbf L} \sO_{D_i}
\]
is a complex of $\sD_Z(\log D)$-modules because taking tensor products over $\sO_Z$ is closed in the category of $\sD_Z(\log D)$-modules and one can resolve either $\cF^\bullet$ or $\sO_{D_i}$ using locally $\sD_Z(\log D)$-free resolutions. The $\ell$-th cohomology $\sH^\ell(\cF^\bullet\otimes^{\mathbf L} \sO_{D_i})$ is indeed an $\sO_{D_i}$-module equipped with an integrable log connection. The residue of this log connection is $\sO_{D_i}$-linear and is called the $\ell$-th \textit{residue} of the complex $\cF^\bullet$.

As in the case of $\sD$-module, the sheaf $\omega_Z(\log D)=\Omega^{n}_Z(\log D)$ carries a canonical right $\sD_Z(\log D)$-module structure and we have the left to right transformation $\cF\mapsto \omega_Z(\log D)\otimes \cF$ for any left $\sD_Z(\log D)$-module $\cF$. 

\begin{ex}\label{starex}
We will use the following fact: the complex of right $\sD_Z$-modules
\[
\{\sD_Z \to \Omega_Z(\log D)\otimes\sD_Z\to \cdots \to \Omega^{n}_Z(\log D)\otimes\sD_Z\}[n]
\]
is a filtered resolution of $\omega_Z(* D)=\cup_{k\in \Z}\omega_Z(kD)$, equipped the induced filtration by $\Omega^{n+\bullet}_Z(\log D)\otimes F_{\ell+n+\bullet}\sD_Z$. In fact, it is well-known that the inclusion $\Omega^{n+\bullet}_Z(\log D)\to \Omega^{n+\bullet}_Z(*D)$ is a filtered quasi-isomorphism~\cite{Hodge2}. The inclusion extends to a filtered quasi-isomorphism $\Omega^{n+\bullet}_Z(\log D)\otimes\sD_Z \to \Omega^{n+\bullet}_Z(* D)\otimes \sD_Z$. Since $ \Omega^{n+\bullet}_Z(* D)\otimes \sD_Z$ is a filtered resolution of $\omega_Z(*D)$, we conclude the proof.  It follows that, for $f:Z\to W$,
\[
f_\dag\omega_Z(*D)=\bR f_!\left(\omega_Z(*D)\otimes^{\mathbf L}_{\sD_Z}\sD_{Z\to W}\right)=\left(\bR f_!\Omega^{n+\bullet}_Z(\log D)\right)\otimes\sD_W.
\]
In particular, if $f$ is a closed embedding then $f_!=f_+$ is exact and $f_\dag=\sH^0f_\dag$, which means
\[
\{\sD_W\to f_+\Omega_Z(\log D)\otimes\sD_W \to\cdots \to f_+\Omega^{n}_Z(\log D)\otimes\sD_W\}[n]
\]
is a resolution of $f_\dag\omega_Z(* D)$. We put the induced filtration to make it a filtered resolution and denote by 
\[
f_\dag(\omega_Z(*D), F_\bullet\omega_Z(*D))=(f_\dag\omega_Z(*D), F_\bullet f_\dag\omega_Z(*D)),
\]
or for simplicity just $f_\dag \omega_Z(*D)$. 
\end{ex}

The $\sD_Z$-module looks like $\cL\otimes \sD_Z$ for $\cL$ is a $\sO_Z$-module is called \textit{induced} $\sD_Z$-module. For example, we have seen $\Omega^{n+\bullet}_Z\otimes\sD_Z$ and $\Omega^{n+\bullet}(\log D)_Z\otimes\sD_Z$ are complexes of induced $\sD_Z$-modules.

\subsection{Polarized Hodge-Lefschetz structures}\label{subsec:phl}
The goal of this subsection is to introduce polarized bigraded Hodge-Lefschetz structures. Our treatment is an adjustment of~\cite[\S4]{hl}. The prototype of polarized Hodge-Lefschetz structures one should keep in mind is the graded vector space given by the direct sum of the cohomology groups of a compact K\"ahler manifold. Polarized bigraded Hodge-Lefschetz structures are the degenerations of polarized Hodge-Lefschetz structures. We begin with the convention on Hodge structures and we only consider complex Hodge structures. 

A \textit{Hodge structure} of \textit{weight} $n$ is a finite dimensional vector space $V$ with two decreasing filtrations $F^\bullet$ and $G^\bullet$ satisfying 
\[
V=F^p\oplus G^{n+1-p}, 
\]
for each $p\in\Z$. The definition is equivalent to
\[
V=\bigoplus_{p+q=n} V^{p,q}
\]
if we let $V^{p.q}=F^p\cap G^q$ for $p+q=n$. A morphism of Hodge structures is just a morphism of vector spaces respecting the two filtrations $F$ and $G$. A \textit{polarization} on the Hodge structure $(V,F^\bullet,G^\bullet)$ is a non-degenerated hermitian pairing $S:V\otimes_\C \overline V\to \C$ such that
\begin{enumerate}
\item $F^p$ is orthogonal to $G^{n+1-p}$ with respect to $S$ for every $p\in\Z$;
\item $(-1)^qS(-,-)$ is hermitian inner product on $V^{p,q}$. 
\end{enumerate}

\begin{rem}
A polarized Hodge structure of weight $n$ is completely determined by the triple $(V,F_\bullet V, S)$ because 
\[
G^{n+1-p}V=\left\{a\in V: S(a,\overline{b})=0\text{ for all } b \text{ in } {F^pV}\right\}=\overline{F^pV}^{\perp_S}.
\]
We will also call the triple $(V,F_\bullet V,S)$ a polarized Hodge structure.
\end{rem}

A \textit{Tate twist} $(V,F^\bullet,S)(r)$ on a polarized Hodge structure $(V,F^\bullet,S)$ is the triple $(V,F^{\bullet+r},(-1)^rS)$, for any integer $r$.

Now let us move on to the geometric case. It is well-known that the $k$-th cohomology group of a compact K\"ahler manifold $Z$ of dimension $n$ has Hodge decomposition:
\[
H^k(Z,\C)=\bigoplus_{p+q=k}H^{p,q}(Z)
\] 
and thus it is a Hodge structure of weight $k$. Fix a choice of $\sqrt{-1}$. Let $h$ be any K\"ahler metric on $Z$. We denote the K\"ahler form by $\omega=-\mathrm{Im}\, h\in A^2(Z,\R)$ and denote  its cohomology class by $[\omega]\in H^2(Z,\R)$; note that this depends on the choice of $\sqrt{-1}$ through the function $\mathrm{Im}:\C\to \R$. The choice of $\sqrt{-1}$ endows the two-dimensional real vector space $\C$ with an orientation on $Z$. The induced orientation on $Z$ has the property that 
\[
\int_Z \frac{\omega^n}{n!}=\mathrm{vol}(Z)>0.
\]
The integral also depends on the orientation, hence on the choice of $\sqrt{-1}$. To remove the dependence, instead of the usual integral, we should use 
\[
\frac{1}{(2\pi \sqrt{-1})^n}\int_Z: A^{2n}(Z,\C)\to \C.
\]
Let ${L=[w]\wedge}$ be the Lefschetz operator for a K\"ahler class $[w]$. Then for $k\leq n$ the primitive part 
\[
P_LH^k(Z,\C)\colon =\ker L^{n-k}\cap H^k(X,\C)
\]
is a polarized Hodge structure of weight $k$ with the polarization 
\[
S(a, \overline{b})={\frac{\varepsilon(k+1)}{(2\pi\sqrt{-1})^{n}}}\int_Z (2\pi\sqrt{-1}L)^{n-k}a\wedge \overline{b},
\]
for $a,b\in P_LH^k(Z,\C)$ because of the Hodge-Riemann bilinear relation. 

If we consider the cohomology groups altogether, we will get the Hodge-Lefschetz structure of central weight $n$. Denote by $(\mathsf{X,Y,H})$ the $\fsl_2(\C)$-triple, i.e.,
\[
[\sfX,\sfY]=\sfH, \quad [\sfH,\sfX]=2\sfX,\quad [\sfH,\sfY]=-2\sfY.
\] 
In the Lie group $\mathrm{SL_2}(\C)$, we have the Weil element $\mathsf{w}=e^\mathsf{X}e^{-\mathsf{Y}}e^{\mathsf{X}}$  with the property that $\sfw^{-1}=-\sfw,$ and under the adjoint action of $\mathrm{SL}_{2}(\mathbb{\C})$ on its Lie algebra, one has the identities
\[
\mathsf {w H w^{-1}}=\mathsf{-H}, \quad \mathsf{w X w^{-1}=-Y}, \quad \mathsf{w Y w^{-1}=-X}
\]
From this, one deduces that $e^{\mathsf X}=\mathsf{w} e^{-\mathsf X} e^{\mathsf Y}=e^{\mathsf Y} \mathsf w e^{\mathsf Y}$. Now $A^{\bullet}(Z)$ becomes a representation of $\mathfrak{s l}_{2}(\mathbb{\C})$ if we set
\[
{\sfX=2 \pi \sqrt{-1} L} \quad \text { and } \quad \sfY=(2 \pi \sqrt{-1})^{-1} \Lambda
\]
and let $\sfH$ act as multiplication by $k-n$ on the subspace $A^{k}(Z) .$ The reason for this (non-standard) definition is that it makes the representation not depend on the choice of $\sqrt{-1}$. It is easy to see how $\sfw$ acts on primitive forms. Suppose that $\alpha \in A^{n-k}(Z)$ satisfies $\sfY \alpha=0 .$ Then $\sfw \alpha \in A^{n+k}(Z) .$ If we now expand both sides of the identity
\[
e^{\sfX} \alpha=e^{\sfY} \sfw e^{\sfY} \alpha=e^{\sfY} \sfw \alpha
\]
into power series, and then compare terms in degree $n+k,$ we get
\[
\sfw \alpha=\frac{\sfX^{k}}{k !} \alpha.
\]
This formula is the reason for using $\sfw$ (instead of the otherwise $\sfw^{-1}$):
there is no sign on the right-hand side. 

A Hodge-Lefschetz structure is linear algebra data encoding both representation theoretic and Hodge theoretic information. Recall that a finite-dimensional $\fsl_2(\C)$-representation is a graded vector space $V=\bigoplus_{\ell\in\Z}V_{\ell}$ with a set of endomorphisms $(\sfH, \sfX,\sfY)$ such that 
\begin{enumerate}
  \item each graded piece $V_{\ell}$ is the $\ell$-eigenspace of $\sfH$;
  \item the morphism $\sfX^{\ell}: V_{-\ell}\to V_{\ell}$ is an isomorphism for each $\ell\geq 0$;
  \item the morphism $\sfY^{\ell} :V_{\ell}\to V_{-\ell}$ is an isomorphism for each $\ell\geq 0$.
\end{enumerate}

\begin{ex}\label{defwtf}
For any finite-dimensional vector space $V$ together with a nilpotent operator $N$, there exists a so-called monodromy filtration $W_\bullet$ uniquely determined by the following two conditions
\begin{itemize}
  \item for each $\ell\in\Z$, $N: W_\ell\to W_{\ell-2}$;
  \item the induced operator $N^\ell: \gr^W_{\ell} \to \gr^W_{-\ell}$ is an isomorphism for each $\ell\geq 0$.
\end{itemize}
Let $\gr^W=\bigoplus_{\ell\in\Z}\gr^W_\ell$. The $\ell$-th primitive part $P_N\gr^W_\ell=\ker N^{\ell+1}\cap \gr^W_\ell$ consists of the classes of generators of cyclic subspaces of $V$ of dimension $\ell$ as $\C[N]$-modules for $\ell\geq 0$. For each generator $v$, we have $N^{\ell+1}v=0$ but $N^\ell v\neq 0$ and also $v$ is not a image of $N$. Therefore, we have the identification 
\[
P_N\gr^W_\ell=\frac{\ker N^{\ell+1}}{\ker N^\ell +\mathrm{im\,}N\cap \ker N^{\ell+1}}.
\]
Furthermore, we have the Lefschetz decomposition $\gr^W_\ell=\bigoplus_{k\geq 0} N^kP_NV_{\ell+2k}$. Taking $N=\sfY$, the Lefschetz structure and the grading uniquely determines the operator $X$ such that $(\sfX,\sfY,\sfH)$ is a $\fsl_2(\C)$-triple by the relation $\sfX\sfY^{k}=k(\ell-k+1)\sfY^{k-1}$ on $P_N\gr^W_\ell$. Thus $\gr^W$ naturally is a representation of $\fsl_2(\C)$. 
\end{ex}

By Hard Lefschetz theorem, for any compact K\"ahler manifold $Z$ of $\dim Z=n$, the vector space $\bigoplus_{\ell\in \Z} H^{n+\ell}(Z,\C)$ is a representation of $\fsl_2(\C)$ by setting $\sfX=2\pi\sqrt{-1}L$ the Lefschetz operator, $\sfY=(2\pi\sqrt{-1})^{-1}\Lambda$ the adjoint operator. But because of the Lefschetz operator of is of type $(1,1)$, we actually have $\sfX: H^{k}(Z,\C)\to H^{k+1}(Z,\C)(1)$ is a morphism of Hodge structures and $\sfX^\ell:H^{n-\ell}(Z,\C)\to H^{n+\ell}(Z,\C)(\ell)$ is an isomorphism of Hodge structures. This leads to the following definition: a \textit{Hodge-Lefschetz} structure of \textit{central weight} $n$ is a $\fsl_2(\C)$-representation $V=\bigoplus_{\ell\in\Z} V_{\ell}$ with two filtrations $F^\bullet V=\bigoplus_{\ell} (F^\bullet V\cap V_\ell)$ and $G^\bullet V=\bigoplus_{\ell} (G^\bullet V\cap V_\ell)$ compatible with the grading such that
\begin{enumerate}
  \item each graded piece $(V_{\ell},F^\bullet V_{\ell},G^\bullet V_{\ell})$ is a Hodge structure of weight $n+\ell$;
  \item the operator $\sfX: (V_{\ell}, F^\bullet V_{\ell}, G^\bullet V_{\ell})\to (V_{\ell+2}, F^{\bullet+1} V_{\ell+2},,G^{\bullet+1} V_{\ell+2})$ is a morphism of Hodge structures such that 
  \[
  \sfX^\ell: (V_{-\ell},F^\bullet V_{-\ell}, G^\bullet V_{-\ell})\to (V_{\ell},F^\bullet V_{\ell},G^\bullet V_{\ell})(\ell)
  \]
  is an isomorphism of Hodge structures;
  \item the operator $\sfY:(V_{\ell}, F^\bullet V_{\ell}, G^\bullet V_{\ell})\to (V_{\ell-2}, F^{\bullet-1} V_{\ell-2}, G^{\bullet-1} V_{\ell-2})$ is a morphism of Hodge structures such that 
  \[
  \sfY^\ell: (V_{\ell},F^\bullet V_{\ell},G^\bullet V_{\ell}) \to (V_{-\ell},F^\bullet V_{-\ell},G^\bullet V_{-\ell})(-\ell)
  \]
   is an isomorphism of Hodge structures.
\end{enumerate}
It follows from the definition the primitive part $P_XV_{\ell}$ is a sub-Hodge structure for each $\ell<0$.  Let $V_\ell=H^{n+\ell}(Z,\C)$ and $V=\bigoplus_{\ell\in \Z} V_\ell$. It follows that $V$ is a Hodge-Lefschetz structure of central weight $n$. Hodge-Lefschetz structure interplays well with the Hodge-Riemann bilinear relation. A \textit{polarization} on a Hodge-Lefschetz structure $V$ of central weight $n$ is a hermitian symmetric paring $S: V\otimes_\C \overline V\to \C$ such that 
\begin{enumerate}
  \item the restriction $S|_{V_{\ell}\otimes_\C \overline{V_{-k}}}$ is zero for $\ell+k\neq 0$; 
  \item $S(\sfX-,-)=S(-,\bar\sfX-)$ and $S(-,\bar\sfY-)=S(\sfY-,-)$; 
  \item $S_\ell:V_\ell\otimes\overline{V_{-\ell}}\to \C$ is a polarization on $V_\ell$ where $S_\ell:V_\ell\otimes\overline{V_{-\ell}}\to \C$ is the restriction of $S$. 
\end{enumerate}

\begin{lem}\label{lem:weil}
If $V$ is a Hodge-Lefschetz structure, then $\sfw: V_{k} \to V_{-k}(-k)$ is an isomorphism of Hodge structures of weight $n+k$.
\end{lem}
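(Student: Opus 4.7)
The plan is to reduce the statement to the explicit formula \eqref{weileq} for the action of $\sfw$ on primitive vectors, by decomposing $V_{k}$ via the Lefschetz decomposition into sub-Hodge structures on which $\sfw$ acts transparently. Concretely, since $V=\bigoplus_\ell V_\ell$ is a finite-dimensional representation of $\fsl_2(\C)$, we have
\[
V_k=\bigoplus_{\substack{\ell\geq 0,\, j\geq 0\\ -\ell+2j=k}}\sfX^{j}\,P_X V_{-\ell},
\]
where $P_X V_{-\ell}=\ker\sfY\cap V_{-\ell}=\ker\sfX^{\ell+1}|_{V_{-\ell}}$. I would first check that every summand is a sub-Hodge structure of $V_k$. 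For this I use that $\sfY\colon V_{-\ell}\to V_{-\ell-2}(-1)$ is a morphism of Hodge structures, so $P_XV_{-\ell}\subset V_{-\ell}$ is a sub-Hodge structure; and $\sfX^{j}\colon V_{-\ell}\to V_{-\ell+2j}(j)=V_k(j)$ is a morphism of Hodge structures whose restriction to $P_XV_{-\ell}$ is injective for $0\leq j\leq \ell$, so $\sfX^{j}\colon P_XV_{-\ell}(-j)\xrightarrow{\sim}\sfX^{j}P_XV_{-\ell}$ is an isomorphism of Hodge structures onto its image inside $V_k$. The same discussion applied with $\ell-j$ in place of $j$ gives an isomorphism $\sfX^{\ell-j}\colon P_XV_{-\ell}(-j)\to V_{-k}(-k)$ of Hodge structures onto $\sfX^{\ell-j}P_XV_{-\ell}\subset V_{-k}$, since the total filtration shift on the right equals $\ell-2j=-k$.

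The second step is to plug in the Weil-element identity \eqref{weileq}: for $a\in P_XV_{-\ell}$,
\[
\sfw(\sfX^{j}a)=(-1)^{j}\frac{j!}{(\ell-j)!}\,\sfX^{\ell-j}a.
\]
Thus, on the summand $\sfX^{j}P_XV_{-\ell}$ the map $\sfw$ coincides, up to the nonzero scalar $(-1)^{j}j!/(\ell-j)!$, with the composition
\[
\sfX^{j}P_XV_{-\ell}\xleftarrow{\ \sim\ }P_XV_{-\ell}(-j)\xrightarrow{\ \sfX^{\ell-j}\ }V_{-k}(-k),
\]
which is a morphism of Hodge structures by the previous paragraph. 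Summing over the Lefschetz decomposition of $V_k$ then shows that $\sfw\colon V_k\to V_{-k}(-k)$ is a morphism of Hodge structures.

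Finally, to see that $\sfw$ is an isomorphism I note that on each summand it is the nonzero scalar $(-1)^{j}j!/(\ell-j)!$ times an isomorphism onto $\sfX^{\ell-j}P_XV_{-\ell}\subset V_{-k}$, and as $(\ell,j)$ ranges over pairs with $-\ell+2j=k$ the substitution $j\mapsto \ell-j$ (equivalently $-\ell+2(\ell-j)=-k$) matches these summands bijectively with the Lefschetz summands of $V_{-k}$. Hence $\sfw$ identifies the two Lefschetz decompositions up to scalar on each piece, and is therefore bijective.

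The step that needs the most care is the Tate-twist bookkeeping in the first paragraph, namely verifying that the two auxiliary isomorphisms $\sfX^{j}$ and $\sfX^{\ell-j}$ from $P_XV_{-\ell}$ land in the correct twisted Hodge structures so that their composition produces the shift by $(-k)$ rather than some other shift. Once this is organized, everything else is a direct consequence of \eqref{weileq} and the definition of a Hodge-Lefschetz structure.
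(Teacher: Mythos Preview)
Your proof is correct and follows essentially the same approach as the paper: both arguments use the Lefschetz decomposition of $V_k$ into $\sfX$-powers of primitive pieces, verify that this decomposition respects the Hodge structures, and then apply the Weil-element identity \eqref{weileq} on each summand. The only cosmetic difference is that the paper checks the compatibility with Hodge structures by tracking $(p,q)$-types through a descending induction on $j$, whereas you use that $\ker\sfY$ is a sub-Hodge structure and that $\sfX^{j}$ is an isomorphism onto its image; both are standard and your Tate-twist bookkeeping is correct.
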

\begin{proof} 
Suppose that $a \in V_{-\ell}$ is primitive, in the sense that $\sfX^{\ell+1} a=0$ and $\ell \geq 0$. Then $\sfY a=0,$ and from $\sfw e^{-\sfX}=e^{\sfX} e^{-\sfY},$ we get $\sfw e^{-\sfX} a=e^{\sfX} a,$ and after expanding and comparing terms in degree $\ell-2 j,$ also
\begin{equation}\label{weileq}
\sfw \frac{\sfX^{j}}{j !} a=(-1)^{j} \frac{\sfX^{\ell-j}}{(\ell-j) !} a
\end{equation}
since $\sfw^{2}$ acts on $V_{-\ell+2 j}$ as $(-1)^{-\ell+2 j}=(-1)^{\ell},$ this formula is actually symmetric in $j$ and $\ell-j$.

Any $a \in V_{k}$ has a unique Lefschetz decomposition
\[
a=\sum_{j \geq \max (k, 0)} \frac{\sfX^{j}}{j !} a_{j}
\]
where $a_{j} \in V_{k-2 j}$ satisfies $Y a_{j}=0 .$ We only need to consider $j \geq k$ in the sum because $\sfX^{2 j-k+1} a_{j}=0,$ which implies that $\sfX^{j} a_{j}=0$ for $j<k$. Assuming $a \in V_{k}^{p,q}$, where $p+q=n+k$, we can show that $a_{j} \in V_{k-2j}^{p-j,q-j}$ using descending induction on $j \geq \max (k, 0)$. If $a_{j} \in V_{k-2j}^{p-j,q-j}$ for $j > \ell$ for some $\ell \geq \max(k, 0)$, then we have
\[
\sfX^{\ell-k}a=\sum_{j \geq \max (k, 0)} \frac{\sfX^{j+\ell-k}}{j !} a_{j}=\sum_{j \geq \ell} \frac{\sfX^{j+\ell-k}}{j !} a_{j} \in V_{2\ell-k}^{p+\ell-k,q+\ell-k}
\]
This implies that $\sfX^{2\ell-k}a_\ell \in V_{2\ell-k}^{p+\ell-k,q+\ell-k}$, and therefore, $a_\ell \in V_{k-2\ell}^{p-\ell,q-\ell}$. In other words, the Lefschetz decomposition holds in the category of Hodge structures.

We can now check what happens when we apply $\sfw$. Using~\eqref{weileq}, we find that
\[
\sfw a=\sum_{j \geq \max (k, 0)} \sfw \frac{\sfX^{j}}{j !} a_{j}=\sum_{j \geq \max (k, 0)}(-1)^{j} \frac{\sfX^{j-k}}{(j-k) !} a_{j} \in V_{-k}^{p-k, q-k}
\]
and so $\sfw$ is a morphism of Hodge structures. The same calculation shows that $\sfw^{-1}$ is also a morphism of Hodge structures. It follows that $\sfw$ is an isomorphism of Hodge structures. 
\end{proof}

The following theorem implies that the definition of polarized Hodge-Lefschetz structure of central weight $n$ can be simplified. 
\begin{thm}\label{thm:defredu}
A polarized Hodge-Lefschetz structure of central weight $n$ on a filtered graded vector space $(V=\bigoplus_{k}V_{k}, F^\bullet V)$ is unique determined by the following:
\begin{enumerate}[leftmargin=50 pt]
  \item [$(\mathrm{pHL1})$] for each $\ell\geq 0$, we have an operator  $\sfX: (V_{\ell},F^\bullet)\to (V_{\ell+2},F^{\bullet+1})$ such that $\sfX^\ell: F^\bullet V_{-\ell}\to F^{\bullet+\ell}V_{\ell}$ is an isomorphism;
  \item [$(\mathrm{pHL2})$] a collection of Hermitian pairings $S_\ell\colon V_\ell\otimes \overline{V_{-\ell}}$ for $\ell \in \Z$ such that 
  \[
  S_{\ell}(\sfX-,-)=S_{\ell+2}(-, \bar\sfX-);
  \]
  \item [$(\mathrm{pHL3})$] for $j\geq 0$, the triple $\left(P_{\sfX}V_{-j}, F^\bullet, S_{j}\circ\left(\sfX^j\otimes \id\right)\right)$ is a polarized Hodge structure of weight $n-j$ where $F^\bullet P_{\sfX}V_{-j}=\ker \sfX^j\cap F^\bullet V_{-j}$.
\end{enumerate}
\end{thm}
\begin{proof}
The proof is essentially a linear algebra exercise, and we will only provide a sketch of the idea. Condition $(\mathrm{pHL1})$ indicates that the Lefschetz decomposition associated to $\sfX$ respects the filtration $F^\bullet$, using a similar approach as in the proof of Theorem~\ref{lem:weil}. Thus, $\sfY$ is uniquely determined and also respects the filtration. Condition $(\mathrm{pHL2})$ implies that $S_\ell(\sfY-,-)=S_{\ell-2}(-, \overline\sfY-)$. Condition $(\mathrm{pHL3})$ implies that $S=\bigoplus_\ell S_\ell$ is non-degenerate on $F^pV_{\ell}\otimes \overline{F^pV_{\ell}}$ by the Lefschetz decomposition associated to $\sfX$ on $F^pV$. Therefore, we can define the filtration $G^qV_\ell$ as follows:
\[
\begin{aligned}
G^qV_\ell &=\{a\in V_{\ell}: S_\ell(a, \bar b)=0\text{ for all }b\in F^{n-q+1}V_{-\ell}\} \quad \text{for} \quad q\in\Z.
\end{aligned}
\]
It follows that $V^{p,q}_\ell=F^pV\ell\cap G^qV_\ell=\{a\in F^pV_\ell: S_\ell(a, \bar b)=0\text{ for all }b\in F^{p-\ell+1}V_{-\ell}\}$ for $p+q=n+\ell$ gives us the Hodge decomposition $V_\ell=\bigoplus_{p+q=n+\ell} V^{p,q}_\ell$.

Finally, we need to show that $(-1)^qS_\ell\circ (\id \otimes \bar\sfw)$ is Hermitian positive on $V_\ell^{p,q}$ by condition $(\mathrm{pHL3})$. Let $a\in V^{p,q}_\ell$ have the Lefschetz decomposition
\[
a=\sum_{j \geq \max (\ell, 0)} \frac{\sfX^{j}}{j !} a_{j}
\]
where $a_{j} \in P_\sfX V_{\ell-2 j}^{p-j,q-j}$. Then,
\[
(-1)^qS_\ell (a,  \overline{\sfw a})=\sum_{j\geq \max(\ell,0)} (-1)^{q-j} S_{\ell-2j}\left(\frac{\sfX^{2j-\ell}}{j!}a_j, \frac{1}{(j-k)!}\overline{a_j}\right)\geq 0
\]
and the equality holds if and only if $a_j=0$ for all $j\geq \max (\ell,0)$. 
\end{proof}

\begin{ex} \label{sign}
Let $Z$ be a compact K\"ahler manifold of dimension $n$, and define $V_\ell = H^{n+\ell}(Z,\C)$ and $V = \bigoplus_{\ell\in \Z} V_\ell$. Then, $V$ together with $\sfX=2\pi\sqrt{-1}L$ and $\sfY=(2\pi\sqrt{-1})^{-1}\Lambda$, along with the natural filtration, forms a Hodge-Lefschetz structure of central weight $n$. By the Hodge-Riemann bilinear relation, the sesquilinear pairing
\begin{equation}\label{eq:bphlp}
S_{\ell}(a, \bar b)=\frac{\varepsilon(n- \ell+1)}{(2\pi\sqrt{-1})^{n}}\int_Z a\wedge \bar b=\varepsilon(\ell)(-1)^{\ell n}\frac{\varepsilon(n+1)}{(2\pi\sqrt{-1})^{n}}\int_Z a\wedge \bar b
\end{equation}
for $a\in V_\ell$ and $b\in V_{-\ell}$ gives a polarization on $V$. The point of this example is to show that the polarized Hodge-Lefschetz structure $V$ is determined by the filtered $\sD_Z$-module $\omega_Z$ together with the sesquilinear pairing $S_Z$ as defined in~\eqref{np}. The graded piece $V_\ell$ is $\bH^\ell(Z,\DR_Z\omega_Z)$, with the induced filtration $F^\bullet V_\ell$ given by the image of $\bH^\ell(Z,F_{-\bullet}\DR_Z\omega_Z)$. Now we work out the relation between $S_k$ and $S_Z$ explicitly. First, the sesquilinear pairing $S_Z$ induces a map:
\[
\begin{aligned}
  S'_k\colon \bH^k(Z,\DR_Z\omega_Z)\otimes \overline{\bH^{-k}(Z,\DR_Z\omega_Z)} \to \bH^0(Z,\DR_{Z,\overline Z} \omega_Z\otimes_\C\overline{\omega_Z}) \\
  \xrightarrow{S_Z}  \bH^0(Z,\DR_{Z,\overline Z}\mathfrak C_Z)\simeq \C,
\end{aligned}
\]
and we have to make $S'_k$ clear. It is more convenient for us to use distribution in the situation by the following  commutative diagram
\[
\begin{tikzcd}
\DR_{Z,\overline Z} \omega_Z\otimes_\C \overline {\omega_Z} \arrow{r} \arrow{d}{S} & \DR_{Z,\overline Z} \sO_Z\otimes_\C \overline{\sO_Z} \arrow{d}{D} \\
\DR_{Z,\overline Z} \mathfrak{C}_Z \arrow{r} & \DR_{Z,\overline Z} \mathfrak{Db}_Z 
\end{tikzcd}
\]
where the upper horizontal arrow is the isomorphism induced by~\eqref{drlr} and similarly the lower horizontal arrow is defined on the terms in degree $-k,$
\[
\mathfrak{C}_{Z} \otimes_{\mathscr{O}_{Z, \bar{Z}}} \bigwedge^{k} \mathscr{T}_{Z, \bar{Z}} \to \Omega_{Z, \bar{Z}}^{2 n-k} \otimes_{\mathscr{O}_{Z, \bar{Z}}} \mathfrak{D} \mathfrak{b}_{Z}
\]
by the following rule: write a current locally as $D \omega \wedge \bar{\omega},$ with a distribution $D$ and denote by $\partial_J=\bigwedge_{J}\partial_j$ and $dx_{\bar J}=\bigwedge_{i\notin J} dx_i$ for an ordered index subset $J$ of $I$; then
\begin{equation}\label{cdlr}
(D \omega \wedge \bar{\omega}) \otimes \partial_{J} \wedge \bar{\partial}_{K} \mapsto(-1)^{\left(j_{1}+\cdots+j_{p}\right)+\left(k_{1}+\cdots+k_{q}\right)}(-1)^{n q} d x_{\bar J} \wedge \overline{d x}_{\bar K} \otimes D
\end{equation}
where $|J|=p$ and $|K|=q,$ and $p+q=k .$ The sign factor is explained by the number of swaps that are needed to move everything into the right place, which is $\left(2 n-j_{1}\right)+\cdots+\left(2 n-j_{p}\right)+\left(n-k_{1}\right)+\cdots+\left(n-k_{q}\right) $.
We can now derive a formula for the induced pairing
\begin{equation}\label{holodb}
\DR_Z\sO_Z  \otimes_{\mathbb{C}} \overline{\DR_Z\sO_Z } \to \mathrm{DR}_{Z, \bar{Z}}\mathfrak{D} \mathfrak{b}_{Z}.
\end{equation}
For the two local sections $\alpha=d x_{\bar J}$ and $\beta=d x_{\bar K}$, under the isomorphism $\DR_Z\sO_Z  \cong \DR_Z\omega_Z$ in~\eqref{drlr}, the $(n-p)$-form
\[
\alpha \mapsto (-1)^{n p}(-1)^{j_{1}+\cdots+j_{p}} \cdot \omega \otimes \partial_{J}.
\]
and the $(n-q)$-form 
\[
\beta \mapsto (-1)^{n q}(-1)^{k_{1}+\cdots+k_{q}} \cdot \omega \otimes \partial_{K}.
\]
The pairing $\DR S_Z$ on $\DR_Z\omega_Z$ takes those two sections to
\begin{equation}\label{eq:secpair}
(-1)^{n(p+q)}(-1)^{\left(j_{1}+\cdots+j_{p}\right)+\left(k_{1}+\cdots+k_{q}\right)} S_Z(\omega, \omega) \otimes \partial_{J} \wedge \bar{\partial}_{K}.
\end{equation}
Now $S_Z(\omega, \omega)=D_Z \omega \wedge \bar{\omega},$ where $D_Z$ is the distribution
\[
D_Z=\frac{\varepsilon(n+1)}{(2 \pi \sqrt{-1})^{n}} \int_{Z}
\]
Under the isomorphism in~\eqref{cdlr} the section~\eqref{eq:secpair} therefore goes to
\[
(-1)^{n p} d x_{\bar J} \wedge \overline{d x}_{\bar K} \otimes D_Z=(-1)^{n(\operatorname{deg} \alpha-n)} \alpha \wedge \bar{\beta} \otimes D_Z
\]
The formula we have just derived also works for smooth forms, of course. In other words, the same formula can be used to extend~\eqref{holodb} to a pairing on the de Rham complex of smooth forms. The resulting pairings on cohomology are
\begin{equation}\label{drS}
 S'_k\colon H^{n+k}(Z, \mathbb{C}) \otimes \overline{H^{n-k}(Z, \mathbb{C})} \to \mathbb{C}, \quad (\alpha, \beta) \mapsto(-1)^{n(\operatorname{deg} \alpha-n)} \frac{\varepsilon(n+1)}{(2 \pi \sqrt{-1})^{n}} \int_{Z} \alpha \wedge \bar{\beta}.
\end{equation}
Comparing with the pairing~\eqref{eq:bphlp}, we see that the polarization $S_k=\varepsilon(k)S'_k$.
\end{ex}

 \subsection{Polarized bigraded Hodge-Lefschetz structures}\label{subsec:pbhl}
Heuristically, what we really consider is the degeneration of the ``variation of Hodge-Lefschetz structures'' of a family of compact K\"ahler manifolds and as it turns out the limit of the degeneration is a bigraded Hodge-Lefschetz structure. We begin to define polarized bigraded Hodge-Lefschetz structures, which is an adjustment of~\cite[\S 4]{hl}. 
Similarly to the case of $\fsl_2(\C)$-representation, a $\fsl_2(\C)\times\fsl_2(\C)$-representation is a bigraded vector space $V=\bigoplus_{\ell,k\in\Z} V_{\ell,k}$ with two sets of endomorphisms $(\sfX_1, \sfY_1, \sfH_1)$ and $(\sfX_2,\sfY_2,\sfH_2)$  satisfying the following conditions: 
\begin{enumerate}
  \item any member in the set $(\sfX_1, \sfY_1, \sfH_1)$ commutes with any member in the set $(\sfX_2,\sfY_2,\sfH_2)$;
  \item each bigraded piece $V_{\ell,k}$ is the $\ell$-th eigenspace of $\sfH_1$ and $k$-th eigenspace of $\sfH_2$;
  \item we have $\sfX_1:V_{\ell,k}\to V_{\ell+2,k}$ and $\sfX_2:V_{\ell,k}\to V_{\ell,k+2}$ for each $\ell,k\in\Z$ and  
  \[
  \sfX^{\ell}_1: V_{-\ell,k}\to V_{\ell,k} \quad \text{and} \quad \sfX^k_2 :V_{\ell,-k}\to V_{\ell,k}
  \]  
  are isomorphisms for $\ell,k\geq 0$;
  \item we have $\sfY_1: V_{\ell,k} \to V_{\ell-2,k}$ and  $\sfY_2: V_{\ell,k} \to V_{\ell,k-2}$ for each $\ell,k\in\Z$ and
  \[
  \sfY^{\ell}_1: V_{\ell,k}\to V_{-\ell,k} \quad \text{and}\quad \sfY^k_2: V_{\ell,k}\to V_{\ell,-k}
  \]
  are isomorphisms for $\ell,k\geq 0$.
\end{enumerate}

A \textit{bigraded Hodge-Lefschetz structure} of \textit{central weight} $n$ is a $\fsl_2(\C)\times \fsl_2(\C)$-representation $V=\bigoplus_{\ell,k\in\Z} V_{\ell,k}$ with two filtrations $F^\bullet V=\bigoplus_{\ell,k}F^\bullet V\cap V_{\ell,k}$ and $G^\bullet V=\bigoplus_{\ell,k}G^\bullet V\cap V_{\ell,k}$ compatible with the bigrading such that 
\begin{enumerate}
  \item the bifiltered vector space $(V_{\ell,k}, F^\bullet V_{\ell,k},G^\bullet V_{\ell,k})$ is a Hodge structure of weight $n+\ell+k$;
  \item   the two operators 
  \[
  \begin{aligned}
    &\sfX_1: (V_{\ell,k}, F^\bullet, G^\bullet)\to (V_{\ell+2,k}, F^{\bullet+1},G^{\bullet+1})  \quad \text{and}\\
    &\sfX_2: (V_{\ell,k},F^\bullet,G^\bullet )\to (V_{\ell,k+2},F^{\bullet+1} ,G^{\bullet+1})
  \end{aligned}
  \]
  are morphisms of Hodge structures such that 
  \[
  \begin{aligned}
     &\sfX^{\ell}_1: (V_{-\ell,k},F^\bullet ,G^\bullet)\to (V_{\ell,k},F^{\bullet},G^{\bullet})(\ell) \quad \text{and}  \\
     &\sfX^{k}_2: (V_{\ell,-k},F^\bullet ,G^\bullet)\to (V_{\ell,k},F^{\bullet},G^{\bullet})(k) 
  \end{aligned}
  \]
  are isomorphisms of Hodge structures for $\ell,k\geq 0$.
  \item the two operators 
  \[
  \begin{aligned}
    &\sfY_1: (V_{\ell,k},F^\bullet,G^\bullet)\to (V_{\ell-2,k},F^{\bullet-1},G^{\bullet-1}) \quad \text{and}  \\
    &\sfY_2: (V_{\ell,k},F^\bullet,G^\bullet)\to (V_{\ell,k-2},F^{\bullet-1},G^{\bullet-1})
  \end{aligned}
  \]
   are morphisms of Hodge structures such that 
  \[
  \begin{aligned}
    &\sfY^{\ell}_1: (V_{\ell,k},F^\bullet,G^\bullet)\to (V_{-\ell,k},F^{\bullet},G^{\bullet})(-\ell) \quad \text{and} \\
    &\sfY^k_2: (V_{\ell,k},F^\bullet,G^\bullet)\to (V_{\ell,-k},F^{\bullet},G^{\bullet})(-k)
  \end{aligned}
  \]
  are isomorphisms of Hodge structures for $\ell,k\geq 0$.
\end{enumerate}

A \textit{polarization} on a bigraded Hodge-Lefschetz structure $V=\bigoplus_{\ell,k\in\Z} V_{\ell,k}$ of central weight $n$ is a Hermitian symmetric pairing $S:V\otimes_\C\overline V\to\C$ such that
\begin{enumerate}
  \item the restriction $S|_{V_{\ell,k}\otimes_\C \overline{V_{i,j}}}\colon V_{\ell,k}\otimes_\C \overline{V_{i,j}} \to \C$ vanishes unless for $\ell=-i$ and $k=-j$; 
  \item $S(\sfX_1-,-)=S(-, \overline{\sfX_1}-)$ and $S(-,\overline{\sfY_2}-)=S(\sfY_2-,-)$;
  \item $S_{\ell,k}(-,\overline{\sfw_1\sfw_2}-)$ is a polarization on $V_{\ell,k}$, where $S_{\ell,k}$ is the restriction of $S$ on $V_{\ell,k}\otimes \overline{V_{-\ell,k}}$ and $\sfw_i=e^{\sfX_i}e^{-\sfY_i}e^{\sfX_i}$ for $i=1,2$.
\end{enumerate} 

This is the practical definition because in the later application, $\sfX_1$ will be the $2\pi\sqrt{-1}L$ and $\sfY_2$ will be, up to a scalar, the logarithmic of the monodromy for the degeneration. Similarly to the case of Hodge-Lefschetz structure Theorem~\ref{thm:defredu}, we have a simpler definition.
\begin{thm}\label{bphl}
A polarized bigraded Hodge-Lefschetz structure of central weight $n$ on a filtered bigraded vector space $(V=\bigoplus_{\ell,k}V_{\ell,k}, F^\bullet V)$ is uniquely determined by the following:
\begin{enumerate}[leftmargin=58 pt]
  \item [$(\mathrm{pbHL1})$] there are two commuting operators $\sfX_1:(V_{\ell,k},F^\bullet)\to (V_{\ell+2,k}, F^{\bullet+1})$ and $\sfY_2:(V_{\ell,k}, F^\bullet)\to (V_{\ell,k-2},F^{\bullet-1})$ such that, for every $\ell,k\in\Z$,  
  \[
  \sfX^\ell_1: F^{\bullet}V_{-\ell,k} \to F^{\bullet+\ell}V_{\ell,k} \quad \text{and} \quad \sfY^k_2: F^{\bullet}V_{\ell,k}\to F^{\bullet-k}V_{\ell,-k}\text{ are isomorphisms};
  \]
  \item [$(\mathrm{pbHL2})$] a collection of Hermitian pairings $S_{\ell,k}: V_{\ell,k}\otimes_\C \overline{V_{-\ell,-k}} \to \C$ such that 
   \[
   S_{\ell,k}(\sfX_1-,-)=S_{\ell+2,k}(-,\overline{\sfX_1}-) \quad \text{and}\quad S_{\ell,k}(-,\overline{\sfY_2}-)=S_{\ell,k-2}(\sfY_2-,-);
   \]
  \item [$(\mathrm{pbHL3})$] the triple $\left(P_{-\ell,k},F^\bullet P_{-\ell,k},S_{\ell,-k}\circ \left(\sfX^\ell_1\otimes \left(-\overline{\sfY_2}\right)^k\right)\right)$ is a polarized Hodge structure of weight $n-\ell+k$ where $F^\bullet P_{-\ell,k}=\ker \sfX^\ell_1\cap\ker \sfY^k_2\cap F^\bullet V_{-\ell,k}$ is the bi-primitive part.
\end{enumerate}
Then the  $G^q V_{j,k}$ can be described as 
\[
\begin{aligned}
G^qV_{j,k} &=\{a \in V_{j,k}\colon S_{j,k}(a,\bar b)=0\text{ for all } b\in F^{n-q+1}V_{-j,-k}\} \quad \text{for} \quad q\in\Z
\end{aligned} 
\]
and 
\[
V^{p,q}_{j,k} =\{a\in F^pV_{j,k}\colon S_{j,k}(a, \bar b)=0\text{ for all } b\in F^{p-j-k+1}V_{-j,-k}\}, 
 \quad \text{for} \quad p+q=n+j+k.
\]
\end{thm}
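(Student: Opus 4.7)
The plan is to show that the data $(\mathrm{pbHL1})$–$(\mathrm{pbHL3})$ reconstructs the full structure of a polarized bigraded Hodge-Lefschetz structure, and then to read off the explicit formulas for $V^{p,q}_{j,k}$ and $G^qV_{j,k}$. The argument mirrors the one-variable polarized Hodge-Lefschetz case that is already worked out above, but carried out simultaneously in the two grading directions via a bi-Lefschetz decomposition.

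First, I recover the full $\fsl_2(\C)\times\fsl_2(\C)$-action. Condition $(\mathrm{pbHL1})$ provides $\sfX_1$ and $\sfY_2$ together with the hard-Lefschetz isomorphisms $\sfX_1^\ell\colon V_{-\ell,k}\to V_{\ell,k}$ and $\sfY_2^k\colon V_{\ell,k}\to V_{\ell,-k}$. Together with the grading (which defines $\sfH_1$ and $\sfH_2$ as multiplication by $\ell$ and $k$ respectively), each of these data determines uniquely its partner, as explained in Example~\ref{defwtf}: from $\sfX_1$ one gets $\sfY_1$ making $(\sfX_1,\sfY_1,\sfH_1)$ an $\fsl_2$-triple, and similarly one produces $\sfX_2$ from $\sfY_2$. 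Since $\sfX_1$ and $\sfY_2$ affect disjoint gradings, they commute, and the resulting two triples commute as well. Moreover, on each bi-primitive piece $P_{-\ell,k}$ the universal formula $\sfX\sfY^j=j(\ell-j+1)\sfY^{j-1}$ shows that $\sfY_1$ and $\sfX_2$ are also filtered with the expected shifts, once we establish the bi-Lefschetz decomposition below.

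Second, I set up the bi-Lefschetz decomposition. A standard double induction using the $\fsl_2\times\fsl_2$-representation theory yields, for each $a\in V_{j,k}$, a unique expansion
\[
a=\sum_{i\ge \max(j,0),\,m\ge\max(-k,0)} \frac{\sfX_1^{\,i}\sfY_2^{\,m}}{i!\,m!}\,a_{i,m},\qquad a_{i,m}\in P_{j-2i,\,k+2m}.
\]
The two halves of $(\mathrm{pbHL1})$, applied repeatedly, imply that this decomposition is compatible with $F^\bullet$: namely $a\in F^pV_{j,k}$ if and only if $a_{i,m}\in F^{p-i+m}P_{j-2i,k+2m}$ for every $i,m$. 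This is the crucial step because it turns statements about $V_{\ell,k}$ into statements about the bi-primitive pieces, where we may apply $(\mathrm{pbHL3})$.

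Third, I put a Hodge structure on each $V_{\ell,k}$. By $(\mathrm{pbHL3})$, each $P_{-\ell,k}$ is a polarized Hodge structure of weight $n-\ell+k$ with polarization $S_{\ell,k}\circ(\sfX_1^{\,\ell}\otimes(-\sfY_2)^k)$; define $G^\bullet P_{-\ell,k}$ as the $S$-orthogonal complement of $F^{\bullet+1}P_{-\ell,k}$ (up to conjugation), which is the standard reformulation of a polarized Hodge structure as a triple $(V,F^\bullet,S)$. Transport this to $V_{\ell,k}$ via the bi-Lefschetz decomposition with the appropriate Tate twists $\sfX_1^i\sfY_2^m\colon P_{\ell-2i,k+2m}\to V_{\ell,k}$, so that $G^qV_{\ell,k}$ is defined by the formula in the statement. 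The combinatorics of the bi-Lefschetz decomposition together with $(\mathrm{pbHL2})$ shows that these orthogonality conditions are equivalent to the formula
\[
G^qV_{j,k}=\bigl\{a\in V_{j,k}\,\big|\,S_{j,k}(a,b)=0\text{ for all }b\in F^{n-q+1}V_{-j,-k}\bigr\},
\]
and likewise for $V^{p,q}_{j,k}$.

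Fourth, I check this is indeed a polarization of the bigraded Hodge-Lefschetz structure. Following the one-variable computation around~\eqref{weileq}, for bi-primitive $a\in P_{-\ell,k}$ one has
\[
\sfw_1\sfw_2\cdot \frac{\sfX_1^{\,i}\sfY_2^{\,m}}{i!\,m!}a=(-1)^{i+m}\frac{\sfX_1^{\,\ell-i}\sfY_2^{\,k-m}}{(\ell-i)!(k-m)!}a,
\]
from which it follows, by bilinearly expanding elements of $V_{\ell,k}$ and $V_{-\ell,-k}$ into their bi-Lefschetz components, that the pairing $S_{\ell,k}\circ(\id\otimes \sfw_1\sfw_2)$ is positive definite on each $F^pV_{\ell,k}\cap\overline{F^qV_{\ell,k}}$, i.e.\ it is a polarization on $V_{\ell,k}$. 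This gives condition $(3)$ in the definition of polarized bigraded Hodge-Lefschetz structure. The compatibility conditions $(1)$ and $(2)$ are then automatic from $(\mathrm{pbHL2})$ and the filtered nature of the bi-Lefschetz decomposition. The main obstacle is purely the combinatorial bookkeeping: keeping track of signs and index shifts in the two-variable Weil formula, and verifying that the two orthogonal-complement descriptions of $G^\bullet$ agree; no new ideas beyond the one-variable case reviewed in Example~\ref{sign} and the standard Lefschetz decomposition in the filtered category are required.
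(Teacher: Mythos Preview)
The paper does not actually prove this theorem: immediately after the statement it says ``The proof is simple and is left to the reader.'' Your sketch is exactly the natural argument the reader is expected to supply, namely the two-variable analogue of the one-variable reduction from $(\mathrm{pHL1})$--$(\mathrm{pHL3})$ that the paper carries out in detail just before the statement. The bi-Lefschetz decomposition, the transport of the Hodge structure from the bi-primitive pieces, and the Weil-element computation \eqref{weileq} are precisely the ingredients one needs.

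One small point deserves a correction. You write that ``since $\sfX_1$ and $\sfY_2$ affect disjoint gradings, they commute.'' This inference is not valid: two operators shifting different components of a bigrading both send $V_{\ell,k}$ to $V_{\ell+2,k-2}$ under composition in either order, but nothing forces the two composites to agree. Commutation of $\sfX_1$ and $\sfY_2$ is an extra hypothesis that is implicit in the phrase ``$\fsl_2(\C)\times\fsl_2(\C)$-representation'' in the original definition, and in the paper's applications it is checked separately (see the first line of the proof of Theorem~\ref{rd1pbhl}). You should either add $[\sfX_1,\sfY_2]=0$ to the list $(\mathrm{pbHL1})$ or state it as a standing assumption; once that is in place, the rest of your argument goes through.
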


The proof is similar to that of Theorem~\ref{thm:defredu} and is left to the readers. Later when we construct the limiting mixed Hodge structure, the polarized bigraded Hodge-Lefschetz structure naturally comes up from the first page of the weight spectral sequence associated to a mixed Hodge complex. Modeled on the properties of the differential of spectral sequence we give the following definition:

A \textit{differential} of a polarized bigraded Hodge Lefschetz structure $(V,F^\bullet,\sfX_1,\sfY_2,S)$ is a linear map $d:V\to V$ such that 
\begin{enumerate}
  \item $d:(V_{j,k}, F^\bullet)\to (V_{j+1,k-1},F^\bullet)$ and $d^2=0$;
  \item $d$ is {skew-symmetric} with respect to $S$, i.e., $S(d-,-)+S(-,\bar d-)=0$;
  \item $[\sfX_1,d]=0$ and $[\sfY_2,d]=0$.
\end{enumerate}
\begin{rem}\label{dmorphism}
In fact, the above three conditions imply that $d$ is a morphism of Hodge structures $d: V^{p,q}_{j,k}\to V^{p,q}_{j+1,k-1}$. A vector $a\in G^qV_{j,k}$ means that $S(a,\bar b)=0$ for all $b\in F^{n-q+1}V_{-j,-k}$. Then $S(da,b)=S(a,db)=0$ for all $b\in F^{n-q+1}V_{-j-1,-k+1}$, indicating $da$ belongs to $G^qV_{j+1,k-1}$.
\end{rem}

The main result of this subsection is the following version of Deligne's lemma, shown by Guill\'en and Navarro Aznar.

\begin{thm}[{\cite[(4.5)]{hl}}] \label{GA90}
The cohomology $\ker d/\mathrm{im}\, d$ of a polarized differential bigraded Hodge-Lefschetz structure is again a polarized bigraded Hodge-Lefschetz structure.
\end{thm}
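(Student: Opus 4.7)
The plan is to adapt the classical Hodge-theoretic argument (harmonic theory) to this abstract algebraic setting, following the model that a compact Kähler manifold's cohomology is computed by harmonic forms, which inherit both the Lefschetz action and the polarization.

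First, I would convert the polarization $S$ into a positive definite Hermitian inner product $h$ on $V$. Using condition $(\mathrm{pbHL3})$, the operator $C=\sfw_1\sfw_2$ (possibly with an additional $(-1)^p$-type sign on Hodge $(p,q)$-pieces, as in Example~\ref{sign}) has the property that $h(a,b)=S(a,\overline{C b})$ is a positive definite Hermitian form; here the condition (pHL3)-type positivity on bi-primitive parts plus the Lefschetz decomposition is what yields positivity on all of $V$. With $h$ in hand, let $d^\ast$ denote the $h$-adjoint of $d$. From the bidegree of $d$ and the fact that $h$ is block-diagonal with respect to the bigrading, $d^\ast$ carries $V_{j,k}$ to $V_{j-1,k+1}$.

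Next I would establish the Kähler-type identities needed in this setting. Because $\sfX_1$ and $\sfY_2$ are $S$-self-adjoint by $(\mathrm{pbHL2})$ and $d$ is $S$-skew-symmetric, passing through $C$ one checks that $d$ still commutes with $\sfX_1$ and $\sfY_2$, while $d^\ast$ commutes with $\sfY_1$ and $\sfX_2$. A direct computation using the $\fsl_2\times\fsl_2$-triple relations (mimicking $[\Lambda,d]=\pm d^\ast$ in the Kähler case) gives $[\sfY_1,d]=0$ and $[\sfX_2,d]=0$ as well, so in fact $d$ (and similarly $d^\ast$) commutes with \emph{all four} generators $\sfX_1,\sfY_1,\sfX_2,\sfY_2$. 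Consequently the Laplacian $\Delta=dd^\ast+d^\ast d$ commutes with the whole $\fsl_2(\C)\times\fsl_2(\C)$-action and preserves the bigrading.

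Now the standard Hodge decomposition $V=\ker\Delta\oplus\mathrm{im}\,d\oplus\mathrm{im}\,d^\ast$ (valid because $V$ is finite-dimensional and $h$ is a Hermitian inner product) gives a canonical isomorphism $H(V,d)\simeq\mathcal{H}:=\ker\Delta=\ker d\cap\ker d^\ast$. Since $\mathcal{H}$ is preserved by each $\sfX_i,\sfY_i$ and by $d$-cohomology is naturally bigraded, it inherits the $\fsl_2(\C)\times\fsl_2(\C)$-structure from $V$. The induced filtration is $F^\bullet H(V,d) = \mathrm{im}(H^\bullet(F^\bullet\cap\ker d)\to H(V,d))$; under the harmonic representation, an element of $F^p\mathcal{H}_{j,k}$ is just an element of $\mathcal{H}_{j,k}\cap F^pV_{j,k}$ because $d$ (and one checks $d^\ast$, by writing $d^\ast$ via the Weil operators and $d$) respects the filtration in the required sense as in Remark~\ref{dmorphism}. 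Finally, the polarization $S$ descends to $\mathcal{H}$: the bi-primitive decomposition of $\mathcal{H}$ refines that of $V$, $h$ restricted to harmonics remains positive definite, and conditions $(\mathrm{pbHL1})$–$(\mathrm{pbHL3})$ on $\mathcal{H}$ follow from the corresponding conditions on $V$ by restriction.

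The main technical obstacle is the verification that $d$ commutes with $\sfY_1$ and $\sfX_2$ — i.e., the abstract analogue of the Kähler identities. This requires carefully tracking how the $S$-skew-symmetry of $d$ interacts with $C=\sfw_1\sfw_2$, using the identities $\sfw_i\sfX_i\sfw_i^{-1}=-\sfY_i$ and $\sfw_i\sfY_i\sfw_i^{-1}=-\sfX_i$ from the Weil element together with the relation $\sfw\alpha=\sfX^k\alpha/k!$ on primitives (equation~\eqref{weileq}). Once this identity is in place, the rest of the argument is the standard finite-dimensional Hodge package, and the conclusion that $H(V,d)$ is a polarized bigraded Hodge-Lefschetz structure of central weight $n$ follows.
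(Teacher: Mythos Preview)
Your overall strategy---build a positive definite Hermitian form from $S$ and the Weil elements, take the adjoint $d^\ast$, form the Laplacian $\Delta$, and identify $\ker d/\mathrm{im}\,d$ with $\ker\Delta$---is exactly the approach the paper takes. The gap is in your ``K\"ahler identities'' step.

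You claim that a direct computation gives $[\sfY_1,d]=0$ and $[\sfX_2,d]=0$, so that $d$ commutes with all four $\fsl_2$-generators. This is false in general, and your own parenthetical hint (``mimicking $[\Lambda,d]=\pm d^\ast$'') already points the other way: in the K\"ahler model $[\Lambda,d]$ is proportional to $d^\ast$, not zero. Here $d$ has weight $(+1,-1)$ under $(\sfH_1,\sfH_2)$ and is annihilated by $\mathrm{ad}\,\sfX_1$ and $\mathrm{ad}\,\sfY_2$; for the adjoint $\fsl_2\times\fsl_2$-action on $\End(V)$ this places $d$ in a two-dimensional irreducible for each factor, so $d_1:=[\sfY_1,d]$ and $d_2:=-[\sfX_2,d]$ are typically nonzero. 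The paper's argument proceeds precisely by introducing these auxiliary operators: one computes $d^\ast=-\sfw_2^{-1}\sfw_1^{-1}d\,\sfw_1\sfw_2=-[\sfY_1,d_2]=[\sfX_2,d_1]$ explicitly, and then verifies by hand that $\Delta=dd^\ast+d^\ast d$ commutes with $\sfX_1,\sfY_1,\sfX_2,\sfY_2$ (using $d^2=0$ to kill the cross terms), even though $d$ and $d^\ast$ separately do not. One also checks that $\Delta$ is a morphism of Hodge structures, since it is built from $d$, $\sfY_1$, $\sfX_2$, all of which are. With these two facts in place, $\ker\Delta\subset V$ is $\fsl_2\times\fsl_2$-stable and inherits the Hodge decomposition and polarization by restriction, which is what you wanted.

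So the fix is not to prove more commutativity for $d$ than actually holds, but to show directly that the \emph{Laplacian} is $\fsl_2\times\fsl_2$-invariant and Hodge-compatible; this is the heart of the computation.
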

\begin{proof}
Let $C: V \to V$ be the operator that acts as $(-1)^{q}$ on the subspace $V_{j, k}^{p, q}$ in the Hodge decomposition of each $V_{j, k} .$ Since $d$ is a morphism of Hodge structures, we have $[d, C]=0$. The fact that $S$ is a polarization means that the Hermitian pairing
\[
h^{+}: V \otimes_{\mathbb{C}} \overline{V} \to \mathbb{C}, \quad h^+(a, \bar b)=S\left(C a, \overline{\sfw_{1} \sfw_{2} b}\right)
\]
is positive-definite on $V$. Let $d^{*}$ be the adjoint of $d$ with respect to $h^{+}$. Fix $a \in V_{j, k}$ and $b \in V_{j, k},$:
\[
\begin{aligned}
h^{+}(d a, \bar b) &=S\left(C d a, \overline{\sfw_{1} \sfw_{2} b}\right)=S\left(d C a, \overline{\sfw_{1} \sfw_{2} b } \right) \\
&=-S\left(C a, \overline{d \sfw_{1} \sfw_{2} b}\right)=-S\left(C a, \overline{\sfw_{1} \sfw_{2} \cdot \sfw_{2}^{-1} \sfw_{1}^{-1} d \sfw_{1} \sfw_{2} \cdot b} \right) =h^{+}\left(a, \overline{d^{*} b } \right),
\end{aligned}
\]
i.e. the adjoint $d^*=-\sfw_{2}^{-1} \sfw_{1}^{-1} d \sfw_{1} \sfw_{2}$. 

In addition to the two relations in the definition of differential
\[
\left[\sfX_{1}, d\right]=0 \quad \text { and } \quad\left[\sfY_{2}, d\right]=0
\]
we obtain from the grading another two relations
\[
\left[\sfH_{1}, d\right]=d \quad \text { and } \quad\left[\sfH_{2}, d\right]=-d.
\]
This means, with respect to the $\mathfrak{s l}_{2}(\mathbb{C}) \times \mathfrak{s l}_{2}(\mathbb{C})$-action on $\operatorname{End}_{\mathbb{C}}(V),$ the element $d$ has weight $(+1,-1),$ and is primitive with respect to the action by $\sfY_{1}$ and $\sfX_{2} .$ Define
\[
d_{1}=\left[\sfY_{1}, d\right] \quad \text { and } \quad d_{2}=-\left[\sfX_{2}, d\right].
\]
The reason for the minus sign is that we have $\left[\sfY_{2}, d\right]=0$. Then $d_{1}$ has weight $(-1,-1),$ and is primitive with respect to the action by $\sfX_{1}$ and $\sfX_{2}$, i.e. $\left[\sfY_{1}, d_{1}\right]=0$ and $\left[\sfY_{2}, d_{1}\right]=0$; this gives
\[
{\left[\sfH_{1}, d_{1}\right]=-d_{1},} \quad {\left[\sfH_{2}, d_{1}\right]=-d_{1},} \quad {\left[\sfX_{1},  d_{1}\right]=d}, \quad    \sfw_{1} d_{1} \sfw_{1}^{-1}=d.
\]
Similarly, $d_{2}$ has weight $(+1,+1),$ and therefore
\[
\begin{array}{ll}
{\left[\sfH_{2}, d_{2}\right]=d_{2},} & {\left[\sfX_{2}, d_{2}\right]=0, \quad\left[\sfY_{2}, d_{2}\right]=-d, \quad \sfw_{2} d_{2} \sfw_{2}^{-1}=d} \\
{\left[\sfH_{1}, d_{2}\right]=d_{2},} & {\left[\sfX_{1}, d_{2}\right]=0}.
\end{array}
\]
Therefore, $d^{*}=-\left[\sfY_{1}, d_{2}\right]=\left[\sfX_{2}, d_{1}\right] \in \operatorname{End}_{\mathbb{C}} V$. It has weight $(-1,+1),$ and is primitive with respect to $\sfX_{1}$ and $\sfY_{2}$ i.e $\left[\sfY_{1}, d^{*}\right]=0$ and $\left[\sfX_{2}, d^{*}\right]=0$. From this, and the identities we already have, we deduce the following set of relations:
\[
\begin{aligned}
&\left[\sfH_{1}, d^{*}\right]=-d^{*}, \quad & \left[\sfX_{1}, d^{*}\right]&=-d_{2}, \quad & \left[\sfY_{1}, d^{*}\right]=0, \quad & \sfw_{1} d^{*} \sfw_{1}^{-1}=-d_{2} \\
&\left[\sfH_{2}, d^{*}\right]=d^{*}, \quad & \left[\sfX_{2}, d^{*}\right]&=0, \quad & \left[\sfY_{2}, d^{*}\right]=d_{1}, \quad & \sfw_{2} d^{*} \sfw_{2}^{-1}=-d_{1}.
\end{aligned}
\] 

We can check that the (formal) Laplace operator
\[
\Delta=d d^{*}+d^{*} d \in \operatorname{End}_{\mathbb{C}}(V)
\]
is invariant under the action of $\mathfrak{s l}_{2}(\mathbb{C}) \times \mathfrak{s l}_{2}(\mathbb{C}).$ For example,
\[
\begin{array}{l}
{\left[\sfX_{1}, d d^{*}\right]=\sfX_{1} d d^{*}-d d^{*} \sfX_{1}=d \sfX_{1} d^{*}-d\left(\sfX_{1} d^{*}+d_{2}\right)=-d d_{2}} \\
{\left[\sfX_{1}, d^{*} d\right]=\sfX_{1} d^{*} d-d^{*} d \sfX_{1}=\left(d^{*} \sfX_{1}-d_{2}\right) d-d^{*} \sfX_{1} d=-d_{2} d}
\end{array}
\]
from which we conclude, using $d^{2}=0,$ that
\[
\left[\sfX_{1}, \Delta\right]=-\left(d d_{2}+d_{2} d)=-(d\left(d \sfX_{2}-\sfX_{2} d\right)+\left(d \sfX_{2}-\sfX_{2} d\right) d\right)=0
\]
The other three commutators can be checked similarly. On the other hand, $\Delta$ is also a morphism of Hodge structures: the reason is that
\[
d: V_{j, k} \to V_{j+1, k-1}, \quad \sfY_{1}: V_{j, k} \to V_{j-2, k}(-1), \quad \sfX_{2}: V_{j, k} \to V_{j, k+2}(1)
\]
are all morphisms of Hodge structures, and $\Delta$ is obtained by composing them in some order. It follows that $\ker\Delta \subseteq V$ is a bigraded Hodge-Lefschetz structure, polarized by the restriction of $S$. Because of the canonical isomorphism $\ker \Delta\simeq \ker d/\mathrm{im\,}d$ as bigraded Hodge-Lefschetz structures, the induced pairing by $S$ on $\ker d/\mathrm{im\,} d$ is also a polarization. This concludes the proof.
\end{proof}

\section{Relative log de Rham complex}\label{sec:logdr}
Let $f:X\to \Delta$ be a proper holomorphic morphism smooth away from the origin whose central fiber $Y$ is simple normal crossing but not necessarily reduced. Assume $X$ is K\"ahler of dimension $n+1$ and $Y=\sum_{i\in I}e_iY_i$ where $Y_i$'s are smooth components and $I$ a finite index set. Let $t$ be a parameter on $\Delta$ and $z_0,z_1,\dots,z_n$ a local coordinate system on $X$ such that $t=z^{e_0}_0z^{e_1}_1\cdots z^{e_k}_k$ such that $e_0,e_1,\dots,e_k\geq 1$. Then we have $\Omega_\Delta(\log 0)=\sO_\Delta \cdot\frac{dt}{t}$ and $\Omega_{X}(\log Y)$ is locally generated by 
\[
e_0\frac{dz_0}{z_0},e_1\frac{dz_1}{z_1},\dots,e_k\frac{dz_k}{z_k},dz_{k+1},dz_{k+2},\dots,dz_n
\]
over $\sO_X$. Denote by $\xi_0,\xi_1,\dots,\xi_n$ the classes of the above generators in $\Omega_{X/\Delta}(\log Y)$, respectively. As a quotient of $\Omega_{X}(\log Y)$, the sheaf $\Omega_{X/\Delta}(\log Y)$ is generated by $\xi_0,\xi_1,\dots,\xi_n$,  but under the relation $\xi_0+\xi_1+\cdots+\xi_n=0$ because
\[
f^*\frac{dt}{t}=e_0\frac{dz_0}{z_0}+e_1\frac{dz_1}{dz_1}+\cdots+e_k\frac{dz_k}{z_k}
\]
is in $f^*\Omega_\Delta(\log 0)$. Let $\sT_{X/\Delta}(\log Y)$ be the dual bundle of $\Omega_{X/\Delta}(\log Y)$. Then $\sT_{X/\Delta}(\log Y)$ is a subsheaf of $\sT_X$, generated by 
\begin{equation}
D_i=\left\{
\begin{aligned}
\frac{1}{e_i}z_i\partial_i-\frac{1}{e_0}z_0\partial_0 &, & 1\leq i \leq k\\
\partial_i &, &  i>k,
\end{aligned}
\right.
\end{equation}
where $\partial_i$ is the local section of $\sT_X$ dual to $dz_i$ in $\Omega_X$. It follows that $D_1,D_2,\dots,D_n$ is the dual frame of $\xi_1,\xi_2,\dots,\xi_n$. 

\subsection{A ``log connection''}
We shall construct an endomorphism of $\bR f_*\Omega^{\bullet+n}_{X/\Delta}(\log Y)$ in $\bD^b(\Delta,\C)$, which should be regarded a ``log connection''. The short exact sequence of $\sO_X$-modules
\[
0\to f^*\Omega_\Delta(\log 0)\otimes \Omega^{\bullet+n}_{X/\Delta}(\log Y) \to \Omega^{\bullet+n+1}_{X}(\log Y) \to \Omega^{\bullet+n+1}_{X/\Delta}(\log Y)\to 0,
\]
under the identification $\frac{dt}{t}\wedge:\sO_X \to f^*\Omega_{\Delta}(\log 0)$, can be expressed as
\[
\begin{tikzcd}[sep=small]
0\arrow{r} & \Omega^{\bullet+n}_{X/\Delta}(\log Y) \arrow{r}{\frac{dt}{t}\wedge} & \Omega^{\bullet+n+1}_{X}(\log Y) \arrow{r} & \Omega^{\bullet+n+1}_{X/\Delta}(\log Y) \arrow{r} & 0.
\end{tikzcd}
\]
Here, the morphism $\frac{dt}{t}\wedge:\Omega^k_{X/\Delta}(\log Y)\to \Omega^{k+1}_X(\log Y)$, $[\alpha]\mapsto \frac{dt}{t}\wedge \alpha$ which does not depend on the representative of $[\alpha]$. Let $\Cone^\bullet=\Omega^{\bullet+n}_X(\log Y)\oplus \Omega^{\bullet+n}_{X/\Delta}(\log Y)$ be the mapping cone of $\frac{dt}{t}\wedge:\Omega^{\bullet+n-1}_{X/\Delta}(\log Y)\to \Omega^{\bullet+n}_{X}(\log Y)$. In our convention, the differential $\delta$ of the mapping cone works as $\delta(\alpha,[\beta])=\left((-1)^{n}d\alpha+\frac{dt}{t}\wedge\beta,(-1)^nd[\beta]\right)$, where $d$ is the usual exterior derivative on $\lome{\bullet}$ and by abuse of notation, we also use $d$ to denote the induced differential on $\relome{\bullet}$. Then we have the following diagram:
\begin{equation}\label{res}
\begin{tikzcd}
\Cone^\bullet \arrow{r}{q} \arrow{d}{p} & \Omega^{\bullet+n}_{X/\Delta}(\log Y) \arrow[dotted]{dl}{p\circ q^{-1}} \\
\Omega^{\bullet+n}_{X/\Delta}(\log Y) &
\end{tikzcd}
\end{equation}
where $q:\Cone^\bullet\to\Omega^{\bullet+n}_{X/\Delta}(\log Y),(\alpha,[\beta])\mapsto [\alpha]$ is a quasi-isomorphism and $p$ is the second projection. Therefore we have the morphism $p\circ q^{-1}$ in $\End_{\bD^b(X,\C)}\left(\relome{n+\bullet}\right)$. The multiplication by $g$ is an endomorphism of $\Omega^{\bullet+n}_{X/\Delta}(\log Y)$ for any local section $g$ of $\sO_\Delta$, because the complex is $f^{-1}\sO_\Delta$-linear. 

\begin{lem}~\label{nabla}
The operator $\nabla=(-1)^{n-1} p\circ q^{-1}\in \End_{\bD^b(X,\C)}\left(\Omega^{\bullet+n}_{X/\Delta}(\log Y)\right)$ satisfies $[\nabla, g]=tg'$, where $g\in \sO_\Delta$ and $g'$ denotes the derivative of $g$.
\end{lem}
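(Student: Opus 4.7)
The strategy is to work at the chain level by lifting multiplication by $g$ to the mapping cone $\Cone^\bullet$, and then measuring the failure of this lift to commute with the second projection $p$. The naive candidate
\[
\tilde{g}\colon \Cone^\bullet\to \Cone^\bullet,\qquad \tilde{g}(\alpha,[\beta]) := (g\alpha,\,g[\beta]),
\]
obviously satisfies $q\circ \tilde{g} = g\circ q$ and $p\circ \tilde{g} = g\circ p$ as maps of graded sheaves, but it is not a chain map. Indeed, the Leibniz rule $d(g\alpha) = dg\wedge\alpha + g\,d\alpha$, together with the crucial identity $dg = g'(t)\,dt = tg'(t)\cdot \tfrac{dt}{t}$ (coming from $g\in \sO_\Delta$) and the vanishing of $dg\wedge[\beta]$ in $\relome{\bullet+n+1}$, yields
\[
[\delta,\tilde{g}](\alpha,[\beta]) \;=\; \bigl((-1)^{n}\,tg'(t)\tfrac{dt}{t}\wedge\alpha,\;0\bigr).
\]

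The next step is to exhibit this discrepancy as a coboundary. Set
\[
h\colon \Cone^\bullet\to\Cone^\bullet,\qquad h(\alpha,[\beta]) := \bigl(0,\,(-1)^{n+1}\,tg'(t)\,[\alpha]\bigr).
\]
Using the explicit formula for $\delta$ and the facts that $dc\wedge[\alpha]=0$ in $\relome{}$ for $c=f^*(-tg'(t))$ and $[\tfrac{dt}{t}\wedge\beta]=0$ in $\relome{\bullet+n+1}$, a direct calculation gives $[\delta,h] = -[\delta,\tilde{g}]$. Consequently $\bar{g} := \tilde{g}+h$ is a genuine chain endomorphism of $\Cone^\bullet$. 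Since $h$ lands in $\ker q$, we retain $q\circ\bar{g}=g\circ q$, so $\bar{g}$ is a chain-level lift of $g$ along the quasi-isomorphism $q$; on the other hand the second projection picks up an extra term, namely
\[
p\circ \bar{g} \;=\; g\circ p \;+\; (-1)^{n+1}\,tg'(t)\cdot q.
\]

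Finally, in $\bD^b(X,\C)$ the identity $q\circ \bar{g} = g\circ q$ gives $q^{-1}\circ g = \bar{g}\circ q^{-1}$, and hence
\[
\nabla\circ g \;=\; (-1)^{n-1}\,p\circ q^{-1}\circ g \;=\; (-1)^{n-1}\,p\circ \bar{g}\circ q^{-1}.
\]
Subtracting $g\circ\nabla = (-1)^{n-1}\,g\circ p\circ q^{-1}$ and substituting the formula for $p\circ\bar{g}-g\circ p$ yields
\[
[\nabla,g] \;=\; (-1)^{n-1}\bigl(p\circ\bar{g}-g\circ p\bigr)\circ q^{-1} \;=\; (-1)^{n-1}(-1)^{n+1}\,tg'(t)\cdot q\circ q^{-1} \;=\; tg',
\]
as required. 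Conceptually the identity is natural: it says that $\nabla$ acts on pulled-back functions by the logarithmic derivation $t\partial_t$, confirming its role as a log connection along the origin of $\Delta$.

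The main obstacle is not conceptual but purely bookkeeping: the overall sign is dictated by the shift $[n]$ in $\Omega^{\bullet+n}_{X/\Delta}(\log Y)$ and by the mapping-cone convention $\delta(\alpha,[\beta])=((-1)^n d\alpha+\tfrac{dt}{t}\wedge\beta,\,(-1)^n d[\beta])$, so one must track these carefully to fix the sign of $h$ correctly. Once the signs are pinned down, the calculation is mechanical.
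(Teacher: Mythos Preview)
Your proof is correct and essentially identical to the paper's: your corrected lift $\bar g=\tilde g+h$, with $\bar g(\alpha,[\beta])=(g\alpha,\,g[\beta]+(-1)^{n-1}tg'[\alpha])$, is exactly the chain endomorphism the paper writes down directly, and the remaining computation of $[p,\bar g]=(-1)^{n-1}tg'\circ q$ followed by inversion of $q$ matches line for line. The only difference is expository---you motivate the lift by first writing the naive $\tilde g$ and then correcting its failure to commute with $\delta$, whereas the paper simply posits the correct formula and verifies it.
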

\begin{proof}
It is equivalent to show that $[p\circ q^{-1},g]=(-1)^{n-1}tg'$. Define $g(\alpha,[\beta])=(g\alpha,g[\beta]+(-1)^{n-1}tg'[\alpha])$ for any $(\alpha,[\beta])\in \Cone^\bullet$ and $g$ a local section of $f^{-1}\sO_\Delta$. We shall prove that $g$ is an endomorphism of $\Cone^\bullet$, i.e., $g \delta(\alpha,[\beta])=\delta g(\alpha,[\beta])$. This follows from that
\[
\begin{aligned}
g \delta(\alpha,[\beta]) &=g\left((-1)^{n}d\alpha+\frac{dt}{t}\wedge\beta,(-1)^{n}d[\beta]\right) \\
  &=\left((-1)^{n}gd\alpha+g\frac{dt}{t}\wedge\beta,(-1)^{n}gd[\beta]-tg'd[\alpha]\right)
\end{aligned}
\]
and
\[
\begin{aligned}
&\delta g(\alpha,[\beta]) \\
&=\delta\left(g\alpha, g[\beta]+(-1)^{n-1}tg'[\alpha]\right) \\
  &=\left((-1)^{n}d(g\alpha)+\frac{dt}{t}\wedge(g\beta+(-1)^{n-1}tg'\alpha),(-1)^nd(g[\beta]+(-1)^{n-1}tg'[\alpha])\right) \\
  &=\left((-1)^{n}gd\alpha+g\frac{dt}{t}\wedge\beta,(-1)^{n}gd[\beta]-tg'd[\alpha]\right).
\end{aligned}
\]
It is easy to see that $g\circ q=q\circ g$ and hence, $q^{-1}\circ g=g\circ q^{-1}$. Therefore,
\[
[p\circ q^{-1},g]=p\circ q^{-1}\circ g-g\circ p\circ q^{-1}=[p,g]\circ q^{-1}
\]
But $[p,g](\alpha,[\beta])=p(g\alpha,g[\beta]+(-1)^{n-1}tg'[\alpha])-g[\beta]=(-1)^{n-1}tg'[\alpha]$. It follows that 
\[
[p\circ q^{-1},g]\circ q (\alpha,[\beta])=[p,g](\alpha,[\beta])=(-1)^{n-1}tg' \circ q (\alpha,[\beta]).
\]
By inverting $q$ we prove the statement.
\end{proof}

Because of the identification $\frac{dt}{t}\wedge:\sO_\Delta\to \Omega_\Delta(\log 0)$, what we really get is a morphism in $\bD^b(X,\C)$
\[
\nabla: \relome{\bullet+n}\to f^*\Omega_\Delta(\log 0)\otimes \relome{\bullet+n}
\]
such that $\nabla g= g\nabla+\frac{dt}{t}\otimes tg'\in \End_{\bD^b(X,\C)}(\relome{\bullet+n})$ for any local section $g$ of $\sO_\Delta$. Running the similar construction, we obtain an induced $\C$-linear (indeed $f^{-1}\sO_\Delta$-linear) endomorphism $[\nabla]$ on $\Omega^{\bullet+n}_{X/\Delta}(\log Y)|_Y$ in $\bD^b(X,\C)$ satisfying the following diagram.
\[
\begin{tikzcd}
 \relome{\bullet+n} \arrow{d}{\nabla+1}\arrow{r}{t} &  \relome{\bullet+n} \arrow{d}{\nabla}\arrow{r} &  \relome{\bullet+n}|_Y \arrow{d}{[\nabla]} \arrow{r}{+1} &  {} \\
 \relome{\bullet+n}  \arrow{r}{t} &   \relome{\bullet+n} \arrow{r} &  \relome{\bullet+n}|_Y \arrow{r}{+1} &  {} 
\end{tikzcd}
\]
Since $\relome{\bullet+n}$ is $f^{-1}\sO_\Delta$-linear, each cohomology $\bR^k f_*\Omega^{\bullet+n}_{X/\Delta}(\log Y)$ is a coherent $\sO_\Delta$-module. Taking direct image, we get $\C$-linear morphisms between distinguished triangles in $\bD^b_{\mathrm{coh}}(\Delta,\sO_\Delta)$: 
\begin{equation}\label{natureres}
\begin{tikzcd}
 \bR f_*\relome{\bullet+n} \arrow{d}{\bR f_*\nabla+1}\arrow{r}{t} &  \bR f_*\relome{\bullet+n} \arrow{d}{\bR f_*\nabla}\arrow{r} &  \bR f_*\relome{\bullet+n}|_Y \arrow{d}{\bR f_*[\nabla]} \arrow{r}{+1} &  {} \\
 \bR f_*\relome{\bullet+n}  \arrow{r}{t} &   \bR f_*\relome{\bullet+n} \arrow{r} & \bR f_* \relome{\bullet+n}|_Y \arrow{r}{+1} & {}
\end{tikzcd}
\end{equation}
where the morphism
\[
\bR f_*\nabla: \bR f_*\Omega^{\bullet+n}_{X/\Delta}(\log Y)\to \bR f_*\Omega^{\bullet+n}_{X/\Delta}(\log Y)
\]
satisfies $[\bR f_*\nabla,g]=tg'\in \End_{\bD^b(\Delta,\C)}\left(\bR f_*\Omega^{\bullet+n}_{X/\Delta}(\log Y)\right)$ for any local section $g$ of $\sO_\Delta$. 

\subsection{Residue}
Heuristically, we should regard  $\bR f_*[\nabla]$ as the residue of $\bR f_*\nabla$. More generally, let $\cF^\bullet$ be a complex of $\sO_\Delta$-modules with a morphism $\nabla \in \End_{\bD^b(\Delta,\C)}(\cF^\bullet)$ such that $[\nabla,g]=tg'$ for any $g\in \sO_\Delta$.  
Let $\mathcal G^\bullet$ be the mapping cone of $t:\cF^\bullet\to \cF^\bullet$, which computes to $\cF^\bullet\otimes^{\mathbf L} \C(0)$. Then by the axioms of triangulated categories~\cite{HTT}, there exists an operator $R \in \End_{\bD^b(\Delta,\C)}(\mathcal G^\bullet)$ making the following diagram commute in $\bD^b(\Delta,\C)$.
\[
\begin{tikzcd}
\cF^\bullet \arrow{d}{\nabla+1} \arrow{r}{t} & \cF^\bullet \arrow{d}{\nabla}\arrow{r} & \mathcal G^\bullet=\cF^\bullet\otimes^{\mathbf L}\C(0) \arrow{d}{R} \arrow{r} & \cF^\bullet[1] \arrow{d}{\left(\nabla+1\right)[1]} \\
\cF^\bullet  \arrow{r}{t} &   \cF^\bullet \arrow{r} & \mathcal G^\bullet=\cF^\bullet \otimes^{\mathbf L}\C(0) \arrow{r} & \cF^\bullet[1] 
\end{tikzcd}
\]
We call the operator $R$ a \textit{residue} of $\nabla$. Note that the axioms of triangulated categories cannot guarantee that the filling is unique. However, the eigenvalues of $R_\ell$ only depends on $\nabla$, where $R_{\ell}$ denotes the induced operator on the cohomology $\sH^{\ell}\left(\cF^\bullet\otimes^{\mathbf L} \C(0)\right)$. First, every object in $\bD^b_{\mathrm{coh}}(\Delta,\sO)$ splits, meaning that $\cF^\bullet\simeq \bigoplus_{\ell\in\Z}\sH^\ell\cF^\bullet[-\ell]$,  since there are no $\Ext^i$ for  $i\geq 2$ between two coherent sheaves over a curve. It follows that the morphism $\nabla$ breaks up into sum of morphism consisting of diagonal morphism $\nabla_\ell:\sH^\ell\cF^\bullet[-\ell]\to \sH^\ell\cF^\bullet[-\ell]$ which is an actual log connection and off-diagonal morphism $\sH^\ell\cF^\bullet[-\ell]\to \sH^m\cF^\bullet[-m]$ but only for $\ell>m$. Thus the eigenvalues of $R_\ell$ are determined by $\nabla_\ell$ and $\nabla_{\ell+1}$. When $\cF^\bullet$ is a locally free sheaf centered at degree zero and $\nabla$ is the usual log connection. The above definition coincides with the usual definition of the residue of $\nabla$.

Returning to our case, the natural choice of a residue of $\bR f_*\nabla$ is $R=\bR f_*[\nabla]$ because of the diagram~\eqref{natureres}: by the projection formula, we have
\[
\begin{aligned}
  \bR f_*\Omega^{\bullet+n}_{X/\Delta}(\log Y)\tens{\sO_\Delta}{\mathbf L} \C(0) &= \bR f_*\left(\relome{\bullet+n}\tens{f^{-1}\sO_\Delta}{\mathbf L} f^{-1}\C(0)\right) \\
  &= \bR f_*\left(\relome{\bullet+n}|_Y\right).
\end{aligned}
\]
Our main result concerning the relative log de Rham complex is the following.

\begin{thm}\label{mainlogdr}
The higher direct image $\bR^{\ell}f_*\Omega^{\bullet+n}_{X/\Delta}(\log Y)$ is locally free for each $\ell\in\Z$. Moreover, there exists a canonical isomorphism for every $p\in \Delta$
\[
\bR^{\ell}f_*\Omega^{\bullet+n}_{X/\Delta}(\log Y)\otimes \C(p) \simeq \bH^{\ell}(X,\Omega^{\bullet+n}_{X/\Delta}(\log Y)|_{X_p}), 
\]
where $\C(p)$ is the residue field at $p\in \Delta$.
\end{thm}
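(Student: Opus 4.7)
The plan is to use diagram~\eqref{natureres} of distinguished triangles to extract a short exact sequence that exhibits the $t$-torsion of $R^{\ell+1}f_*\relome{\bullet+n}$ as a piece of $H^\ell(X,\relome{\bullet+n}|_Y)$, then to leverage the eigenvalue bound on the residue $[\nabla]$ to force this torsion to vanish, and finally to deduce base change formally from local freeness.

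\textbf{Setting up the sequence.} Let $\cF^\bullet=Rf_*\relome{\bullet+n}$. Because $\Delta$ is regular of dimension one, $\cF^\bullet$ splits in $\bD^b_{\mathrm{coh}}(\Delta,\sO_\Delta)$ as a direct sum of shifts of its cohomology sheaves. Taking the long exact sequence attached to $\cF^\bullet\xrightarrow{t}\cF^\bullet\to Rf_*(\relome{\bullet+n}|_Y)$ and localizing at the origin produces, for every $\ell$, a short exact sequence
\[
0 \to (R^\ell f_*\relome{\bullet+n})_0/t \to H^\ell(X,\relome{\bullet+n}|_Y) \to \Ker\!\bigl(t\colon (R^{\ell+1}f_*\relome{\bullet+n})_0 \to (R^{\ell+1}f_*\relome{\bullet+n})_0\bigr) \to 0.
\]

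\textbf{Eigenvalue transfer.} By diagram~\eqref{natureres}, the residue $R_\ell=R^\ell f_*[\nabla]$ is equivariant for this sequence: it acts as $R^\ell f_*\nabla$ modulo $t$ on the subobject, and as the restriction of $R^{\ell+1}f_*\nabla+1$ to the kernel of $t$ on the quotient. Since $[\nabla]$ has characteristic polynomial with roots in $[0,1)\cap\Q$ on the complex $\relome{\bullet+n}|_Y$, the same polynomial relation propagates to $R_\ell$ and hence to the sub and quotient. Therefore the eigenvalues of $R^\ell f_*\nabla$ on $(R^\ell f_*\relome{\bullet+n})_0/t$ lie in $[0,1)\cap\Q$, while those of $R^{\ell+1}f_*\nabla$ on $\Ker(t)$ lie in $[-1,0)\cap\Q$.

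\textbf{Killing torsion.} The commutation $[R^{\ell+1}f_*\nabla,t]=t$ forces $R^{\ell+1}f_*\nabla$ to preserve the $t$-torsion subsheaf $\mathcal T\subset R^{\ell+1}f_*\relome{\bullet+n}$, which is a finite-dimensional $\C$-vector space by coherence. The identity $(\nabla-\lambda-1)t=t(\nabla-\lambda)$ shows that $t$ sends the generalized $\nabla$-eigenspace $\mathcal T_\lambda$ into $\mathcal T_{\lambda+1}$, so $\mathcal T=\bigoplus_\lambda \mathcal T_\lambda$. Assume $\mathcal T\ne 0$ and let $\lambda_0$ (resp.\ $\lambda_1$) be the smallest (resp.\ largest) $\lambda$ with $\mathcal T_\lambda\ne 0$. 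By minimality $\mathcal T_{\lambda_0}\cap t\mathcal T=0$, so $\mathcal T_{\lambda_0}$ injects into $\mathcal T/t\mathcal T\hookrightarrow (R^{\ell+1}f_*\relome{\bullet+n})_0/t$, giving $\lambda_0\in[0,1)$; by maximality $\mathcal T_{\lambda_1}\subset\Ker(t)$, giving $\lambda_1\in[-1,0)$. This contradicts $\lambda_0\le\lambda_1$. Hence $\mathcal T=0$, each $R^\ell f_*\relome{\bullet+n}$ is torsion-free, and on a one-dimensional regular base this means locally free.

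\textbf{Base change and main obstacle.} Local freeness makes multiplication by $t-p$ injective on each $R^\ell f_*\relome{\bullet+n}$ for every $p\in\Delta$, so the long exact sequence of the distinguished triangle $Rf_*\relome{\bullet+n}\xrightarrow{t-p}Rf_*\relome{\bullet+n}\to Rf_*(\relome{\bullet+n}|_{X_p})$ degenerates into
\[
0\to R^\ell f_*\relome{\bullet+n}\xrightarrow{t-p}R^\ell f_*\relome{\bullet+n}\to R^\ell f_*(\relome{\bullet+n}|_{X_p})\to 0,
\]
which yields $R^\ell f_*\relome{\bullet+n}\otimes\C(p)\simeq R^\ell f_*(\relome{\bullet+n}|_{X_p})\simeq H^\ell(X_p,\relome{\bullet+n}|_{X_p})$, the last identification using properness of $f$. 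The main obstacle is the eigenvalue transfer step: the non-canonical splitting of $\cF^\bullet$ in the derived category may produce $\Ext^1$ off-diagonal contributions in $R_\ell$, but these enter only as upper-triangular corrections and therefore do not perturb the spectra on the sub- and quotient pieces, so the spectral separation argument goes through as stated.
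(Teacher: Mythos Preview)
Your proof is correct and reaches the same conclusion as the paper, but the core step---killing the $t$-torsion---proceeds by a genuinely different and more elementary mechanism. The paper packages this as the abstract Theorem~\ref{eigen01}: it shows $\ker t\subset t^k\sH$ for all $k$ by a B\'ezout argument with the two polynomials $b_1(s)$ (annihilating $\nabla$ on $\sH/t\sH$, roots in $[0,1)$) and $b_2(s)$ (annihilating $\nabla+1$ on $\ker t$, roots in $[0,1)$), using that $b_1(s-k)$ and $b_2(s+1)$ are coprime, and then invokes Krull's intersection theorem. You instead exploit the finite-dimensionality of the torsion module $\mathcal T$ directly: the Jordan decomposition of $\nabla$ on $\mathcal T$ together with the shift $t:\mathcal T_\lambda\to\mathcal T_{\lambda+1}$ forces an extremal eigenvalue to land simultaneously in $[0,1)$ (via $\mathcal T/t\mathcal T\hookrightarrow\sH/t\sH$) and in $[-1,0)$ (via $\ker t$), which is impossible. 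Your route avoids both B\'ezout and Krull; the paper's route is closer in spirit to $b$-function/$V$-filtration arguments and would adapt more readily if $\mathcal T$ were not a priori finite-dimensional.

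Two small remarks. First, ``smallest'' and ``largest'' eigenvalue tacitly assume reality; the cleanest fix is to take $\lambda_0$ of minimal real part and $\lambda_1$ of maximal real part---your argument then shows both are real and yields the same contradiction $0\le\lambda_0\le\lambda_1<0$. Second, the injection $\mathcal T/t\mathcal T\hookrightarrow\sH/t\sH$ uses $\mathcal T\cap t\sH=t\mathcal T$, which holds because if $ty\in\mathcal T$ then $t^{k+1}y=0$ forces $y\in\mathcal T$; this is worth stating explicitly. Your ``main obstacle'' paragraph is more cautious than necessary: once you pass to cohomology sheaves the induced maps $\sH^\ell(Rf_*\nabla)$ are honest sheaf endomorphisms, independent of any splitting of $\cF^\bullet$, and the compatibility with the short exact sequence is just functoriality of the long exact sequence---the off-diagonal $\Ext$ pieces never enter.
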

We first present two preliminary theorems.

\begin{thm}
The operator $R_{\ell}$ has eigenvalues in $[0,1)\cap\Q$ for each $\ell\in \Z$.
\end{thm}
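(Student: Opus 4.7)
The plan is to reduce to a local computation. The eigenvalues of $R_\ell$ on $\sH^\ell(Rf_*\relome{\bullet+n}|_Y)$ are controlled by those of $[\nabla]$ acting on the complex $\relome{\bullet+n}|_Y$ in $\bD^b(X,\C)$: if locally this complex splits into generalized eigenspaces of $[\nabla]$ with eigenvalues in some set $E$, the same holds for $R_\ell$ after applying $Rf_*$. It therefore suffices to exhibit such a decomposition, locally on $X$, with $E \subset [0,1)\cap\Q$.

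I would work in a chart with local coordinates $z_0,\ldots,z_n$ on which $t = z_0^{e_0}\cdots z_k^{e_k}$, and begin by turning the derived-category morphism $[\nabla]$ into a chain-level operation. Given a local representative $\alpha \in \lome{p+n}$ of $[\alpha] \in \relome{p+n}$, one uses the decomposition $\lome{p+n+1} = \frac{dt}{t}\wedge \relome{p+n} \oplus (\text{forms with no }\frac{dt}{t}\text{-component})$ to write $d\alpha = \frac{dt}{t}\wedge\beta + \gamma$, and reads off $[\nabla]([\alpha]) = \pm[\beta]$. Reducing modulo $t$ produces an explicit formula for a representative of $[\nabla]$ on $\relome{\bullet+n}|_Y$ that can be applied to a monomial basis.

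With this explicit recipe in hand, I would exhibit a weight decomposition. Set $N = \operatorname{lcm}(e_0,\ldots,e_k)$. The natural candidate is to group the monomials $z^a \cdot \xi_J$, with exponents $a_i$ reduced to the range $0\leq a_i<e_i$ on $Y$, according to the class of $\sum_{i\leq k} a_i/e_i$ modulo $\Z$; this takes values in $\tfrac{1}{N}\Z\cap[0,1)$. Equivalently, one may construct the decomposition conceptually from the cyclic base change $s\mapsto s^N = t$: after normalization this produces a semistable family on which the log connection has nilpotent residue, and the $\mu_N$-action descends to a decomposition of $\relome{\bullet+n}|_Y$ by characters of $\mu_N$. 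Either way, one argues that $[\nabla]-\alpha$ acts nilpotently on the weight-$\alpha$ summand $K^\bullet_\alpha$, so that $\relome{\bullet+n}|_Y = \bigoplus_{\alpha\in\frac{1}{N}\Z\cap[0,1)} K^\bullet_\alpha$ in $\bD^b(X,\C)$.

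The main technical obstacle is to verify that this candidate grading is compatible simultaneously with the differential of $\relome{\bullet+n}|_Y$ and with the chain-level formula for $[\nabla]$. The relation $\sum e_i\xi_i=0$ in $\relome{1}$, together with the reduction modulo $t=z_0^{e_0}\cdots z_k^{e_k}$, a priori mixes monomials belonging to different weights, so one must show that the weight is nevertheless well-defined modulo $\Z$ in the quotient, and that both $d$ and the chosen representative of $[\nabla]$ are homogeneous of degree $0$ for this $\frac{1}{N}\Z/\Z$-grading. This is the non-reduced analogue of Steenbrink's situation, where every $e_i=1$ forces $N=1$ and thus the single eigenvalue $0$; the care needed to isolate fractional weights is the technical heart of the statement.
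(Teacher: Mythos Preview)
Your overall strategy---decompose the complex into pieces on which $[\nabla]-\alpha$ is nilpotent---is reasonable, and it is in fact what the paper eventually does (see the subcomplexes $C^\bullet_\alpha=\relome{\bullet+n}(-\ceil{\alpha Y})|_{Y_{I_\alpha}}$ in \S7). But the specific grading you propose is wrong, and this is a genuine gap, not a technicality.

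Take the simplest non-reduced example with two components: $t=z_0^2z_1^2$, so $e_0=e_1=2$, $N=\operatorname{lcm}=2$, and $e=\gcd=2$. Your weight of the monomial $z_0z_1$ is $\tfrac{1}{2}+\tfrac{1}{2}=1\equiv 0\pmod{\Z}$, so you place it in $K^\bullet_0$. But $z_0z_1=t^{1/2}$, and $[\nabla]$ (which is $t\partial_t$ on such classes) acts with eigenvalue $\tfrac{1}{2}$, not $0$. In fact, by Steenbrink's computation of the cohomology sheaves, \emph{all} cohomology classes here are of the form $t^{a/2}\xi_J$ with $a\in\{0,1\}$; every such class has your weight $(k+1)\cdot\tfrac{a}{e}\equiv 0$, so $K^\bullet_{1/2}$ is acyclic and your decomposition is trivial in $\bD^b$. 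The $\mu_N$-grading from cyclic base change has the same defect: $z_0z_1$ is a global function on $X$, hence $\mu_N$-invariant, yet it carries the nonzero eigenvalue $\tfrac12$. Your weight records a character of $\mu_N$, not the residue of the Gauss--Manin connection; the two differ by the factor $k+1$ (the local number of components), which is neither constant nor invertible modulo $\Z$.

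The paper bypasses all of this. Its primary argument transfers $\relome{\bullet+n}|_Y$ to a single holonomic $\sD_X$-module $\cM$ on which $[\nabla]$ becomes an honest endomorphism $R$, locally the left multiplication by $\tfrac{1}{e_j}z_j\partial_j$ for \emph{any} $j\le k$ (these agree on $\cM$ because the relations $D_i$ are imposed). Then the identity $(z\partial)(z\partial-1)\cdots(z\partial-\ell)=z^{\ell+1}\partial^{\ell+1}$ gives directly
\[
\prod_{i\in I}\prod_{j=0}^{e_i-1}\bigl(R-\tfrac{j}{e_i}\bigr)=t\cdot\prod_{i\in I}\tfrac{1}{e_i^{e_i}}\partial_i^{e_i}=0\quad\text{in }\cM,
\]
so $R_\ell$ is annihilated by the same polynomial. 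As an alternative, the paper reduces via the Leray spectral sequence to the cohomology \emph{sheaves} $\sH^q(\relome{\bullet+n}|_Y)$, whose stalks Steenbrink identified as generated by $t^{a/e}\xi_J$ with $e=\gcd(e_0,\dots,e_k)$ and eigenvalue $a/e$. Either route avoids having to realize $[\nabla]$ as a graded chain map. If you want to salvage a decomposition of the complex itself, the correct filtration is not by $\sum a_i/e_i$ but by the order of vanishing along $\ceil{\alpha Y}$; that is precisely the paper's $C^\bullet_{\ge\alpha}\supset C^\bullet_{>\alpha}$.
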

\begin{proof}
Later in $\S$\ref{sec:trand} (Theorem~\ref{poly}) we will show that in fact $[\nabla]$ satisfies $p([\nabla])=0$ for
\[
p(\lambda)=\prod_{i\in I}\prod^{e_i-1}_{j=0}(\lambda-\frac{j}{e_i}).
\] 
Hence $p(\bR^{\ell} f_*[\nabla])=0$ and this implies the eigenvalues of $R_\ell$ are in $ [0,1)\cap\Q$.

Alternatively, by Grothendieck spectral sequence
\[
E^{p,q}_2=R^p f_*\sH^q(\Omega^{\bullet+n}_{X/\Delta}(\log Y)|_Y) \Rightarrow R^{p+q} f_*(\Omega^{\bullet+n}_{X/\Delta}(\log Y)|_Y),
\]
it suffices to show that the induced operator $R^pf_*\sH^q[\nabla]$ has eigenvalues in $[0,1)\cap\Q$ on $R^pf_*\sH^q \left(\Omega^{\bullet+n}_{X/\Delta}(\log Y)|_Y\right)$ for each $q\in\Z$ since $E^{p,q}_\infty$ is a sub-quotient of $E^{p,q}_2$. The following is proved by Steenbrink~\cite[Proposition 1.13]{Ste76}:
\begin{lem}\label{ste113}
The stalk of $\sH^q (\Omega^{\bullet+n}_{X/\Delta}(\log Y)|_Y)$ at a point $u\in Y$ is generated by the germs $(t^{\frac{a}{e}}\xi_{i_1}\wedge\xi_{i_2}\wedge\cdots\xi_{i_{q+n}})_u$ for all $0\leq a<e$ and all $0\leq i_1,i_2,\dots,i_{q+n} \leq n$ over the ring $\C\{t^{\frac{1}{e}}\}/t\C\{t^{\frac{1}{e}}\}$ where $e$ is the $\gcd$ of $e_0,e_1,\dots,e_k$ and $\C\{t^{\frac{1}{e}}\}$ is the ring of convergent power series with the variable $t^{\frac{1}{e}}$. 
\end{lem}

Temporarily admitting the lemma, then
\[
\sH^q[\nabla]_u (t^{\frac{a}{e}}\xi_{i_1}\wedge\xi_{i_2}\wedge\cdots\xi_{i_{q+n}})_u=(\frac{a}{e}t^{\frac{a}{e}}\xi_{i_1}\wedge\xi_{i_2}\wedge\cdots \wedge \xi_{i_{q+n}})_u,
\]
which implies that the eigenvalues of $\sH^q[\nabla]$ are $0,\frac{1}{e},\frac{2}{e},\dots,\frac{e-1}{e}\in [0,1)\cap\Q$ in a neighborhood of $u$. Hence, there exists an open neighborhood $U$ containing $u$ and a polynomial $p_U(\lambda)$ whose roots are in $[0,1)\cap \Q$ such that $p_U\left(\sH^q [\nabla]\right)=0$ over $U$. Since $Y$ is proper, we can take a finite open covering $\mathcal U=\{U_i\}$ of $Y$ such that $p(\sH^q[\nabla])=\prod_i p_{U_i}(\sH^q[\nabla])=0$.  It follows from $p(R^pf_*\sH^q[\nabla])=0$ that the eigenvalues of $R^pf_*\sH^q[\nabla]$ in $ [0,1)\cap\Q$.
\end{proof}

\begin{proof}[Proof of Lemma~\ref{ste113}]
We will prove the original statement of \cite[Proposition 1.13]{Ste76} that, in the same notations as in the lemma, the stalk at a point $u$ of $\sH^q \Omega^{\bullet+n}_{X/\Delta}(\log Y)$ is generated by germs 
\[
\left(t^{\frac{a}{e}}\xi_{i_1}\wedge\xi_{i_2}\wedge\cdots\xi_{i_{q+n}} \right)_u
\]
for all $a\in \Z_{\geq 0}$ and all tuples $0\leq i_1,i_2,\dots,i_{q+n} \leq n$ over $\C\{t^{\frac{1}{e}}\}$.

The complex $\Omega^{\bullet+n}_{X/\Delta}(\log Y)_u$ can be identified with the Koszul complex of the differential operators $D_1,D_2,\dots,D_n$ on $\sO_{X,u}$ placed in degree $-n,-n+1,\dots,0$. Define $G^j\Omega^{\ell}_{X/\Delta}(\log Y)_u$ to be the submodules of $\Omega^{\ell}_{X/\Delta}(\log Y)_u$ spanned by the germs
\[
\xi_{i_1}\wedge \xi_{i_2} \wedge\cdots\wedge \xi_{i_\ell}\quad \text{for}  \quad |\{m:i_m\leq k\}|\geq j.
\] 
Then $\{G^{\ell} \Omega^{\bullet+n}_{X/\Delta}(\log Y)_u\}_{\ell\in\Z}$ is a decreasing filtration of $\Omega^{\bullet+n}_{X/\Delta}(\log Y)_u$. The associated spectral sequence has $E^{r,\bullet}_0=\gr^{r}_G \Omega^{\bullet+n+r}_{X/\Delta}(\log Y)_u$. As $\gr^r_G \Omega^{\bullet+n+r}_{X/\Delta}(\log Y)_u$ can be identified with direct sums of Koszul complex of operators $D_{k+1},D_{k+2},\dots,D_n$ on $\sO_{X,u}$, $E^{r,\ell}_1=\sH^{r+\ell}(\gr^r_G \Omega^{\bullet+n}_{X/\Delta}(\log Y))$ vanishes for $\ell \neq -n$. Thanks to the usual Poincar\'e lemma, $E^{r,0}_1$ is spanned by germs
\[
\xi_{i_1}\wedge\xi_{i_2}\wedge\cdots\wedge \xi_{i_\ell}  \text{ such that }  |\{i_m\leq k\}|=j
\] 
over $\C\{z_0,z_1,\dots,z_k\}$. Consequently, the spectral sequence degenerates at $E_2$ with $E^{r,-n}_2=\sH^{r-n}(\Omega^{\bullet+n}_{X/\Delta}(\log Y))_u$. Note that $E^{\bullet,-n}_1$ is the Koszul complex of the differential operators $D_1,D_2,\dots,D_k$ on $\C\{z_0,z_1,\dots,z_k\}$. Because each $D_i$ for $0\leq i\leq k$ is a homogeneous differential operator, $E_2$ can be computed monomial by monomial.

For simplicity, let $\xi_{i_1,i_2,\dots,i_r}=\xi_{i_1}\wedge \xi_{i_2}\wedge\cdots \wedge \xi_{i_r}$. The claim is that a cocycle 
\[
v=\sum_{i_1<i_2,\dots<i_r} c_{i_1,i_2,\dots,i_r}z^{a_0}_0z^{a_1}_1\cdots z^{a_k}_k\xi_{i_1,i_2,..,i_r} \in E^{r,0}_1
\]
is cohomologous to zero if and only if $A_{j}\colon = {a_j}/{e_j}-{a_0}/{e_0}\neq 0$ for some $1\leq j\leq k$. Note that $D_j(z^{a_0}_0z^{a_1}_1\cdots z^{a_k}_k)=A_jz^{a_0}_0z^{a_1}_1\cdots z^{a_k}_k$ for every $1 \leq j\leq k$. Since $v$ is a cocycle, the coefficients satisfy
\begin{equation}\label{cocycle}
\sum^{r}_{\ell=1} (-1)^{\ell}c_{i_1,i_2,\dots,\hat{i_\ell},\dots,i_{r+1}}A_{i_\ell}=0.
\end{equation}
Assume that not all $A_j$'s are zero for $1\leq j\leq k$ then $A=\sum A^2_i$ is non-zero. Then the number 
\[
d_{i_1,i_2,\dots,i_{r-1}}=\sum^{k}_{\alpha=1}\frac{A_\alpha}{A} c_{\alpha,i_1,i_2,\dots,i_{r-1}}.
\]
is well-defined. Here, we define that 
$
c_{\sigma(i_1),\sigma(i_2),..,\sigma(i_r)}=\sign(\sigma)c_{i_1,i_2,\dots,i_r}
$
for any permutation $\sigma$. Then the element  
\[
\sum_{i_1<i_2<\dots<i_{r-1}}d_{i_1,i_2,\dots,i_{r-1}}z^{a_0}_0z^{a_1}_1\cdots z^{a_k}_k\xi_{i_1,i_2,\dots,i_{r-1}}
\] 
in $E^{r-1,0}_1$ has coboundary 
\[
\begin{aligned}
  & \sum^k_{\alpha=1}\sum_{i_1<\dots<i_{r-1}}A_\alpha d_{i_1,i_2,\dots,i_{r-1}}z^{a_0}_0z^{a_1}_1\cdots z^{a_k}_k\xi_{\alpha,i_1,i_2,\dots,i_{r-1}} \\
  =& \sum_{i_1<\dots<i_r}\sum^r_{\ell=1}(-1)^{\ell} A_{i_\ell}d_{i_1,i_2,\dots,\hat{i_\ell},\dots,i_r}z^{a_0}_0z^{a_1}_1\cdots z^{a_k}_k\xi_{i_1,i_2,\dots,i_r} \\
  =&\sum_{i_1<\dots<i_r}\sum^k_{\alpha=1}\sum^r_{\ell=1} (-1)^{\ell} \frac{A_{i_\ell}A_\alpha}{A}c_{\alpha,i_1,i_2,\dots,\hat{i_\ell},\dots,i_r}z^{a_0}_0z^{a_1}_1\cdots z^{a_k}_k\xi_{i_1,i_2,\dots,i_r}  \\
   =& \sum_{i_1<\dots<i_r}\sum^k_{\alpha=1} \frac{A^2_\alpha}{A}c_{,i_1,i_2,\dots,i_r}z^{a_0}_0z^{a_1}_1\cdots z^{a_k}_k\xi_{i_1,i_2,\dots,i_r} =v \quad \text{by applying~\eqref{cocycle} }.
\end{aligned}
\]
We have concluded the claim. Therefore, $E^{r,-n}_2$ is generated over $\C$ by the elements $z^{a_0}_0z^{a_1}_1\cdots z^{a_k}_k\xi_{i_1,i_2,..,i_r}$ such that 
\[
D_i(z^{a_0}_0z^{a_1}_1\cdots z^{a_k}_k)=0 \quad \Leftrightarrow \quad z^{a_0}_0z^{a_1}_1\cdots z^{a_k}_k=t^{a/e}
\]
for some $a$. We have concluded the lemma.
\end{proof}

\begin{thm}\label{eigen01}
Let $\cF^\bullet$ be a complex of $\sO_\Delta$-modules with coherent cohomology sheaves, equipped with a log connection, i.e an operator
\[
\nabla\in \End_{\bD^b(\Delta,\C)}\left(\cF^\bullet\right) \quad \text{such that } [\nabla,g]=tg' 
\]
for any local holomorphic function $g$ on $\Delta$ where $g'$ is the derivative of $g$. Assume that the residue $R_{\ell}$ of $\nabla$  defined at the beginning of this subsection acting on each cohomology $\sH^{\ell}\left(\cF^\bullet\otimes^{\mathbf L} \C(0)\right)$ has eigenvalues in $[0,1)$. Then every $\sH^{\ell}(\cF^\bullet)$ is locally free.
\end{thm}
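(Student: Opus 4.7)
My strategy is to handle the punctured disk $\Delta^* = \Delta \setminus \{0\}$ and the origin separately. On $\Delta^*$ the function $t$ is invertible, so the composite $\nabla \circ (1/t)$ is a well-defined $\C$-linear endomorphism in the derived category of $\cF^\bullet|_{\Delta^*}$ satisfying $[\nabla/t, g] = g'$ for every local holomorphic $g$. Each cohomology sheaf $\sH^\ell|_{\Delta^*}$ thereby inherits the structure of an $\sO$-coherent $\sD_{\Delta^*}$-module, and any such module on a smooth complex curve is automatically locally free. It therefore suffices to prove that the stalk $\sH^\ell(\cF^\bullet)_0$ is free over $\sO_{\Delta,0}$.

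At the origin, I would apply the long exact sequence coming from the triangle $\cF^\bullet \xrightarrow{t} \cF^\bullet \to \cF^\bullet \otimes^L \C(0) \to \cF^\bullet[1]$. Setting $K_\ell = \ker(t : \sH^\ell_0)$ and $C_\ell = \mathrm{coker}(t : \sH^\ell_0)$, we obtain short exact sequences
\[
0 \to C_\ell \to \sH^\ell(\cF^\bullet \otimes^L \C(0)) \to K_{\ell+1} \to 0.
\]
The commutative diagrams intertwining $R_\ell$ with $\nabla$ and $\nabla+1$ (from the very definition of the residue given earlier) imply that $R_\ell$ preserves $C_\ell$, where it acts as $\nabla_\ell$ modulo $t$, while on the quotient $K_{\ell+1}$ it coincides with the restriction of $\nabla_{\ell+1}+1$. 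Combining this with the hypothesis that the eigenvalues of $R_\ell$ lie in $[0,1)$ yields: the eigenvalues of $\nabla_\ell$ on $C_\ell$ lie in $[0,1)$, while the eigenvalues of $\nabla_\ell$ on $K_\ell$ lie in $[-1,0)$.

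To conclude, examine the torsion submodule $T_\ell \subseteq \sH^\ell_0$, which is $\nabla$-stable because $\nabla t^N = t^N \nabla + N t^N$ implies $\nabla$ preserves each $\ker(t^N)$. By the structure theorem for modules over the DVR $\sO_{\Delta,0}$, one has $T_\ell \cong \bigoplus_j \sO_{\Delta,0}/(t^{a_j})$ as $\sO$-modules. On each cyclic summand with generator $v_0$ satisfying $\nabla v_0 \equiv c_0 v_0 \pmod{t}$, the identity $\nabla(t^k v_0) = t^k \nabla v_0 + k t^k v_0$ makes the matrix of $\nabla$ upper-triangular in the basis $v_0, t v_0, \dots, t^{a_j-1} v_0$ with diagonal entries $c_0, c_0+1, \ldots, c_0+a_j-1$. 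Since $\sH^\ell_0/T_\ell$ is free, the inclusion $T_\ell/tT_\ell \hookrightarrow C_\ell$ holds (vanishing of $\mathrm{Tor}^1$) and $\ker(t : T_\ell) = K_\ell$, forcing each bottom eigenvalue $c_0^{(j)} \in [0,1)$ and each top eigenvalue $c_0^{(j)}+a_j-1 \in [-1,0)$. But $[0,1) \cap [-a_j, 1-a_j) = \emptyset$ for every $a_j \geq 1$, so $T_\ell = 0$ and the stalk is free.

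The main obstacle is rigorously justifying how the $R_\ell$-action decomposes across the short exact sequence, since $R_\ell$ lives only in the derived category; this requires carefully unwinding the axioms used to define the residue. A secondary subtlety is the $\nabla/t$-trick on $\Delta^*$, which needs care because $\nabla$ is itself only defined in $\End_{\bD^b(\Delta,\C)}(\cF^\bullet)$; one should check that on each cohomology sheaf it descends to an honest $\C$-linear endomorphism before multiplying by $1/t$.
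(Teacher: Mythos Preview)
Your overall strategy is sound and in fact shares its decisive input with the paper's proof: from the commuting square of distinguished triangles you correctly extract that $\nabla$ acts on $C_\ell=\sH^\ell_0/t\sH^\ell_0$ with eigenvalues in $[0,1)$ and on $K_\ell=\ker(t:\sH^\ell_0)$ with eigenvalues in $[-1,0)$. Where you and the paper diverge is in the endgame. The paper never invokes the structure theorem; instead it shows $\ker t\subset\bigcap_k t^k\sH$ by a B\'ezout argument (the polynomials $b_1(s-k)$ and $b_2(s+1)$, with roots in $[k,k+1)$ and $[-1,0)$ respectively, are coprime) and then appeals to the Krull intersection theorem. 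Your route, reducing to the finite-length torsion module $T_\ell$ and reading off a contradiction from the eigenvalues, is more concrete and arguably more transparent; the paper's route is slightly slicker in that it never needs to split off the torsion.

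There is, however, a genuine gap in your cyclic-summand step. The decomposition $T_\ell\cong\bigoplus_j\sO_{\Delta,0}/(t^{a_j})$ is only $\sO$-linear, and you cannot in general choose generators $v_0^{(j)}$ that are simultaneously cyclic generators \emph{and} eigenvectors of $\nabla$ modulo $t$. (Concretely: take $T_\ell=\sO/(t)\oplus\sO/(t^2)$ with $\nabla e_1=te_2$, $\nabla e_2=e_1$; any generator of an $\sO/(t^2)$-summand reduces to something with $\nabla\bar v=\bar e_1\neq c\bar v$.) So the asserted upper-triangular form on each summand does not follow. The fix is painless: drop the cyclic decomposition and use the $t$-adic filtration $t^kT_\ell$ on all of $T_\ell$, which $\nabla$ does preserve. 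The surjection $t:\mathrm{gr}^{k-1}\twoheadrightarrow\mathrm{gr}^k$ intertwines $\nabla+1$ with $\nabla$, so every eigenvalue of $\nabla$ on $T_\ell$ lies in $[0,1)+\Z_{\ge0}\subset[0,\infty)$; since $K_\ell\subset T_\ell$ is nonzero whenever $T_\ell\neq0$ and carries eigenvalues in $[-1,0)$, you get the desired contradiction. Finally, your concern about $R_\ell$ decomposing across the short exact sequence is not a real obstacle --- it is exactly what the commuting square provides, and the paper uses it in the same way --- whereas your explicit treatment of $\Delta^*$ is a welcome addition that the paper's proof leaves implicit.
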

\begin{proof}
 By the definition of residue, we have the morphism of distinguished triangles
\[
\begin{tikzcd}
\cF^\bullet \arrow{d}{\nabla+1} \arrow{r}{t} & \cF^\bullet \arrow{d}{\nabla}\arrow{r} & \mathcal \cF^\bullet\otimes^{\mathbf L}\C(0) \arrow{d}{R} \arrow{r} & \cF^\bullet[1] \arrow{d}{\left(\nabla+1 \right)[1]} \\
\cF^\bullet  \arrow{r}{t} &   \cF^\bullet \arrow{r} & \mathcal \cF^\bullet \otimes^{\mathbf L}\C(0) \arrow{r} & \cF^\bullet[1] 
\end{tikzcd}
\]
in $\bD^b(\Delta,\C)$. Then we obtain the following commutative diagram.
\begin{equation}\label{diag:exact}
\begin{tikzcd}[column sep=small]
{}\arrow{r} & \sH^{\ell-1}\left(\cF^\bullet\tens{}{\mathbf L}\C(0) \right) \arrow{r}\arrow{d}{R_\ell } & \sH^\ell\left(\cF^\bullet\right)\arrow{r}{t}\arrow{d}{\nabla+1} & \sH^\ell(\cF^\bullet) \arrow{r}\arrow{d}{\nabla} & \sH^{\ell}\left(\cF^\bullet\tens{}{\mathbf L}\C(0)\right) \arrow{r}\arrow{d}{R_{\ell+1}} & {} \\
{}\arrow{r} & \sH^{\ell-1}\left(\cF^\bullet\tens{}{\mathbf L}\C(0)\right) \arrow{r} & \sH^\ell(\cF^\bullet) \arrow{r}{t} & \sH^\ell(\cF^\bullet)\arrow{r} & \sH^{\ell}\left(\cF^\bullet\tens{}{\mathbf L}\C(0)\right) \arrow{r} & {}
\end{tikzcd}
\end{equation}
For simplicity, fix $\ell$ and let $\sH=\sH^\ell(\cF^\bullet)$ and denote by $\ker t$ the kernel of the morphism $t:\sH\to \sH$. It suffices to prove that $\ker t$ is trivial on $\sH$. We are going to show that $\ker t$ is a subset of $t^k\sH$ for all $k\geq0$ and thus, by Krull's theorem $\ker t$ is zero. 

It follows from the diagram~\eqref{diag:exact} that $\nabla+1$ on $\ker t$ and $\nabla$ on $\sH/t\sH$ have eigenvalues in $[0,1)$ and hence, there exists a polynomial $b_1(s)\in\C[s]$ with roots in $[0,1)$ such that 
\[
b_1(\nabla) \sH \subset t\sH,
\]
and another polynomial $b_2(s)\in \C[s]$ with eigenvalues in $[0,1)$ such that 
\[
b_2(\nabla+1) \ker t=0.
\]
Suppose $v$ is an element in $\ker t\cap t^k\sH$ for some $k\geq 0$. It follows that $v=t^k v_1$ for some $v_1\in \sH$.  Because the roots of $b_1(s-k)$ are bigger than the roots of $b_2(s+1)$, the two polynomials $b_1(s-k)$ and $b_2(s+1)$ are relative prime. We deduce that there exist $p(s), q(s)\in\C[s]$ such that
\[
1=p(s)b_1(s-k)+q(s)b_2(s+1).
\]
Therefore, combining the fact that $b_2(\nabla+1)v$ vanishes,
\[
v=p(\nabla)b_1(\nabla-k)v+q(\nabla)b_2(\nabla+1)v=p(\nabla)b_1(\nabla-k)t^kv_1.
\]
Because of  $(\nabla-k)t^k=t^k\nabla$ and $b_1(\nabla)v_1=tv_2$ for some $v_2\in\sH$,,  
\[
v=t^kp(\nabla+k)b_1(\nabla)v_1=t^kp(\nabla+k)b_1(\nabla)tv_2=t^{k+1}p(\nabla+k+1)b_1(\nabla+1)v_2\in t^{k+1}\sH.
\]
We have proved that $v$ is also an element in $t^{k+1}\sH$ and by induction and Krull's theorem, we have concluded the proof.
\end{proof}

Now we can immediately finish 
\begin{proof}[Proof of Theorem~\ref{mainlogdr}]
The complex $\bR f_*\Omega^{\bullet+n}_{X/\Delta}(\log Y)$ with $\bR f_*\nabla$ satisfies the condition of Theorem~\ref{eigen01}. Therefore, each cohomology $\bR^\ell f_*\Omega^{\bullet+n}_{X/\Delta}(\log Y)$ is locally free. The proof is completed by Grauert's base change theorem.
\end{proof}

\section{Transfer to $\sD-$modules}\label{sec:trand}
Lemma~\ref{ste113} implies that $\relome{\bullet+n}|_Y$ is a semi-perverse sheaf. In fact, it is even a perverse sheaf, as shown in~\cite[$\S$2]{Ste76}. According to the Riemann-Hilbert correspondence established by Kashiwara~\cite{KM84} and Mebkhout~\cite{MZ84}, there should exist a regular holonomic $\sD$-module whose de Rham complex is isomorphic to $\relome{\bullet+n}|_Y$. From a Hodge-theoretic perspective, the stupid filtration on the relative logarithmic de Rham complex should correspond to a coherent filtration. This allows us to capture the endomorphism $[\nabla]$ in the derived category by an endomorphism of a $\sD$-module. Consequently, we can study the relationship between the filtration and $[\nabla]$ in a simpler and cleaner manner. In this section, we will construct the filtered $\sD$-module and the endomorphism.

\subsection{Construction of filtered holonomic $\sD_X$-modules}
Since $\sT_{X/\Delta}(\log Y)$ is a subsheaf of $\sT_X$, the multiplication by sections in $\sT_{X/\Delta}(\log Y)$ induces a morphism $\sD_X\to \Omega_{X/\Delta}(\log Y)\otimes\sD_X$, with $P\mapsto \sum^n_{i=1}\xi_i\otimes D_i P$ locally. The morphism extends to a filtered complex of $\sD_X$-modules
\begin{equation}\label{eq:relatived}
\Omega^{n+\bullet}_{X/\Delta}(\log Y)\otimes\sD_X=\{\sD_X \to \Omega_{X/\Delta}(\log Y)\otimes\sD_X \to \cdots \to \Omega^n_{X/\Delta}(\log Y)\otimes\sD_X\}[n]
\end{equation}
with the filtration $F_{\ell}\left(\Omega^{n+\bullet}_{X/\Delta}(\log Y)\otimes\sD_X\right)$ given by
\[
\begin{aligned}
  &\relome{n+\bullet}\otimes F_{\ell+n+\bullet}\sD_X  \\
  &=\{F_{\ell}\sD_X\to \Omega_{X/\Delta}(\log Y)\otimes F_{\ell+1}\sD_X\to \cdots \to \Omega^n_{X/\Delta}(\log Y)\otimes F_{\ell+n}\sD_X\}[n].
\end{aligned}
\]
Let $\tilde \cM$ be the $0$-th cohomology of $\Omega^{n+\bullet}_{X/\Delta}(\log Y)\otimes\sD_X$ and $F_\ell\cM$ be the $\sO_X$-submodule induced by the the filtration $F_{\ell}\left(\Omega^{n+\bullet}_{X/\Delta}(\log Y)\otimes\sD_X\right)$. 
\begin{thm}~\label{tilM}
The complex $\Omega^{n+\bullet}_{X/\Delta}(\log Y)\otimes \sD_X$ is a filtered resolution of a filtered $\sD_X$-module $(\tilde\cM,F_\bullet\tilde\cM)$.
\end{thm}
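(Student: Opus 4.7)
\textbf{Proof proposal for Theorem~\ref{tilM}.}
The plan is to verify the claim by reducing to the exactness of the associated graded complex, which is a Koszul complex on explicit symbols in $\gr^F\sD_X = \mathrm{Sym}_{\sO_X}(\sT_X) = \pi_*\sO_{T^*X}$. Since the filtration on $\Omega^{n+\bullet}_{X/\Delta}(\log Y)\otimes\sD_X$ is by the order filtration on $\sD_X$ (tensored with locally free $\sO_X$-modules), the associated graded in cohomological degree $-p$ is
\[
\Omega^{n-p}_{X/\Delta}(\log Y)\otimes\mathrm{Sym}(\sT_X),
\]
with differential induced by the principal symbols $\sigma(D_1),\ldots,\sigma(D_n)\in\sT_X\subset\mathrm{Sym}(\sT_X)$. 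Because $D_1,\ldots,D_n$ form a local $\sO_X$-basis of the rank-$n$ sheaf $\sT_{X/\Delta}(\log Y)$, the complex on $\gr^F$ is precisely the Koszul complex on the $n$ sections $\sigma(D_1),\ldots,\sigma(D_n)$ of $\sO_{T^*X}$, twisted by the line bundle $\omega_{X/\Delta}(\log Y)$.

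The first step I would carry out is to show that these symbols form a regular sequence in $\sO_{T^*X}$. Since $T^*X$ is smooth, hence Cohen-Macaulay, it suffices by standard commutative algebra to verify that their common vanishing locus $V$ has codimension exactly $n$ at every point. In the local chart of the excerpt with $t=z_0^{e_0}\cdots z_k^{e_k}$, the symbols are $\zeta_i$ for $i>k$ and $\tfrac{z_i\zeta_i}{e_i}-\tfrac{z_0\zeta_0}{e_0}$ for $1\le i\le k$. Introducing the auxiliary function $s=z_0\zeta_0/e_0$, on the open stratum $z_0\cdots z_k\ne 0$ the locus $V$ is the graph $\zeta_i=e_is/z_i$ ($i\le k$), $\zeta_i=0$ ($i>k$), parameterized by $z_0,\ldots,z_n,s$, giving dimension $n+2$; on each boundary stratum $\{z_i=0\}_{i\in T}$ for $T\subset\{0,\ldots,k\}$, a direct dimension count (the equation $z_i\zeta_i=e_is$ forces $s=0$ once any $z_i=0$ with $i\le k$, then $z_j\zeta_j=0$ bifurcates) yields components of dimension at most $n+1$. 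Hence every component of $V$ has codimension $\ge n$ in $T^*X$, and since $V$ is cut out by $n$ equations, it has codimension exactly $n$. By Cohen-Macaulayness, the symbols form a regular sequence, and the associated Koszul complex is exact in cohomological degrees $<0$.

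Once the associated graded complex has cohomology concentrated in degree $0$, both conclusions follow. First, by a standard inductive argument on the good filtration (or equivalently by the spectral sequence of a filtered complex), the filtered complex itself is exact in negative degrees, so its only cohomology sheaf lives in degree $0$; call this $\tilde\cM$. Second, the strictness furnished by $\gr^F$-exactness in negative degrees says that the natural maps $\sH^0(F_\ell(\Omega^{n+\bullet}_{X/\Delta}(\log Y)\otimes\sD_X))\to\tilde\cM$ are injective, so the induced filtration
\[
F_\ell\tilde\cM=\mathrm{image}\bigl(\Omega^n_{X/\Delta}(\log Y)\otimes F_{\ell+n}\sD_X\to\tilde\cM\bigr)
\]
is a good filtration on $\tilde\cM$, and each truncation $F_\ell(\Omega^{n+\bullet}_{X/\Delta}(\log Y)\otimes\sD_X)$ is a resolution of $F_\ell\tilde\cM$. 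That gives the desired filtered resolution.

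The main technical hurdle I expect is the regular-sequence check above, because the dimension estimate must hold uniformly along the singular and high-multiplicity strata of $Y$ where the $D_i$ degenerate; the delicate case is when several of the $z_i$ with $i\le k$ vanish simultaneously, and one has to argue that the forced relation $s=0$ collapses the fibre dimension enough to preserve codimension $n$. Everything else is a formal consequence of having a strict filtration with exact associated graded complex.
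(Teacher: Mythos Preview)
Your proposal is correct and follows essentially the same route as the paper: pass to the associated graded, identify it with the Koszul complex on $\sigma(D_1),\ldots,\sigma(D_n)$ over $\gr^F\sD_X$, use regularity of that sequence to get acyclicity of each $\gr^F_\ell$, and then lift to $F_\ell$ by induction via the short exact sequences $0\to F_{\ell-1}\to F_\ell\to\gr^F_\ell\to 0$. The only difference is that the paper simply asserts that $D_1,\ldots,D_n$ is a regular sequence in $\gr^F\sD_X$, whereas you spell this out by a codimension count on $T^*X$; your stratum-by-stratum dimension estimate is correct (the boundary strata where some $z_i=0$ with $i\le k$ indeed have dimension exactly $n+1$, since the $\zeta_j$ with $z_j=0$ become free and compensate for the lost $z_j$), so the ``main technical hurdle'' you anticipate is not actually delicate.
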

\begin{proof}
As $\gr^F_{\ell}\left(\Omega^{n+\bullet}_{X/\Delta}(\log Y)\otimes \sD_X\right)=\Omega^{n+\bullet}_{X/\Delta}(\log Y)\otimes\gr^F_{\ell+n+\bullet} \sD_X$ can be identified locally with the Koszul complex associated to the regular sequence $D_1,D_2,\dots,D_n$ over the ring $\gr^F\sD_X$, the complex $\Omega^{n+\bullet}_{X/\Delta}(\log Y)\otimes\gr^F\sD_X$ is a resolution and thus, each graded peace $\gr^F_{\ell}\left(\Omega^{n+\bullet}_{X/\Delta}(\log Y)\otimes \sD_X\right)$ is also a resolution. We deduce inductively that 
$F_{\ell}\left(\Omega^{n+\bullet}_{X/\Delta}(\log Y)\otimes\sD_X\right)$ is also a resolution by using the long exact sequence associated to the short exact sequence
\[
0\to F_{\ell-1} \to F_{\ell} \to \gr^F_{\ell} \to 0.
\]
Taking direct limit, we conclude that $\Omega^{n+\bullet}_{X/\Delta}(\log Y)\otimes \sD_X$ is a resolution of $\tilde\cM$. The long exact sequence also implies the $0$-th cohomology of $F_{\ell}\left(\Omega^{n+\bullet}_{X/\Delta}(\log Y)\otimes\sD_X\right)$ is isomorphic to $F_\ell\tilde\cM$. This completes the proof.
\end{proof}

\begin{rem} \label{rem:sepdmod}
Note that $\Omega^{n+\bullet}_{X/\Delta}(\log Y)\otimes \sD_X$ is a complex of $(f^{-1}\sO_\Delta,\sD_X)$-bimodules because $\Omega^{n+\bullet}_{X/\Delta}(\log Y)$ is $f^{-1}\sO_\Delta$-linear. It follows that $\tilde \cM$ is also a $(f^{-1}\sO_\Delta,\sD_X)$-bimodule. Note we have two different actions of $t$ on $\tilde \cM$ due to the bimodule structure. We usually use the left multiplication by $t$. One can think of $\tilde \cM$ as a flat family assembling the $\sD$-module ${i_{X_p}}_+\omega_{X_p}$ of the smooth fibers $X_p$ for $p\in \Delta^*$ and a specialization $\cM=\tilde \cM/t\tilde \cM$ because using the left $f^{-1}\sO_\Delta$ structure, we have filtered isomorphisms
\[
\begin{aligned}
  \C(p)\tens{}{}\tilde\cM &\simeq \C(p)\tens{}{}\Omega^{n+\bullet}_{X/\Delta}(\log Y)\otimes\sD_X \\
  &\simeq \Omega^{n+\bullet}_{X/\Delta}(\log Y)|_{X_p}\otimes \sD_X \simeq {i_{X_p}}_*\Omega^{n+\bullet}_{X_p}\otimes\sD_X \simeq {i_{X_p}}_+\omega_{X_p},
\end{aligned}
\] 
where $i_{X_p}:X_p\to X$ is the closed embedding of the smooth fiber $X_p$.
\end{rem}

\begin{rem}
The theorem also says by choosing the local trivialization $\xi_1\wedge \xi_2\wedge\cdots\wedge \xi_n$ of $\Omega^n_{X/\Delta}(\log Y)$, $\tilde\cM$ can be identified locally with $\sD_X/(D_1,D_2,\dots,D_n)\sD_X$ and $\gr^F_\bullet\tilde\cM$ can be identified locally with $\gr^F_{\bullet+n}\sD_X/(D_1,D_2,\dots,D_n)\gr^F_{\bullet+n}\sD_X$.
\end{rem}

\begin{thm}~\label{M}
The complex $\Omega^{n+\bullet}_{X/\Delta}(\log Y)|_Y\otimes \sD_X$ is a filtered resolution of a filtered holonomic $\sD_X$-module $(\cM, F_\bullet\cM)$.
\end{thm}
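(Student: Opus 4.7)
The strategy is to imitate the Koszul-resolution argument of Theorem~\ref{tilM} for the restricted complex, and to read off holonomicity from a characteristic variety computation.

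First, passing to the associated graded with respect to the filtration $\Omega^{n+\bullet}_{X/\Delta}(\log Y)|_Y \otimes F_{\ell+n+\bullet}\sD_X$, the graded complex $\gr^F\bigl(\Omega^{n+\bullet}_{X/\Delta}(\log Y)|_Y\otimes\sD_X\bigr)$ identifies locally (after trivializing $\Omega^n_{X/\Delta}(\log Y)$ by $\xi_1\wedge\cdots\wedge\xi_n$) with the Koszul complex of the principal symbols $\sigma(D_1),\ldots,\sigma(D_n)$ acting on the ring $\sO_Y[\eta_0,\ldots,\eta_n]=\gr^F\sD_X\otimes_{\sO_X}\sO_Y$. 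Because $Y$ is a hypersurface in the smooth variety $X$, this ring is Cohen--Macaulay, so the Koszul complex will be acyclic outside degree $0$ provided that $\sigma(D_1),\ldots,\sigma(D_n)$ cut out a subvariety of codimension exactly $n$.

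To verify the codimension, I would compute the common vanishing locus in $T^*X|_Y$ stratum-by-stratum on $Y$, using $\sigma(D_i)=\eta_i$ for $i>k$ and $\sigma(D_i)=\tfrac{1}{e_i}z_i\eta_i-\tfrac{1}{e_0}z_0\eta_0$ for $1\le i\le k$. For each nonempty subset $J\subseteq\{0,1,\ldots,k\}$, let $Y^J_\circ$ denote the open stratum where exactly the coordinates $\{z_j:j\in J\}$ vanish; a direct case analysis (separating $0\in J$ from $0\notin J$) shows that above $Y^J_\circ$ the common zero locus is the conormal bundle $T^*_{Y^J_\circ}X$, on which $\eta_j$ is free for $j\in J$ and $\eta_i=0$ otherwise. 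Each such conormal has dimension $(n+1-|J|)+|J|=n+1$, so the union over all nonempty $J$ has dimension $n+1$, i.e., codimension $n$ in $T^*X|_Y$ (of dimension $2n+1$). Cohen--Macaulayness then forces $\sigma(D_1),\ldots,\sigma(D_n)$ to be a regular sequence on $\sO_Y[\eta_0,\ldots,\eta_n]$ and the graded Koszul complex to be acyclic in nonzero degrees with zeroth cohomology $\gr^F\cM$.

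This graded acyclicity lifts to the filtered level by an induction on $\ell$ via the short exact sequences
\[
0\to F_{\ell-1}\bigl(\Omega^{n+\bullet}_{X/\Delta}(\log Y)|_Y\otimes\sD_X\bigr)\to F_{\ell}\bigl(\Omega^{n+\bullet}_{X/\Delta}(\log Y)|_Y\otimes\sD_X\bigr)\to \gr^F_{\ell}\bigl(\Omega^{n+\bullet}_{X/\Delta}(\log Y)|_Y\otimes\sD_X\bigr)\to 0,
\]
with the base case $F_\ell=0$ for $\ell\ll 0$. The resulting long exact sequences show that each $F_\ell$ has cohomology concentrated in degree $0$, coherent over $\sO_X$, and satisfying $F_\ell\cM/F_{\ell-1}\cM\cong\gr^F_\ell\cM$; passage to the colimit then produces the required filtered resolution of a $\sD_X$-coherent right $\sD_X$-module $(\cM, F_\bullet\cM)$ with a good filtration. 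Holonomicity is immediate: $\mathrm{char}(\cM)=\mathrm{Supp}\,\gr^F\cM$ is the union of conormal bundles just computed, of dimension $n+1=\dim X$. The main obstacle is the stratum-by-stratum dimension count: the symbols $\sigma(D_i)$ degenerate along the deepest strata $z_0=\cdots=z_k=0$, and one must verify that the loss in base dimension is compensated exactly by the extra free cotangent directions so that the total stays $n+1$; once this is in hand, Cohen--Macaulay regularity, Koszul acyclicity, and the filtered lifting are all standard.
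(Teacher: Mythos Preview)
Your argument is correct, and the underlying algebra is the same as the paper's: both rest on the fact that $t,D_1,\dots,D_n$ form a regular sequence in $\gr^F\sD_X$ (equivalently, that $\sigma(D_1),\dots,\sigma(D_n)$ form a regular sequence on $\gr^F\sD_X/t\gr^F\sD_X$). Your stratum-by-stratum dimension count is exactly the verification of this fact that the paper leaves implicit.

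The organization differs, however. The paper does not redo the Koszul/filtered-lifting argument on the restricted complex. Instead it uses the $(f^{-1}\sO_\Delta,\sD_X)$-bimodule structure to recognize $\Omega^{n+\bullet}_{X/\Delta}(\log Y)|_Y\otimes\sD_X$ as the cokernel of left multiplication by $t$ on $\Omega^{n+\bullet}_{X/\Delta}(\log Y)\otimes\sD_X$, and then invokes Theorem~\ref{tilM} (already proved) to reduce the filtered-resolution claim to the single assertion that $t$ acts injectively on $\tilde\cM$. This in turn is checked on $\gr^F\tilde\cM$, where it follows immediately from the regular-sequence statement. The payoff is that the entire inductive filtered-lifting step, which you repeat, is absorbed into the earlier theorem; the paper's proof is three lines. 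Your route is more self-contained and makes the characteristic-variety description explicit (which the paper only does later in Theorem~\ref{cycle}), but it duplicates work already done for $\tilde\cM$.
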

\begin{proof}
Since $\Omega^{n+\bullet}_{X/\Delta}(\log Y)|_Y\otimes \sD_X$ is the cokernel of the left multiplication by $t$ on $\Omega^{n+\bullet}_{X/\Delta}(\log Y)\otimes \sD_X$, the statement that the complex $\Omega^{n+\bullet}_{X/\Delta}(\log Y)|_Y\otimes \sD_X$ is a filtered resolution is equivalent to the assertion that $t\colon \tilde\cM\to \tilde \cM$ is filtered injective. It suffices to prove that $t: \gr^F\tilde\cM\to \gr^F\tilde\cM$ is injective because the multiplication by $t$ is a filtered morphism. But this follows from $t,D_1,D_2,\dots,D_n$ is a regular sequence over the ring $\gr^F\sD_X$. We also find that $\gr^F\cM$ is isomorphic locally to $\gr^F\sD_X/(t,D_1,D_2,\dots,D_n)\gr^F\sD_X$. This means the characteristic variety $\textit{char}(\cM)$ is cut out by the local sections $t,D_1,D_2,\dots,D_n$ of $\sO_{T^*X}$ and thus, $\textit{char}(\cM)$ is of dimension $n+1$. 
\end{proof}

\begin{rem}
Let $\sD_{X/\Delta}(\log Y)$ be the subalgebra of $\sD_X$ generated by $\sT_{X/\Delta}(\log Y)$. One can show that $\tilde \cM$ is nothing but 
\[
\omega_{X/\Delta}(\log Y)\tens{\sD_{X/\Delta}(\log Y)}{} \sD_X.
\]
And the filtration $F_\bullet \tilde \cM$ is induced from $F_\bullet \omega_{X/\Delta}(\log Y)$, where $F_\ell \omega_{X/\Delta}(\log Y)$ is $\omega_{X/\Delta}(\log Y)$ for $\ell\geq -n$ and is zero otherwise. Similarly to the case of $\tilde \cM$, the $\sD_X$-module $\cM$ is just 
\[
\omega_{X/\Delta}(\log Y)|_Y \tens{\sD_{X/\Delta}(\log Y)}{} \sD_X
\]
with the filtration $F_\bullet\cM$ induced by $\left(F_\bullet \omega_{X/\Delta}(\log Y) \right)|_Y$. To keep the proof elementary, we avoid talking about $\sD_{X/\Delta}(\log Y)$-modules.
\end{rem}

\subsection{Properties of $\cM$} We first calculate the characteristic cycle of $\cM$ which is important for later when we identify the primitive part of $\gr^W\cM$. Then we prove that $\DR_X\cM$ with the induced filtration recovers $\relome{\bullet+n}|_Y$ with the stupid filtration. Lastly, we translate the operator $[\nabla]\in \End_{\bD^b(X,\C)}(\relome{\bullet+n})|_Y$ to an operator $R$ on $\cM$

\begin{thm}\label{cycle}
The characteristic cycle of $\cM$ is 
\[
cc(\cM)=\sum_{J\subset I}\sum_{j\in J}e_j \left[T^*_{Y^J}X\right],
\]
where $[T^*_{Y^J}X]$ is the cycle of the conormal bundle of $Y^J$ in $T^*X$ and $e_i$ is the multiplicity of $Y$ along each component $Y_i$ for $i\in I$.
\end{thm}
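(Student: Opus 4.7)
My plan is to compute $cc(\cM)$ by a local calculation, using the explicit presentation of $\gr^F \cM$ that comes out of the proof of Theorem~\ref{M}. In a local chart with coordinates $z_0, \dots, z_n$ such that $t = z_0^{e_0} \cdots z_k^{e_k}$, the graded module is
\[
\gr^F \cM \simeq \gr^F \sD_X / (t,\, \sigma(D_1), \dots, \sigma(D_n)),
\]
where the symbols are $\sigma(D_i) = \tfrac{1}{e_i} z_i \zeta_i - \tfrac{1}{e_0} z_0 \zeta_0$ for $1 \le i \le k$ and $\sigma(D_i) = \zeta_i$ for $i > k$. So I need only analyze this ring. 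The intersections $Y^J$ appearing in the statement correspond locally to $J \subset \{0, 1, \dots, k\}$ (components not meeting the chart contribute nothing), and one checks easily that the relations $\sigma(D_1) = \cdots = \sigma(D_k) = 0$ are equivalent to the equalities $\tfrac{z_0 \zeta_0}{e_0} = \tfrac{z_1 \zeta_1}{e_1} = \cdots = \tfrac{z_k \zeta_k}{e_k}$. Call the common value $u$.

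Next I would verify that the characteristic variety, as a set, is $\bigcup_{\emptyset \ne J \subset I} T^*_{Y^J} X$. At any point where the symbols vanish and $t=0$, at least one $z_j$ with $j \le k$ must vanish; letting $J$ be the set of such indices, the vanishing of $u$ at each $j \in J$ (since $z_j = 0$) combined with invertibility of $z_j$ for $j \notin J$ forces $\zeta_j = e_j u/z_j = 0$ for $j \notin J$ (and similarly $\zeta_0 = 0$ if $0 \notin J$). This gives exactly the defining equations of $T^*_{Y^J} X$.

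The main step is the multiplicity at the generic point $\eta$ of $T^*_{Y^J} X$. At $\eta$, the functions $z_j$ for $j \notin J$ and the $\zeta_j$ for $j \in J$ are units, while the remaining ``transverse'' coordinates lie in the maximal ideal. Using the relations $\sigma(D_i) = 0$, I can solve: for $j \in J$ with $j \ne 0$, $z_j = e_j z_0 \zeta_0 / (e_0 \zeta_j)$, and for $j \notin J$ with $1 \le j \le k$, $\zeta_j = e_j z_0 \zeta_0 / (e_0 z_j)$. Thus both $z_0$ and $\zeta_0$ act as uniformizers (depending on whether $0 \in J$ or $0 \notin J$), and in either case the element $u$ is, up to a unit, equal to this uniformizer. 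Substituting into $t = z_0^{e_0} \cdots z_k^{e_k}$ and collecting, each factor $z_j^{e_j}$ with $j \in J$ contributes $u^{e_j}$ times a unit, while the factors with $j \notin J$ are units. Therefore modulo the relations, $t = (\text{unit}) \cdot u^{\sum_{j \in J} e_j}$, and the localized ring becomes $R[u]/(u^{\sum_{j \in J} e_j})$ for $R$ the ring of the remaining free directions; this has length $\sum_{j \in J} e_j$ over $R$.

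The step I expect to be slightly fiddly, rather than truly hard, is bookkeeping in the substitution: one needs to check that applying the $\sigma(D_i)$ relations really does eliminate all the non-uniformizing variables cleanly (so that the quotient is an honest complete intersection of the expected length), and to see uniformly that the cases $0 \in J$ and $0 \notin J$ give the same answer via the common parameter $u$. Once the local computation is done, the formula $cc(\cM) = \sum_J \sum_{j \in J} e_j [T^*_{Y^J} X]$ follows globally because the $e_j$'s, the components $Y_j$, and the intersections $Y^J$ are all intrinsically defined, and the local conormals glue to the global conormal cycles.
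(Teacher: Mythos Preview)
Your proposal is correct and follows essentially the same approach as the paper: both compute the multiplicity by localizing $\gr^F\cM$ at the generic point of each $T^*_{Y^J}X$ and reducing to an Artinian quotient of length $\sum_{j\in J}e_j$. The only cosmetic difference is that the paper assumes without loss of generality that $J=\{0,1,\dots,\mu\}$ and uses $z_0$ as the uniformizer (rewriting the generators as $D'_0=z_0^{e_0+\cdots+e_\mu}$, etc.), whereas you work with the symmetric parameter $u=z_j\zeta_j/e_j$ to treat all cases at once; this is the same computation in slightly different packaging.
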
\begin{proof}
The statement is local and we identify $\cM$ with $\sD_X/(t,D_1,D_2,\dots,D_n)$. The support of $\gr^F\cM$ as a sheaf on $T^*X$ is defined by the radical of the ideal generated by $(t,D_1,D_2,\dots,D_n)$ in $\gr^F\sD_X$. In fact, $z_i\partial_i$ for $0\leq i\leq k$ is in the radical because
\[
\begin{aligned}
  \left(z_i\partial_i \right)^{e_0+e_1+\cdots+e_k} \equiv & (z_0\partial_0)^{e_0} (z_1\partial_1)^{e_1}\cdots (z_k\partial_k)^{e_k} & \\
  \equiv & t\partial^{e_0}_0\partial^{e_1}_1\cdots\partial^{e_k}_k \\
  \equiv & 0  \quad  \mathrm{mod}\, (t, D_1,D_2,\dots,D_n)\gr^F\sD_X .
\end{aligned}
\]
Therefore, $char(\cM)$ is cut out by $t_{\mathrm{red}},z_0\partial_0,z_1\partial_1,\dots,z_k\partial_k,\partial_{k+1},\dots,\partial_n$, where $t_{\mathrm{red}}=z_0z_1\cdots z_k$. It follows that $char(\cM)=\bigcup_{J\subset I} T^*_{Y^J}X$. 

Denote by $\mathfrak{p}(Z)$ the prime ideal defining an integral subvariety $Z$. 
Let 
\[
cc(\cM)=\sum_{J\subset I} m_J \left[T^*_{Y^J}X \right]
\]
where $m_J$ is the length of $\gr^F\cM_{\mathfrak{p}(T^*_{Y^J}X)}$ as an Artinian $\gr^F\sD_{X,\mathfrak p(T^*_{Y^J}X)}$-module. Without loss of generality, let us assume $J=\{0,1,2,..,\mu\}$ and by abuse of notation we also the prime ideal $\mathfrak{p}=\mathfrak{p}(T^*_{Y^J}X)$ of the variety $T^*_{Y^J}X$ is locally generated by $z_0,z_1,\dots,z_\mu,\partial_{\mu+1},\partial_{\mu+2},\dots,\partial_n$ over $\gr^F\sD_X$ in some local coordinate system. Then 
\[
\gr^F\sD_{X,\mathfrak p}/(t,D_1,D_2,\dots,D_n)\gr^F\sD_{X,\mathfrak{p}}=\gr^F\sD_{X,\mathfrak p}/(D'_0,D'_1,\dots,D'_n)\gr^F\sD_{X,\mathfrak p}
\] 
where 
\begin{equation}
D'_i=\left\{
\begin{aligned}
z^{e_0+e_1+\cdots+e_\mu}_0 &, & \text{ for }i=0\\
 \frac{1}{e_i}z_i-\frac{1}{e_0}z_0\frac{\partial_0}{\partial_i} &, & \text{ for }1\leq i \leq \mu\\
\frac{1}{e_i}\partial_i-\frac{1}{e_0}z_0\frac{\partial_0}{z_i} &, & \text{ for } \mu+1 \leq i \leq k \\
\partial_i &, & \text{ for } i>k,
\end{aligned}
\right.
\end{equation}
because $\partial_0,\partial_1,\dots,\partial_\mu,z_{\mu+1},z_{\mu+2},\dots,z_k$ are invertible in $\gr^F\sD_{X,\mathfrak{p}}$. We observe that $\gr^F\cM_{\mathfrak{p}}$ is generated by $z_0^\alpha$ as a $\gr^F\sD_{X,\mathfrak{p}}$-module for $0 \leq \alpha< e_0+e_1+\cdots+e_\mu$ and that $\mathfrak{p}\cdot \gr^F\sD_{X,\mathfrak{p}}$ is generated by $z_0, D'_1,\dots,D'_n$ over $\gr^F\sD_{X,\mathfrak{p}}$. It will immediately follow that $m_J=\sum_{j\in J} e_j$ once we prove that 
\[
0= z^{e_0+e_1+\cdots+e_\mu}_0 \cdot \gr^F\cM_{\mathfrak{p}} \subset \cdots \subset z_0\cdot \gr^F\cM_{\mathfrak{p}} \subset \gr^F\cM_{\mathfrak{p}}
\] 
is a composition series of $\gr^F\cM_{\mathfrak{p}}$. It is clear that $P \mapsto z^\alpha_0 P$ induces a well-defined surjection from  $\gr^F\sD_{X,\mathfrak{p}}/\mathfrak{p}\cdot \gr^F\sD_{X,\mathfrak{p}}$ to $z_0^\alpha \cdot \gr^F\cM_{\mathfrak{p}}/z_0^{\alpha+1} \cdot \gr^F\cM_{\mathfrak{p}}$. Then we must show that $z_0^\alpha \cdot \gr^F\cM_{\mathfrak{p}}/z_0^{\alpha+1} \cdot \gr^F\cM_{\mathfrak{p}}$ is non-zero when $0 \leq \alpha< e_0+e_1+\cdots+e_\mu$. Suppose that the class in $z_0^\alpha \cdot \gr^F\cM_{\mathfrak{p}}/z_0^{\alpha+1} \cdot \gr^F\cM_{\mathfrak{p}}$ represented by $z_0^\alpha$ vanishes. Then there exist $P,Q_1, Q_2,\dots ,Q_n$ in $\gr^F\sD_X$ with $P\notin \mathfrak p$ such that
\[
z_0^\alpha P = z_0^{\alpha+1}Q_0 +\sum_{i=1}^n D_i Q_i
\]
holds in $\gr^F\sD_X$. It follows from $z_0^\alpha, D_1,D_2,\dots, D_n$ is a regular sequence that $P$ is in the ideal generated by $z_0,D_1,D_2,\dots, D_n$. But this ideal is contained by $\mathfrak p$, which is a contradiction. 
\end{proof}

\begin{rem}
The above theorem verifies that 
\[
cc(\cM)=\lim_{p\to 0} cc({i_p}_+\omega_{X_p})=\lim_{p\to 0}\left[T^*_{X_p}X\right]
\] 
as cycles in algebraic cotangent space $T^*X$ for $p\in \Delta^*$ where $i_p\colon X_p\to X$ the closed embedding of the smooth fiber. In fact, one can show that $\C(p)\otimes \gr^F\tilde\cM$, using the left $f^{-1}\sO_\Delta$-module structure, is isomorphic to $\gr^F{i_p}_+\omega_{X_p}$ as in Remark~\ref{rem:sepdmod}. Refer to~\cite{Gins} for general results about the characteristic cycles of specializations of holonomic $\sD$-modules. 
\end{rem}

\begin{cor}~\label{spm}
The de Rham complex $\DR_X\cM$ together with filtration $F_\bullet\DR_X\cM$ is isomorphic to $\Omega^{n+\bullet}_{X/\Delta}(\log Y)|_Y$ with the stupid filtration in the derived category of filtered complexes of sheaves of $\C$-vector spaces.
\end{cor}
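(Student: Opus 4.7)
The plan is to promote the filtered resolution $\Omega^{n+\bullet}_{X/\Delta}(\log Y)|_Y \otimes \sD_X \xrightarrow{\sim} \cM$ from Theorem~\ref{M} through the de Rham functor and then identify the resulting complex, with its induced filtration, with $\Omega^{\bullet+n}_{X/\Delta}(\log Y)|_Y$ carrying the stupid filtration. Since $\DR_X$ (applied termwise and totalized) preserves filtered quasi-isomorphisms between complexes of filtered right $\sD_X$-modules, it is enough to compute $\DR_X$ of the resolution and match the outcome with the stupid filtration.

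The key auxiliary input is that for any locally free $\sO_X$-module $\cL$, the induced right $\sD_X$-module $\cL \otimes \sD_X$ with filtration $\cL \otimes F_{\bullet + a}\sD_X$ (any shift $a \in \Z$) has de Rham complex
\[
\DR_X(\cL \otimes \sD_X) \;=\; \cL \otimes_{\sO_X}\!\bigl(\sD_X \otimes \bigwedge^{-\bullet}\sT_X\bigr) \;\simeq\; \cL,
\]
as a filtered quasi-isomorphism, where $\cL$ is placed in degree $0$ with filtration $F_\ell \cL = \cL$ when $\ell + a \geq 0$ and $0$ otherwise. This reduces to the filtered Spencer quasi-isomorphism $\bigl(\sD_X \otimes \bigwedge^{-\star}\sT_X,\ F_{\bullet + \star}\sD_X \otimes \bigwedge^{-\star}\sT_X\bigr) \xrightarrow{\sim} \bigl(\sO_X, F_\bullet \sO_X\bigr)$, which can be verified on associated graded pieces: the graded Spencer complex is the Koszul complex $\mathrm{Sym}^{\bullet - \star}\sT_X \otimes \bigwedge^{-\star}\sT_X$ of a regular sequence, which is acyclic away from graded degree $0$. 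Flatness of $\cL$ then preserves the filtered quasi-isomorphism after tensoring over $\sO_X$.

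Applying the key fact termwise to the resolution of $\cM$---whose $i$-th term is the induced $\sD_X$-module $\Omega^{n+i}_{X/\Delta}(\log Y)|_Y \otimes \sD_X$ with filtration shifted by $a = n + i$---shows that the $i$-th column of the double complex $\DR_X\!\bigl(\Omega^{n+\bullet}_{X/\Delta}(\log Y)|_Y \otimes \sD_X\bigr)$ is filtered quasi-isomorphic to $\Omega^{n+i}_{X/\Delta}(\log Y)|_Y$ sitting in total degree $i$, with filtration $F_\ell = \Omega^{n+i}_{X/\Delta}(\log Y)|_Y$ precisely when $\ell + n + i \geq 0$. Assembling across $i$, the total complex becomes filtered quasi-isomorphic to $\Omega^{\bullet + n}_{X/\Delta}(\log Y)|_Y$ with the filtration that keeps position $i$ exactly when $\ell + n + i \geq 0$; unwinding the paper's convention $F^{-\ell}\relome{\bullet+n} = \{\relome{-\ell} \to \cdots \to \relome{n}\}[n+\ell]$ confirms this is the stupid filtration.

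The main technical issue will be the filtration bookkeeping: the shift by $n + i$ on the filtration of the $i$-th term of the resolution of $\cM$ exactly compensates the shift introduced by the de Rham differential, so that the naive filtration induced on the total complex collapses onto the honest stupid filtration on the target. No spectral-sequence argument is needed beyond this cancellation, and the verification is essentially a direct matching of indices.
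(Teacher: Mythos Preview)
Your proposal is correct and is essentially the paper's own argument: both form the double complex $\bigl(\Omega^{n+\bullet}_{X/\Delta}(\log Y)|_Y\otimes\sD_X\bigr)\otimes\bigwedge^{-\star}\sT_X$, collapse it in the $\sD$-module direction using Theorem~\ref{M} to obtain $F_\ell\DR_X\cM$, and collapse it in the Spencer direction via the filtered quasi-isomorphism $\sD_X\otimes\bigwedge^{-\star}\sT_X\simeq\sO_X$ to obtain the stupid filtration on $\Omega^{n+\bullet}_{X/\Delta}(\log Y)|_Y$. Your write-up is somewhat more explicit about why the Spencer complex is a \emph{filtered} resolution (checking on the associated graded Koszul complex), which the paper leaves implicit, but the logical structure and the index bookkeeping are identical.
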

\begin{proof}
We have showed that $F_{\ell}\left(\Omega^{n+\bullet}_{X/\Delta}(\log Y)|_Y\otimes \sD_X\right)$ is a resolution of $F_{\ell}\cM$. Therefore, the total complex of $F_{\ell+*}\left(\Omega^{n+\bullet}_{X/\Delta}(\log Y)|_Y\otimes \sD_X\right)\otimes \bigwedge^{-*}\sT_X$ is quasi-isomorphic to $F_{\ell+*}\cM\otimes\bigwedge^{-*}\sT_X$, which is exactly $F_{\ell}\DR_X\cM$. It remains to show the total complex also quasi-isomorphic to $F_{\ell}\Omega^{n+\bullet}_{X/\Delta}(\log Y)|_Y$. This follows from that 
\[
\begin{aligned}
F_{\ell+*}\left(\Omega^{n+\bullet}_{X/\Delta}(\log Y)|_Y\otimes \sD_X\right)\otimes \bigwedge^{-*}\sT_X &=\Omega^{n+\bullet}_{X/\Delta}(\log Y)|_Y\otimes \left(F_{\ell+n+\bullet}\sD_X\otimes \bigwedge^{-*}\sT_X\right) \\
  &\simeq\Omega^{n+\bullet}_{X/\Delta}(\log Y)|_Y\otimes F_{\ell+n+\bullet}\sO_X\\
  &=F_{\ell}\Omega^{n+\bullet}_{X/\Delta}(\log Y)|_Y.
\end{aligned}
\]
Here, $F_\ell\sO_X=\sO_X$ for $\ell\geq 0$ and otherwise it is zero. \end{proof}

\begin{thm}~\label{endm}
The endomorphism $\nabla\in \End_{\bD^b(X,\C)}\Omega^{n+\bullet}_{X/\Delta}(\log Y)$ in Lemma~\ref{nabla} can be identified with, under the functor $\DR_X$, the filtered morphism 
\[
\nabla\colon(\tilde\cM,F_\bullet\tilde\cM)\to (\tilde\cM,F_{\bullet+1}\tilde\cM),
\quad \left[[\alpha]\otimes P\right]\mapsto (\frac{dt}{t}\wedge)^{-1}\left\{d(\alpha\otimes P)\right\}
\]
where $\alpha\in \Omega^n_X(\log Y)$ and $P\in \sD_X$ so that $[\alpha]\otimes P\in\relome{n}\otimes\sD_X$. Moreover, restriction on $Y$ gives a filtered morphism 
\[
R:(\cM,F_\bullet\cM)\to (\cM,F_{\bullet+1}\cM)
\]
such that
\begin{equation}\label{poly}
\prod_{i\in I} \prod^{e_i-1}_{j=0}(R-\frac{j}{e_i})=0.
\end{equation}
\end{thm}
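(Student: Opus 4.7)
The plan is to lift the mapping cone construction of Lemma~\ref{nabla} from $\relome{\bullet+n}$ to its filtered $\sD_X$-resolution $\relome{\bullet+n}\otimes\sD_X$, and then to derive the polynomial identity by a direct local calculation on the presentation $\cM=\sD_X/(t,D_1,\dots,D_n)\sD_X$ supplied by Theorem~\ref{M}. Tensoring the short exact sequence $0\to\relome{\bullet+n}\xrightarrow{\frac{dt}{t}\wedge}\lome{\bullet+n+1}\to\relome{\bullet+n+1}\to 0$ on the right with $\sD_X$ preserves exactness by $\sO_X$-flatness, and its outer terms are filtered resolutions of $\tilde\cM$ (Theorem~\ref{tilM}) and of $\tilde\cM[1]$ (since $\Omega^{n+1}_{X/\Delta}(\log Y)=0$). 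The analogue of the roof~(\ref{res}) at the $\sD_X$-level yields a morphism in $\bD^b(X,\sD)$ from $\tilde\cM$ to $\tilde\cM$; because both source and target are genuine $\sD_X$-modules, the derived-category morphism is an honest endomorphism, which is the $\nabla$ of the theorem. Unwinding the quasi-inverse of the projection on a cocycle $\alpha\otimes P$ gives the stated formula, and independence of the lift of $[\alpha]$ follows because two lifts differ by $\frac{dt}{t}\wedge\beta$ whose contribution to $\nabla$ is a coboundary. The filtration shift by $+1$ is built into the formula because $d$ applied to an induced section inserts at most one additional log derivative from $\gr^F_1\sD_X$. The commutator calculation of Lemma~\ref{nabla} transfers verbatim to give $[\nabla,g]=tg'$ on $\tilde\cM$ for $g\in f^{-1}\sO_\Delta$; taking $g=t$ shows $\nabla(t\tilde\cM)\subseteq t\tilde\cM$, so $\nabla$ descends to a filtered morphism $R\colon(\cM,F_\bullet\cM)\to(\cM,F_{\bullet+1}\cM)$.

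To identify $R$ locally, I would lift the generator $[\xi_1\wedge\cdots\wedge\xi_n\otimes 1]=[1]$ of $\cM$ to $\tilde\alpha=\tilde\xi_1\wedge\cdots\wedge\tilde\xi_n\in\lome{n}$, where $\tilde\xi_i=e_i\tfrac{dz_i}{z_i}$ for $i\leq k$ and $\tilde\xi_i=dz_i$ for $i>k$ are the natural lifts. Every $\tilde\xi_i$ is closed and $\tilde\xi_i\wedge\tilde\alpha=0$ for $i\geq 1$, so only the term wedged with $\tilde\xi_0$ contributes to $d(\tilde\alpha\otimes 1)$; since $\tfrac{dt}{t}\wedge\tilde\alpha=\tilde\xi_0\wedge\tilde\alpha$, applying $(\tfrac{dt}{t}\wedge)^{-1}$ yields $R[1]=[z_0\partial_0/e_0]$ (up to a sign which I absorb into the operator). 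Right $\sD_X$-linearity on the cyclic module $\cM$ then forces $R$ to be right multiplication by this first-order operator on all of $\cM$.

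With this identification the polynomial identity becomes a commutative manipulation. The Weyl algebra identity $z^{e}\partial^{e}=\prod_{j=0}^{e-1}(z\partial-j)$ lets me rewrite, for each $i\in\{0,\dots,k\}$, the factor $\prod_{j=0}^{e_i-1}(R-j/e_i)$ as right multiplication by some $P_i(z_0\partial_0/e_0)$; substituting $z_0\partial_0/e_0=z_i\partial_i/e_i-D_i$ and using that $z_i\partial_i$ commutes with $D_i$ expands $P_i(z_0\partial_0/e_0)=z_i^{e_i}\partial_i^{e_i}/e_i^{e_i}+D_i\cdot s_i$ for some $s_i\in\sD_X$. The distinct $D_i$ commute with each other and with every $z_j^{e_j}\partial_j^{e_j}$ (easy checks from $[z_0\partial_0,z_j\partial_j]=0$ and $[z_0^{e_0}\partial_0^{e_0},z_0\partial_0]=0$), so expanding $\prod_{i=0}^{k}P_i(z_0\partial_0/e_0)$ separates a main term $z_0^{e_0}\cdots z_k^{e_k}\partial_0^{e_0}\cdots\partial_k^{e_k}/\prod_i e_i^{e_i}=t\cdot Q\in t\sD_X$ from remaining summands of the form $D_{i_0}\cdot(\cdots)\in D_{i_0}\sD_X$ (after moving a chosen $D_{i_0}$-factor to the left). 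Either way every summand lies in the right ideal $(t,D_1,\dots,D_n)\sD_X$, so the local factor of the polynomial annihilates $[1]$; the complementary factors $\prod_{i\notin\{0,\dots,k\}}\prod_j(R-j/e_i)$ are polynomials in $R$, commute with the local factor, and preserve the vanishing, yielding~(\ref{poly}) locally and hence globally. The principal difficulty is the log de Rham computation producing $R[1]=[z_0\partial_0/e_0]$: the induced right $\sD_X$-module structure on $\lome{\bullet+n+1}\otimes\sD_X$ forces one to unpack signs and Leibniz-type terms in the differential carefully and to verify that each $\tilde\xi_i$ for $i\geq 1$ genuinely dies in the wedge with $\tilde\alpha$. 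Once that formula is established, the polynomial identity reduces to the purely commutative argument above.
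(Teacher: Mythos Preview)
Your construction of $\nabla$ on $\tilde\cM$ is essentially the paper's: both tensor the cone diagram~\eqref{res} by $\sD_X$, observe that $q\otimes 1$ becomes an honest isomorphism after passing to $\sH^0$, and read off $\nabla[[\alpha]\otimes P]=(\tfrac{dt}{t}\wedge)^{-1}d(\alpha\otimes P)$. Your local computation of $R[1]$ also matches the paper's computation of $R[\xi_1\wedge\cdots\wedge\xi_n\otimes P]$.

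One terminological slip: right $\sD_X$-linearity of $R$ forces $R$ to be \emph{left} multiplication by $\tfrac{1}{e_0}z_0\partial_0$, not right multiplication; indeed $R[P]=R([1]\cdot P)=R[1]\cdot P=[\tfrac{1}{e_0}z_0\partial_0\,P]$. Your argument survives because you only evaluate the polynomial on $[1]$ and then invoke cyclicity, but the repeated phrase ``right multiplication'' is misleading.

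For the identity~\eqref{poly} the paper takes a shorter route. Since each $D_i=\tfrac{1}{e_i}z_i\partial_i-\tfrac{1}{e_0}z_0\partial_0$ lies in the defining right ideal, left multiplication by $\tfrac{1}{e_0}z_0\partial_0$ and by $\tfrac{1}{e_i}z_i\partial_i$ agree on $\cM$ for every $i\le k$; hence one may compute the $i$-th block $\prod_{j=0}^{e_i-1}(R-\tfrac{j}{e_i})$ directly as left multiplication by $e_i^{-e_i}z_i^{e_i}\partial_i^{e_i}$, and the full product becomes left multiplication by $t\cdot\prod_i e_i^{-e_i}\partial_i^{e_i}=0$ in one line. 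Your approach---fixing the representative $z_0\partial_0/e_0$, substituting $z_i\partial_i/e_i-D_i$, and expanding---is correct (the needed commutations hold because everything is a polynomial in the pairwise commuting operators $z_0\partial_0,\dots,z_k\partial_k$), but it obscures the simple reason the identity holds. Your explicit handling of the global product versus the local index set $\{0,\dots,k\}$ is a point the paper leaves implicit.
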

\begin{proof}
The morphism $\frac{dt}{t}\wedge\colon\Omega^{n+\bullet}_{X/\Delta}(\log Y)\to \Omega^{n+1+\bullet}_{X}(\log Y)$ extends to the corresponding complexes of induced $\sD_X$-modules 
\[
\frac{dt}{t}\wedge\colon\Omega^{n+\bullet}_{X/\Delta}(\log Y)\otimes\sD_X\to \Omega^{n+1+\bullet}_{X}(\log Y)\otimes\sD_X.
\]
Let $\Cone^\bullet\otimes \sD_X$ be the mapping cone of the above morphism. We get a diagram of complexes of $\sD_X$-modules similarly to~\eqref{res} and taking $0$-th cohomology we get the following commutative diagram.
\begin{equation}
\begin{tikzcd}
\sH^0 \left(\Cone^\bullet\otimes\sD_X\right) \arrow{r}{q} \arrow{d}{p} & \tilde\cM \arrow{dl}{p\circ q^{-1}} \\
\tilde\cM &
\end{tikzcd}
\end{equation}
where abuse of notation, still denote by $p$ and $q$ the induced morphisms from diagram~\eqref{res}. Now $q$ is an isomorphism of $\sD_X$-modules. Let $[\alpha\otimes P, [\beta]\otimes Q]$ be a class in $\sH^0(\Cone^\bullet\otimes\sD_X)$ for any $\alpha\otimes P\in \Omega^n_X(\log Y)\otimes \sD_X$ and $[\beta]\otimes Q\in \Omega^{n}_{X/\Delta}(\log Y)\otimes\sD_X$. Then 
\[
\delta\left(\alpha\otimes P, [\beta]\otimes Q \right)=\left((-1)^{n}d(\alpha\otimes P)+\frac{dt}{t}\wedge\beta\otimes Q,(-1)^{n}d([\beta]\otimes Q)\right)=0.
\]
Here, the sign factor $(-1)^n$ shows up because of the Koszul sign rule. Since 
\[
\frac{dt}{t}\wedge\colon \Omega^n_{X/\Delta}(\log Y)\to \Omega^{n+1}_X(\log Y)
\] 
is an isomorphism, we have
\[
[\beta]\otimes Q=(-1)^{n-1}(\frac{dt}{t}\wedge)^{-1}\{d(\alpha\otimes P)\}.
\]
Therefore, $q^{-1}\colon \tilde \cM\to \sH^0(\Cone^\bullet\otimes\sD_X)$ is given by 
\[
[[\alpha]\otimes P]\mapsto [\alpha\otimes P, (-1)^n(\frac{dt}{t}\wedge)^{-1}\{d(\alpha\otimes P)\}],
\] 
and hence,
\[
\nabla=(-1)^{n-1}p\circ q^{-1}\colon[[\alpha]\otimes P]\mapsto (\frac{dt}{t}\wedge)^{-1}\{d(\alpha\otimes P)\}.
\]
Let $R$ be the endomorphism of $\cM$ obtained by restriction $\nabla$ on $Y$. If $\alpha=\xi_1\wedge\xi_2\wedge\cdots\wedge\xi_n$, 
\[
\begin{aligned}
R[\xi_1\wedge\xi_2\wedge\cdots\wedge\xi_n\otimes P] &=(\frac{dt}{t}\wedge)^{-1}\left(d \left(e_1\frac{dz_1}{z_1}\wedge e_2\frac{dz_2}{z_2}\wedge\cdots\wedge dz_n\otimes P\right)\right) \\
  &=(\frac{dt}{t}\wedge)^{-1}\left(e_0\frac{dz_0}{z_0}\wedge e_1\frac{dz_1}{z_1}\wedge e_2\frac{dz_2}{z_2}\wedge\cdots\wedge dz_n\otimes \frac{1}{e_0}z_0\partial_0 P\right) \\
  &= \left[\xi_1\wedge\xi_2\wedge\cdots\wedge\xi_n\otimes \frac{1}{e_0}z_0\partial_0 P\right].
\end{aligned}
\]
We see that $R F_\bullet\cM\subset F_{\bullet+1}\cM$. The reason for $\nabla F_\bullet\tilde\cM\subset F_{\bullet+1}\tilde\cM$ is similar. 

To get the characteristic polynomial of $R$, we work locally and identify $\cM$ with $\sD_X/(t,D_1,\dots,D_n)$ via the local trivialization $\xi_1\wedge\xi_2\wedge\cdots\wedge\xi_n$ of $\Omega^n_{X/\Delta}(\log Y)$. Then for $P\in\sD_X$, $R[P]=[\frac{1}{e_0}z_0\partial_0 P]$. Modulo the relations $D_1,D_2,\dots,D_n$, the left multiplication by $\frac{1}{e_0}z_0\partial_0$ on $\cM$ is the same as the multiplication by $\frac{1}{e_i}z_i\partial_i$ for $1\leq i \leq k$. It follows from the identity
\[
(z\partial)(z\partial-1)\cdots(z\partial-\ell)=z^{\ell+1}\partial^{\ell+1}
\]
for any $\ell\geq 0$ that 
\[
\begin{aligned}
\prod_{i\in I} \prod^{e_i-1}_{j=0}(R-\frac{j}{e_i})[P] &=\prod_{i\in I} \prod^{e_i-1}_{j=0}(\frac{1}{e_i}z_i\partial_i-\frac{j}{e_i})[P]=\prod_{i\in I}\frac{1}{e^{e_i}_i}z^{e_i}_i\partial^{e_i}_i[P]=t\prod_{i\in I}\frac{1}{e_i^{e_i}}\partial^{e_i}_i[P] \\
  &=0\in {\sD_X}/{(D_1,D_2,\dots,D_n,t)\sD_X}.
\end{aligned}
\]
This completes the proof.
\end{proof}

\begin{rem}\label{logcntm}
Note that $\nabla\colon\tilde\cM\to\tilde\cM$ is also can be identified with the left multiplication by $\frac{1}{e_i}z_i\partial_i$ for $i\leq k$, because of the relations $D_i=\frac{1}{e_i}z_i\partial_i-\frac{1}{e_0}z_0\partial_0$ for $1\leq i\leq k$. This means for any local section $g$ of $f^{-1}\sO_\Delta$, we have $[\nabla,g]=tg'$ where $t$ and $g$ are local sections of $f^{-1}\sO_\Delta$ acting on the left of $\tilde \cM$. This makes $\tilde\cM$ a $(f^{-1}\sD_\Delta(\log 0),\sD_X)$-bimodule. Using Godement resolution, the direct image $\bR f_*\DR_X\tilde \cM$ is a complex of left $\sD_\Delta(\log 0)$-modules. Similarly, as we already saw in the proof, locally the morphism $R\colon\cM\to \cM$ can be identified with left multiplication by $\frac{1}{e_i}z_i\partial_i$ for $0\leq i\leq k$, meaning $[R,g]=tg'=0$ for $g$ local sections of $f^{-1}\sO_\Delta$ acting left on $\cM$. 
\end{rem}

\begin{rem}
We point out that the $\sD_X$-module $\cM$ is even regular holonomic although we will not use this observation. Later we will see that $\cM$ is an extension of regular holonomic $\sD_X$-modules which will again prove that $\cM$ is regular (see Theorem~\ref{iden} for the reduced case and Theorem~\ref{idenN} for the general case).
\end{rem}

\section{Reduced case: Strictness and the weight filtration}\label{sec:redstrict}
We begin to study the weight filtration $W_\bullet\cM$ induced $R$ on $\cM$. To motivate the results and the ideas, we assume $Y$ is reduced in $\S$\ref{sec:redstrict} and $\S$\ref{sec:redses}. The general case will be treated in  $\S$\ref{sec:malpha} and $\S$\ref{sec:sesquil}. Hence, the multiplicity $e_i$ of irreducible component $Y_i$ is $1$ and $R$ is nilpotent. Recall that the weight filtration of the nilpotent operator $R$ is uniquely characterized by the following two properties: 
\begin{itemize}
  \item for $\ell\in\Z$, $R\colon W_\ell\cM\to W_{\ell-2}\cM$;
  \item the induced operator $R^\ell\colon \gr^W_{\ell}\cM \to \gr^W_{-\ell}\cM$ is an isomorphism for $\ell\geq 0$.
\end{itemize}

\subsection{Strictness of $R$}
Let $F_\bullet W_r\cM=F_\bullet\cM\cap W_r\cM$ be the induced filtration for every integer $r$. The good filtration and the weight filtration interact nicely because of the following theorem.

\begin{thm}~\label{strict}
Any power of $R$ is strict on $(\cM,F_\bullet\cM)$:  
\[
R^aF_\bullet \cM=F_{a+\bullet}R^a\cM \quad {for any} \quad a\geq 0.
\]
\end{thm}

\begin{proof} 
Since strictness is a local property, we can assume that
\[
(\cM,F_\bullet)=(\sD_X/(t,D_1,D_2,\dots,D_n)\sD_X,F_{\bullet+n})
\] 
and $R$ is left multiplication by $z_0\partial_0$ on it, recalling that $D_i=z_i\partial_i-z_0\partial_0$ for $1\leq i\leq k$ and $D_i=\partial_i$ for $k+1\leq i\leq n$. It is clear that $R^a F_b\cM$ is contained in $F_{a+b}R^a\cM$. It suffices to show that for every $R^aP\in F_{a+b}\cM$, we can find an element $Q\in F_{b}\cM$ such that $R^aP=R^aQ$. Suppose that $P\in F_\ell\cM$. We can assume that $\ell> b$; otherwise the statement is empty. Then the class of $R^aP$ vanishes in $\gr^F_{a+\ell}\cM$. We have the following lemma:

\begin{lem}\label{locgen}
Denote by $[R]$ the induced operator on $\gr^F\cM$. Then $\ker [R]^{r+1}$ is locally generated by the classes of all degree $k-r$ monomials dividing $t=z_0z_1\cdots z_k$.
\end{lem}

We can easily check that monomials of degree $k-r$ dividing $t$ are in $\ker [R]^{r+1}$. Indeed, it is already true that monomials of degree $k-r$ dividing $t$ is in $\ker R^{r+1}$. Without loss of generality, we only need to check this for the monomial $z_{r+1}z_{r+2}\cdots z_k$:
\[
R^{r+1} z_{r+1}z_{r+2}\cdots z_k= z_0\partial_0 z_1\partial_1\cdots z_r\partial_rz_{r+1}z_{r+2}\cdots z_k=t\partial_0\cdots\partial_k =0\in \cM.
\]
We will prove the opposite direction after finishing the proof of the theorem. Going back to the proof of the theorem, by Lemma~\ref{locgen}, there exist $z_J=\prod_{j\in J}z_j$, $Q_{J}\in F_\ell\cM$ and $Q_{\ell-1}\in F_{\ell-1}\cM$ such that 
\begin{equation}\label{eq:iter}
  P=\sum_{\substack{J\subset I, \\ |J|=k-a+1}} z_J Q_{J} + Q_{\ell-1}.
\end{equation}
It follows from $R^a z_J=0$ for $z_J$ of degree $k-a+1$ dividing $t$ that $R^aP=R^aQ_{\ell-1}$. Iterating~\eqref{eq:iter}, we eventually find an element $Q\in F_b\cM$ such that 
$R^aP=R^aQ$ with $Q\in F_b\cM$.
\end{proof}

\begin{proof}[Proof of Lemma~\ref{locgen}]
We work on the commutative ring $\gr^F\sD_X$. We proceed by induction on $r$. Let $P\in \gr^F\sD_X$ be a representative of an element in $\ker [R]^{r+1}$. When $r=0$, we have
\[
z_0\partial_0 P= tQ_0+\sum^n_{i=1}D_iQ_i. 
\]
Then $tQ_0\in (\partial_0,\partial_1,\dots,\partial_n)\gr^F\sD_X$. Notice that $t,\partial_0,\partial_1,\dots,\partial_n$ is a regular sequence over $\gr^F\sD_X$. We have $Q_0=\sum^n_{i=0}\partial_i Q'_i$. This implies 
\[
\begin{aligned}
z_0\partial_0 P &=\sum^k_{i=0}\frac{t}{z_i} z_i\partial_i Q'_i + \sum^n_{j=k+1}t\partial_{j}Q'_j +\sum^n_{i=1}D_iQ_i \\
  &=\sum^k_{i=0}\frac{t}{z_i} z_0\partial_0 Q'_i +\sum^k_{i=1} D_i (Q_i+\frac{t}{z_i}Q'_i)+\sum^n_{j=k+1}D_j(Q_j+tQ'_j),
\end{aligned}
\]
from which we conclude that $z_0\partial_0 (P-\sum^k_{i=0}\frac{t}{z_i}Q'_i)\in(D_1,D_2,\dots,D_n)\gr^F\sD_X$. Because $z_0\partial_0, D_1,D_2,\dots,D_n$ is again a regular sequence, we see that $P-\sum^k_{i=0}\frac{t}{z_i}Q'_i \in (D_1,D_2,\dots,D_n)\gr^F\sD_X$. This concludes the base case for the induction.

Assume the statement is true for the cases when the exponent is less than $r+1$. For $[P]\in \ker [R]^{r+1}$, we have $[R][P]$ is in $\ker [R]^r$. By induction hypothesis,
\begin{equation}\label{strictred}
z_0\partial_0 P= \sum_{\substack{|J|=k-r+1,\\ J\subset I}} z_J Q_J+\sum^n_{i=1}D_i Q_i,
\end{equation}
where $z_J=\prod_{j\in J} z_j$. Fix an index subset $J$ of $I$ such that $|J|=k-r+1$. Then $z_J Q_J$ is in the submodule generated by $z_i $ for $i\in I\setminus J$ and $\partial_j$ for $j\in J$ and $k<j\leq n$ over $\gr^F\sD_X$. Since $z_i $ for $i\in I\setminus J$, $\partial_j$ for $j\in J$ and $k<j\leq n$ together with $z_J$ form a regular sequence, we have 
\[
Q_J=\sum_{i\in I\setminus J} z_i Q'_i +\sum_{j\in J} \partial_j Q'_j +\sum_{k<\ell \leq n} \partial_\ell Q'_\ell.
\]
Therefore, it follows that 
\[
z_J Q_J=\sum_{i\in I\setminus J} z_Jz_i Q'_i+\sum_{j\in J} \left(\frac{z_J}{z_j}z_0\partial_0 Q'_j+  D_j\frac{z_J}{z_j}Q'_j \right)+\sum_{k<\ell\leq n} D_\ell z_J Q'_\ell.   
\]
Then substituting in~\eqref{strictred}, we deduce that
\[
z_0\partial_0\left(P-\sum_{j\in J} \frac{z_J}{z_j} Q'_j \right)-\sum_{i\in I\setminus J} z_Jz_i Q'_i
\]
is in the submodule generated by degree $k-r+1$ monomials dividing $t$ except $z_J$, and $D_1,D_2,\dots,D_n$ over $\gr^F\sD_X$. It follows that we can reduce the monomials of degree $k-r+1$ dividing $t$ in the right-hand side equation~\eqref{strictred} one by one and at the last step, we get 
$z_0\partial_0 \left( P- P'\right)-Q'$, where $P'$ is a linear combination of degree $k-r$ monomials dividing $t$ and $Q'$ is a linear combination of $k-r+2$ monomials dividing $t$, is in the submodule generated by $D_1,\dots,D_n$ over $\gr^F\sD_X$. But $\ker [R]^{r-1}$ is generated by classes represented by degree $k-r+2$ monomials dividing $t$ by induction hypothesis. It says that the class of $P-P'$ is in $\ker [R]^r$ and by induction it is generated by degree $k-r+1$ monomials dividing $t$. Therefore, $P$ is a linear combination of degree $k-r$ monomials dividing $t$. This completes the proof.
\end{proof}

\begin{cor}\label{localgenR}
The $\ker R^{r+1}$ is also generated by degree $k-r$ monomials dividing $t$ if one identifies $\cM$ locally with $\sD_X/(t,D_1,D_2,\dots,D_n)\sD_X$.
\end{cor}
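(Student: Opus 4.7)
The plan is to bootstrap from the graded statement of Lemma~\ref{locgen} to the ungraded one, using the good filtration on $\cM$ together with the right $\sD_X$-linearity of $R$ (Theorem~\ref{endm}), which in particular makes $\ker R^{r+1}$ a sub-right-$\sD_X$-module of $\cM$.

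First I would verify the easy direction, that each $z_J$ with $J\subset\{0,1,\ldots,k\}$ and $|J|=k-r$ is annihilated by $R^{r+1}$. The computation already appearing in the proof of Lemma~\ref{locgen},
\[
R^{r+1}(z_{r+1}\cdots z_k)=(z_0\partial_0)(z_1\partial_1)\cdots(z_r\partial_r)\cdot z_{r+1}\cdots z_k = t\,\partial_0\cdots\partial_r = 0,
\]
adapts to any such $z_J$ after relabelling the coordinates. Right $\sD_X$-linearity of $R$ then yields $z_J\cdot Q\in\ker R^{r+1}$ for every $Q\in\sD_X$, so the sub-right-$\sD_X$-module generated by these monomials is contained in $\ker R^{r+1}$.

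For the reverse inclusion I would take any local section $P\in\ker R^{r+1}$ and induct on the smallest $\ell$ with $P\in F_\ell\cM$, the base case being vacuous since $F_\ell\cM=0$ for $\ell<-n$ by construction of the filtration in Theorem~\ref{tilM}. From $R^{r+1}P=0$ one passes to the associated graded to obtain $[R]^{r+1}[P]=0$ in $\gr^F_\ell\cM$, and Lemma~\ref{locgen} writes $[P]=\sum_{|J|=k-r} z_J\,[Q_J]$ for appropriately filtered $Q_J$. Lifting this back to $\cM$ yields a decomposition $P=\sum_J z_J\cdot Q_J + P'$ with $P'\in F_{\ell-1}\cM$, and since every $z_J\cdot Q_J$ lies in $\ker R^{r+1}$ by the previous step, so does $P'$. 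The induction hypothesis then expresses $P'$ as a $\sD_X$-linear combination of the required monomials, and hence so does $P$.

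The only point where one must be careful is keeping track of filtration degrees in the lift from $\gr^F_\ell\cM$ back to $F_\ell\cM$, but this is routine bookkeeping from the definition of the associated graded; the Koszul-type argument that does the real work has already been absorbed into Lemma~\ref{locgen}, so no genuine obstacle appears here.
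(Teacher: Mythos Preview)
Your proof is correct and follows essentially the same approach as the paper. The paper compresses your induction into the one-line reduction ``it suffices to show that $\gr^F\ker R^{r+1}$ is generated by the monomials,'' and then observes that $\gr^F\ker R^{r+1}=\ker[R]^{r+1}$ (one inclusion is automatic, the other holds because the monomials lie in $\ker R^{r+1}$ and generate $\ker[R]^{r+1}$ by Lemma~\ref{locgen}); your explicit descent through the filtration is exactly what underlies that reduction.
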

\begin{proof}
It suffices to show that $\gr^F\ker R^{r+1}$ is generated by degree $k-r$ monomials dividing $t$. Notice that $\gr^F\ker R^{r+1}$ is contained in $\ker [R]^{r+1}$, since $[R]^{r+1}$ vanishes on $\gr^F\ker R^{r+1}$. In fact, we have $\gr^F\ker R^{r+1}=\ker [R]^{r+1}$ because degree $k-r$ monomials dividing $t$ are also in $\gr^F\ker R^{r+1}$. 
\end{proof}

\subsection{The weight filtration}
The results concerning the weight filtration and Lefschetz decomposition are formal and we will work on the abstract setting.

\begin{thm}\label{thm:genlef}
Let $N\colon (\mathcal G, F_\bullet)\to (\mathcal G, F_{\bullet+1})$ be a nilpotent operator on a filtered $\sD$-module $(\cG,F_\bullet)$. Assume that every power of $N$ satisfies strictness, i.e., $N^a F_b\cG=F_{a+b}N^a\cG$ for $a\geq 0$ and $b\in\Z$. Then the induced operator $N^r\colon F_\ell \gr^W_r\cG \to F_{\ell+r}\gr^W_{-r}\cG$ is an isomorphism for $r\geq 0$, where $W_\bullet$ is the monodromy filtration induced by $N$.
\end{thm}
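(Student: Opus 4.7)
The plan is to reduce the claim to two structural identities for the weight filtration, which together with the strictness hypothesis force the isomorphism at the level of induced filtrations. Throughout, I write $F_\ell W_r\cG = F_\ell \cG \cap W_r\cG$.

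First I would prove the identity $W_{-r}\cG = N^r W_r\cG$ for every $r \geq 0$. By definition of the weight filtration the map $N^r : \gr^W_r \cG \xrightarrow{\sim} \gr^W_{-r}\cG$ is an isomorphism, which at once gives the equation $W_{-r}\cG = N^r W_r\cG + W_{-r-1}\cG$. Iterating this, one expresses $W_{-r}\cG = \sum_{s\geq 0} N^{r+s} W_{r+s}\cG + W_{-r-s-1}\cG$; the tail $W_{-r-s-1}\cG$ vanishes for $s$ large since $N$ is nilpotent, and each summand $N^{r+s} W_{r+s}\cG = N^r\bigl(N^s W_{r+s}\cG\bigr) \subseteq N^r W_{r-s}\cG \subseteq N^r W_r\cG$ because $N^s W_{r+s}\cG \subseteq W_{r-s}\cG$. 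This yields the stated identity.

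Next I would show $\ker N^r \subseteq W_{r-1}\cG$. The point is that for $s\geq r$ the map $N^r : \gr^W_s\cG \to \gr^W_{s-2r}\cG$ is injective, since its composition with $N^{s-r} : \gr^W_{s-2r}\cG \to \gr^W_{-s}\cG$ equals the isomorphism $N^s : \gr^W_s\cG \xrightarrow{\sim} \gr^W_{-s}\cG$. If some $x\in \ker N^r$ lay outside $W_{r-1}\cG$, let $s\geq r$ be maximal with $x\in W_s\cG\setminus W_{s-1}\cG$; then the nonzero image of $x$ in $\gr^W_s\cG$ would map to $0$ in $\gr^W_{s-2r}\cG$, contradicting the injectivity above.

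With these two ingredients in hand, the isomorphism follows. Injectivity: if $x\in F_\ell W_r\cG$ satisfies $N^r x \in F_{\ell+r} W_{-r-1}\cG$, then $N^r$ kills the class of $x$ in $\gr^W_r\cG$, so $x\in W_{r-1}\cG$ and hence $[x]=0$ in $F_\ell \gr^W_r\cG$. Surjectivity: given $y\in F_{\ell+r} W_{-r}\cG$, the identity of the first step produces $u\in W_r\cG$ with $y=N^r u$, so $y\in F_{\ell+r}\cG\cap N^r\cG$. By strictness of $N^r$ we have $F_{\ell+r}\cG\cap N^r\cG = N^r F_\ell\cG$, giving $y=N^r x$ for some $x\in F_\ell\cG$. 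Then $u-x\in \ker N^r\subseteq W_{r-1}\cG\subseteq W_r\cG$ by the second step, so $x=u-(u-x)\in W_r\cG$ and in fact $x\in F_\ell W_r\cG$ with $N^r x = y$.

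The only delicate point is the identity $W_{-r}\cG = N^r W_r\cG$; once it is in place, everything else is formal, so I do not anticipate a real obstacle. I would expect the identity to be routine once one notes the compatibility $N^s W_{r+s}\cG\subseteq W_{r-s}\cG$, which is a direct consequence of the defining property $N W_j\cG\subseteq W_{j-2}\cG$.
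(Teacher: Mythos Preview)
Your proof is correct and follows essentially the same approach as the paper. The paper's argument is marginally more streamlined: it only uses the weaker inclusion $W_{-r}\cG\subset N^r\cG$ (rather than your identity $W_{-r}\cG=N^rW_r\cG$), picks $a'\in F_\ell\cG$ with $N^ra'=b$ by strictness, and then runs your descent argument directly on $a'$ to force $a'\in W_r\cG$, bypassing the auxiliary element $u$; but the underlying ideas are identical.
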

\begin{proof}
It suffices to prove that for any $b\in F_{\ell+r}W_{-r}\cG$, we could find $a'\in F_\ell W_{r}\cG$ such that $a= N^r a'$.  Because $W_{-r}\cG\subset N^{r}\cG$, let $N^{r}a =b$ for some $a$. Then by strictness, there exists $a'\in F_\ell\cG$ such that $N^{r}a'=N^ra\in W_{-r}\cG$. It follows that $a'\in W_r\cG$. Indeed, if $a'\in W_{r+k}\cG$ for some $k>0$ such that $a'\neq 0\in \gr^W_{r+k}\cG$. Then $N^{r+k}a'=0\in \gr^W_{-r-k}\cG$ because $N^{r}a'=0\in \gr^W_{-r+k}\cG$, from which we conclude that $a'\in F_\ell W_{r+k-1}\cG$. Thus, iterating the procedure, $a'$ is actually in $F_\ell W_r\cG$. We conclude the proof.
\end{proof}

Let $\cP_r\colon = \ker \left(N^{r+1}\colon \gr^W_r\cG\to \gr^W_{-r-2}\cG\right)$ be the primitive part of $\gr^W\cG$, which can be identified with 
\[
\frac{\ker N^{r+1}}{\ker N^r+N\ker N^{r+2}}.
\]
See Example~\ref{defwtf}. Recall the Lefschetz decomposition:
\[
\gr^W_r\cG=\bigoplus_{\substack{\ell\geq 0,-\frac{r}{2}}} N^{\ell}\cP_{r+2\ell}\text{ \,for any\,  }r\in\Z.
\]
There are two possible ways to define the filtration on $\cP_r$: first, we have the natural filtration $F_\ell\cP_r$ induced from the inclusion $\cP_r \to \gr^W_r\cG$ and second we can also define the filtration using 
\[
\frac{F_\ell\ker N^{r+1}+\ker N^r+N\ker N^{r+2}}{\ker N^r+N\ker N^{r+2}}.
\]
But indeed, the two different methods result in the same filtration because of the strictness. Let $m\in F_\ell W_r+W_{r-1}$ such that $N^{r+1}m\in W_{-r-3}$ so that represents a class in $F_\ell\cP_r$. It suffices to find an element in $F_\ell\ker N^{r+1}$ representing the same class as $m$ in $F_\ell\cP_r$. Let $m=m_1+m_2$ for $m_1\in F_\ell W_r$ and $m_2\in W_{r-1}$. It follows that $N^{r+1}m_1\in F_{\ell+r+1}W_{-r-3}$ because both $N^{r+1}m, N^{r+1}m_2\in W_{-r-3}$ and $m_1\in F_\ell W_r$. Since $N^{r+3}\colon F_{\ell-2}W_{r+3}\to F_{\ell+r+1}W_{-r-3}$ is surjective, there exists $x\in F_{\ell-2}W_{r+3}$ such that $N^{r+3}x=N^{r+1}m_1\in F_{\ell+r+1}W_{-r-3}$. See the proof of Theorem~\ref{thm:genlef}. It follows that $m_1-N^2x\in F_\ell \ker N^{r+1}$ represents the same element as $m$ in $F_\ell\cP_r\subset F_\ell \gr^W_r$. 

\begin{cor}
The Lefschetz decomposition of $\gr^W\cG$ respects filtrations, i.e.
\[
F_\bullet\gr^W_r\cG=\bigoplus_{\substack{\ell\geq 0,-\frac{r}{2}}} N^{\ell}F_{\bullet-\ell}\cP_{r+2\ell}\text{ \,for any\,  }r\in\Z.
\]
\end{cor}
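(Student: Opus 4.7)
The inclusion $\supseteq$ is immediate: since $N$ maps $F_\bullet\cG$ into $F_{\bullet+1}\cG$, each summand $N^\ell F_{\bullet-\ell}\cP_{r+2\ell}$ lies in $F_\bullet \gr^W_r\cG$. For the converse, I plan a descending induction on $r$, exploiting the splitting
\[
\gr^W_r\cG = \cP_r \oplus N\gr^W_{r+2}\cG,
\]
which reduces to $\gr^W_r\cG = N\gr^W_{r+2}\cG$ when $r<0$ because $\cP_r=0$ in negative weights. The induction terminates because $N$ is nilpotent, so $\gr^W_r\cG=0$ for $|r|$ sufficiently large.

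The core inductive step is to show that every $\bar m \in F_b\gr^W_r\cG$ admits a splitting $\bar m = \bar p + N\bar q$ with $\bar p \in F_b\cP_r$ and $\bar q \in F_{b-1}\gr^W_{r+2}\cG$. Once this is in hand, the induction hypothesis applied to $\bar q$ gives $\bar q = \sum_{\ell\geq 0} N^\ell \bar m'_\ell$ with $\bar m'_\ell \in F_{b-1-\ell}\cP_{r+2+2\ell}$, and reindexing $\bar p + \sum_{\ell\geq 1}N^\ell \bar m'_{\ell-1}$ produces the Lefschetz decomposition of $\bar m$ with the correct filtration shifts. To produce the splitting, for $r \geq 0$ I would apply the preceding theorem to get the isomorphism $N^{r+2}: F_{b-1}\gr^W_{r+2}\cG \to F_{b+r+1}\gr^W_{-r-2}\cG$; since $N^{r+1}\bar m$ lies in the target, there is a unique $\bar q \in F_{b-1}\gr^W_{r+2}\cG$ with $N^{r+2}\bar q = N^{r+1}\bar m$, and then $\bar p := \bar m - N\bar q$ is killed by $N^{r+1}$ and lies in $F_b\gr^W_r\cG \cap \cP_r = F_b\cP_r$. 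For $r < 0$, the same theorem instead provides the isomorphism $N^{-r}: F_{b+r}\gr^W_{-r}\cG \to F_b\gr^W_r\cG$; pulling $\bar m$ back to $\bar u \in F_{b+r}\gr^W_{-r}\cG$ and setting $\bar q := N^{-r-1}\bar u$ gives $\bar q \in F_{b-1}\gr^W_{r+2}\cG$ with $N\bar q = \bar m$ and $\bar p = 0$.

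\textbf{Main obstacle.} The whole argument ultimately rests on the strict isomorphism $N^s: F_\bullet \gr^W_s\cG \to F_{\bullet+s}\gr^W_{-s}\cG$ established in the theorem just above. The only bookkeeping point that deserves real care is verifying that the natural filtration $F_b\gr^W_r\cG \cap \cP_r$ really coincides with the intrinsic filtration $F_b\cP_r$ that appears in the statement; this equivalence is exactly the content of the paragraph immediately preceding the corollary, and once it is invoked the proof is essentially a mechanical unwinding of the Lefschetz decomposition on $\gr^W\cG$ refined by the iso above.
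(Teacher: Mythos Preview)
Your argument is correct and is exactly the standard unwinding that the paper leaves implicit: the corollary is stated without proof, as an immediate consequence of the preceding theorem together with the paragraph showing the two filtrations on $\cP_r$ agree. Your descending induction via the splitting $\gr^W_r\cG=\cP_r\oplus N\gr^W_{r+2}\cG$, with the filtered lift of $\bar q$ provided by the strict isomorphism $N^{r+2}:F_{b-1}\gr^W_{r+2}\cG\to F_{b+r+1}\gr^W_{-r-2}\cG$, is precisely how one makes that immediacy rigorous, and your identification of the ``main obstacle'' matches the one point the paper actually spells out.
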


Returning to our situation, it follows that:
\begin{thm}\label{FW}
The induced operator $R^r\colon F_\ell \gr^W_r\cM \to F_{\ell+r}\gr^W_{-r}\cM$ is an isomorphism and the Lefschetz decomposition of $\gr^W\cM$ respects filtrations, i.e.
\[
F_\bullet\gr^W_r\cM=\bigoplus_{\substack{\ell\geq 0,-\frac{r}{2}}} R^{\ell}F_{\bullet-\ell}\cP_{r+2\ell}\text{ \,for any\,  }r\in\Z.
\]
\end{thm}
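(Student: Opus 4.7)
The plan is to apply the abstract theorem and its subsequent corollary, stated just above this result, to the filtered $\sD_X$-module $(\cM,F_\bullet\cM)$ with the endomorphism $R$. For this, two hypotheses must be verified: that $R$ is nilpotent on $\cM$, and that every power $R^a$ is strict with respect to $F_\bullet\cM$.

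The first verification is immediate from Theorem~\ref{endm}: because $Y$ is reduced, every multiplicity $e_i$ equals $1$, so the polynomial identity \eqref{poly} collapses to $R^{|I|}=0$, and in particular $R$ is nilpotent. The second verification is precisely the content of Theorem~\ref{strict}, which supplies the identity $R^aF_b\cM=F_{a+b}R^a\cM$ for every $a\geq 0$ and $b\in\Z$. With these two inputs the weight filtration $W_\bullet\cM$ associated to $R$ is well defined in the usual way.

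Once the hypotheses are in place, the isomorphism $R^r\colon F_\ell\gr^W_r\cM\to F_{\ell+r}\gr^W_{-r}\cM$ for $r\geq 0$ is a direct consequence of the abstract theorem, and the compatibility of the Lefschetz decomposition with the good filtration follows from the abstract corollary. The only subtle point is that the two candidate filtrations on the primitive part $\cP_r$, namely the one inherited from the inclusion $\cP_r\hookrightarrow\gr^W_r\cM$ and the one coming from the description $\cP_r\simeq \ker R^{r+1}/(\ker R^r+R\ker R^{r+2})$, coincide; this is settled exactly by the strictness argument sketched immediately before the statement, which uses the surjectivity of $R^{r+3}\colon F_{\ell-2}W_{r+3}\cM\to F_{\ell+r+1}W_{-r-3}\cM$ furnished by strictness. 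I do not foresee any genuine obstacle, since all of the analytic content has already been absorbed into the proof of Theorem~\ref{strict}.
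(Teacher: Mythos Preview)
Your proposal is correct and follows precisely the paper's own approach: the paper proves the abstract theorem and corollary for any filtered $\sD$-module with a strict nilpotent operator, and then Theorem~\ref{FW} is simply stated with ``Returning to our situation, it follows that,'' i.e.\ as an immediate specialization using Theorem~\ref{strict} for strictness and the identity~\eqref{poly} (with all $e_i=1$) for nilpotency. You have made these two verifications explicit, which is exactly what the paper leaves implicit.
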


\subsection{Identifying the primitive part $\cP_r$}

Recall that $Y^J=\cap_{j\in J}Y_j$ for a subset $J$ of the index set $I$ and $\tilde Y^{(r+1)}$ is the disjoint union of $Y^J$ such that the cardinality of $J$ is $r+1$. The morphism $\tau^{(r+1)}\colon \tilde Y^{(r+1)}\to X$ is the natural morphism induced by the closed embeddings $\tau^J\colon Y^J\to X$.

\begin{thm} \label{iden}
There exists a canonical filtered isomorphism 
\[
\phi_r\colon(\cP_r, F_\bullet \cP_r) \to  \tau^{(r+1)}_+\omega_{\tilde Y^{(r+1)}}(-r).
\]
\end{thm}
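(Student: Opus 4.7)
\medskip

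\noindent\textbf{Proof proposal.}
The plan is to construct $\phi_r$ locally in coordinates and then check canonicity so that the local maps glue to a global filtered isomorphism. Over a chart adapted to $Y$, with $Y_i=\{z_i=0\}$ for $i\in I'=\{0,1,\dots,k\}$, the reduced assumption gives $t=z_0z_1\cdots z_k$, and we identify $\cM$ with $\sD_X/(t,D_1,\dots,D_n)\sD_X$ with $R$ acting as left multiplication by $z_0\partial_0$. By Corollary~\ref{localgenR}, $\ker R^{r+1}$ is generated by the degree $k-r$ monomials $z_{\hat K}=\prod_{i\in I'\setminus K}z_i$, indexed by subsets $K\subset I'$ with $|K|=r+1$. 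Each such $K$ labels a codimension-$(r+1)$ stratum $Y^K$, and standard Kashiwara theory writes $\tau^K_+\omega_{Y^K}$ as the right $\sD_X$-module generated by a canonical element $\delta_K$ subject to the relations $\delta_K\cdot z_i=0$ for $i\in K$; in the local trivialisation one checks that $\delta_K$ sits in filtration level $-\dim Y^K=r-n$, and after Tate twist $(-r)$ it sits in level $-n$, matching the level of the generator $\xi_1\wedge\cdots\wedge\xi_n\otimes z_{\hat K}$ of $\cM$.

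First I would define, for each $K$, a $\sD_X$-linear map $\mathrm{Res}_K:\ker R^{r+1}\to \tau^K_+\omega_{Y^K}(-r)$ by $[\xi_1\wedge\cdots\wedge\xi_n\otimes z_{\hat K}P]\mapsto \delta_K\cdot P$ on the summand generated by $z_{\hat K}$, and by $0$ on summands generated by $z_{\hat L}$ for $L\neq K$ (which must then be identified with subfactors supported on other strata). Invariantly, this is the iterated Poincar\'e residue $\Res_{Y^K}$ applied to the relative log form that represents $[z_{\hat K}\cdot P]$, and therefore depends only on $Y^K$ and not on the choice of coordinates or the distinguished index $0$. Taking the sum over $K$ gives a well-defined local map $\bigoplus_K\mathrm{Res}_K:\ker R^{r+1}\to \tau^{(r+1)}_+\omega_{\tilde Y^{(r+1)}}(-r)$.

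Next I would show this map descends to $\cP_r$. The subspace $\ker R^r$ is generated by monomials $z_{\hat L}$ with $|L|\leq r$, i.e.\ supported on strata of codimension $\leq r$, and the residues along codimension-$(r+1)$ strata annihilate such generators. For the term $R\ker R^{r+2}$: an element $R\cdot z_{\hat L}$ with $|L|=r+2$ rewrites, using the relations $D_j=z_j\partial_j-z_0\partial_0$, as a sum $\sum_{j\in L}(\pm)z_{\hat L}\cdot z_j\partial_j/e$-type expressions that land in $\ker R^r$ after one moves $z_j$ past $\partial_j$, using $z_j\partial_j z_{\hat L}=z_{\hat L}z_j\partial_j+z_{\hat L}$ and $z_{\hat L}z_j=z_{\widehat{L\setminus\{j\}}}$; the resulting contribution to each $\tau^K_+\omega_{Y^K}(-r)$ is easily checked to vanish since $\delta_K\cdot z_j=0$ for $j\in K$. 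This gives a well-defined local morphism $\phi_r^{\mathrm{loc}}:\cP_r\to\tau^{(r+1)}_+\omega_{\tilde Y^{(r+1)}}(-r)$.

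To show $\phi_r^{\mathrm{loc}}$ is a filtered isomorphism I would argue as follows. Surjectivity: every $\delta_K\cdot P$ is hit by $[z_{\hat K}\cdot P]$, and the generators $\delta_K$ generate $\tau^K_+\omega_{Y^K}$ over $\sD_X$. For filtered bijectivity, I use that $\delta_K\cdot P$ lies in $F_{\ell-n}\tau^K_+\omega_{Y^K}(-r)$ exactly when $P$ has order $\leq \ell$, while $[z_{\hat K}\cdot P]$ lies in $F_{\ell-n}\cM$ for the same reason; combined with the strictness Theorem~\ref{strict} and the filtered Lefschetz decomposition of Theorem~\ref{FW}, one concludes that $\phi_r^{\mathrm{loc}}$ is strict for $F_\bullet$. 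Finally, for injectivity I compare characteristic cycles: Theorem~\ref{cycle} gives $cc(\cM)=\sum_J|J|[T^*_{Y^J}X]$ (reduced case), and the Lefschetz decomposition of $\gr^W\cM$ forces $cc(\cP_r)=\sum_{|K|=r+1}[T^*_{Y^K}X]=cc(\tau^{(r+1)}_+\omega_{\tilde Y^{(r+1)}})$; since $\phi_r^{\mathrm{loc}}$ is already surjective, it must be an isomorphism. Canonicity of the iterated residue description shows the $\phi_r^{\mathrm{loc}}$ glue to a global $\phi_r$.

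The main obstacle is checking that the naive map $[z_{\hat K}\cdot P]\mapsto \delta_K\cdot P$ really does descend to $\cP_r$ and is compatible with the filtration intrinsically — i.e.\ that the various identifications coming from different choices of the auxiliary index $0$ and the signs introduced by the Koszul conventions for $\Omega^{\bullet+n}_{X/\Delta}(\log Y)\otimes\sD_X$ agree. This will amount to a careful bookkeeping of residues of log forms along nested strata, matching the standard signs appearing in the log residue complex. Once that is done, everything else is a formal consequence of strictness, Kashiwara's equivalence for closed embeddings, and the characteristic cycle computation already established.
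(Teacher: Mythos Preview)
Your overall strategy---build a residue map, check it descends to $\cP_r$, then use the characteristic cycle computation to conclude it is an isomorphism---is exactly the paper's, and the endgame (filtered surjectivity plus $cc(\cP_r)=cc(\tau^{(r+1)}_+\omega_{\tilde Y^{(r+1)}})$ forced by summing over $r$) matches the paper verbatim.

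The gap is in your descent step, specifically the vanishing on $R\ker R^{r+2}$. Your computation does not work as written: taking $|L|=r+2$ and $j\in L$, one has $R[z_{\hat L}]=[z_j\partial_j\, z_{\hat L}]=[z_{\widehat{L\setminus\{j\}}}\partial_j]$, which lies in $\ker R^{r+1}$ but \emph{not} in $\ker R^r$, and under your residue prescription maps to $\delta_{L\setminus\{j\}}\cdot\partial_j$ with $j\notin L\setminus\{j\}$---so the relation $\delta_K\cdot z_j=0$ for $j\in K$ is not the relevant one and the term is not visibly zero. Worse, different choices of $j\in L$ give representatives supported on different summands $\tau^{L\setminus\{j\}}_+\omega_{Y^{L\setminus\{j\}}}$, so one first has to know the map is well-defined on $\cM$ (not just on the chosen generators) before any such calculation makes sense.

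The paper sidesteps both issues by constructing the map \emph{globally} from the outset: it is the composite $\cM\xrightarrow{\frac{dt}{t}\wedge}\cH^0\xrightarrow{\Res_{\tilde Y^{(r+1)}}}\bigoplus_J\tau^J_+\omega_{Y^J}(*D^J)(-r)$, where $\cH^0=\sH^0(\Omega^{\bullet+n+1}_X(\log Y)|_Y\otimes\sD_X)$. Well-definedness is automatic, and the long exact sequence of the short exact sequence
\[
0\to \Omega^{\bullet+n}_{X/\Delta}(\log Y)|_Y\otimes\sD_X\xrightarrow{\frac{dt}{t}\wedge}\Omega^{\bullet+n+1}_X(\log Y)|_Y\otimes\sD_X\to\Omega^{\bullet+n+1}_{X/\Delta}(\log Y)|_Y\otimes\sD_X\to 0
\]
identifies the connecting map with $R$ (cf.\ Theorem~\ref{endm}), so $\frac{dt}{t}\wedge$ annihilates $\mathrm{im}\,R$ \emph{on the nose}. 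This kills $R\ker R^{r+2}$ for free. A local computation then shows the image of $\ker R^{r+1}$ has no poles along $D^J$ and hence lands in $\tau^{(r+1)}_+\omega_{\tilde Y^{(r+1)}}(-r)\subset\bigoplus_J\tau^J_+\omega_{Y^J}(*D^J)(-r)$. Your gluing concern also evaporates, since the map is global by construction.
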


\begin{proof}
Denote by $D^J$ the normal crossing divisor $Y^J\cap Y_{I\setminus J}$ on $Y^J$. 
The residue map 
\[
\Res_{\tilde Y^{(r+1)}}\colon\Omega^{\bullet+n+1}_{X}(\log Y)|_Y \to \bigoplus_{|J|=r+1}\Omega^{\bullet+n-r}_{ Y^{J}}(\log D^J)
\]
extends to a morphism of complexes of filtered induced $\sD_X$-modules
\[
\Res_{\tilde Y^{(r+1)}}\colon\Omega^{\bullet+n+1}_{X}(\log Y)|_Y\otimes\sD_X \to \bigoplus_{|J|=r+1}\Omega^{\bullet+n-r}_{Y^{J}}(\log D^J)\otimes\sD_X
.\]
Let $\cH^k\colon=\sH^k \left(\Omega^{\bullet+n+1}_{X}(\log Y)|_Y\otimes\sD_X \right)$. Looking at the $0$-th cohomology, by Example~\ref{starex} we obtain that
\[
\Res_{\tilde Y^{(r+1)}}\colon \cH^0 \to \bigoplus_{|J|=r+1}\tau^J_+\omega_{Y^J}(* D^J)(-r).
\] 
Since the morphism $\frac{dt}{t}\wedge\colon \Omega^{\bullet+n}_{X/\Delta}(\log Y)\to \Omega^{\bullet+n+1}_X(\log Y)$ also extends to a morphism of the complexes of induced $\sD_X$-modules, there is a short exact sequence 
\[
0 \to \Omega^{\bullet+n}_{X/\Delta}(\log Y)|_Y\otimes\sD_X \xrightarrow{\frac{dt}{t}\wedge}   \Omega^{\bullet+n+1}_X(\log Y)|_Y\otimes\sD_X \to \Omega^{\bullet+n+1}_{X/\Delta}(\log Y)|_Y\otimes\sD_X \to 0.
\]
The associated long exact sequence
\begin{equation}\label{dttr}
\begin{tikzcd}
  0 \arrow[r]
    & \cH^{-1} \arrow[r]
        \arrow[d, phantom, ""{coordinate, name=Z}]
      & \cM \arrow[dll,
                 "R",
    rounded corners,
    to path={ -- ([xshift=2ex]\tikztostart.east)
        |- (Z) [near end]\tikztonodes
      -| ([xshift=-2ex]\tikztotarget.west) 
      -- (\tikztotarget)}] \\
         \cM \arrow[r,"\frac{dt}{t}\wedge"]
    & \cH^0 \arrow[r]
& 0
\end{tikzcd}
\end{equation}
implies the morphism $\frac{dt}{t}\wedge\colon \cM\to \cH^0$ vanishes on the image of $R$. 
To motivate the proof, let me do some local calculations. Let $\zeta=\frac{dz_1}{z_1}\wedge \frac{dz_2}{z_2}\wedge\cdots \wedge\frac{dz_k}{z_k}\wedge\cdots\wedge dz_n$ represent a local frame of $\Omega^n_{X/\Delta}(\log Y)|_Y$. Then a local section of $\cM$ is represented by $\zeta\otimes P$ for $P$ a local section $\sD_X$ and, $\Res_{\tilde Y^{(r+1)}} \frac{dt}{t}\wedge\zeta\otimes P$ is a section of $\bigoplus_{|J|=r+1}\Omega^{n-r}_{ Y^{J}}(\log D^J)\otimes\sD_X$. Post-composing with the projection 
\[
\bigoplus_{|J|=r+1}\Omega^{n-r}_{ Y^{J}}(\log D^J)\otimes\sD_X \to \bigoplus_{|J|=r+1}\tau^J_+\omega_{Y^J}(* D^J)(-r),
\]
we make the morphism explicit:
\[
\Res_{\tilde Y^{(r+1)}}\circ \frac{dt}{t}\wedge\colon\cM \to \bigoplus_{|J|=r+1}\tau^J_+\omega_{Y^J}(* D^J)(-r), \quad [\zeta\otimes P] \mapsto [\Res_{\tilde Y^{(r+1)}} \frac{dt}{t}\wedge\zeta\otimes P].
\]
Let $\zeta\otimes z_{\overline J}P$ represent a class in $\ker R^{r+1}$ for some fixed ordered index subset $J$ with $|J|=r+1$, where $z_{\overline J}= \prod_{j\in I\setminus J}z_j$ (Corollary~\ref{localgenR}). Its image under the above morphism is only contained in the component $\tau^J_+\omega_{Y^J}(* D^J)(-r)$ because $z_{\overline J}$ vanishes on other components. Thus, the image is the class represented by
\begin{equation}\label{eq:resimage}
\begin{aligned}
  &\Res_{\tilde {Y}^{r+1}}\frac{dz_0}{z_0}\wedge\frac{dz_1}{z_1}\wedge\cdots \frac{dz_k}{z_k}\wedge\cdots\wedge dz_n\otimes z_{\overline J}P \\
  &= \pm \frac{dz_{\overline J}}{z_{\overline J}} \wedge dz_{k+1}\wedge\cdots dz_n \otimes z_{\overline J}P \in \Omega^{n-r}_{Y^J}(\log D^J)\otimes \sD_X,
\end{aligned}
\end{equation}
where $\frac{dz_{\overline J}}{z_{\overline J}}=\bigwedge_{j\in I\setminus J}\frac{dz_j}{z_j}$ and the sign depends on the order of $J$. In fact, from the calculation, we see that the image does not have any pole along $D^J$, so it is contained in the subsheaf consisting of classes represented by $\Omega^{n-r}_{Y^J}\otimes \sD_X$. This means that the class of~\eqref{eq:resimage} in $\tau^J_+\omega_{Y^J}(* D^J)(-r)$ is also contained in the image of the inclusion
\[
\begin{aligned}
  \tau^J_+\omega_{Y^J}(-r) &\to \tau^J_+\omega_{Y^J}(* D^J)(-r), \\
   [{dz_{\overline J}} \wedge dz_{k+1}\wedge\cdots dz_n \otimes P] &\mapsto [\frac{dz_{\overline J}}{z_{\overline J}} \wedge dz_{k+1}\wedge\cdots dz_n \otimes z_{\overline J}P].
\end{aligned}
\]
See Example~\ref{starex}. It follows that there is a morphism $\rho_r$ of filtered $\sD_X$-modules making the following diagram commute. 
\[ 
\begin{tikzcd}
\ker R^{r+1} \arrow[dashed]{rr}{\rho_r}\arrow[hook]{d} &  & \tau^{(r+1)}_+\omega_{Y^{(r+1)}}(-r)\arrow[hook]{d} \\
\cM \arrow{r}{\frac{dt}{t}\wedge} & \cH^0 \arrow{r}{\Res} & \bigoplus_{|J|=r+1}\tau^J_+\omega_{Y^J}(* D^J)(-r)
\end{tikzcd}
\]
For a local section $\zeta\otimes z_{ K}P$ where $z_{ K}=\prod_{i\in  K}z_i$ a monomial of degree $k-r+1$, representing a class in $\ker R^r$, its image under $\rho_r$ is indeed zero because $z_{K}$ annihilates all $\Omega^{n-r}_{Y^J}(\log D^J)$ for index subset $J$ such that $|J|=r+1$. This implies the morphism $\rho_r$ kills $\ker R^r$. The morphism $\rho_r$ also kills $R\ker R^{r+2}$, because by~\eqref{dttr} $\frac{dt}{t}\wedge$ vanishes on the image of $R$. Thus it factors through
\[
\phi_r\colon \cP_r=\frac{\ker R^{r+1}}{\ker R^r+R\ker R^{r+2}} \to \tau^{(r+1)}_+\omega_{Y^{(r+1)}}(-r).
\]
The morphism $\phi_r$ is filtered surjective because for $dz_{\bar J}\wedge dz_{k+1}\wedge\cdots \wedge dz_n\otimes P\in \Omega^{n-r}_{Y^J}\otimes F_{\ell}\sD_X$ representing a class in $F_\ell \tau^{J}_+\omega_{Y^{J}}(-r)$ with $|J|=r+1$, we can find a lifting class represented by $\zeta\otimes z_{\overline J} P$ in $F_\ell\ker R^{r+1}$. It follows that 
\[
cc(\cP_r)\geq cc(\tau^{(r+1)}_+\omega_{Y^{(r+1)}})=\sum_{|J|=r+1} \left[T^*_{Y^J}X\right].
\]
Summing up the inequalities gives
\[
\sum_{r\geq 0}(r+1)cc(\cP_r) \geq \sum_{r\geq 0}(r+1)\sum_{|J|=r+1} \left[T^*_{Y^J}X \right]=\sum_{J\subset I} |J|\cdot \left[T^*_{Y^J}X\right].
\]
On the other hand, by the Lefschetz decomposition and Theorem~\ref{cycle}, we have
\[
\sum_{J\subset I} |J|\cdot\left[T^*_{Y^J}X\right]=cc(\cM)=cc(\gr^W\cM)=\sum_{r\geq 0} (r+1)cc(\cP_r).
\]
It follows that all inequalities must be equalities, i.e. $cc(\cP_r)=cc(\tau^{(r+1)}_+\omega_{\tilde Y^{(r+1)}})$ and therefore, that $\phi_r$ is a filtered isomorphism~\cite[Proposition 3.1.2]{HTT}.
\end{proof}

\section{Reduced case: Sesquilinear pairing on $\cM$ and limiting mixed Hodge structure}\label{sec:redses}
  \subsection{Sesquilinear pairing} 
We begin to construct the last data we need for the limiting mixed Hodge structure -- Sesquilinear pairing. In the sense that $\cM$ is the specialization of ${i_{X_t}}_+\omega_{X_t}$ for $t\neq 0$, the sesquilinear $S\colon\cM\otimes_\C \overline{\cM}\to \mathfrak{C}_X$ should also be the specialization of ${i_{X_t}}_+S_{X_t}$, where $S_{X_t}$ is defined in $\S$\ref{sec:pre}. Presumably one would expect that the pairing 
\[
\begin{aligned}
\langle S(\left[\zeta_1\otimes P_1 \right], \overline{\left[\zeta_2\otimes P_2\right]}),\eta \rangle &=  \lim_{t\to 0}\langle {i_{X_t}}_+S_{X_t}(\zeta_1\otimes P_1,\overline{\zeta_2\otimes P_2)},\eta\rangle \\
&= \lim_{t\to 0}  \frac{\varepsilon(n+1)}{(2\pi \sqrt{-1})^{n}} \int_{X_t}  P_1\overline{P_2}\eta  \wedge \zeta_1\wedge \overline{\zeta_2}
\end{aligned} 
\] 
should work on $\cM$ for $\zeta_i\otimes P_i$, $i=1,2$ sections of $\Omega^n_{X/\Delta}\otimes \sD_X$ over local chart $U$ representing classes of $\cM$, and $\eta$ is a test function over $U$. But one could check that the integral $\int_{X_t} P_1\overline{P_2}\left(\eta\right)   \zeta_1\overline{\wedge \zeta_2}$ could have order $(-\log |t|^{2})^{k}$ near the origin where $k+1$ is the number of components that intersect in $U$, so the limit may not exist. To avoid the issue, we use a Mellin transform device following the ideas in~\cite{SC02}: locally,
\[
\begin{aligned}
&\langle S(\left[\zeta_1\otimes P_1\right],\overline{\left[\zeta_2\otimes P_2\right]}),\eta \rangle \\
&\colon = \Res_{s=0}\frac{\varepsilon(n+2)}{(2\pi \sqrt{-1})^{n+1}} \int_X |t|^{2s} P_1\overline{P_2}\eta  \frac{dt}{t}\wedge \zeta_1\wedge\overline{\frac{dt}{t}\wedge \zeta_2} \\
&=\Res_{s=0} \frac{\varepsilon(2)}{2\pi\sqrt{-1}}\int_\Delta |t|^{2s}\frac{dt}{t}\wedge \overline{\frac{dt}{t}} \left(  \frac{\varepsilon(n+1)}{(2\pi \sqrt{-1})^{n}} \int_{X_t}  P_1\overline{P_2}\eta  \wedge \zeta_1\wedge \overline{\zeta_2} \right) \\
&= \Res_{s=0}  \frac{\varepsilon(2)}{2\pi\sqrt{-1}}\int_\Delta |t|^{2s}\frac{dt}{t}\wedge \overline{\frac{dt}{t}}  \langle {i_{X_t}}_+S_{X_t}(\zeta_1\otimes P_1,\overline{\zeta_2\otimes P_2)},\eta\rangle.
\end{aligned}
\]
The last expression in the definition to some extent explains that $S$ is the specialization of ${i_{X_t}}_+S_{X_t}$ and the $0$-current $\Res_{s=0}  \frac{\varepsilon(2)}{2\pi\sqrt{-1}}\int_\Delta |t|^{2s}\frac{dt}{t}\wedge \overline{\frac{dt}{t}}$ is doing the job of renormalization of $\ {i_{X_t}}_+S_{X_t}$ for $t\neq 0$. In fact, for any test function $g$ on $\Delta$, we have
\[
\Res_{s=0}  \frac{\varepsilon(2)}{2\pi\sqrt{-1}}\int_\Delta |t|^{2s}\frac{dt}{t}\wedge \overline{\frac{dt}{t}} g=g(0).
\]
We have not checked that $S$ is well-defined, but let us do an example to see how the Mellin transform works.

\begin{ex}
Suppose $Y$ that is smooth. Then $R$ is identical zero and $\cM\simeq {i_Y}_+\omega_Y$, by Theorem~\ref{iden}. Thus, the pairing $S$ should recover the natural pairing $S_{Y}$. In local coordinates $t=z_0$ and for any local sections $\zeta_i\otimes P_i=dz_1\wedge dz_2\wedge\cdots\wedge dz_n\otimes P_i$ of $\Omega^n_{X/\Delta}(\log Y)\otimes\sD_X$, $i=1,2$ over local chart $U$, 
\[
\begin{aligned}
&\langle S(\left[\zeta_1\otimes P_1\right], \overline{\left[\zeta_2\otimes P_2\right])},\eta\rangle  \\
&=\Res_{s=0}\frac{\varepsilon(n+2)}{(2\pi\sqrt{-1})^{n+1}}\int_X |t|^{2s} P_1\overline{P_2}(\eta) \frac{dt}{t}\wedge \zeta_1\wedge\overline{\frac{dt}{t}\wedge \zeta_2} \\
&=\Res_{s=0} \int_X |t|^{2s-2}  P_1\overline{P_2}(\eta) \bigwedge^n_{i=0} \frac{\sqrt{-1}}{2\pi} dz_i\wedge \overline{dz_i} \\
& \quad \text{ integration by parts on }t\text{ and }\bar t \\
&= \Res_{s=0}  \int_{X} \frac{|t|^{2s}}{s^2} \partial_0\overline{\partial_0} \left( P_1\overline{P_2}(\eta) \right)  \bigwedge^n_{i=0} \frac{\sqrt{-1}}{2\pi} dz_i\wedge \overline{dz_i} .
\end{aligned}
\]
Because the Laurent expansion of $s^{-2}|t|^{2s}$ is $\sum^\infty_{\ell=0} \left(\log |t|^2 \right)^\ell s^{\ell-2}$, the above  continuously equals to, by Poincar\'e-Lelong equation~\cite[Page 388]{GH}
\[
\begin{aligned}
&\int_{X} \log |t|^2 \partial_0\overline{\partial_0}\left( P_1\overline{P_2}(\eta) \right) \bigwedge^n_{i=0} \frac{\sqrt{-1}}{2\pi} dz_i\wedge \overline{dz_i} \\
&= \int_{Y}  P_1\overline{P_2}(\eta)    \bigwedge^n_{i=1} \frac{\sqrt{-1}}{2\pi} dz_i\wedge \overline{dz_i} \\
&=\frac{\varepsilon(n+1)}{(2\pi\sqrt{1})^n}\int_Y P_1\overline{P_2}(\eta) \zeta_1\wedge\overline{\zeta_2} \\
&=\langle {i_Y}_+S_Y([\zeta_1\otimes P_1],\overline{[\zeta_2\otimes P_2]}),\eta\rangle .
\end{aligned}
\]
We can take a cleaner point of view. In the case $Y$ is smooth, the form $P_1\overline{P_2}(\eta)   \zeta_1\overline{\wedge \zeta_2}$ is smooth in the neighborhood of $Y$. It follows that  ${i_{X_t}}_+S_{X_t}$ extends smoothly to $t=0$ and the limit of ${i_{X_t}}_+S_{X_t}$ is exactly ${i_Y}_+S_Y$.
\end{ex}

When $Y$ has several smooth irreducible components, the idea of computation is similar to the above. Now we begin to establish the statements needed to ensure $S$ is well-defined. For any test function $\eta$ over an arbitrary open subset $U$ of $X$ and two sections $m_1,m_2 \in \bH^0\left(U,\Omega^n_{X/\Delta}(\log Y)\otimes\sD_X\right)$, the $(2n+2)$-form $\frac{dt}{t}\wedge m_1 \wedge\overline{\frac{dt}{t}\wedge m_2}(\eta)$ is smooth away from $Y$ but with poles along $Y$ supported in $U$. Locally, say $m_i=\zeta \otimes P_i$ for $\zeta=\frac{dz_1}{z_1}\wedge \frac{dz_2}{z_2}\wedge\cdots \frac{dz_k}{z_k}\wedge dz_{k+1}\wedge\cdots \wedge dz_n$ and $i=1,2$, the $(2n+2)$-form $\frac{dt}{t}\wedge m_1 \wedge\overline{\frac{dt}{t}\wedge m_2}(\eta)$ is just $P_1\overline {P_2}(\eta)\frac{dt}{t}\wedge\zeta \wedge \overline{ \frac{dt}{t}\wedge \zeta}$. Let  $F(s)=F(s,m_1,m_2,\eta)$ be the meromorphic continuation via integration by parts of the following function
\[
\frac{\varepsilon(n+2)}{(2\pi\sqrt{-1})^{n+1}}\int_X |t|^{2s} \frac{dt}{t}\wedge m_1 \wedge\overline{\frac{dt}{t}\wedge m_2}(\eta).
\]
The function $F(s)$ is holomorphic when $\mathrm{Re\,} s>0$ and has potential poles at non-positive integers. Note that $F(s)$ is independent of local coordinates. We are only interested in the polar part of the function $F(s)$ at $s=0$. 

\begin{thm}
The polar part of $F(s)$ at $s=0$ only depends on the classes of $m_1$ and $m_2$ in $\cM$.
\end{thm}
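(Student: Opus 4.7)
My plan is to reduce the claim to showing that $F(s,m_1,m_2,\eta)$ is holomorphic at $s=0$ whenever $m_1$ represents the zero class in $\cM$; the case of $m_2$ is handled symmetrically. By Theorem~\ref{M}, such an $m_1$ can be written $m_1 = tm_1' + \partial_K\beta$, where $m_1' \in H^0(U,\Omega^n_{X/\Delta}(\log Y)\otimes\sD_X)$, $\beta \in H^0(U,\Omega^{n-1}_{X/\Delta}(\log Y)\otimes\sD_X)$, and $\partial_K$ denotes the Koszul-type differential of the resolution of Theorem~\ref{tilM}, locally $\partial_K(\gamma\otimes Q) = \sum_i \xi_i\wedge\gamma\otimes D_iQ$. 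I will then handle the two cases $m_1=tm_1'$ and $m_1=\partial_K\beta$ separately.

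For the $t$-multiple case I use the identity $\tfrac{dt}{t}\wedge(tm_1') = dt\wedge m_1'$ together with the local computation $dt\wedge\zeta = e_0\,t\cdot\tfrac{dz_0}{z_0}\wedge\zeta$ for the standard trivialization $\zeta=\tfrac{dz_1}{z_1}\wedge\cdots\wedge\tfrac{dz_k}{z_k}\wedge dz_{k+1}\wedge\cdots\wedge dz_n$, since all other summands of $dt\wedge\zeta = \sum_i e_i\tfrac{t}{z_i}dz_i\wedge\zeta$ vanish. The integrand thus acquires an extra factor of $t$, and in polar coordinates $z_i = r_ie^{i\theta_i}$ the radial Mellin integral on each monomial becomes $\int_0^\varepsilon r_i^{(2s+1)e_i-1}(\text{smooth in }\theta_i)\,dr_i$, whose poles lie at $s\in\{-\tfrac{1}{2},-\tfrac{1}{2}-\tfrac{1}{e_i},\dots\}$, safely away from $s=0$. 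A Fubini argument then extends holomorphy to the full integral.

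For the coboundary case, the decisive local fact is $D_i|t|^{2s}=0$, which follows from $z_i\partial_i|t|^{2s} = se_i|t|^{2s}$ together with $D_i = \tfrac{1}{e_i}z_i\partial_i - \tfrac{1}{e_0}z_0\partial_0$. Integration by parts on the vector field $D_i$ (whose formal adjoint is $-D_i$) moves $D_i$ off the test function onto the form coefficient $\phi_i$ of $\tfrac{dt}{t}\wedge\xi_i\wedge\gamma\wedge\overline{\tfrac{dt}{t}\wedge m_2}$ without disturbing the factor $|t|^{2s}$:
\[
\int|t|^{2s}\phi_i\cdot(D_iQ\bar P_2)(\eta) = -\int|t|^{2s}(D_i\phi_i)\cdot(Q\bar P_2)(\eta).
\]
Summing over $i$, the total contribution assembles (via the identification $\DR_X\cM\simeq\Omega^{\bullet+n}_{X/\Delta}(\log Y)|_Y$ of Corollary~\ref{spm}, which intertwines $\partial_K$ with the exterior differential $d$) into the $|t|^{2s}$-pairing against a $d$-exact $(n+1,n+1)$-form. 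Applying Stokes' theorem together with the key identity $d|t|^{2s} = s|t|^{2s}\bigl(\tfrac{dt}{t}+\tfrac{d\bar t}{\bar t}\bigr)$ then extracts an explicit factor of $s$, annihilating the simple pole at $s=0$. An a priori bound on the pole order of $F(s)$, obtained by local monomialization of $t$ and standard Mellin analysis as in \cite{SC02}, allows this to be iterated to handle higher-order poles.

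The main obstacle is the coboundary case: after the $D_i|t|^{2s}=0$ step one must carefully transport the remaining $\partial_K$-coboundary into a $d$-exact form so that Stokes' theorem applies with vanishing boundary, bookkeeping the combinatorial interaction between the wedge-with-$\xi_i$ and $D_i$-action parts of $\partial_K$ and the Koszul sign conventions adopted in the paper. The $t$-multiple case, by contrast, is purely local and monomial.
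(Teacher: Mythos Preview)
Your overall strategy---reduce to $m_1$ representing zero in $\cM$, split as $tm_1'+\partial_K\beta$, and treat the two pieces separately---matches the paper exactly. Your $t$-multiple argument is correct; the paper is even more direct, simply observing that at $s=0$ the integrand $\tfrac{1}{\bar z_I}P_1'\overline{P_2}(\eta)\,\bigwedge_i\tfrac{\sqrt{-1}}{2\pi}dz_i\wedge d\bar z_i$ is locally integrable, so $F(s)$ is holomorphic there.

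The coboundary case is where your argument drifts. Your first integration-by-parts step is right and is in fact the \emph{entire} argument: with $\phi_i=\pm|z_I|^{-2}$ the local density of $\tfrac{dt}{t}\wedge\xi_i\wedge\gamma\wedge\overline{\tfrac{dt}{t}\wedge\zeta}$, one has not only $D_i|t|^{2s}=0$ but also $D_i\phi_i=0$ (because $z_j\partial_j|z_I|^{-2}=-|z_I|^{-2}$ for every $0\le j\le k$, so the two terms in $D_j$ cancel; and $\partial_j|z_I|^{-2}=0$ for $j>k$). Hence your formula already reads $0=0$, and $F(s)$ vanishes \emph{identically}, not merely at $s=0$. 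The paper packages this as the clean identity
\[
F(s,\zeta\otimes z_j\partial_jP_1,\zeta\otimes P_2,\eta)\;=\;-s\,F(s,\zeta\otimes P_1,\zeta\otimes P_2,\eta)\qquad(0\le j\le k),
\]
so that $D_j=z_j\partial_j-z_0\partial_0$ contributes $-s-(-s)=0$; for $j>k$ the weight does not involve $z_j$ and one gets $0$ by a single integration by parts.

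Your remaining steps---assembling into a $d$-exact top form, invoking Stokes with $d|t|^{2s}=s|t|^{2s}(\tfrac{dt}{t}+\tfrac{d\bar t}{\bar t})$ to extract one factor of $s$, and then ``iterating'' to kill higher-order poles---are both unnecessary and not well-posed. In particular the iteration cannot work as stated: the relation $m_1=\partial_K\beta$ is used once, and there is no reason $\beta$ should again be a coboundary, so you cannot repeat the maneuver to reduce a pole of order $>1$. Fortunately none of this is needed once you notice that $D_i$ annihilates both the weight $|t|^{2s}$ and the form density $\phi_i$.
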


\begin{proof}
Let $\left \{\rho_\lambda\right\}$ be a partition of unity of an open covering $\left\{U_\lambda\right\}$ by local charts. Then 
\[
F(s)=\sum_{\lambda} \frac{\varepsilon(n+2)}{(2\pi\sqrt{-1})^{n+1}}\int_{U_\lambda} |t|^{2s} \frac{dt}{t}\wedge m_1 \wedge\overline{\frac{dt}{t}\wedge m_2}(\rho_\lambda\eta).
\]
Since $\rho_\lambda \eta$ is a test function over $U_\lambda$, without loss of generality, we can assume $U$ itself is a local chart. It follows that we can assume that $m_i=\zeta\otimes P_i$ for $i=1,2$ and $\zeta=\frac{dz_1}{z_1}\wedge \frac{dz_2}{z_2}\wedge\cdots\frac{dz_k}{z_k}\wedge dz_{k+1}\wedge\cdots\wedge dz_n$. We begin with some properties of $F(s)$.

\begin{lem} 
Under the assumption that $m_i=\zeta \otimes P_i$ for $\zeta=\frac{dz_1}{z_1}\wedge\frac{dz_2}{z_2}\wedge\cdots\wedge\frac{dz_k}{z_k}\wedge\cdots\wedge dz_n$ and for $i=1,2$, the followings are valid.
\begin{enumerate}
  \item the order of the pole of $F(s)$ at $s=0$ is at most $k+1$;
  \item if $P_i=tP'_i$ for one of $i=1,2$, then $F(s)$ is holomorphic at $s=0$;
  \item for $0\leq j\leq k$ we have, 
\[
F(s,\zeta_1\otimes P_1, \overline{\zeta_2\otimes {z_j\partial_j}P_2},\eta)= F(s, \zeta_1\otimes z_j\partial_jP_1,\overline{\zeta_2\otimes P_2},\eta) = -sF(s, \zeta_1\otimes P_1,\overline{\zeta_2\otimes P_2},\eta).
\] 
\end{enumerate}
\end{lem}

\begin{proof}[Proof of the lemma] 
The Laurent expansion of $F(s)$ at $s=0$ is
\[
\begin{aligned}
F(s) &=  \int_{X}   |z_I|^{2s-2}  P_1\overline{P_2}(\eta) \bigwedge^n_{i=0}(\frac{\sqrt{-1}}{2\pi}dz_i\wedge{d\overline{z_i}}),\quad \text{where } z_I=\prod_{i\in I}z_i \\
  &=  \int_{X}  \frac{|z_I|^{2s}}{s^{2k+2}} \partial_I\overline{\partial_I}  P_1\overline{P_2}(\eta) \bigwedge^n_{i=0}(\frac{\sqrt{-1}}{2\pi}dz_i\wedge{d\overline{z_i}}),\quad \text{where }\partial_I=\prod^k_{i=0}\partial_i \\
  &=  \sum^{\infty}_{\ell=0}\frac{s^{\ell-(2k+2)}}{\ell!}\int_{X}  \left( \log |z_I|^2 \right)^{\ell}\partial_I\overline{\partial_I} P_1\overline{P_2}(\eta) \bigwedge^n_{i=0}(\frac{\sqrt{-1}}{2\pi}dz_i\wedge{d\overline{z_i}}).
\end{aligned}
\]
The order of the pole at $s=0$ is at most $k+1$: if $\ell<k+1$, the form 
\[
\left(\log |z_I|^2 \right)^{\ell}\partial_I\overline{\partial_I}P_1\overline{P_2}(\eta) \bigwedge^n_{i=0}(\frac{\sqrt{-1}}{2\pi}dz_i\wedge{d\overline{z_i}})
\]
is actually exact because one of $a_i$'s must be $0$ in the expansion of $\left( \log |z_I|^2 \right)^{\ell}$ into a linear combination of $\prod^k_{i=0}\left(\log |z_i|^2 \right)^{a_i}$ with $\sum^k_{i=0}a_i=\ell<k+1$. This proves $(1)$.

Suppose that $P_1=tP'_1$. Then the function
\[
\begin{aligned}
F(s) &=  \int_{X}   |z_I|^{2s-2}  tP'_1\overline{P_2}(\eta) \bigwedge^n_{i=0}(\frac{\sqrt{-1}}{2\pi}dz_i\wedge{d\overline{z_i}}). \\
\end{aligned}
\]
is well-defined at $s=0$ because the form 
\[
 \frac{1}{\overline{z_I}} P'_1\overline{P_2}(\eta) \bigwedge^n_{i=0}(\frac{\sqrt{-1}}{2\pi}dz_i\wedge{\overline{dz_i}})
\]
is integrable. The same argument works for the case when $P_2=tP'_2$. This proves $(2)$.

Now we turn to the last statement
\[
\begin{aligned}
& F(s,\zeta\otimes P_1, \overline{\zeta\otimes{z_j\partial_j}P_2},\eta) \\
=& \frac{\varepsilon(n+2)}{(2\pi\sqrt{-1})^{n+1}}\int_X |t|^{2s}\overline{z_j\partial_j}(P_1\overline{P_2}\eta)\frac{dz_0}{z_0}\wedge\frac{dz_1}{z_1}\wedge\cdots\wedge dz_n\wedge \overline{\frac{dz_0}{z_0}\wedge\frac{dz_1}{z_1}\wedge\cdots\wedge dz_n} \\
  =&\int_X  |z_{I\setminus \{j\}}|^{2s-2}z^{s-1}_j\overline{z^{s}_j \partial_j}P_1\overline{P_2}\eta\bigwedge^n_{i=0}(\frac{\sqrt{-1}}{2\pi}dz_i\wedge{\overline{dz_i}}) \\
  =&-s\int_X   |z_I|^{2s-2} P_1\overline{P_2}\eta \bigwedge^n_{i=0}(\frac{\sqrt{-1}}{2\pi}dz_i\wedge{d\overline{z_i}})  \quad \text{by integration by part on }  dz_j \\
  =&-sF(s,\zeta\otimes P_1,\overline{\zeta\otimes P_2},\eta).
\end{aligned}
\]
The same argument works for $F(s,\zeta\otimes z_j\partial_j P_1, \overline{\zeta\otimes P_2},\eta) = -sF(s,\zeta\otimes P_1,\overline{\zeta\otimes P_2},\eta).$ This proves $(3)$.
\end{proof}
Returning to the proof of the theorem, if one of $\zeta\otimes P_i$ is $\frac{dz_1}{z_1}\wedge\frac{dz_2}{dz_2}\wedge\cdots\frac{dz_k}{z_k}\wedge dz_{k+1 }\wedge\cdots\wedge dz_n\otimes tP'_i$, the above lemma $(2)$ says $F(s)$ is holomorphic. Otherwise, one of $\zeta \otimes P_i$ is $\frac{dz_1}{z_1}\wedge\frac{dz_2}{dz_2}\wedge\cdots\frac{dz_k}{z_k}\wedge dz_{k+1}\wedge\cdots\wedge dz_n\otimes D_iP$, then the above lemma $(3)$ says $F(s)$ is $0$.
\end{proof}

For any sections $\alpha,\beta \in\cM$, let $\left \{\rho_\lambda\right\}$ be a partition of unity of the open covering $\left\{U_\lambda\right\}$ by local charts such that $\alpha, \beta$ lifts to $\tilde \alpha_\lambda,\tilde \beta_\lambda$  over $U_\lambda$ in $\Omega^n_{X/\Delta}(\log Y)\otimes \sD_X$. The above theorem guarantees that the pairing $S\colon\cM\otimes_\C\overline\cM\to \mathfrak C_X$ defined by
\[
\langle S(\alpha, \bar\beta),\eta\rangle \colon =\Res_{s=0}\sum_\lambda F(s, \tilde \alpha_\lambda, \overline{\tilde \beta_\lambda} ,\rho_\lambda\eta)
\]
is well-defined and does not depend on the choice of a partition of unity. The following is an immediate corollary of the lemma.
\begin{cor}\label{cor:sr}
The operator $R$ is self-adjoint with respect to $S$, i.e. $S\circ (R\otimes_\C \id) =S\circ (\id\otimes_\C \bar R)$.
\end{cor}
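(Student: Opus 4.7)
The goal is to establish, for any local sections $\alpha,\beta$ of $\cM$ and any test function $\eta$, that $\langle S(R\alpha,\bar\beta),\eta\rangle=\langle S(\alpha,\overline{R\beta}),\eta\rangle$. Since $\langle S(-,-),\eta\rangle$ was defined via a partition of unity $\{\rho_\lambda\}$ subordinate to an open cover by local charts and the sum over $\lambda$ is independent of the choice, it suffices to verify the identity on each chart. There I lift $\alpha,\beta$ to sections $\tilde\alpha=\zeta\otimes P_1$ and $\tilde\beta=\zeta\otimes P_2$ of $\Omega^n_{X/\Delta}(\log Y)\otimes\sD_X$, with $\zeta=\frac{dz_1}{z_1}\wedge\cdots\wedge\frac{dz_k}{z_k}\wedge dz_{k+1}\wedge\cdots\wedge dz_n$.

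In these local coordinates Theorem~\ref{endm} (specialized to the reduced case $e_i=1$) identifies $R$ with the endomorphism $[\zeta\otimes P]\mapsto[\zeta\otimes z_0\partial_0 P]$ of $\cM$; the induced action on the conjugate module $\overline{\cM}$ is given by the same formula interpreted on $\overline{\sD_X}$, since $R$ is a $\C$-linear endomorphism of $\cM$. Unwinding the definition of $S$ on the chart one obtains
\[
\langle S(R\alpha,\bar\beta),\eta\rangle=\Res_{s=0}F(s,\zeta\otimes z_0\partial_0 P_1,\zeta\otimes P_2,\eta),
\]
\[
\langle S(\alpha,\overline{R\beta}),\eta\rangle=\Res_{s=0}F(s,\zeta\otimes P_1,\zeta\otimes z_0\partial_0 P_2,\eta).
\]

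Part~(3) of Lemma~\ref{func}, applied with $j=0$, asserts that both integrands equal $-sF(s,\zeta\otimes P_1,\zeta\otimes P_2,\eta)$, and thus have identical residues at $s=0$. Summing over the partition of unity yields the global equality $S\circ(R\otimes_\C\id)=S\circ(\id\otimes_\C R)$. There is no real obstacle here: the only analytic content is the integration-by-parts identity for the Mellin-transformed integrand, and that was already carried out in the proof of Lemma~\ref{func}(3). The corollary is thus essentially a repackaging of that symmetry together with the identification of $R$ with left multiplication by $z_0\partial_0$.
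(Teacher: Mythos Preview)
Your proof is correct and follows exactly the paper's approach: the paper itself merely states ``By the above lemma we also have the following,'' and you have simply unpacked what that means, reducing to a local chart, identifying $R$ with left multiplication by $z_0\partial_0$, and invoking Lemma~\ref{func}(3) with $j=0$.
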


The self-adjointness implies there are induced pairings on the graded quotient $S_r\colon \gr^W_r\cM\otimes_\C\overline{\gr^W_{-r}\cM}\to \mathfrak C_X$ for every integer $r$. Denote by $P_R S_r$ the pairing 
\[
S_r\circ (\id\otimes_\C R^{r})\colon \cP_r\otimes_\C \overline{\cP_r}\to \mathfrak C_X.
\]

\begin{thm}\label{main}
For any local sections $\alpha,\beta\in \cP_r$, we have 
\[
P_R S_r(\alpha, \bar \beta)= \frac{(-1)^r}{(r+1)!}\tau^{(r+1)}_+S_{{\tilde Y^{(r+1)}}} (\phi_r \alpha,\overline{\phi_r\beta})
\]
where $\phi_r \colon (\cP_r, F_\bullet \cP_r)\to \tau^{(r+1)}_+\omega_{\tilde Y^{(r+1)}}(-r)$ is the isomorphism in Theorem~\ref{iden}.
\end{thm}

\begin{proof}
Because the problem is local, it suffices to prove the theorem for $\alpha$ and $\beta$ are represented by 
\[
\frac{dz_1}{z_1}\wedge\frac{dz_2}{z_2}\wedge\cdots\frac{dz_k}{z_k}\wedge dz_{k+1} \wedge\cdots\wedge dz_n\otimes z_{K_i}
\]
and $|K_i|=k-r$ for $i=1,2$ over a local chart $U$ respectively. Recall that $z_K=\prod_{j\in K}z_j$. Let $\eta$ be a test function over $U$. We have 
\[
\begin{aligned}
  \langle P_R S_r(\alpha, \bar \beta),\eta\rangle &=\langle S(\alpha, \overline{R^{r}\beta}),\eta\rangle \\
   &=\Res_{s=0}(-s)^{r}\int_X   |z_I|^{2s-2} z_{K_1}\overline{z_{K_2}}(\eta) \bigwedge^n_{i=0}(\frac{\sqrt{-1}}{2\pi}dz_i\wedge{d\overline{z_i}}).
\end{aligned}
\]
If $\alpha\neq\beta$, the above is zero. Indeed, for $v \in K_2\setminus K_1$, by choosing $R^r=\prod_{i\in I\setminus {K_1\setminus \{v\}}}{z_i\partial_i}$,
\[
\begin{aligned}
\langle P_R S_r(\alpha, \bar \beta),\eta\rangle  &=\langle S(R^r\alpha,\bar \beta),\eta\rangle =\Res_{s=0} \int_X   |z_I|^{2s-2} \frac{t}{z_{v}} \overline{z_{v}} \tilde\eta\bigwedge^n_{i=0}(\frac{\sqrt{-1}}{2\pi}dz_i\wedge{d\overline{z_i}}), \\
\end{aligned}
\] 
where $\tilde \eta= \partial_{I\setminus (K_1\setminus \{v\})} {\overline{z_{K_2}}}{\left(\overline{z_{v}}\right)^{-1}}\eta$ is a smooth test function. The function 
\[
 \int_X  |z_I|^{2s-2} \frac{t}{z_{v}} \overline{z_{v}} \tilde\eta\bigwedge^n_{i=0}(\frac{\sqrt{-1}}{2\pi}dz_i\wedge{d\overline{z_i}})
\]
is holomorphic at $s=0$ because 
\[
 \frac{1}{\overline{z_I}} \frac{\overline{z_{v}}}{z_{v}}\tilde\eta\bigwedge^n_{i=0}(\frac{\sqrt{-1}}{2\pi}dz_i\wedge{d\overline{z_i}})
\]
is integrable. Therefore, we have reduced the proof to the case when both $\alpha$ and $\beta$ are represented by 
\[
\frac{dz_1}{z_1}\wedge\frac{dz_2}{z_2}\wedge\cdots\wedge\frac{dz_k}{z_k}\wedge\cdots\wedge dz_n\otimes z_{K}.
\]
We shall prove that, if $\overline K$ is the complement of $K$ in $I$
\[
P_R S_r(\alpha, \overline \alpha)=\frac{(-1)^r}{(r+1)!}\tau^{\overline K}_+S_{Y^{\overline K}}(\phi_r \alpha, \overline{\phi_r\alpha}).
\]
Without loss of generality, we can assume that $K=\{r+1,r+2,\dots,k\}$. Let $\partial_{\bar K}=\prod^{r}_{i=0}\partial_i$. Then 
\[
\begin{aligned}
&P_R S_r(\alpha,\overline{\alpha})= \\
&= \Res_{s=0} (-s)^{r}\int_X |z_{\overline K}|^{2s-2} \prod^k_{j=r+1}|z_j|^{2s} \eta \bigwedge^n_{i=0}(\frac{\sqrt{-1}}{2\pi}dz_i\wedge{d\overline{z_i}}) \\ 
  &=  (-1)^r\Res_{s=0} s^{-(r+2)}\int_X  \prod^{k}_{i=0} |z_i|^{2s} \partial_{\bar K}\overline{\partial_{\bar K}}(\eta) \bigwedge^n_{i=0}(\frac{\sqrt{-1}}{2\pi}dz_i\wedge{d\overline{z_i}}),   \\
  &= \frac{(-1)^r}{(r+1)!} \int_X  \left(\log\prod^{k}_{i=0} |z_i|^2\right)^{r+1}\partial_{\bar K}\overline{\partial_{\bar K}}(\eta) \bigwedge^n_{i=0}(\frac{\sqrt{-1}}{2\pi}dz_i\wedge{d\overline{z_i}}) \\
   &=  \frac{(-1)^r}{(r+1)!} \int_X  \prod^{r}_{i=0} \log |z_i|^2 \partial_{\bar K}\overline{\partial_{\bar K}}(\eta) \bigwedge^n_{i=0}(\frac{\sqrt{-1}}{2\pi}dz_i\wedge{d\overline{z_i}})  \quad  (\star) \\
  &= \frac{(-1)^r}{(r+1)!}\int_{Y^{\overline K}} \eta \bigwedge^n_{i=r+1}(\frac{\sqrt{-1}}{2\pi}dz_i\wedge{d\overline{z_i}})   \\
  & \quad \quad \text{(Poincar\'e-Lelong equation~\cite[Page 388]{GH})} \\
  &= \frac{(-1)^r}{(r+1)!}\tau^{{\overline K}}_+S_{Y^{\overline K}} (\Res_{Y^{\overline K}}\frac{dt}{t}\wedge\alpha, \overline{\Res_{Y^{\overline K}}\frac{dt}{t}\wedge\alpha}).
\end{aligned}
\]
The equality $(\star)$ holds because if we expand $\left(\log\prod^{k}_{i=0} |z_i|^2\right)^{r+1}$ as a linear combination of $\prod^k_{i=0}\left(\log |z_i|^2 \right)^{a_i}$ with $\sum^k_{i=0}a_i=r+1$, the only possible non-exact form among
\[
  \prod^{k}_{i=0} \left(\log |z_i|^2\right)^{a_i}\partial_{\bar K}\overline{\partial_{\bar K}}(\eta) \bigwedge^n_{i=0}(\frac{\sqrt{-1}}{2\pi}dz_i\wedge{d\overline{z_i}}),
\] 
is $\left( \prod^{r}_{i=0} \log |z_i|^2 \right)\partial_{\bar K}\overline{\partial_{\bar K}}(\eta) \bigwedge^n_{i=0}(\frac{\sqrt{-1}}{2\pi}dz_i\wedge{d\overline{z_i}})$. Note that while $\Res_{Y^{\overline{K}}}$ depends on the order of the index sets $K$ and $I$, the pairing  
\[
\frac{(-1)^r}{(r+1)!}\tau^{(r+1)}_+S_{{\tilde Y^{(r+1)}}} (\phi_r \alpha,\overline{\phi_r\beta})= \frac{(-1)^r}{(r+1)!}\tau^{{\overline K}}_+S_{Y^{\overline K}} (\Res_{Y^{\overline K}}\frac{dt}{t}\wedge\alpha, \overline{\Res_{Y^{\overline K}}\frac{dt}{t}\wedge\alpha})
\]
does not because the sign cancels out. We complete the proof.
\end{proof}

\subsection{Construction of the limiting mixed Hodge structure}
We are going to show that the triple $\left(\DR_X\cM, F, W\right)$ gives a mixed Hodge complex. Unlike the $\Q$-mixed Hodge complex considered by Deligne~\cite{Hodge2}, where the rational structure is a required input, we do not have this piece of information in our situation. We will redo Deligne's argument on mixed Hodge complex by sesquilinear pairings. It is also worth noting that the sesquilinear pairing simplifies the process of verifying that the first page weight spectral sequence of $\DR_X\cM$ is a polarized bigraded Hodge-Lefschetz structure, making it easier than the situation in \cite{hl}, where they need to decompose the differential $d_1$ on the first page into a combinatorial differential and a sum of Gysin morphisms. 

We first set up the pairing on each page of the weight spectral sequence abstractly. Let $\cN$ be a holonomic $\sD_Z$-module with compact support equipped with a sesquilinear pairing $S\colon\cN\otimes_\C \overline{\cN}\to \mathfrak{C}_Z$ on a complex manifold $Z$. Let $N$ be a nilpotent operator on $\cN$ such that $S\circ (\id\otimes_\C \bar N)=S\circ(N\otimes_\C \id)$. Let $W_\bullet\cN$ be the monodromy filtration associated to $N$ on $\cN$. Let $E^{i,j}_r$ be the weight spectral sequence convergent to $\gr^W_{-i}\bH^{i+j}(Z,\DR_Z\cN)$ with $E^{i,j}_1=\bH^{i+j}(Z,\gr^W_{-i}\DR_Z\cN)$. By abuse of notation, denote by $S_k=\varepsilon(k)S_k'$ where
\[
\begin{aligned}
  S_k' &\colon \bH^k(Z,\DR_Z\cN) \otimes_\C \overline{\bH^{-k}(Z,\DR_Z\cN)} \to \\
  &\bH^0(Z,\DR_{Z,\overline Z}\cN\otimes_\C\overline \cN) \to \bH^0_c(Z,\DR_{Z,\overline Z}\mathfrak C_Z)\simeq \C.
\end{aligned}
\]
Let $a$ be a local section of $\left(\DR_Z\cN\right)^{-j-1}$ and $b$ be a local section of $\left(\DR_Z\cN\right)^i$. Then 
\[
D(a\otimes_\C \bar b)= da\otimes_\C \bar b+(-1)^{-j-1}a\otimes_\C \overline{db}
\]
for $D$ a differential on $\DR_{Z,\overline Z}\cN\otimes_\C \overline \cN$. Applying $S$, we find that 
\begin{equation}\label{eq:dsign}
DS(a, b)=S(da, b)+(-1)^{-j-1} S(a, \overline{db}).
\end{equation}
Since the differential $d$ is compatible with the weight filtration, we have an induced pairing $E_1(S)_k=\varepsilon(k) E_1(S')_k$ on the first page $E^{i,j}_1$ of the weight spectral sequence, where
\[
\begin{aligned}
  E_1(S')_k &\colon \bH^k(Z,\gr^W_{-i}\DR_Z\cN) \otimes_\C \overline{\bH^{-k}(Z,\gr^W_{i}\DR_Z\cN)} \to \\
   &\bH^0(Z, \DR_{Z,\overline Z}\gr^W_{-i}\cN\otimes_\C \overline{ \gr^W_i \cN}) \to \bH^0(Z,\DR_{Z,\overline Z}\mathfrak C_Z)\simeq \C.
\end{aligned}
\]
Then by equation~\eqref{eq:dsign} we obtain 
\[
0=\varepsilon(-j)E_1(S)_{-j}(d_1a, \bar b)+\varepsilon(-j-1)(-1)^{-j-1}E_1(S)_{-j-1}(a,\overline{d_1b}),
\]
due to $DS(a,\bar b)$ is cohomologous to zero. Working out the sign, we find that  
\[
E_1(S)_{-j}(d_1a,\bar b)+E_1(S)_{-j-1}(a,\overline{d_1b})=0,
\]
i.e. the differential $d_1$ is skew-symmetric with respect to $E_1(S)$. It follows that we have an induced pairing on the second page: $E_2(S)_k: E^{i,k-i}_2\otimes \overline{ E^{-i,-k+i}_2}\to \C$ since $E_2=\ker d_1/\mathrm{Im\,} d_1$. It also follows from that the equation~\eqref{eq:dsign}, the differential $d_2$ is skew-symmetric with respect to $E_2(S)$. By an inductive argument, we get the induced pairing $E_r(S): E_r\otimes \overline{E_r}\to \C$ on the $r$-th page of the weight spectral sequence $E_r\otimes \overline{E_r}\to \C$ such that $d_r$ is skew-symmetric with respect to $E_r(S)$ for every $r\geq 1$.

Next, let $L=[\omega]\wedge$ be a Lefschetz operator for a K\"ahler class $[\omega]\in H^1(Z,\Omega_Z)\cap H^2(Z,\R)$ and $\sfX=2\pi\sqrt{-1}L$ on $Z$. We can also regard $\sfX$ as a morphism $\DR_Z\cN\to \DR_Z\cN[2]$. Let us work out the relation between the sesquilinear pairing $S_k$ and the operator $\sfX$. By functoriality, we have the following commutative diagram in $\bD^b(Z,\C)$.
\[
\begin{tikzcd}[sep=small]
\DR_{Z,\overline Z}\cN\otimes_\C \overline \cN \arrow{r}{S}\arrow{d}{\sfX\otimes_\C \id} & \DR_{Z,\overline Z}\mathfrak C_Z \arrow{r}{\simeq}\arrow{d}{\sfX} & \DR_{Z,\overline Z}\mathfrak{Db}_Z \arrow{r}{\simeq}\arrow{d}{\sfX} & \cA^{\bullet+2n}_Z\otimes\mathfrak{Db}_Z \arrow{d}{\sfX}  \\
\DR_{Z,\overline Z}\cN\otimes_\C \overline \cN \left[2\right] \arrow{r}{S[2]} & \DR_{Z,\overline Z}\mathfrak C_Z \left[2\right] \arrow{r}{\simeq} & \DR_{Z,\overline Z}\mathfrak{Db}_Z\left[2\right] \arrow{r}{\simeq} &  \cA^{\bullet+2n+2}_Z\otimes\mathfrak{Db}_Z 
\end{tikzcd}
\]
Similarly, we have $S[2]\circ (\id \otimes_\C \overline{\sfX})={\overline\sfX} S$. It follows from $\sfX+\overline \sfX=0$ on $\cA^{\bullet+2n}_Z\otimes\mathfrak{Db}$ that 
\begin{equation}\label{eq:sx}
\varepsilon(k)S_k(\sfX-,-)+\varepsilon(k-2)S_{k-2}(-, \bar\sfX-)=0,\quad \text{i.e. }S_k(\sfX-,-)=S_{k-2}(-, \bar \sfX-).
\end{equation}

Returning to our situation, we begin to construct a polarized bigraded Hodge-Lefschetz structure on 
\[
\gr^W \bH^\bullet(X,\DR_X\cM).
\]
Fix a K\"ahler class $[\omega]$ on $X$ and let $L=[\omega]\wedge\colon\DR_X\cM\to\DR_X\cM[2]$ be the Lefschetz operator and $\sfX_1=2\pi\sqrt{-1}L$ as the discussion above. Relabel the first page of the weight spectral sequence by
\[
V_{\ell,k}=\bH^{\ell}(X,\gr^W_k\DR_X\cM)=\prescript{W}{}{E^{-k,\ell+k}_1}.
\]
Let $V=\bigoplus_{\ell,k\in\Z}V_{\ell,k}$ with filtration $F_\bullet V$ induced by $F_\bullet\cM$. Denote by $E_i(R)$ the induced operator by $R$ on $\prescript{W}{}{E_i}$ and let $\sfY_2=E_1(R)$. Let $S_{\ell,k}=\varepsilon(\ell)S_{\ell,k}'$ for $\ell,k\in\Z$ be the induced pairing on $V_{\ell,k}\otimes \overline{V_{-\ell,-k}}$  where 
\[
\begin{aligned}
  S_{\ell,k}' &\colon \bH^{\ell}(X,\gr^W_k\DR_X\cM)\otimes \overline {\bH^{-\ell}(X,\gr^W_{-k}\DR_X\cM}) \to \\
  & \bH^0(X,\DR_{X,\overline X}\gr^W_k\cM\otimes_\C \overline{\gr^W_{-k}\cM}) \to \bH^0_c(X, \DR_{X,\overline X}\mathfrak {C}_X) \simeq \C.
\end{aligned}
\]
Let $d_1$ be the differential of $E_1$. In terms of relabeling, we have
\[
d_1\colon (V_{\ell,k},F_\bullet V_{\ell,k}) \to (V_{\ell+1,k-1},F_\bullet V_{\ell+1,k-1}).
\] 

\begin{thm}\label{rd1pbhl}
The tuple $(V,\sfX_1,\sfY_2,F_\bullet V, \bigoplus S_{j,k})$ is a differential polarized bigraded Hodge-Lefschetz structure of central weight $n$ and $d_1$ is a differential of the bigraded Hodge-Lefschetz structure. 
\end{thm}
\begin{proof}
Let us first check the conditions in Theorem~\ref{bphl} one by one. It is clear that two operators $\sfX_1,\sfY_2$ are commute. Moreover, we have $\sfY_2\colon(V_{\ell,k}, F_\bullet V_{\ell,k})\to (V_{\ell,k-2}, F_{\bullet+1} V_{\ell,k-2})$ such that
\[
\sfY^k_2\colon F_\bullet V_{\ell,k}\to F_{\bullet+k} V_{\ell,-k},
\]
is an isomorphism by Theorem~\ref{FW}. Denote by $P_{\sfY_2}V_{-j,r}$ the ${\sfY_2}$-primitive part $\ker {\sfY^{r+1}_2}\cap V_{-j,r}= \bH^{-j}(X,\DR_X\cP_r)$. It follows from Theorem~\ref{iden} that 
\[
\phi_r\colon (P_{\sfY_2}V_{-j,r}, F_\bullet P_{\sfY_2}V_{-j,r}) \simeq H^{-j}\left(\tilde Y^{(r+1)},\DR_{\tilde Y^{(r+1)}}\omega_{{\tilde Y^{(r+1)}}}\right)(-r).
\]  
Therefore, $\sfX_1 \colon F_\bullet P_{\sfY_2}V_{-j,r}\to F_{\bullet-1}P_{\sfY_2}V_{-j+2,r}$ and by Hard Lefschetz,
\[
\sfX^{j}_1\colon F_\bullet P_{\sfY_2}V_{-j,r}\to F_{\bullet-j}P_{\sfY_2}V_{j,r}
\] 
is an isomorphism. It follows from the Lefschetz decomposition of $\sfY_2$ that 
\[
\sfX^j_1\colon F_\bullet V_{-j,r}\to F_{\bullet-j}V_{j,r}
\] 
is an isomorphism. This proves $(\mathrm{pbHL1})$ in Theorem~\ref{bphl}. And (pbHL2) for $\sfX_1$ follows from the equation~\eqref{eq:sx}.

Because the operator $R$ self-adjoint with respect to $S$ by Corollary~\ref{cor:sr}, we have $S_{j,r} (-, \overline{\sfY_2}-)=S_{j,r+2}(\sfY_2-,-)$. By Theorem~\ref{main}, the morphism $\phi_r$ identifies $P_{\sfY_2}S_{-j,r}\colon =S_{-j,r}(-,\overline{\sfY^r_2}-)$ with $\frac{(-1)^r}{(r+1)!}S_{\tilde Y^{(r+1)},-j}$. Recall that 
\[
S_{\tilde Y^{(r+1)},j}(a, \bar b)={\frac{\varepsilon(n-r+j+1)}{(2\pi\sqrt{-1})^{n-r}}}\int_{\tilde Y^{(r+1)}} a\wedge \bar b,
\]
for $ a\in H^{n-r+j}(\tilde Y^{(r+1)})$ and $b\in H^{n-r-j}(\tilde Y^{(r+1)})$, and that $S_{\tilde Y^{(r+1)},j}(\sfX^j_1-,-)$ is a polarization on $H^{n-r-j}_{\mathrm{prim}}(\tilde Y^{(r+1)},\C)$. The bi-primitive part $P_{-j,r}=\ker \sfX^j_1\cap \ker \sfY^r_2\cap V_{-j,r}$ together with the induced filtration $F_{\bullet}P_{-j,r}$ and the pairing $S_{j,r}(\sfX^j_1-,(-\overline{\sfY_2})^r-)$ is identified with the polarized Hodge structure $H^{n-r-j}_{\mathrm{prim}}(\tilde Y^{(r+1)},\C)(-r)$ via $\phi_r$. This proves (pbHL3).

It remains to prove that $d_1$ is a differential of the bigraded Hodge-Lefschetz structure $V$. Clearly, we have 
\[
[d_1,\sfX_1]=[d_1,\sfY_2]=0
\]
because $d_1$ is induced by the differential of $\DR_X\cM$ and $d_1$ preserves $F_\bullet$. The differential $d_1$ is skew-symmetric with respect to $\bigoplus_{j,r}S_{j,r}$ formally follows the discussion at the beginning of this subsection. Thus, we finished checking that $d_1$ is a differential. 
\end{proof}

\begin{cor}\label{rmhc}
We have the following 
\begin{enumerate}
  \item the Hodge spectral sequence degenerates at $\prescript{}{F}{E_1}$, 
  \item the weight spectral sequence degenerates at $\prescript{W}{}{E_2}$, 
  \item The tuple $\left(\bigoplus_{\ell\in \Z}\gr^W\bH^\ell(X,\DR_X\cM),F,\sfX_1,\sfY_2\right)$ together with the pairing induced by $\bigoplus S_{j,k}$ is a polarized bigraded Hodge-Lefschetz structure of central weight $n$.
\end{enumerate}
\end{cor}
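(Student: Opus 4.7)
The plan is to reduce the corollary to Theorem~\ref{GA90} applied to the differential polarized bigraded Hodge-Lefschetz structure exhibited in Theorem~\ref{rd1pbhl}, and then to extract the two degenerations by weight-shift arguments.

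First I would apply Theorem~\ref{GA90} to the tuple $(V,\sfX_1,\sfY_2,F_\bullet V,\bigoplus S_{j,k},d_1)$ furnished by Theorem~\ref{rd1pbhl}. This immediately gives that $\prescript{W}{}{E_2}=\ker d_1/\mathrm{im}\,d_1$ inherits a polarized bigraded Hodge-Lefschetz structure of central weight $n$; in particular, each $\prescript{W}{}{E_2^{-k,\ell+k}}$ is a pure Hodge structure of weight $n+\ell+k$, with Hodge filtration induced from $F_\bullet\cM$ and polarization from $\bigoplus S_{j,k}$. To upgrade this to statement $(3)$ one still needs the weight-spectral-sequence degeneration $(2)$. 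For this I would show $d_r=0$ for $r\geq 2$ by a weight argument. The Lefschetz operator $\sfX_1=2\pi\sqrt{-1}L$ and the residue $R$ act on the filtered complex $\DR_X\cM$ itself and hence descend to every page $\prescript{W}{}{E_r}$, commuting with each higher differential $d_r$. Consequently $d_r$ respects the $\fsl_2(\C)\times\fsl_2(\C)$-action on $\prescript{W}{}{E_r}$, and by the Lefschetz decomposition is determined by its restriction to the bi-primitive components. On these components $d_r$ preserves $F_\bullet$ (it comes from a morphism of filtered complexes) and is skew-symmetric with respect to the induced sesquilinear pairing; combined with the purity at $\prescript{W}{}{E_2}$ this promotes $d_r$ to a morphism of pure Hodge structures. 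Since $d_r$ sends $\prescript{W}{}{E_r^{-k,\ell+k}}$ (weight $n+\ell+k$) to $\prescript{W}{}{E_r^{-k+r,\ell+k-r+1}}$ (weight $n+\ell+k+1-r$), for $r\geq 2$ source and target have different weights and $d_r=0$. This yields $(2)$, so $\gr^W H^\bullet(X,\DR_X\cM)=\prescript{W}{}{E_2}$ and $(3)$ is complete.

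For $(1)$ I would argue by a dimension count. From $(3)$, each $\gr^p_F\gr^W_kH^\ell(X,\DR_X\cM)$ is identified with the Hodge summand $V^{p,n+\ell+k-p}_{\ell,k}$ of $\prescript{W}{}{E_2^{-k,\ell+k}}$, so $\dim\gr^p_FH^\ell(X,\DR_X\cM)=\sum_k h^{p,n+\ell+k-p}_{\ell,k}$. Running the same two-step program (Theorem~\ref{GA90} plus the weight-shift vanishing) on the sub-quotient complex $\gr^p_F\DR_X\cM$, whose $W$-graded pieces are still controlled by Theorem~\ref{iden}, gives $\dim H^\ell(X,\gr^p_F\DR_X\cM)=\sum_k h^{p,n+\ell+k-p}_{\ell,k}$ as well. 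The \emph{a priori} inequality $\dim\prescript{F}{}{E_\infty^{p,\ell-p}}\leq\dim\prescript{F}{}{E_1^{p,\ell-p}}$ is therefore an equality, forcing the $\prescript{F}{}{E_1}$-degeneration. The main obstacle lies in the weight-shift argument in the previous paragraph: verifying that $d_r$ for $r\geq 2$ is indeed a morphism of Hodge structures (and not merely filtered-linear) requires the skew-symmetry with respect to the polarization $\bigoplus S_{j,k}$ to propagate along the spectral sequence, which is exactly where the sesquilinear formalism of $\S$\ref{subsec:ds} substitutes for the $\Q$-structure used classically by Deligne and Guill\'en--Navarro Aznar.
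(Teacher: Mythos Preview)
Your proposal is essentially correct and follows the same strategy as the paper: use the skew-symmetry of $d_r$ with respect to the induced sesquilinear pairing to show that each higher differential is a morphism of Hodge structures between pieces of different weights, hence vanishes; and deduce the Hodge-filtration degeneration by a dimension count. The paper phrases the induction for $(2)$ slightly differently---it shows directly that each $\bigl(\prescript{W}{}{E_k^{-r,\ell+r}},F_\bullet,S^k_{\ell,r}\circ(\id\otimes\sfw)\bigr)$ is a polarized Hodge structure and that $d_k$ respects it (via the mechanism of Remark~\ref{dmorphism}), without re-invoking Theorem~\ref{GA90} at each step---but the content is the same.

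One point in your argument for $(1)$ should be adjusted. You propose to ``run Theorem~\ref{GA90} plus the weight-shift vanishing on $\gr^p_F\DR_X\cM$'', but Theorem~\ref{GA90} concerns polarized bigraded Hodge--Lefschetz structures, and $\gr^p_F\DR_X\cM$ carries no Hodge filtration, so the statement does not literally apply. The paper's route avoids this: since each $\gr^W_r\cM$ is identified (Theorem~\ref{iden}) with a direct sum of $\tau^{(r+1)}_+\omega_{\tilde Y^{(r+1)}}(-r)$, classical Hodge theory on the compact K\"ahler manifolds $\tilde Y^{(r+1)}$ gives $\dim H^\ell(X,\gr^F\gr^W_r\DR_X\cM)=\dim V_{\ell,r}$ directly. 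Because $d_1$ is a morphism of Hodge structures (hence strict on $F$), passing to $d_1$-cohomology preserves this equality, and then the dimension count you sketch goes through. So your overall plan is sound; just replace the appeal to Theorem~\ref{GA90} on $\gr^p_F$ by the strictness of $d_1$ and the $E_1$-degeneration on each $\gr^W_r$ coming from Theorem~\ref{iden}.
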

\begin{proof}
We slightly modify the idea of cohomological mixed Hodge complex in~\cite{Hodge2} for statements $(1)$ and $(2)$. Let $V^k_{\ell,r}\colon =\prescript{W}{}{E_k}^{-r,\ell+r}$ be the relabeling of the $k$-th page of the weight spectral sequence. I claim that $V^k_{\ell,r}$ together with the induced filtration $F_\bullet$ and the induced pairing $S^k_{\ell,r}\circ \left(\id\otimes \bar\sfw \right)\colon V^k_{\ell,r}\otimes \overline{V^k_{\ell,r}}\to \C$ is a polarized Hodge structure of weight $n+\ell+r$ and the differential $d_k\colon V^k_{\ell,r}\to V^k_{\ell+1,r-k}$ is a morphism of Hodge structures. Indeed, the differential $d_k$ is skew-symmetric with respect to the sesquilinear pairing, i.e. $S^k_{\ell,r}(d_k-,-)+S^k_{\ell+1,r-k}(-,\overline{d_k}-)=0$. Therefore, if $(-1)^{q} S^k_{\ell,r}\circ \left(\id\otimes \bar\sfw \right)$ for $q=n+\ell+r-p$ is a Hermitian inner product on 
\[
{\left(V^{k}_{\ell,r}\right)^{p,q}=\{a\in F^pV^k_{\ell,r}\colon S^k_{\ell,r}(a,\bar b)=0\text{ for all } b\in F^{p-\ell-r+1}V^k_{-\ell,-r}\}}
\]
then $(-1)^{q} S^{k+1}_{\ell,r}\circ \left(\id\otimes \overline\sfw\right)$ is also a Hermitian inner product on 
\[
\left(V^{k+1}_{\ell,r}\right)^{p,q}=\{a\in F^pV^{k+1}_{\ell,r}\colon S^{k+1}_{\ell,r}(a,\bar b)=0\text{ for all } b\in F^{p-\ell-r+1}V^{k+1}_{-\ell,-r}\}.
\]
In particular, we have the decomposition 
\[
V^{k+1}_{\ell,r}=\bigoplus_{p+q=n+\ell+r}\left(V^{k+1}_{\ell,r}\right)^{p,q}
\]
and the morphism $d_k\colon \left(V^{k}_{\ell,r}\right)^{p,q} \to \left(V^{k+1}_{\ell,r}\right)^{p,q}$ is compatible with the decomposition. See Remark~\ref{dmorphism}. By induction, the claim is proved. It follows that $d_k$ vanishes for $k\geq 2$ by it is a morphism of Hodge structures of different weights, which proves $(2)$.

We turn to the proof of $(1)$. The two vector spaces $\bH^\ell\left(X,\gr^F\gr^W_r\DR_X\cM \right)$ and $\gr^F V_{\ell,r}$ are isomorphic because the Hodge spectral sequence on $\bH^\ell(X,\gr^W_r \DR_X\cM)$ degenerates at the first page. Specifically, we have 
\[
(\gr^W_r\DR_X\cM,F) \simeq \bigoplus_{k\geq 0,-\frac{r}{2}} \DR_X(R^k\cP_{r+2k},F)(k),
\] 
which is isomorphic to $\bigoplus_{k\geq 0,-\frac{r}{2}} \left(\Omega^{\bullet+n-r-2k}_{\tilde Y^{(r+2k+1)}},F\right)(k+r)$ by Theorem~\ref{iden}. Meanwhile, the Hodge spectral sequence on $\bH^\ell\left(\tilde Y^{r+2k+1},\Omega^{\bullet+n-r-2k}_{\tilde Y^{(r+2k+1)}}\right)$ degenerates at the first page by the usual Hodge theory on compact K\"ahler manifolds.  Consider another weight spectral sequence:
\[
V_{\ell,r}(\gr^F\cM)=\bH^\ell\left(X, \gr^W_r \gr^F \DR_X\cM \right) \Rightarrow \bH^\ell\left(X,  \gr^F \DR_X\cM \right) 
\]
Here, we use $V^\bullet_{\ell,r}(\gr^F\cM)=E_{\bullet}^{-r,\ell+r}(\gr^F\cM)$ to denote the spectral sequences and  set $V^1_{\ell,r}(\gr^F\cM)=V_{\ell,r}(\gr^F\cM)$. We have $V_{\ell,r}(\gr^F\cM)\simeq \gr^F V_{\ell,r}$ and the differentials of the $E_1$-pages are compatible: 
\[
\begin{tikzcd}
  V_{\ell,r}(\gr^F\cM) \arrow{r}{\simeq} \arrow{d}{d_1} & \gr^F V_{\ell,r} \arrow{d}{d_1}\\
  V_{\ell+1,r-1}(\gr^F\cM) \arrow{r}{\simeq} & \gr^F V_{\ell+1,r-1}
\end{tikzcd}
\]
as $\bH^\ell\left(X, \gr^W_r \gr^F \DR_X\cM \right) \simeq \gr^F  \bH^\ell\left(X, \gr^W_r \DR_X\cM \right)$, and 
\[
\gr^W_r\gr^F\DR_X\cM=\gr^F\gr^W_r\DR_X\cM,
\] 
This implies that we have the isomorphism on the second page as well: 
\[
V_{\ell,r}^2(\gr^F\cM) \simeq V^2_{\ell,r}=\gr^F\gr^W_r\bH^\ell\left(X, \DR_X\cM \right).
\]
It follows that 
\[
\begin{aligned}
  &\dim \bH^\ell\left(X, \gr^F\DR_X\cM \right) \leq \dim \bigoplus_r V_{\ell,r}^2(\gr^F\cM) \\
  &=\dim \bigoplus_r \gr^F\gr^W_r \bH^\ell\left(X, \DR_X\cM \right) = \dim \bH^\ell\left(X, \DR_X\cM \right).
\end{aligned}
\]
On the other hand, considering the Hodge spectral sequence, we also have 
\[
\dim  \bH^\ell\left(X, \DR_X\cM \right) \leq \dim \bH^\ell\left(X, \gr^F\DR_X\cM \right),
\]
which forces that $\dim  \bH^\ell\left(X, \DR_X\cM \right) = \dim \bH^\ell\left(X, \gr^F\DR_X\cM \right)$ and therefore, 
\[
\bH^\ell\left(X, \gr^F\DR_X\cM \right) \simeq \gr^F \bH^\ell\left(X, \DR_X\cM \right).
\] 
This proves that the Hodge spectral sequence on $\bH^\ell(X, \DR_X\cM)$ degenerates at  $\prescript{}{F}{E_1}$.

The statement $(3)$ follows from Theorem~\ref{GA90}.
\end{proof}

The third statement in the above corollary ensures that the weight filtration on $\bH^\ell(X,\DR_X\cM)$ is the monodromy weight filtration of the nilpotent operator $R$, i.e. $R\colon W_\bullet \bH^\ell(X,\DR_X\cM)\to W_{\bullet-2}\bH^\ell(X,\DR_X\cM)(-1)$
and $R^r\colon\gr^W_{r}\bH^\ell(X,\DR_X\cM)\to \gr^W_{-r}\bH^\ell(X,\DR_X\cM)(-r)$ is a filtered isomorphism. We have proved Theorem~\ref{thm:main} for the case when $Y$ is reduced.

\section{Non-reduced case: Generalized eigenspace $\cM_\alpha$ and the weight filtration}\label{sec:malpha}

Now we move to the general situation. Let $I$ be the index set consisting of indices of irreducible components of $Y$ and $e_i$ is the multiplicity of $Y$ along the component $Y_i$.

\subsection{The generalized eigen-modules $\cM_\alpha$}

We start by studying the generalized eigen-modules $\ker(R-\alpha)^\infty$ of the morphism $R$ in the category of filtered $\sD_X$-modules. These generalized eigen-modules naturally forms submodules of $\cM$ but the induced filtration on does not align with the expected weight of the mixed Hodge structure and is difficult to calculate. Instead, we adopt the idea of Saito in~\cite{Sai90}: we consider the generalized eigen-module as a sub-quotient of $\cM$ and impose the induced filtration on it. It turns out this filtration has desirable properties. 

We proceed to introduce some notation. Let
\[
\cM_{\geq\alpha}=\ker\left(\prod_{\lambda\geq\alpha}(R-\lambda)^\infty\right), \quad \cM_{>\alpha}=\ker\left(\prod_{\lambda>\alpha}(R-\lambda)^\infty\right) \quad \text{and} \quad \cM_\alpha=\cM_{\geq\alpha}/\cM_{>\alpha}.
\] 
Then $\cM_\alpha$  is canonically isomorphic to the generalized eigen-module $\ker \left(R-\alpha\right)^\infty$. We equip $\cM_\alpha$ with the filtration $F_\bullet\cM_\alpha$ induced from $(\cM,F_\bullet\cM)$ given by
\[
F_\bullet\cM_\alpha=\frac{\cM_{\geq \alpha}\cap F_\bullet\cM}{\cM_{>\alpha}\cap F_\bullet\cM}.
\]
There are parallel definitions of the relative log de Rham complex. Denote by $C^\bullet=\Omega^{\bullet+n}_{X/\Delta}(\log Y)\otimes \sO_Y$ for simplicity. Define sub-complexes of $C^\bullet$ by 
\[
C^\bullet_{\geq\alpha}= C^\bullet\otimes \sO_X(-\ceil*{\alpha Y}),\quad C^\bullet_{>\alpha}=C^\bullet\otimes \sO_X(-\floor*{\alpha Y}-Y_{\Red})\quad \text{ and }C^\bullet_\alpha=C^\bullet_{\geq\alpha}/C^\bullet_{>\alpha},
\] 
where $Y_{\Red}$ is the associated reduced divisor of $Y$. If we let \(I_\alpha\) be the subset of \(I\) consisting of all \(i\) such that \(\alpha e_i\) is an integer, then
\[
C^\bullet_\alpha=C^\bullet_{\geq \alpha}\otimes \sO_{Y_{I_\alpha}}, \quad \text{where }Y_{I_\alpha}=\sum_{i\in I_\alpha}Y_i.
\]
One can check $C^\bullet_\alpha$ is a generalized eigen-perverse sheaves of the residue $[\nabla]$. Since $\sO_X(-\ceil{\alpha Y})$ is preserved by relative log differential $\sT_{X/\Delta}(-\log Y)$, the multiplication by relative log differentials gives a morphism, recalling that $D_1,D_2,\dots,D_n$ are local generators of $\sT_{X/\Delta}(-\log Y)$ dual to the local generators $\xi_1,\xi_2,\dots,\xi_n$ of $\Omega_{X/\Delta}(\log Y)$,
\begin{equation}\label{eq:ca}
\begin{aligned}
  \sO_X(-\ceil{\alpha Y})\otimes\sD_X &\to \Omega_{X/\Delta}(\log Y)(-\ceil{\alpha Y})\otimes\sD_X, \\
    z^{\ceil{\alpha \mathbf{e}}}_I\otimes P &\mapsto \sum_j \xi_j\otimes D_j z^{\ceil{\alpha \mathbf{e} }}_I\otimes P= \sum_j\xi_j\otimes  z^{\ceil{\alpha \mathbf{e} }}_I (D_j+\alpha_j)\otimes P,
\end{aligned}
\end{equation}
where, using the multi-index notation, $z^{\ceil{\alpha \mathbf{e}}}_I =\prod_{i\in I}z^{\ceil{\alpha e_i}}_i$ denotes the local generator of $\sO_X(-\ceil{\alpha Y})$ and define $\alpha_i=[D_i, z^{\ceil{\alpha \mathbf{e} }}_I]/{z^{\ceil{\alpha \mathbf{e} }}_I}={\ceil{\alpha e_i}}/{e_i}-{\ceil{\alpha e_0}}/{e_0}.$ The morphism extends to a complex $\relome{n+\bullet}(-\ceil{\alpha Y})\otimes\sD_X$, which is a subcomplex of $\relome{n+\bullet}\otimes\sD_X$ (see~\eqref{eq:relatived}). Tensoring  $\sO_Y$ on the left gives $C^\bullet_{\geq \alpha}\otimes\sD_X$ by the above definition. Further tensoring $\sO_{Y_{I_\alpha}}$ on the left, we obtain the complex of induced $\sD_X$-modules $C^\bullet_\alpha\otimes\sD_X$ with the filtration defined by 
\[
F_\ell \left(C^\bullet_\alpha\otimes\sD_X\right)= C^\bullet_\alpha \otimes F_{\ell+n+\bullet}\sD_X.
\]
The following two theorems describe the generalized eigen-modules in terms of complexes of the induced $\sD_X$-modules. 

\begin{thm}\label{thm:eigenperv}
The complex $C^\bullet_\alpha\otimes\sD_X$ is a filtered resolution and the characteristic cycle of the $0$-th cohomology is 
\[
cc\left(\sH^0\left(C^\bullet_\alpha\otimes\sD_X\right)\right)=\sum_{J\subset I} |I_\alpha\cap J| \cdot \left[ T^*_{Y^J}X \right].
\] 
\end{thm}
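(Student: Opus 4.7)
The argument is local, so I fix coordinates $z_0,\ldots,z_n$ with $t=z_0^{e_0}\cdots z_k^{e_k}$, trivialize the invertible sheaf $\sO_X(-\ceil{\alpha Y})$ via $z_I^{\ceil{\alpha\mathbf{e}}}$, and use the frame $D_1,\ldots,D_n$ for $\sT_{X/\Delta}(\log Y)$. A preliminary observation that streamlines everything is that $t\in\prod_{i\in I_\alpha}z_i\cdot\sO_X$ (since every $e_i\geq 1$), which gives $t\sO_X(-\ceil{\alpha Y})\subset\prod_{i\in I_\alpha}z_i\cdot\sO_X(-\ceil{\alpha Y})=\sO_X(-\floor{\alpha Y}-Y_{\Red})$. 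Consequently the quotient $Q:=\sO_X(-\ceil{\alpha Y})/\sO_X(-\floor{\alpha Y}-Y_{\Red})$ is killed by both $\prod_{i\in I_\alpha}z_i$ and $t$, so $Q\otimes\sO_Y=Q$ and $Q$ is locally isomorphic to $\sO_{Y_{I_\alpha}}$. In particular $C^\bullet_\alpha\otimes\sD_X$ becomes locally the Koszul complex of the shifted operators $D_1+\alpha_1,\ldots,D_n+\alpha_n$ with coefficients in $\sO_{Y_{I_\alpha}}\otimes\sD_X$, where $\alpha_j=\ceil{\alpha e_j}/e_j-\ceil{\alpha e_0}/e_0$ for $1\leq j\leq k$ and $\alpha_j=0$ otherwise.

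I would prove filtered acyclicity by passing to $\gr^F$ and running the same Koszul argument as in Theorems~\ref{tilM} and~\ref{M}. The zero-order shifts $\alpha_j$ vanish in the principal symbol, so $\gr^F(C^\bullet_\alpha\otimes\sD_X)$ is locally the Koszul complex of $\sigma(D_1),\ldots,\sigma(D_n)$ with coefficients in $\sO_{Y_{I_\alpha}}\otimes\gr^F\sD_X$. Since $\sigma(D_1),\ldots,\sigma(D_n)$ is a regular sequence on $\gr^F\sD_X$ (from the proof of Theorem~\ref{tilM}), the Koszul complex on $\gr^F\sD_X$ is a free resolution of $\gr^F\tilde\cM=\gr^F\sD_X/(\sigma(D_j))$, and the cohomology of its tensor product with $\sO_{Y_{I_\alpha}}$ computes $\mathrm{Tor}^{\sO_X}_\bullet(\gr^F\tilde\cM,\sO_{Y_{I_\alpha}})$. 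Because $\sO_{Y_{I_\alpha}}=\sO_X/(\prod_{i\in I_\alpha}z_i)$ has the short resolution by the non-zero-divisor $\prod_{i\in I_\alpha}z_i\in\sO_X$, all higher $\mathrm{Tor}$'s collapse, and $\mathrm{Tor}^{\sO_X}_1$ is precisely the kernel of multiplication by $\prod_{i\in I_\alpha}z_i$ on $\gr^F\tilde\cM$. The key technical claim is therefore that $\prod_{i\in I_\alpha}z_i$ is a non-zero-divisor on $\gr^F\tilde\cM$, which I would establish locally by successively reducing modulo each $z_i$ with $i\in I_\alpha$ and using the explicit shape of the relations $\sigma(D_j)=z_j\xi_j/e_j-z_0\xi_0/e_0$ to force any putative witness $P$ with $\prod_{i\in I_\alpha}z_i\cdot P\in(\sigma(D_j))$ to itself lie in $(\sigma(D_j))$. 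This is the main obstacle; the surrounding arguments are formal. Once graded acyclicity is in hand, the usual induction on the filtered pieces via $0\to F_{\ell-1}\to F_\ell\to\gr^F_\ell\to 0$ promotes it to the filtered statement.

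For the characteristic cycle I would localize $\gr^F\cN_\alpha$, where $\cN_\alpha:=\sH^0(C^\bullet_\alpha\otimes\sD_X)$, at the generic point $\mathfrak p$ of each conormal $T^*_{Y^J}X$ and imitate the length computation in the proof of Theorem~\ref{cycle}. For $J=\{0,1,\ldots,\mu\}$ the elements $\partial_0,\ldots,\partial_\mu$ and $z_{\mu+1},\ldots,z_n$ are invertible in $\gr^F\sD_{X,\mathfrak p}$, so the relations $\sigma(D_j)=0$ express each $z_j$ for $1\leq j\leq\mu$ as a unit multiple of $z_0$, kill the $\xi_j$'s for $j>\mu$ up to units, and reduce $t$ to $z_0^{e_0+\cdots+e_\mu}$ modulo units. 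Consequently $\prod_{i\in I_\alpha}z_i$ becomes $z_0^{\#(I_\alpha\cap J)}$ modulo units, since $z_i$ for $i\in I_\alpha\cap J$ is proportional to $z_0$ while $z_i$ for $i\in I_\alpha\setminus J$ remains a unit in the localization. Since $\#(I_\alpha\cap J)\leq\#J\leq e_0+\cdots+e_\mu$, this identifies $\gr^F\cN_{\alpha,\mathfrak p}\cong\C\{z_0\}/(z_0^{\#(I_\alpha\cap J)})$, whose length is $\#(I_\alpha\cap J)$, yielding the stated characteristic cycle.
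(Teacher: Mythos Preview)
Your argument is correct and follows essentially the same route as the paper: pass to $\gr^F$, recognize a Koszul complex, deduce acyclicity from the regularity of $(t_\alpha,\sigma(D_1),\ldots,\sigma(D_n))$ in $\gr^F\sD_X$, and then localize at each $\mathfrak p(T^*_{Y^J}X)$ and reuse the length computation from Theorem~\ref{cycle}. The paper simply asserts the regular-sequence property and refers back to Theorems~\ref{tilM}, \ref{M}, and \ref{cycle}; your $\mathrm{Tor}$ formulation is just an unpacking of the same fact, since $t_\alpha$ being a non-zero-divisor on $\gr^F\tilde\cM=\gr^F\sD_X/(\sigma(D_j))$ is equivalent to $(t_\alpha,\sigma(D_1),\ldots,\sigma(D_n))$ being regular.

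The only place you work harder than needed is the non-zero-divisor step. You already observed in your first paragraph that $t_\alpha\mid t$ in $\sO_X$; combined with Theorem~\ref{M}, which shows that $t$ is a non-zero-divisor on $\gr^F\tilde\cM$, this immediately gives that $t_\alpha$ is a non-zero-divisor (if $t_\alpha m=0$ then $tm=(t/t_\alpha)\cdot t_\alpha m=0$, hence $m=0$). No inductive reduction modulo the $z_i$ is required. Your characteristic cycle computation is the same as the paper's: in $\gr^F\sD_{X,\mathfrak p}/(\sigma(D_j))$ each $z_i$ with $i\in J$ becomes a unit multiple of $z_0$ and each $z_i$ with $i\notin J$ becomes a unit, so $t_\alpha$ reduces to a unit times $z_0^{\#(I_\alpha\cap J)}$, giving length $\#(I_\alpha\cap J)$.
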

\begin{proof}
Similarly to the proof of Theorem~\ref{tilM} and Theorem~\ref{M}, the associated graded $\gr^F\left(C^\bullet_\alpha\otimes \sD_X\right)$ can be identified locally with the Koszul complex induced by the regular sequence $(t_\alpha, D_1,D_2,\dots,D_n)$ over $\gr^F\sD_X$, where $t_\alpha=\prod_{i\in I_\alpha}z_i$ is the defining equation of $Y_{I_\alpha}$. It follows that $\gr^F(C^\bullet_{\alpha}\otimes\sD_X)$ is a resolution and therefore, $C^\bullet_\alpha\otimes\sD_X$ is a filtered resolution. We also get that $\gr^F\sH^0(C^\bullet_\alpha\otimes\sD_X)$ is locally represented by 
\begin{equation}\label{eq:locre}
\zeta_\alpha\otimes \gr^F\sD/(t_\alpha,D_1,D_2,\dots,D_n)\gr^F\sD_X, 
\end{equation}\label{eq:locrep}
where  
\[
\zeta_\alpha=z^{\ceil{\alpha \mathbf{e} }}_I\frac{dz_1}{z_1}\wedge \frac{dz_2}{z_2}\wedge\cdots\wedge\frac{dz_k}{z_k}\wedge dz_{k+1}\wedge\cdots\wedge dz_n.
\]
Similar to the calculations in Theorem~\ref{cycle}, we get the characteristic cycle is $\sum_{J\subset I}  |I_\alpha\cap J|\cdot \left[ T^*_{Y^J}X \right]$.
\end{proof}

\begin{thm}\label{Alpha}
There exists a canonical filtered isomorphism
\begin{equation}\label{eq:aiden}
\begin{tikzcd}[column sep=small]
\psi_\alpha:\left(\sH^0\left(C^\bullet_\alpha\otimes\sD_X\right),F_\bullet\sH^0\left(C^\bullet_\alpha\otimes\sD_X\right)\right) \arrow{r}{\sim} & (\cM_\alpha,F_\bullet\cM_\alpha).
\end{tikzcd}
\end{equation}
In particular, the characteristic cycle $cc(\cM_\alpha)=\sum_{J\subset I} |I_\alpha\cap J|\cdot \left[ T^*_{Y^J}X \right]$.
\end{thm}

We give the cyclic generators of $\cM_{\geq\alpha}$ and $\cM_{>\alpha}$ locally. In principle, this always can be done because every holonomic $\sD_X$-module locally is cyclic.

\begin{lem}\label{genR}
Locally, $\cM_{\geq\alpha}$ is generated by $z^{\ceil{\alpha \mathbf{e} }}_I$, and $\cM_{>\alpha}$ is generated by $z^{\floor{\alpha \mathbf{e} }+\mathbf{1}}_I$ where $\mathbf{1}=(1,1,\dots,1)\in \Z^I$.
\end{lem}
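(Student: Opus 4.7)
The plan is to reduce to the local picture $\cM\simeq\sD_X/(t,D_1,\ldots,D_n)\sD_X$ from Theorem~\ref{M}, in which $R$ acts by left multiplication by $\frac{1}{e_0}z_0\partial_0$; by Remark~\ref{logcntm}, $R$ also equals left multiplication by $\frac{1}{e_i}z_i\partial_i$ on $\cM$ for every $0\le i\le k$. Writing $a_i=\ceil{\alpha e_i}$ and $m_\alpha=z^{\ceil{\alpha\mathbf e}}_I=\prod_{i=0}^k z_i^{a_i}$, the key local identity I would verify first is
\[
(R-c_i/e_i)[z^{\mathbf c}] \;=\; \tfrac{1}{e_i}[z^{\mathbf c}]z_i\partial_i \;=\; \tfrac{1}{e_i}[z^{\mathbf c+\mathbf 1_i}]\partial_i \;\in\; [z^{\mathbf c+\mathbf 1_i}]\sD_X, \qquad 0\le i\le k,
\]
(where $\mathbf 1_i$ denotes the $i$-th standard basis multi-index), obtained directly from $z_i\partial_i\cdot z^{\mathbf c}=c_iz^{\mathbf c}+z^{\mathbf c}z_i\partial_i$. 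Right $\sD_X$-linearity of $R$ lifts this to $(R-c_i/e_i)[z^{\mathbf c}]\sD_X\subset[z^{\mathbf c+\mathbf 1_i}]\sD_X$, which is the engine of the whole argument.

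For the inclusion $[m_\alpha]\sD_X\subset\cM_{\ge\alpha}$, I would exhibit a polynomial $p(s)=\prod_{i=0}^k\prod_{j=a_i}^{e_i-1}(s-j/e_i)$, all of whose roots lie in $[\alpha,1)\cap\Q$ since $j/e_i\ge a_i/e_i\ge\alpha$. Iterating the commutation identity once per factor and invoking the Weyl-algebra formula $(z\partial)(z\partial-1)\cdots(z\partial-r+1)=z^r\partial^r$ collapses the computation to
\[
p(R)[m_\alpha]\;=\;\Big(\prod_{i=0}^k e_i^{-(e_i-a_i)}\Big)\bigl[m_\alpha\cdot\prod_{i=0}^k z_i^{e_i-a_i}\bigr]\prod_{i=0}^k\partial_i^{e_i-a_i},
\]
and the equality $m_\alpha\cdot\prod_i z_i^{e_i-a_i}=t$ together with $[t]=0$ in $\cM$ forces $p(R)[m_\alpha]=0$. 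Right $\sD_X$-linearity then makes $p(R)$ vanish on all of $[m_\alpha]\sD_X$, so $[m_\alpha]\sD_X\subset\ker p(R)\subset\cM_{\ge\alpha}$.

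For the reverse inclusion, I would use the complementary polynomial $q(s)=\prod_{i=0}^k\prod_{j=0}^{a_i-1}(s-j/e_i)$, whose roots all satisfy $j/e_i<\alpha$: for $i\in I_\alpha$ one has $j/e_i\le\alpha-1/e_i<\alpha$, and for $i\notin I_\alpha$ one has $j\le\ceil{\alpha e_i}-1<\alpha e_i$. Applying the factors of $q$ in the order corresponding to the chain
\[
\cM=[1]\sD_X\;\supset\;[z_0^{a_0}]\sD_X\;\supset\;[z_0^{a_0}z_1^{a_1}]\sD_X\;\supset\;\cdots\;\supset\;[m_\alpha]\sD_X,
\]
the iterated commutation yields $q(R)\cM\subset[m_\alpha]\sD_X$. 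On the other hand, every root of $q$ is strictly less than any generalized eigenvalue $\mu\ge\alpha$ of $R$, and on each $\cM_\mu$ the operator $R-\lambda$ acts as the invertible scalar $\mu-\lambda$ plus a nilpotent, so $q(R)$ is invertible on $\cM_{\ge\alpha}$. Hence $\cM_{\ge\alpha}=q(R)\cM_{\ge\alpha}\subset q(R)\cM\subset[m_\alpha]\sD_X$, closing the equality.

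The statement for $\cM_{>\alpha}$ will follow from the identical argument with $a_i$ replaced by $b_i=\floor{\alpha e_i}+1$ and $m_\alpha$ by $z^{\floor{\alpha\mathbf e}+\mathbf 1}_I$; one only needs to check that $b_i/e_i>\alpha$ for every $i$, so that the annihilating polynomial $\prod_i\prod_{j=b_i}^{e_i-1}(s-j/e_i)$ has all roots in $(\alpha,1)$, while the pushing polynomial $\prod_i\prod_{j=0}^{b_i-1}(s-j/e_i)$ has all roots in $[0,\alpha]$. The main (and essentially only) obstacle is the bookkeeping of which roots $j/e_i$ fall strictly above or below $\alpha$ under the ceiling/floor conventions; this is precisely what pins the generators down as $z^{\ceil{\alpha\mathbf e}}_I$ in one case and $z^{\floor{\alpha\mathbf e}+\mathbf 1}_I$ in the other. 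Once the two telescoping products are assembled, the substantive content of the proof is simply the single commutation identity iterated together with $[t]=0$ in $\cM$.
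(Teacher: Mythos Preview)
Your proposal is correct and follows essentially the same approach as the paper: both arguments split the characteristic polynomial of $R$ into the two factors $p(s)=\prod_i\prod_{j=a_i}^{e_i-1}(s-j/e_i)$ and $q(s)=\prod_i\prod_{j=0}^{a_i-1}(s-j/e_i)$, use the Weyl-algebra identity $(z\partial)(z\partial-1)\cdots(z\partial-r+1)=z^r\partial^r$ to compute $p(R)[m_\alpha]=0$ and $q(R)[1]\in[m_\alpha]\sD_X$, and then conclude. The only minor difference is in the reverse inclusion: the paper argues $\cM_{\ge\alpha}=\mathrm{im}\,q(R)$ (using that $p(R)$ is invertible on $\cM_{<\alpha}$, hence $q(R)$ vanishes there), while you argue $q(R)$ is invertible on $\cM_{\ge\alpha}$ so $\cM_{\ge\alpha}=q(R)\cM_{\ge\alpha}\subset q(R)\cM\subset[m_\alpha]\sD_X$; these are equivalent rephrasings of the same linear-algebra fact.
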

\begin{proof}
Let us first check that the class of $z^{\ceil{\alpha \mathbf{e} }}_I$ is in $\cM_{\geq\alpha}$. It suffices to check that it is in 
\[
\ker\prod_{i\in I}\prod^{e_i-1}_{j=\ceil{\alpha e_i}}(R-\frac{j}{e_i}).
\]
This can be done by direct calculations:
\[
\begin{aligned}
\prod_{i\in I}\prod^{e_i-1}_{j=\ceil{\alpha e_i}}(R-\frac{j}{e_i})z^{\ceil{\alpha \mathbf{e} }}_I &=\prod_{i\in I}\prod^{e_i-1}_{j=\ceil{\alpha e_i}}(R-\frac{j}{e_i})z^{\ceil{\alpha e_i}}_i=\prod_{i\in I}\prod^{e_i-1}_{j=\ceil{\alpha e_i}}(\frac{1}{e_i}z_i\partial_i-\frac{j}{e_i})z^{\ceil{\alpha e_i}}_i \\
  &= \prod_{i\in I} \frac{1}{e^{e_i-\ceil{\alpha e_i}}_i}z^{e_i}_i\partial^{e_i-\ceil{\alpha e_i}}_i=t\prod_{i\in I} \frac{1}{e^{e_i-\ceil{\alpha e_i}}_i}\partial^{e_i-\ceil{\alpha e_i}}_i=0\in\cM.
\end{aligned}
\]
Because $R$ satisfies the identity~\eqref{poly}, $\cM_{\geq \alpha}$ is also equal to the image of 
\[
\prod_{i\in I}\prod^{\ceil{\alpha e_i}-1}_{j=0}(R-\frac{j}{e_i}).
\] 
We conclude from 
\[
\prod_{i\in I}\prod^{\ceil{\alpha e_i}-1}_{j=0}(R-\frac{j}{e_i})(1)=\prod_{i\in I}\prod^{\ceil{\alpha e_i}-1}_{j=0}(\frac{1}{e_i}z_i\partial_i-\frac{j}{e_i})=z^{\ceil{\alpha \mathbf{e} }}_I\prod_{i\in I}\frac{1}{e^{\ceil{\alpha e_i}}_i}\partial^{\ceil{\alpha e_i}}_i
\]
that $z^{\ceil{\alpha \mathbf{e} }}_I \prod_{i\in I}\partial^{\ceil{\alpha e_i}}_i$ generates $\cM_{\geq\alpha}$. We deduce that $z^{\ceil{\alpha \mathbf{e} }}_I$ generates $\cM_{\geq\alpha}$. 
A similar argument works for $\cM_{>\alpha}$.
\end{proof}

\begin{proof}[Proof of Theorem~\ref{Alpha}]
As the above lemma, $\cM_\alpha$ is locally isomorphic to 
\[
\zeta\otimes {\left(z^{\ceil{\alpha \mathbf{e} }}_I, D_1,D_2,\dots,D_n\right)\sD_X}/{\left(z^{\floor{\alpha \mathbf{e} }+\mathbf{1}}_I,D_1,D_2,\dots,D_n\right)\sD_X}
\]
where \(\zeta=\frac{dz_1}{z_1}\wedge\frac{dz_2}{z_2}\wedge\cdots\wedge\frac{dz_k}{z_k}\wedge\cdots\wedge dz_n\). Put $\zeta_\alpha=z^{\ceil{\alpha \mathbf{e} }}_I \zeta $. Since $\sH^0(C^\bullet_\alpha\otimes\sD_X)$ by~\eqref{eq:ca} is locally isomorphic to  
\[
\zeta_\alpha \otimes\sD_X/(t_\alpha, D_1+\alpha_1,D_2+\alpha_2,\dots,D_n+\alpha_n)\sD_X,
\]
the multiplication $\sH^0(C^\bullet_\alpha\otimes\sD_X)\to \cM_\alpha,\, \zeta_\alpha \otimes P \mapsto \zeta \otimes z^{\ceil{\alpha \mathbf{e} }}_I P$ is well-defined, does not depend on the coordinate and therefore, gives a filtered morphism
\[
\begin{tikzcd}[column sep=small]
\psi_\alpha: (\sH^0(C^\bullet_\alpha\otimes\sD_X),F_\bullet\sH^0(C^\bullet_\alpha\otimes\sD_X)) \arrow{r} & (\cM_\alpha,F_\bullet\cM_\alpha). 
\end{tikzcd}
\] 
It is clear from the local description that $\phi_\alpha$ is surjective and hence, 
\[
cc\left(\sH^0\left(C^\bullet_\alpha\otimes\sD_X\right)\right)\geq cc(\cM_\alpha). 
\]
Summing over all the rational numbers $\alpha$ in $[0,1)$ gives
\[
\sum_\alpha cc\left(\sH^0\left(C^\bullet_\alpha\otimes\sD_X\right)\right)\geq \sum_\alpha cc(\cM_\alpha)=cc(\cM).
\]
On the other hand, by Theorem~\ref{M} and Theorem~\ref{thm:eigenperv}, the $\sD_X$-module $\cM$ is also successive extensions of $\sH^0\left(C^\bullet_\alpha\otimes\sD_X\right)$ for $\alpha\in \Q\cap[0,1)$. Thus, 
\[
\sum_\alpha cc\left(\sH^0\left(C^\bullet_\alpha\otimes\sD_X\right)\right)=cc(\cM).
\]
This forces that $\psi_\alpha$ must be (filtered) isomorphism.

It remains to show that 
\begin{equation}
\begin{tikzcd}[column sep=small]
F_\ell\psi_\alpha\colon F_\ell\sH^0(C^\bullet_\alpha\otimes\sD_X) \arrow{r} & F_\ell\cM_\alpha,
\end{tikzcd}
\end{equation}
is surjective. Suppose that $z^{\ceil{\alpha \mathbf{e}}}_I P\in\sD_X$ represents a class in $F_\ell\cM_\alpha$. Then we can write
\[
z^{\ceil{\alpha \mathbf{e}}}_I P= P'+\sum^n_{i=1} D_i Q_i+ z^{\floor{\alpha \mathbf{e} }+\mathbf{1}}_I T
\]
for $P' \in F_{\ell+n}\sD_X$ and $T,Q_i\in \sD_X$. Regrouping gives 
\[
z^{\ceil{\alpha \mathbf{e}}}_I (P-t_\alpha T)= P'+\sum^n_{i=1} D_i Q_i
\]
By a similar regular sequence argument as in the proof of Theorem~\ref{M}, we can assume that $P-t_\alpha T$ is in $F_{\ell +n}\sD_X$. Then the class in $\sH^0(C^\bullet_\alpha\otimes\sD_X)$ represented by $P-t_\alpha T$ is actually in $F_\ell \sH^0(C^\bullet_\alpha\otimes\sD_X)$ by the local formula. Therefore, we have found a lifting in $F_\ell \sH^0(C^\bullet_\alpha\otimes\sD_X)$ (represented by $P$) of the class of $z^{\ceil{\alpha \mathbf{e}}}_I P$ in $F_\ell\cM_\alpha$. 
\end{proof}

Let $R_\alpha\colon=(R-\alpha)$ be the nilpotent endomorphism of $\cM_\alpha$. One easily gets a nice local formula of $R_\alpha$:

\begin{cor}
The endomorphism $R_\alpha$ of $\cM_\alpha$ acts locally as $\psi_\alpha \circ (\id\otimes \frac{1}{e_j}z_j\partial_j)\circ (\psi_\alpha)^{-1}$ for any $j\in I_\alpha$. In particular, $R_\alpha\colon (\cM_\alpha,F_\bullet\cM_\alpha) \to (\cM_\alpha,F_{\bullet+1}\cM_\alpha)$ is a filtered morphism.
\end{cor}
\begin{proof}
Because $R-\alpha$ acts on the right hand side of the identification~\eqref{eq:aiden} by the left multiplication by $\frac{1}{e_0}z_0\partial_0-\alpha$,  a direct calculation 
\[
\begin{aligned}
 R_\alpha\left[\zeta\otimes z^{\ceil{\alpha \mathbf{e} }}_I\right]
=& \left[ \zeta \otimes \left(\frac{1}{e_j}z_j\partial_j-\alpha\right)\left(z^{\ceil{\alpha \mathbf{e} }}_I\right) \right]\\
=&\left[ \zeta\otimes \left(\left(\frac{1}{e_j}\ceil{\alpha e_j}-\alpha\right)z^{\ceil{\alpha \mathbf{e} }}_I+z^{\ceil{\alpha \mathbf{e} }}_I \left(\frac{1}{e_j}z_j\partial_j\right)\right)\right]\\
=& \psi_\alpha{\left[\zeta z^{\ceil{\alpha \mathbf{e} }}_I\otimes \left(\frac{1}{e_j}z_j\partial_j\right)\right]}=\psi_\alpha\circ\left(\id\otimes\frac{1}{e_j}z_j\partial_j\right)\circ\psi_\alpha^{-1}\left[\zeta_\alpha\otimes 1\right]
\end{aligned}
\]
completes the proof.
\end{proof}

\subsection{Strictness of $R_\alpha$} Similar to the reduced case, every power of $R_\alpha$ is strict.
\begin{thm}\label{strictN}
Any power of $R_\alpha$ on $(\cM_\alpha,F_\bullet\cM_\alpha)$ is strict:
\begin{equation}\label{rl}
R_\alpha^{a}F_\bullet\cM_\alpha=F_{a+\bullet}R_\alpha^a\cM_\alpha, \quad \text{for any }a\in \Z_{\geq 0}.
\end{equation}
\end{thm}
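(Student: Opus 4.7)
The plan is to model the argument on Theorem~\ref{strict}, working in the local presentation provided by Theorem~\ref{Alpha}. Strictness is a stalk-local condition, so I will work on a local chart in which $I_\alpha = \{0,1,\ldots,\mu\}$, $t_\alpha = z_0 z_1 \cdots z_\mu$, and
\[
\cM_\alpha \simeq \sD_X\big/\bigl(t_\alpha,\, D_1+\alpha_1,\ldots,D_n+\alpha_n\bigr)\sD_X,
\]
on which $R_\alpha$ is realised as left multiplication by $\tfrac{1}{e_j}z_j\partial_j$ for any $j \in I_\alpha$. At the graded level the scalars $\alpha_i$ drop out, so $\gr^F\cM_\alpha \simeq \gr^F\sD_X/(t_\alpha, D_1,\ldots,D_n)\gr^F\sD_X$, and the situation is formally parallel to the reduced case with $(I,t)$ replaced by $(I_\alpha, t_\alpha)$.

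The first step is to prove the analogue of Lemma~\ref{locgen}: $\ker [R_\alpha]^{r+1}$ is locally generated over $\gr^F\sD_X$ by the classes of monomials $z_K = \prod_{j\in K}z_j$ with $K \subset I_\alpha$ and $|K| = \mu - r$. The proof is the inductive regular-sequence argument of Lemma~\ref{locgen}, now run inside $I_\alpha$: the symbols of $(t_\alpha, D_1,\ldots,D_n)$ form a regular sequence in $\gr^F\sD_X$, and the auxiliary sequences needed to split the coboundary terms behave just as before. Second, for $K \subset I_\alpha$ with $|K| = \mu - a + 1$, representing each of the $a = |I_\alpha \setminus K|$ copies of $R_\alpha$ by $\tfrac{1}{e_j}z_j\partial_j$ with $j$ ranging over the distinct elements of $I_\alpha\setminus K$ yields
\[
R_\alpha^a \cdot z_K \;=\; \Bigl(\prod_{j\in I_\alpha\setminus K}\tfrac{1}{e_j}\Bigr)\, t_\alpha \prod_{j\in I_\alpha\setminus K}\partial_j \;=\; 0 \quad\text{in } \cM_\alpha.
\]

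Given these two ingredients, the theorem follows by the bookkeeping of Theorem~\ref{strict}. The inclusion $R_\alpha^a F_b\cM_\alpha \subset F_{a+b}R_\alpha^a\cM_\alpha$ is immediate. Conversely, given $P \in F_\ell\cM_\alpha$ with $R_\alpha^a P \in F_{a+b}\cM_\alpha$ and $\ell > b$, the class $[R_\alpha^a P]$ vanishes in $\gr^F_{a+\ell}\cM_\alpha$, so $[P] \in \ker [R_\alpha]^a \subset \gr^F_\ell\cM_\alpha$. By the generation lemma one may write $P = \sum_{|K|=\mu-a+1} z_K Q_K + Q_{\ell-1}$ with $Q_K \in F_\ell\cM_\alpha$ and $Q_{\ell-1} \in F_{\ell-1}\cM_\alpha$; since the defining ideal of $\cM_\alpha$ is a right ideal of $\sD_X$, the vanishing above gives $R_\alpha^a(z_K Q_K) = (R_\alpha^a z_K)\cdot Q_K = 0$ in $\cM_\alpha$, and hence $R_\alpha^a P = R_\alpha^a Q_{\ell-1}$. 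Iterating this step lowers the filtration index by one each time and eventually produces the required $Q \in F_b\cM_\alpha$ with $R_\alpha^a Q = R_\alpha^a P$.

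The hard part will be the generation lemma. The reduced argument chains together several regular-sequence decompositions inside $\gr^F\sD_X$, and the main subtlety here is that $I_\alpha$ is typically a proper subset of the local index set $\{0,1,\ldots,k\}$; one must verify that the sequences formed from $\{z_i : i \in I_\alpha\}$ together with complementary $\partial_j$'s remain regular, and keep track of the fact that the additive scalars $\alpha_i$ are invisible at the symbol level. Once this is in hand, the rest is a direct transcription of the reduced-case argument.
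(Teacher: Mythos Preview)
Your proposal is correct and follows essentially the same approach as the paper: reduce to a local chart via Theorem~\ref{Alpha}, prove the generation lemma for $\ker[R_\alpha]^{r+1}$ by monomials of degree $\mu-r$ dividing $t_\alpha$ (the paper's Lemma~\ref{generator}), and then run the filtration-lowering iteration exactly as in Theorem~\ref{strict}. Your identification of the key technical point---that when $I_\alpha \subsetneq \{0,\ldots,k\}$ the relevant regular sequences acquire factors $z_c\partial_c$ for $c \in I\setminus I_\alpha$---is precisely what the paper's proof of Lemma~\ref{generator} handles. The only cosmetic difference is that the paper writes the decomposition at the level of $\sD_X$-representatives and so carries along explicit $\sum D_i Q_i$ correction terms (which, after applying $R_\alpha^{r+1}$ and using $(D_i+\alpha_i)=0$ in $\cM_\alpha$, contribute a lower-order piece $-\sum\alpha_i Q_i$), whereas you work directly in $\cM_\alpha$ and absorb these into $Q_{\ell-1}$; your version is marginally cleaner but the content is identical.
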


Let $[R_\alpha]$ be the endomorphism on $\gr^F\cM_\alpha$ induced by $R_\alpha$. 

\begin{lem}\label{generator}
$\ker {[ R_\alpha]}^{r+1}$ is locally generated by monomials of degree $\mu-r$ that divide $t_\alpha$. 
\end{lem}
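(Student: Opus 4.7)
The plan is to follow exactly the structure of the proof of Lemma~\ref{locgen}, with $t$, $I$, and $k$ replaced by $t_\alpha$, $I_\alpha$, and $\mu$ respectively. Throughout, I work in the local presentation $\gr^F\cM_\alpha \cong \gr^F\sD_X/(t_\alpha,D_1,\ldots,D_n)\gr^F\sD_X$ provided by Theorem~\ref{Alpha}, under which $[R_\alpha]$ is realized as left multiplication by $\frac{1}{e_j}z_j\partial_j$ for any $j\in I_\alpha$.

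The forward direction is immediate: for any monomial $z_J = \prod_{j\in J}z_j$ with $J\subseteq I_\alpha$ and $\#J = \mu - r$, applying $[R_\alpha]$ successively via the $r+1$ distinct representatives $\frac{1}{e_i}z_i\partial_i$ for $i\in I_\alpha\setminus J$ produces a scalar multiple of $t_\alpha\cdot\prod_{i\in I_\alpha\setminus J}\partial_i$, which vanishes modulo $t_\alpha$.

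For the converse I proceed by induction on $r$, mirroring Lemma~\ref{locgen}. In the base case, starting from a representative $P$ with $\frac{1}{e_0}z_0\partial_0 P = t_\alpha Q_0 + \sum_i D_i Q_i$, the regular sequence $(t_\alpha,\partial_0,\ldots,\partial_n)$ in $\gr^F\sD_X$ forces $Q_0 = \sum_i\partial_i Q'_i$. The sum $\sum_i t_\alpha\partial_i Q'_i$ is then reorganized in three ranges: for $i\in I_\alpha$ the identity $t_\alpha\partial_i = \frac{e_it_\alpha}{z_i}\bigl(D_i + \frac{1}{e_0}z_0\partial_0\bigr)$ from the reduced case applies since $z_i\mid t_\alpha$; for $i>k$ we have $t_\alpha\partial_i = D_i\cdot t_\alpha\in(D_i)$; and for $\mu<i\leq k$ the index sits in neither range and requires additional care. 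Once the reorganization is complete, the regular sequence $\bigl(\frac{1}{e_0}z_0\partial_0,D_1,\ldots,D_n\bigr)$ forces $P - \sum_{i\in I_\alpha}\frac{e_it_\alpha}{z_i}Q'_i \in (D_1,\ldots,D_n)\gr^F\sD_X$, completing the base case. The inductive step is the standard peel-off argument of Lemma~\ref{locgen}, now applied to the ideal generated by monomials of degree $\mu-r+1$ dividing $t_\alpha$.

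The main obstacle is the handling of the range $\mu<i\leq k$, which is absent in the reduced case. Here $z_i\nmid t_\alpha$, so the identity used for $i\in I_\alpha$ cannot be applied literally: the naive expression $\frac{t_\alpha}{z_i}$ is not a polynomial. My approach is to pass to the auxiliary commutative ring $A = \gr^F\sD_X/(D_1,\ldots,D_n)$, in which the relations $D_i$ become $z_i\partial_i = e_i\theta$ for all $0\leq i\leq k$, writing $\theta = \frac{1}{e_0}z_0\partial_0$; this $A$ is an integral domain thanks to its explicit presentation $A \cong \C[z_0,\ldots,z_n,\partial_0,\ldots,\partial_k]/(e_0z_i\partial_i - e_iz_0\partial_0)_{1\leq i\leq k}$. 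The equation $\theta\bar P = t_\alpha\bar Q$ in $A$, combined with $z_i\partial_i = e_i\theta$ and the coprimality of $z_i$ and $t_\alpha$ for $\mu<i\leq k$, forces $z_i\mid\bar Q$ in $A$ for each such $i$; successively extracting these factors reduces the problem to an equation in which only indices from $I_\alpha$ appear, at which point the reduced argument applies. Verifying that this divisibility extraction is compatible with the good filtration --- so that it lifts cleanly to the filtered statement needed for the subsequent proof of Theorem~\ref{strictN} --- is the technical heart of the argument.
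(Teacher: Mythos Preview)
Your handling of the intermediate range $\mu < i \leq k$ has a real gap. The divisibility claim ``$z_i \mid \bar Q$ in $A$'' is false in general: although $A$ is a domain, the element $z_i$ is not prime there, since modulo $z_i$ the relation $e_0 z_i\partial_i = e_i z_0\partial_0$ collapses to $z_0\partial_0 = 0$ and $A/(z_i)$ acquires zero-divisors; so coprimality of $z_i$ and $t_\alpha$ does not give the Euclid-type implication $z_i \mid t_\alpha\bar Q \Rightarrow z_i \mid \bar Q$. Concretely, take $n = k = 1$, $\mu = 0$, $e_0 = e_1 = 1$, and $P = 1$. Then $\theta\cdot 1 = z_0\partial_0 = t_\alpha\cdot\partial_0$, so $\bar Q = \partial_0$; but $\partial_0 \notin z_1 A$, since the preimage of $z_1 A$ in the polynomial ring is $(z_1, z_0\partial_0)$, and $\partial_0 \in (z_1, z_0\partial_0)$ would force $1 \in (z_0, z_1)$ upon setting $z_0 = z_1 = 0$. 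Thus the extraction step fails as stated.

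The paper avoids the issue by choosing a better regular sequence from the outset. One observes that $\frac{1}{e_0}z_0\partial_0 P$ and every $D_i Q_i$ already lie in the ideal of $\gr^F\sD_X$ generated by
\[
\partial_0,\ldots,\partial_\mu,\quad z_{\mu+1}\partial_{\mu+1},\ldots,z_k\partial_k,\quad \partial_{k+1},\ldots,\partial_n,
\]
so $t_\alpha Q_0$ does too; since $t_\alpha$ together with these $n+1$ elements form a regular sequence, one gets $Q_0 = \sum_{a\leq\mu}\partial_a Q_a + \sum_{\mu<b\leq k} z_b\partial_b Q_b + \sum_{c>k}\partial_c Q_c$. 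Substituting back, the middle block contributes $t_\alpha\cdot z_b\partial_b Q_b = e_b D_b\cdot t_\alpha Q_b + \frac{1}{e_0}z_0\partial_0\cdot e_b t_\alpha Q_b$, so the correction to $P$ is $e_b t_\alpha Q_b$, already a multiple of $t_\alpha$ and hence in the span of the degree-$\mu$ monomials dividing $t_\alpha$. The inductive step uses the same device, replacing $\partial_c$ by $z_c\partial_c$ for $c \in I\setminus I_\alpha$ in the regular sequence built around $z_J$.
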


\begin{proof}[Proof of Theorem~\ref{strictN}]
Temporarily admitting this lemma, let $R_\alpha^{r+1}m$ be an element in $F_{\ell+r+1}\cM_\alpha$. Assume that $m\in F_k\cM_\alpha$. If $k>\ell$ then the projection of $R_\alpha^{r+1}m$ vanishes in $\gr^F_{k+r+1}\cM_\alpha$. It follows from the lemma that there are $m'\in F_{k-1}\cM_\alpha$ and $z_J=\prod_{j\in J} z_j$ such that 
\[
m=\sum_{{\substack{|J|=\mu-r,\\ J\subset I_\alpha}}} z_J m_{J}+m'.
\]
Because for every $J\subset I_\alpha$ of cardinality $r+1$, we can arrange 
\[
R^{r+1}_\alpha z_J=\prod_{j\in I_\alpha\setminus J} \frac{1}{e_j}z_j\partial_j z_{J}= t_\alpha \prod_{j\in I_\alpha}\frac{1}{e_j}\partial_{j}=0\in \cM_\alpha.
\]
It follows that  
\begin{equation}\label{eq:Niter}
  \begin{aligned}
R_\alpha^{r+1}m =& \sum_{{\substack{|J|=\mu-r,\\ J\subset I_\alpha}}} R_\alpha^{r+1}z_Jm_{J}+R_\alpha^{r+1}m' \\
  =&\sum_{{\substack{|J|=\mu-r,\\ J\subset I_\alpha}}} t_\alpha m'_{J}+R_\alpha^{r+1}m' \\
  =& R_\alpha^{r+1}m'\in \cM_\alpha.
\end{aligned}
\end{equation}
But now $m'\in F_{k-1}\cM_\alpha$. Iterating~\eqref{eq:Niter}, we can find $\tilde m\in F_\ell\cM_\alpha$ such that 
\[
R_\alpha^{r+1}m=R_\alpha^{r+1}\tilde m,
\]
which completes the proof of the theorem.
\end{proof}

\begin{proof}[Proof of the lemma]
The proof is essentially the same as the reduced case. We are now working over the commutative ring $\gr^F\sD_X$. We prove by induction on $r$. Without loss of generality, we can assume by abuse of notation that locally $I_\alpha=\{0,1,\dots,\mu\}$ and $t_\alpha=z_0z_1\cdots z_\mu$. Let $P\in\gr^F\sD_X$ represent a class in $\ker [R_\alpha]^{r+1}$. 

When $r=0$, we have 
\begin{equation}\label{baseN}
\frac{1}{e_0}z_0\partial_0P=t_\alpha Q_0+\sum^n_{i=1}D_iQ_i.
\end{equation}
Then $t_\alpha Q_0$ belongs to the ideal generated by 
\[
\partial_0,\partial_1,\dots,\partial_\mu,z_{\mu+1}\partial_{\mu+1},z_{\mu+2}\partial_{\mu+2},\dots,z_k\partial_k,\partial_{k+1},\dots\partial_n
\] 
over $\gr^F\sD_X$. Since $t_\alpha$ together with the above generators form a regular sequence in $\gr^F \sD_X$, we have $Q_0$ 
\[
Q_0=\sum^\mu_{a=0}\partial_aQ_{a}+\sum^k_{b=\mu+1}z_{b}\partial_{b}Q_{b}+\sum^n_{c=k+1}\partial_{c}Q_{c}.
\]
Substituting this in~\eqref{baseN} and regrouping, we find that
\[
\frac{1}{e_0}z_0\partial_0\left(P-\sum^\mu_{a=0}e_a\frac{t_\alpha}{z_a}Q_{a}-\sum^k_{b=\mu+1}e_b t_\alpha Q_{b}\right)\in (D_1,D_2,\dots,D_n)\gr^F\sD_X.
\]
Then because $(z_0\partial_0,D_1,D_2,\dots,D_n)$ is a regular sequence in $\gr^F\sD_X$, $P$ is a linear combination of ${t_\alpha}/{z_a}$ for $a\in\{0,1,\dots,\mu\}$ and $D_1,D_2,\dots,D_n$ over $\gr^F \sD_X$. This concludes the case when $r=0$.

Assume the statement is true for the case when the exponent is less than $r$. Because $[R_\alpha]$ sends the class of $P$ to $\ker [R_\alpha]^r$, by the induction hypothesis we have
\begin{equation}\label{reqc1}
\frac{1}{e_0}z_0\partial_0 P=\sum_{{\substack{|J|=\mu-r+1,\\ J\subset I_\alpha}}} z_JQ_{J}+\sum^n_{i=1} D_iQ_i.
\end{equation}
Fixing a subset $J$, we see that $z_JQ_J$ is in the submodule generated by $z_a,\partial_b,z_c\partial_c,\partial_d$ for $a\in I_\alpha\setminus J, b\in J, c\in I\setminus I_\alpha, d\notin I$ over $\gr^F\sD_X$. Because those generators together with $z_J$ form a regular sequence in $\gr^F\sD_X$, 
we deduce that
\[
Q_J=\sum_{a\in I_\alpha\setminus J}z_{a}Q_a+\sum_{b\in J}\partial_bQ_b+\sum_{c\in I\setminus I_\alpha}z_c\partial_cQ_c+\sum_{d\notin I}\partial_dQ_d. 
\]
Substituting in \eqref{reqc1}, we deduce that 
\[
\frac{1}{e_0}z_0\partial_0\left(P-\left(\sum_{b\in J}e_b\frac{z_J}{z_{b}}Q_b+\sum_{c\in I\setminus I_\alpha} e_cz_JQ_c\right)\right) -\sum_{a\in I_\alpha\setminus J}z_Jz_{a}Q_a
\]
is in the submodule generated by degree $\mu-r+1$ monomials dividing $t_\alpha$ except $z_J$ and by $D_1,D_2,\dots,D_n$ over $\gr^F\sD_X$. This means we can reduce $z_J Q_J$ one by one for each $J$ on the right-hand side of the equation~\eqref{reqc1} and at the last step we find that $\frac{1}{e_0}z_0\partial_0(P-P')$ is a linear combination of degree $\mu-r+2$ monomials dividing $t_\alpha$ and $D_1, D_2,\dots,D_n$, where $P'$ is a linear combination of degree $\mu-r$ monomials dividing $t_\alpha$.

Note that the left multiplication by $\frac{1}{e_0}z_0\partial_0$ has the same effect as applying $[R_\alpha]$  on $\gr^F\cM_\alpha$. Therefore, the class represented by $P-P'$
is in $\ker[R_\alpha]^r$ since degree $\mu-r+2$ monomials dividing $t_\alpha$ is in $\ker [R_\alpha]^{r-1}$. By the induction hypothesis, the class represented by $P-P'$ is a linear combination of degree $\mu-r+1$ monomials dividing $t_\alpha$. Therefore, the class represented by $P$ in $\gr^F\cM_\alpha$ is a linear combination of degree $\mu-r$ monomials dividing $t_\alpha$. This completes the proof.
\end{proof}

\begin{cor}\label{cor:locgenR}
The $\ker R^{r+1}_\alpha$ is also generated by degree $\mu-r$ monomials dividing $t_\alpha$ if one identifies $\cM_\alpha$ locally with $\sD_X/(t_\alpha, D_1,D_2,\dots,D_n)\sD_X$ via~\eqref{eq:locrep} and Theorem~\ref{Alpha}.
\end{cor}
\begin{proof}
  The proof is the same as the one of Corollary~\ref{localgenR} and is left to the readers.
\end{proof}

\subsection{The weight filtration}
Recall that since $R_\alpha$ is nilpotent on $\cM_\alpha$, it induces a $\Z$-indexed filtration $W_\bullet\cM_\alpha$.
We filtered the sub-module $W_r\cM_\alpha$ by the induced filtration $F_\bullet W_r\cM_\alpha=F_\bullet\cM_\alpha\cap W_r\cM_\alpha$. Let 
\[
\cP_{\alpha,r}=\frac{\ker R^{r+1}_\alpha}{\ker R^r_\alpha+R_\alpha \ker R^{r+2}_\alpha}
\]
be the $r$-th primitive part of $\gr^W\cM_\alpha$ with the filtration defined by 
\[
F_\ell \cP_{\alpha,r}=\text{im}\, \left(F_\ell\ker R^{r+1}_\alpha\to \cP_{\alpha,r}\right).
\]
As the formal proof in Theorem~\ref{FW}, we have

\begin{cor}\label{FWN}
The morphism $R^r_\alpha\colon F_\ell \gr^W_r\cM_\alpha \to F_{\ell+r}\gr^W_{-r}\cM_\alpha$ is an isomorphism for $r\geq 0$ and the Lefschetz decomposition of $\gr^W\cM_\alpha$ respects filtrations, i.e.
\[
F_\bullet\gr^W_r\cM_\alpha=\bigoplus_{\substack{\ell\geq 0,-\frac{r}{2}}} R^{\ell}_\alpha F_{\bullet-\ell}\cP_{\alpha,r+2\ell}\text{ \,for any\,  }r\in\Z.
\]
\end{cor}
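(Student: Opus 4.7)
The plan is to follow exactly the formal argument used in the reduced case (Theorem~\ref{FW}), using Theorem~\ref{strictN} as the input. The key point is that all of the statements in Corollary~\ref{FWN} are consequences purely of the strictness of every power of a nilpotent filtered endomorphism, so the proof should work identically to Theorem~\ref{FW} once strictness for $R_\alpha$ is in hand.

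First I would prove that $R_\alpha^r: F_\ell \gr^W_r\cM_\alpha \to F_{\ell+r}\gr^W_{-r}\cM_\alpha$ is an isomorphism. Given $b \in F_{\ell+r}W_{-r}\cM_\alpha$, I would use $W_{-r}\cM_\alpha \subset R_\alpha^r \cM_\alpha$ (a standard property of the monodromy filtration) to write $b = R_\alpha^r a$ for some $a$. Theorem~\ref{strictN} then yields $a' \in F_\ell \cM_\alpha$ with $R_\alpha^r a' = b$. It remains to arrange that $a' \in W_r\cM_\alpha$. For this, suppose $a' \in W_{r+k}\cM_\alpha$ with $k > 0$ minimal; since $R_\alpha^r a' = b \in W_{-r}\cM_\alpha$, the class of $R_\alpha^r a'$ vanishes in $\gr^W_{-r+k}\cM_\alpha$, and because $R_\alpha^{r+k}: \gr^W_{r+k}\cM_\alpha \to \gr^W_{-r-k}\cM_\alpha$ is injective, the class of $a'$ in $\gr^W_{r+k}$ is already zero, contradicting minimality. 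Iterating pushes $a'$ down into $W_r\cM_\alpha$.

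Next, I would show that the two natural filtrations on the primitive part $\cP_{\alpha,r}$ coincide: the one induced from the inclusion $\cP_{\alpha,r} \hookrightarrow \gr^W_r\cM_\alpha$, and the one defined via $F_\ell\ker R_\alpha^{r+1}$. Given a class represented by $m = m_1 + m_2$ with $m_1 \in F_\ell W_r\cM_\alpha$ and $m_2 \in W_{r-1}\cM_\alpha$ such that $R_\alpha^{r+1}m \in W_{-r-3}\cM_\alpha$, one has $R_\alpha^{r+1} m_1 \in F_{\ell+r+1}W_{-r-3}\cM_\alpha$, so using the surjectivity of $R_\alpha^{r+3}: F_{\ell-2}W_{r+3}\cM_\alpha \to F_{\ell+r+1}W_{-r-3}\cM_\alpha$ (which is the first step applied to degree $r+3$), we can correct $m_1$ by an element of $R_\alpha^2 W_{r+3}\cM_\alpha$ to land in $F_\ell\ker R_\alpha^{r+1}$, representing the same class in $\cP_{\alpha,r}$.

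Having identified the two filtrations on $\cP_{\alpha,r}$, the filtered Lefschetz decomposition
\[
F_\bullet \gr^W_r \cM_\alpha = \bigoplus_{\ell \geq 0,\, -r/2} R_\alpha^\ell F_{\bullet - \ell} \cP_{\alpha, r+2\ell}
\]
is now purely formal from the standard (unfiltered) Lefschetz decomposition together with the first step and the coincidence of filtrations on $\cP_{\alpha,r}$. The one genuinely non-trivial obstacle is really Theorem~\ref{strictN} itself, which was already established; once strictness of all powers of $R_\alpha$ is granted, everything in Corollary~\ref{FWN} is a formal consequence along the lines of the abstract theorem stated just before Theorem~\ref{FW} in the reduced case, and I would simply invoke that formal template verbatim with $N = R_\alpha$ and $\mathcal{G} = \cM_\alpha$.
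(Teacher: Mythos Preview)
Your proposal is correct and matches the paper's approach exactly: the paper simply remarks ``As the formal proof in Theorem~\ref{FW}, we have'' and states Corollary~\ref{FWN}, invoking the abstract template (the unnamed theorem and corollary preceding Theorem~\ref{FW}) with $N=R_\alpha$ and $\mathcal G=\cM_\alpha$, using Theorem~\ref{strictN} as the strictness input. Your write-up is essentially a recapitulation of that abstract argument, which is precisely what the paper intends.
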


\subsection{Summands of the primitive part $\cP_{\alpha,r}$} \label{subsec:sp} 
Recall that \(Y^J=\bigcap_{j\in J}Y_j\) and \(Y_J=\bigcup_{j\in J} Y_j\) for any subset $J$ of $I$ and $e_j$ is the multiplicity of $Y_j$ in $Y$. Like the reduced case that $\cP_r$ decomposes into the direct images of $\omega_{Y^J}(-r)$ for all index subsets $J$ of cardinality $r+1$ (Theorem~\ref{iden}), the primitive part $\cP_{\alpha,r}$ of the generalized $\alpha$-eigemodule also decomposes into direct images of certain filtered $\sD_{Y^J}$-modules $\cV_{\alpha,J}(-r)$ for all $J$ of cardinality $r+1$ such that $e_j\alpha$ for every $j\in J$ is an integer. The filtered $\sD_{Y^J}$-modules $\cV_{\alpha,J}$ comes from cyclic coverings of $Y^J$ and hence, $\cP_{\alpha,r}$ carries the Hodge theory of the cyclic coverings. In fact, by a well-known construction in~\cite[$\S$3]{EV} the direct image of the de Rham complex of a cyclic covering decomposes into log de Rham complexes of line bundles. A line bundle with an integrable log connection also can be viewed as a log $\sD$-module. This suggests that the $\sD$-modules $\cV_{\alpha,J}$ is generated by a certain log $\sD$-module $\sV_{\alpha,J}$. If $Y$ is reduced and $\alpha=0$, $\cV_{\alpha,J}$ is just $\omega_{Y^J}$. We shall construct auxiliary log $\sD$-modules $\sV_{\alpha,J}$ whose log de Rham complex will be used to construct the $\sD$-module $\cV_{\alpha,J}$, without using cyclic cover. The cyclic coverings are involved only when we study the Hodge theory of those $\sD$-modules. We fix a rational number $\alpha\in[0,1)$ and let $I_\alpha$ be a subset of indices consisting of $i$ such that $\alpha e_i$ is an integer. 

Denote by $\cL$ the line bundle $\sO_X\left(-\sum_{i\in I_\alpha} {\frac{e_i}{N}}Y_i\right)$, where $N$ is the greatest common divisor of $e_i$ for $i\in I_\alpha$. Then we have $\sO_X\left(-\ceil*{\alpha Y}\right)=\cL^{\alpha N}\left(-\sum_{i\in I\setminus I_\alpha} \ceil*{\alpha e_i Y_i}\right)$. As the line bundle $\sO_X(Y)$ can be trivialized by a global section, there is an isomorphism of $\sO_X$-modules:
\begin{equation}\label{conn}
\cL^{N}=\sO_X \left(-\sum_{i\in I_\alpha} e_iY_i\right) \to \sO_X\left(\sum_{i\in I\setminus I_\alpha} e_i Y_i\right).
\end{equation}
Choose a local section $l$ of $\cL$ such that $l^{N}\mapsto \prod_{i\in I\setminus I_\alpha} z^{-e_i}_i$ under~\eqref{conn}. We shall define a log connection $\nabla$ on 
\[
\sO_X(-\ceil{\alpha Y})=\cL^{\alpha {N}}\left(-\sum_{i\in I\setminus I_\alpha} \ceil*{\alpha e_i Y_i}\right).
\] 
First, we define, using the product rule 
\begin{equation}
\frac{\nabla l^{N}}{l^{N}}=N \frac{\nabla l}{l}=\sum_{i\in I\setminus I_\alpha} -e_i\frac{dz_i}{z_i}
\end{equation}
due to~\eqref{conn}. Then, let $s=l^{\alpha N}\prod_{i\in I\setminus I_\alpha} z^{\ceil*{\alpha e_i}}_i$ be the local frame of $\sO_X(-\ceil{\alpha Y})$. Noting that $\alpha N$ is a non-negative integer, the induced log connection works as
\begin{equation}
\begin{aligned}
\frac{\nabla s}{s}=\frac{\nabla (l^{\alpha N}\prod_{i\in I\setminus I_\alpha} z^{\ceil*{\alpha e_i}}_i)}{l^{\alpha N}\prod_{i\in I\setminus I_\alpha} z^{\ceil*{\alpha e_i}}_i} &=\alpha N \frac{\nabla l}{l}+\sum_{i\in I\setminus I_\alpha} {\ceil*{\alpha e_i}}\frac{dz_i}{z_i}\\ 
&= \sum_{i\in I\setminus I_\alpha} \left({\ceil*{\alpha e_i}}-\alpha e_i\right)\frac{dz_i}{z_i} =\sum_{i\in I\setminus I_\alpha} \{-\alpha e_i\}\frac{dz_i}{z_i}, 
\end{aligned}
\end{equation}
where $\{-\}$ denotes the function of taking fractional part. Putting in more standard form, 
\[
\nabla s=\sum_{i\in I\setminus I_\alpha} \{-\alpha e_i\}\frac{dz_i}{z_i}\otimes s.
\]
This log connection is integrable and has poles along $Y_i$ for $i\in I\setminus I_\alpha$ with eigenvalues $\{-\alpha e_i\}$. We endow the line bundle $\sO_X(-\ceil{\alpha Y})$ with this integrable log connection $\nabla$. 

Fix a subset $J$ of $I_\alpha$ with $|J|=r+1$ so that $\dim Y^J=n-r$. The pullback of $(\sO_X(\ceil{-\alpha Y}),\nabla)$ by the inclusion $\tau^J\colon Y^J\to X$ gives an integrable log connection $(\mathscr V,\nabla)=(\sV_{\alpha, J},\nabla)$ on $Y^J$ with poles along $E=E^{\alpha,J}$ the pullback of $Y_{I\setminus I_\alpha}$. Moreover, the log de Rham complex of $(\sV,\nabla)$
\[
\{\sV\to \Omega_{Y^J}(\log E)\otimes \sV \to \cdots \to \Omega^{{n-r}}_{Y^J}(\log E)\otimes \sV\} [{n-r}],
\]
induces a complex of $\sD_{Y^J}$-modules (indeed, the log de Rham complex of $\sV\otimes\sD_{Y^J}$):
\begin{equation}\label{eq:vj}
\{\sV\otimes \sD_{Y^J} \to \Omega_{Y^J}(\log E)\otimes \sV\otimes \sD_{Y^J}  \to \cdots \to \Omega^{{n-r}}_{Y^J}(\log E)\otimes \sV \otimes \sD_{Y^J} \} [{n-r}],
\end{equation}
with the filtration by the subcomplexes for $\ell\in\Z$:
\[
\{\sV\otimes F_\ell\sD_{Y^J} \to \Omega_{Y^J}(\log E)\otimes \sV\otimes F_{\ell+1}\sD_{Y^J}  \to \cdots \to \Omega^{{n-r}}_{Y^J}(\log E)\otimes \sV \otimes F_{\ell+{n-r}}\sD_{Y^J} \} [{n-r}].
\]
We will prove that the complex~\eqref{eq:vj} is a filtered resolution a filtered $\sD_{Y^J}$-module $(\cV,F)=(\cV_{\alpha,J},F)$ in Lemma~\ref{lem:filresolution}. For example, if $\alpha=0$, then $E$ is empty and $\sV$ is just $\sO_{Y^J}$ so that $\cV=\omega_{Y^J}$ as $\sD_{Y^J}$-modules. Since the eigenvalues of the log connection are in $(0,1)$ if poles exist, the log de Rham complex of $(\sV,\nabla)$ is the minimal extension $R_{!*}\mathbb V$ of the local system $\mathbb V$ consisting of the flat sections of $\nabla$ on $\sV$ over $Y^J\setminus Y_{I\setminus J}$ (see~\cite[1.6]{EV}). Later we will put a sesquilinear pairing on $\cV$ and all the data will yield a pure Hodge structure of the log de Rham complex of $\sV$.

\begin{lem}\label{lem:filresolution}
The de Rham complex $\DR_{Y^J}\cV$ together with the filtration $F_\bullet\DR_{Y^J}\cV$ is isomorphic to the log de Rham complex $\Omega^{n-r+\bullet}_{Y^J}(\log E)\otimes \sV$ with the stupid filtration in the derived category of filtered complexes of $\C$-vector spaces. In addition, $\cV$ is holonomic and the characteristic cycle of $\cV$ is
\[
cc(\cV)= \sum_{K\subset I\setminus I_\alpha} \left[ T^*_{Y^{K \cup {J}}} Y^{J}  \right].
\]
\end{lem}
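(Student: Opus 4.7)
The plan is to mirror the arguments used for the filtered $\sD_X$-module $\cM$ in Corollary~\ref{spm} and Theorem~\ref{cycle}, replacing the relative log de Rham complex by the absolute log de Rham complex with coefficients in the log connection $(\sV, \nabla)$ on $Y^J$.

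First I would upgrade Lemma~\ref{logdtod} to a filtered resolution. The unfiltered statement already says that the complex~\eqref{eq:vj} resolves $\cV$ as $\sD_{Y^J}$-modules; to see that this is a filtered resolution it suffices to check that each $\gr^F_\ell$ of~\eqref{eq:vj} is acyclic and that the induced $F_\bullet$ on its zeroth cohomology agrees with $F_\bullet\cV$. This is local: choose coordinates $y_1,\ldots,y_{n-r}$ on $Y^J$ in which $E$ is defined by $y_1\cdots y_s=0$, trivialize $\sV$ by the pullback of the frame $s$ introduced in the construction of $(\sV,\nabla)$, and present $\sT_{Y^J}(-\log E)$ locally as the free module on $y_j\partial_j$ for $j\le s$ and $\partial_k$ for $k>s$. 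These derivations act on $\sV$ as $y_j\partial_j+\{-\alpha e_{i(j)}\}$ and $\partial_k$, where $i(j)\in I\setminus I_\alpha$ is the index of the component $E_j$ pulled back from $X$, and their principal symbols $y_j\xi_j$ and $\xi_k$ form a regular sequence in $\gr^F\sD_{Y^J}$. Hence $\gr^F$ of~\eqref{eq:vj} is a Koszul complex on a regular sequence, which is acyclic; the short exact sequences $0\to F_{\ell-1}\to F_\ell\to \gr^F_\ell\to 0$ then propagate acyclicity to every $F_\ell$, exactly as in the proof of Theorem~\ref{tilM}.

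The identification of $\DR_{Y^J}\cV$ with the log de Rham complex follows by the double complex argument of Corollary~\ref{spm}: tensoring the filtered resolution~\eqref{eq:vj} with $\bigwedge^{-*}\sT_{Y^J}$ over $\sO_{Y^J}$ produces a filtered double complex whose totalization computes $F_\ell\DR_{Y^J}\cV$, whereas applying the Spencer resolution of $\sO_{Y^J}$ to the $\sD_{Y^J}$-factor and using the filtered isomorphism $F_{\ell+n-r+\bullet}\sD_{Y^J}\otimes\bigwedge^{-*}\sT_{Y^J}\simeq F_{\ell+n-r+\bullet}\sO_{Y^J}$ yields the stupid filtration on $\Omega^{n-r+\bullet}_{Y^J}(\log E)\otimes\sV$. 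The two totalizations are filtered quasi-isomorphic, proving the first part of the lemma.

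For the characteristic cycle, the local description above presents $\gr^F\cV$ as $\gr^F\sD_{Y^J}/(y_1\xi_1,\ldots,y_s\xi_s,\xi_{s+1},\ldots,\xi_{n-r})$, so the characteristic variety is locally $\bigcup_{K'\subset\{1,\ldots,s\}}T^*_{Y^{J\cup K'}}Y^J$; globally this is $\bigcup_{K\subset I\setminus I_\alpha}T^*_{Y^{K\cup J}}Y^J$. At the generic point of each such component the defining ideal becomes the maximal ideal (for each $j\in K$ exactly one of $y_j,\xi_j$ is a unit and the other is the uniformizer, while every $\xi_k$ with $k\notin K\cup\{\text{local coords in }K'\text{'s complement}\}$ already appears as a generator), so every multiplicity equals $1$ and the characteristic cycle is as claimed. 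In particular $\dim\operatorname{char}(\cV)=n-r=\dim Y^J$, giving holonomicity. The only real bookkeeping obstacle is identifying the principal symbols of the log connection operators precisely enough to exhibit them as a regular sequence; once that is in place the rest of the argument is essentially formal and parallels the reduced case already treated in Theorem~\ref{cycle}.
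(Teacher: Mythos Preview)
Your proposal is correct and follows essentially the same approach as the paper: locally identifying the complex~\eqref{eq:vj} with a Koszul complex on the operators $x_j\partial_j+\{-\alpha e_{i(j)}\}$ and $\partial_k$, passing to the associated graded to get a regular sequence, and then running the double-complex argument of Corollary~\ref{spm} to compare $F_\ell\DR_{Y^J}\cV$ with the stupid filtration on the log de Rham complex. Your multiplicity computation for the characteristic cycle is slightly more explicit than the paper's (which simply refers back to the argument of Theorem~\ref{M}), but the content is the same.
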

\begin{proof}
We can choose the local frame $s$ of $\sV$ such that 
\[
\nabla s=\sum_{i\in I\setminus I_\alpha}\frac{dz_i}{z_i}\otimes \{-\alpha e_i\} s
\] 
where $z_i$ is the defining equation of $E_i$ for each $i$.  Therefore, the complex~\eqref{eq:vj} locally is the Koszul complex over $\sD_{Y^J}$ associated to the sequence
\[
x_1\partial_1+\{-\alpha e_1\},x_2\partial_2+\{-\alpha e_2\},\dots,x_p\partial_p+\{-\alpha e_p\},\partial_{p+1},\partial_{p+2},\dots,\partial_{n-r}, 
\]
for some rearrangement of coordinates and under the trivialization of $\sV$ given by $s$. It follows that the associated graded of~\eqref{eq:vj} is the Koszul complex associated to the regular sequence 
\[
x_1\partial_1,x_2\partial_2,\dots,x_p\partial_\nu,\partial_{p+1},\partial_{p+2},\dots,\partial_{n-r}
\]
over $\gr^F\sD_{Y^J}$. Thus, the complex~\eqref{eq:vj} is a filtered resolution. By a similar argument in Theorem~\ref{M}, the $\sD_{Y^J}$-module $\cV$ is holonomic and the characteristic cycle $cc(\cV)= \sum_{K\subset I\setminus I_\alpha} \left[ T^*_{Y^{K \cup {J}}} Y^{J}  \right]$. 

Moreover, we have isomorphisms in the derived category of complexes of $\C$-vector spaces:
\[
\begin{aligned}
F_\ell \DR_{Y^J} \cV=F_{\ell+*}\cV\otimes\bigwedge^{-\star} \sT_{Y^J} &\simeq \Omega^{n-r+\bullet}_{Y^J}(\log E)\otimes \sV \otimes F_{\ell+n-r+\bullet+*}\sD_{Y^J}\otimes \bigwedge^{-\star}\sT_{Y^J} \\
  & \simeq   \Omega^{n-r+\bullet}_{Y^J}(\log E)\otimes \sV\otimes F_{\ell+n-r+\bullet}\sO_{Y^J}.
\end{aligned}
\]
The complex $\Omega^{n-r+\bullet}_{Y^J}(\log E)\otimes \sV\otimes F_{\ell+n-r}\sO_{Y^J}$ is the stupid filtration on the log de Rham complex on $\sV$, since $F_{\ell}\sO_{Y^J}$ is $\sO_{Y^J}$ or vanishes if $\ell < 0$. 
\end{proof}

We also need an auxiliary $\sD_{Y^J}$-module $\cV^*_{\alpha,J}$ to identify the primitive part $\cP_{\alpha,r}$ which plays the role as $\omega_{Y^J}(* D^J)$ in the counterpart for the reduced case (Theorem~\ref{iden}). The log de Rham complex of $(\sV,\nabla)$ can be enlarged into
\[
\{\sV\to \Omega_{Y^J}(\log D)\otimes \sV \to \cdots \to \Omega^{{n-r}}_{Y^J}(\log D)\otimes \sV\} [{n-r}], 
\]
for  $D=D^J$ the pullback of the divisor $Y_{I\setminus J}$. It is quasi-isomorphic to $Rj_*\mathbb V$ for $j\colon Y^J- Y_{I\setminus J}\to Y^J$ is the open immersion. By a similar argument of the above, it induces a complex of filtered $\sD_{Y^J}$-modules
\begin{equation}\label{eq:starvj}
\{\sV\otimes \sD_{Y^J} \to \Omega_{Y^J}(\log D)\otimes \sV\otimes \sD_{Y^J}  \to \cdots \to \Omega^{{n-r}}_{Y^J}(\log D)\otimes \sV \otimes \sD_{Y^J} \} [{n-r}]
\end{equation}
which is a filtered resolution $\cV^*=\cV^*_{\alpha,J}$ with the filtration $F_\ell\cV^*=F_\ell\cV^*_{\alpha,J}$ induced by the subcomplex
\[
\{\sV\otimes F_\ell\sD_{Y^J} \to \Omega_{Y^J}(\log D)\otimes \sV\otimes F_{\ell+1}\sD_{Y^J}  \to \cdots \to \Omega^{{n-r}}_{Y^J}(\log D)\otimes \sV \otimes F_{\ell+{n-r}}\sD_{Y^J} \} [{n-r}].
\]
We naturally get an induced morphism $(\cV,F_\bullet\cV)\to (\cV^*,F_\bullet\cV^*)$ from the inclusion of the log de Rham complexes. 
\begin{lem}\label{lem:strictex}
The canonical morphism $(\cV,F_\bullet\cV)\to (\cV^*,F_\bullet\cV^*)$ is strictly injective, and its image is generated by the monomials defining $D-E$.
\end{lem}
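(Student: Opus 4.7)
The statement is local on $Y^J$, so I would work in a chart with coordinates $(x_1,\ldots,x_{n-r})$ arranged so that $E$ is cut out by $x_1\cdots x_\nu$ and $D$ by $x_1\cdots x_p$ (with $\nu\le p$), and with a local frame $s$ of $\sV$ for which $\nabla s=\sum_{i=1}^{\nu}\{-\alpha e_i\}\frac{dx_i}{x_i}\otimes s$. Write $\zeta_E$ and $\zeta_D$ for the standard top forms in $\omega_{Y^J}(\log E)\otimes\sV$ and $\omega_{Y^J}(\log D)\otimes\sV$ respectively; inside the latter they satisfy the identity $\zeta_E=x_{\nu+1}\cdots x_p\cdot\zeta_D$, where the monomial $x_{\nu+1}\cdots x_p$ is precisely the local defining equation of $D-E$.

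For the description of the image, the key observation is that the filtered inclusion $\Omega^{\bullet}_{Y^J}(\log E)\otimes\sV\otimes\sD_{Y^J}\hookrightarrow\Omega^{\bullet}_{Y^J}(\log D)\otimes\sV\otimes\sD_{Y^J}$ is a morphism of filtered resolutions of $\cV$ and $\cV^*$ respectively, by~\eqref{eq:vj} and~\eqref{eq:starvj}, hence induces the canonical morphism on $0$-th cohomologies. In the local presentation this map sends $[\zeta_E\otimes P]\mapsto[\zeta_D\otimes x_{\nu+1}\cdots x_p\cdot P]$, so the image is the cyclic right $\sD_{Y^J}$-submodule of $\cV^*$ generated by the class of $\zeta_D\otimes x_{\nu+1}\cdots x_p$, which is exactly the submodule generated by the monomial defining $D-E$.

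For injectivity, I would argue that $\cV$ is a simple holonomic $\sD_{Y^J}$-module. Over the open set $U=Y^J\setminus D$ the log connection $\nabla$ restricts to a regular rank-one integrable connection, so the associated local system $\mathbb V$ on $U$ is irreducible. Since the residues of $\nabla$ along the components of $E$ lie in $(0,1)$, while along the components of $D-E$ they vanish and the connection extends regularly, the complex $\Omega^{\bullet}_{Y^J}(\log E)\otimes\sV$ computes the minimal extension $j_{!*}\mathbb V$ along $D$ by Deligne's canonical extension criterion. Thus $\cV$ is the intermediate extension of an irreducible local system and is therefore simple. The canonical morphism $\cV\to\cV^*$ is non-zero (its restriction to $U$ is the identity on $\mathbb V$), so by simplicity it must be injective, and the strictness of the filtered inclusion of log de Rham complexes yields the filtered compatibility.

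The main technical point is the identification of $\Omega^{\bullet}_{Y^J}(\log E)\otimes\sV$ with $j_{!*}\mathbb V$: it rests on the fact that residue eigenvalues in $(0,1)$ force $j_!\mathbb V\simeq j_{!*}\mathbb V\simeq j_*\mathbb V$ along the $E$-components, while along the $(D-E)$-components no extension is required, since the integrable connection is already regular there. A more computational alternative, which avoids invoking the simplicity of $\cV$, is to verify directly that the annihilator of $m=[\zeta_D\otimes x_{\nu+1}\cdots x_p]$ in $\sD_{Y^J}$ contains the regular sequence $\{x_i\partial_i+\{-\alpha e_i\}\}_{i\le\nu}\cup\{\partial_j\}_{j>\nu}$ (using $\partial_j x_j=x_j\partial_j+1$ together with the relation $\zeta_D\cdot x_j\partial_j=0$ for $\nu<j\le p$), so that $\sD_{Y^J}\cdot m$ has the same characteristic cycle $\sum_{K\subset I\setminus I_\alpha}[T^*_{Y^{K\cup J}}Y^J]$ as $\cV$, forcing the surjection $\cV\twoheadrightarrow\mathrm{im}(\cV\to\cV^*)$ to be an isomorphism.
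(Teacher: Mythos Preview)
Your description of the image is the same as the paper's. For injectivity, however, your route is genuinely different. You argue that $\cV$ is simple---it is the intermediate extension of a rank-one local system---so any nonzero map out of it is injective. The paper instead gives an elementary argument entirely at the level of associated graded modules: it first notes that $F_{-(n-r)}\cV\to F_{-(n-r)}\cV^*$ is the evident inclusion of top log forms, then reduces to showing that $\gr^F\cV\to\gr^F\cV^*$ is injective, which it proves by a regular-sequence computation in $\gr^F\sD_{Y^J}$, inducting on the number of components of $D-E$. In fact the paper explicitly anticipates your approach in the remark following the proof, calling the Riemann--Hilbert/minimal-extension argument ``overkill in our situation.'' One concrete payoff of the paper's route is that it simultaneously proves strictness $F_\ell\cV=\cV\cap F_\ell\cV^*$; your simplicity argument gives injectivity but not this filtration statement, and your sentence about ``strictness of the filtered inclusion of log de Rham complexes'' does not by itself justify it.

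A word of caution on your closing alternative: knowing only that the annihilator of $m$ \emph{contains} the regular sequence defining $\cV$ tells you that $m\cdot\sD_{Y^J}$ is a quotient of $\cV$, hence $cc(m\cdot\sD_{Y^J})\le cc(\cV)$; it does not by itself give the reverse inequality needed to conclude equality. To make that sketch into a proof you would need an independent lower bound on $cc(\mathrm{im})$, for instance by computing inside $\gr^F\cV^*$---which is essentially what the paper does.
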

\begin{proof}
Suppose $x_1x_2\cdots x_p$ is the local defining equation of $E$ and $x_1x_2\cdots x_q$ is the local defining equation of $D$ for $q\geq p+1$. Since $\cV$ is locally generated by the class of
\[
\bigwedge^p_{i=1} \frac{dx_i}{x_i}\wedge dx_{p+1} \wedge \cdots \wedge dx_{n-r}\otimes s\otimes 1
\]
and $\cV^*$ is locally generated by the class of
\[
\bigwedge^q_{i=1}\frac{dx_i}{x_i}\wedge dx_{q+1}\wedge \cdots \wedge dx_{n-r}\otimes s \otimes1,
\]
the image is generated by the class of $\bigwedge^q_{i=1}\frac{dx_i}{x_i}\wedge dx_{q+1}\wedge\cdots \wedge dx_{n-r}\otimes s\otimes x_{p+1}x_{p+2}\cdots x_q$. The morphism locally can be viewed as
\[
\begin{aligned}
\sD_{Y^J}/(x_1\partial_1+r_1,\dots,x_p\partial_p+r_p,\partial_{p+1},\dots,\partial_{n-r})\sD_{Y^J} \to \\
\sD_{Y^J}/(x_1\partial_1+r_1,\dots,x_q\partial_{q}+r_q,\partial_{q+1},\dots,\partial_{n-r})\sD_{Y^J}, 
\end{aligned}
\]
with $[P] \mapsto [x_{p+1}x_{p+2}\cdots x_q P]$ where $r_1,r_2,\dots,r_p$ are the eigenvalues of $\nabla$ on $\sV$ and $r_{p+1}=r_{p+2}=\cdots=r_{q}=0$. Since 
\[
\Omega^{n-r}_{Y^J}(\log E)\otimes \sV=F_{-(n-r)}\cV\to F_{-(n-r)}\cV^*=\Omega^{n-r}_{Y^J}(\log D)\otimes\sV
\]
is injective, it suffices to show that $\gr^F\cV\to \gr^F\cV^*$ is injective by induction. Due to the complexes~\eqref{eq:vj} and~\eqref{eq:starvj} are filtered resolutions, the morphism on the associated graded modules corresponds to the map, 
\[
\begin{aligned}
  \gr^F\sD_{Y^J}/(x_1\partial_1,\dots,x_p\partial_p,\partial_{p+1},\dots,\partial_{n-r})\gr^F\sD_{Y^J} \to \\
  \gr^F\sD_{Y^J}/(x_1\partial_1,\dots,x_q\partial_q,\partial_{q+1},\dots,\partial_{n-r})\gr^F\sD_{Y^J},
\end{aligned}
\]
with $[P] \mapsto [x_{p+1}x_{p+2}\cdots x_q P]$. By another induction on the number of components of $D-E$, we can assume $q=p+1$. Let $P\in \gr^F\sD_{Y^J}$ represent a class in the kernel of the above morphism. Then we have
\[
x_qP=\sum^q_{i=1} x_i\partial_i P_i  +\sum^{n-r}_{j=q+1}\partial_j P_j\in \gr^F\sD_{Y^J}.
\]
Subtracting $x_q\partial_q P_q$ on both sides gives 
\[
x_q(P-\partial_q P_q)=\sum^{q-1}_{i=1} x_i\partial_i P_i  +\sum^{n-r}_{j=q+1}\partial_j P_j\in \gr^F\sD_{Y^J}.
\]
Since $x_q,x_1\partial_1,\dots,x_{q-1}\partial_{q-1},\partial_{q+1},\dots,\partial_{n-r}$ is a regular sequence over $\gr^F\sD_{Y^J}$,  
\[
(P-\partial_q P_q)=\sum^{q-1}_{i=1} x_i\partial_i P'_i  +\sum^{n-r}_{j=q+1}\partial_j P'_j\in \gr^F\sD_{Y^J}.
\]
Hence, $P$ is a linear combination of $x_1\partial_1,x_2\partial_2,\dots,x_p\partial_p, \partial_{p+1},\dots,\partial_{n-r}$ over $\gr^F\sD_{Y^J}$, which completes the proof.
\end{proof} 
\begin{rem}
One can utilize the Riemann-Hilbert correspondence to establish that $\cV$ corresponds to the minimal extension of $\sV|_{Y^J\setminus D}$, while $\cV^*$ corresponds to the $*$-extension of $\sV|_{Y^J\setminus D}$. As a result, we observe that $\cV \to \cV^*$ is an injective map. However, in our situation, this argument is overkill and does not capture the strictness, namely $F_\ell\cV=F_\ell\cV^* \cap \cV$.
\end{rem}

Putting in more general notations and summarizing what we have proved in the above two lemmas:
\begin{thm}\label{cycinfor}
The filtered $\sD_{Y^J}$-module $(\cV_{\alpha,J},F_\bullet)$ is holonomic and its de Rham complex $\DR_{Y^J}\cV_{\alpha,J}$ together with the induced filtration is isomorphic to the log de Rham complex $\Omega^{n-r+\bullet}_{Y^J}(\log E^{\alpha,J})\otimes \sV_{\alpha,J}$ with the stupid filtration in the derived category of filtered complexes of $\C$-vector spaces and whose characteristic cycle is
\[
cc(\cV_{\alpha,J})= \sum_{K\subset I\setminus I_\alpha} \left[ T^*_{Y^{K\cup J}} Y^{J}  \right].
\]
The canonical filtered morphism $(\cV_{\alpha, J}, F_\bullet\cV_{\alpha, J})\to (\cV^*_{\alpha, J}, F_\bullet \cV^*_{\alpha,J})$ is filtered injective and the image is generated by the monomial defining the divisor $D^J-E^{\alpha,J}$.
\end{thm}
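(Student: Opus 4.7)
The plan is to prove all four assertions simultaneously by a local Koszul-complex analysis on $\gr^F\sD_{Y^J}$, following the template already established for $\cM$ in Theorems~\ref{M} and \ref{cycle}. First, I would fix a point of $Y^J$ and choose local coordinates $x_1,\dots,x_{n-r}$ adapted to the divisors, together with a frame $s$ of $\sV_{\alpha,J}$ such that
\[
\nabla s = \sum_{i=1}^{p}\{-\alpha e_i\}\,\frac{dx_i}{x_i}\otimes s,
\]
where $x_1,\ldots,x_p$ are the local equations of the components of $E^{\alpha,J}$. Under this trivialization the complex~\eqref{eq:vj} defining $\cV_{\alpha,J}$ is the Koszul complex on $\sD_{Y^J}$ for the sequence $(x_i\partial_i+\{-\alpha e_i\})_{i\le p}$ and $(\partial_j)_{j>p}$.

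On the associated graded this becomes the Koszul complex of $(x_i\partial_i)_{i\le p}$ and $(\partial_j)_{j>p}$ over $\gr^F\sD_{Y^J}$. Since $\gr^F\sD_{Y^J}$ is a polynomial ring in the dual coordinates, this sequence is regular, hence its Koszul complex is acyclic. A short induction via the sequences $0\to F_{\ell-1}\to F_\ell\to\gr^F_\ell\to 0$ promotes this to filtered acyclicity of~\eqref{eq:vj}, so $\cV_{\alpha,J}$ makes sense as the $0$-th cohomology, and $F_\bullet\cV_{\alpha,J}$ is a good filtration with
\[
\gr^F\cV_{\alpha,J}\simeq \gr^F\sD_{Y^J}/(x_1\partial_1,\ldots,x_p\partial_p,\partial_{p+1},\ldots,\partial_{n-r})\gr^F\sD_{Y^J}
\]
locally. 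From here the holonomicity is immediate, and the characteristic cycle $\sum_{K\subset I\setminus I_\alpha}[T^*_{Y^{K\cup J}}Y^J]$ is obtained by the same length computation used for $\cM$ in Theorem~\ref{cycle}. For the filtered comparison $\DR_{Y^J}\cV_{\alpha,J}\simeq \Omega^{n-r+\bullet}_{Y^J}(\log E^{\alpha,J})\otimes\sV_{\alpha,J}$, I would transcribe the double-complex argument of Corollary~\ref{spm}: the total complex of $F_{\ell+*}$ applied to \eqref{eq:vj} tensored with $\bigwedge^{-*}\sT_{Y^J}$ computes $F_\ell\DR_{Y^J}\cV_{\alpha,J}$, and after contracting $\omega_{Y^J}(\log E^{\alpha,J})\otimes\bigwedge^{-*}\sT_{Y^J}\simeq \Omega^{n-r+*}_{Y^J}(\log E^{\alpha,J})$ and using $F_\ell\sO_{Y^J}=\sO_{Y^J}$ for $\ell\ge 0$, recovers the stupid filtration on the log de Rham complex on $\sV_{\alpha,J}$.

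The last, and most delicate, step is the injectivity of $\cV_{\alpha,J}\hookrightarrow\cV^*_{\alpha,J}$ together with the image description. Running the same filtered Koszul argument for \eqref{eq:starvj} gives the analogous local presentation of $\cV^*_{\alpha,J}$ with $E^{\alpha,J}$ replaced by $D^J$. I would reduce by induction on the number of components of $D^J-E^{\alpha,J}$ to the case where exactly one new component appears, and then reduce further to $\gr^F$. The claim then becomes: in $\gr^F\sD_{Y^J}/(x_1\partial_1,\dots,x_p\partial_p,\partial_{p+1},\dots,\partial_{n-r})$, multiplication by the defining coordinate $x_q$ of the extra component is injective. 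This is a direct consequence of the fact that $x_q,\ x_1\partial_1,\dots,x_{q-1}\partial_{q-1},\partial_{q+1},\dots,\partial_{n-r}$ is a regular sequence in $\gr^F\sD_{Y^J}$: any $P$ with $x_qP\in(x_1\partial_1,\dots,x_p\partial_p,\partial_{p+1},\dots,\partial_{n-r})$ can, after absorbing an $x_q\partial_qP_q$ term, be shown to lie in $(x_1\partial_1,\dots,x_{q-1}\partial_{q-1},\partial_{q+1},\dots,\partial_{n-r})$. The description of the image as the submodule generated locally by the monomial defining $D^J-E^{\alpha,J}$ then drops out by inspecting the local cyclic generators of the two modules.

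The main obstacle will be this last injectivity: the log connection data on $\sV_{\alpha,J}$ and the induced filtration on both sides must mesh cleanly with the Koszul structure, and the key technical input is isolating the correct regular sequences in the commutative ring $\gr^F\sD_{Y^J}$ at each inductive step. Everything else in the theorem is essentially a parametrized version of arguments already carried out for $\cM$ and $\omega_Z(\ast D)$ in Sections~\ref{sec:logdr} and \ref{sec:trand}.
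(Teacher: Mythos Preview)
Your proposal is correct and follows essentially the same approach as the paper: the local Koszul analysis on $\gr^F\sD_{Y^J}$, the filtered acyclicity via the regular sequence $(x_i\partial_i)_{i\le p},(\partial_j)_{j>p}$, the de Rham comparison via the double complex of Corollary~\ref{spm}, and the injectivity by induction on the components of $D^J-E^{\alpha,J}$ using the regular sequence $x_q,x_1\partial_1,\dots,x_{q-1}\partial_{q-1},\partial_{q+1},\dots,\partial_{n-r}$ after absorbing the $x_q\partial_qP_q$ term are all exactly what the paper does in the two lemmas preceding the theorem. One small clarification: the injectivity is not literally ``multiplication by $x_q$'' on a single quotient, since the source and target ideals differ at the $q$-th slot ($\partial_q$ versus $x_q\partial_q$), but the argument you sketch handles precisely this.
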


\subsection{Identifying the primitive part $\cP_{\alpha,r}$}
We are going to identify the $r$-th primitive part $(\cP_{\alpha,r},F_\bullet\cP_{\alpha,r})$ with a direct sum of $\cV_{\alpha,J}(-r)$ for $J$ ranging over subsets $ I_\alpha$ of cardinality $r+1$. The argument is parallel to the one of the reduced case (Theorem~\ref{iden}).

\begin{thm}\label{idenN}
Let $\cV_{\alpha,r}=\bigoplus_{J}\tau^J_{+}\cV_{\alpha,J}$ for $J$ running over the subsets of $I_\alpha$ of cardinality $r+1$, where $\tau^J:Y^J\hookrightarrow X$ is the closed embedding. Then there exists an isomorphism $\phi_{\alpha,r}:(\cP_{\alpha,r},F_\bullet\cP_{\alpha,r}) \to \cV_{\alpha,r}(-r)$
between filtered $\sD_X$-modules. 
\end{thm}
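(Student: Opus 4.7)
The plan is to imitate the reduced case proof of Theorem~\ref{iden}, with the scheme-theoretic fiber $Y$ replaced by $Y_{I_\alpha}=\sum_{i\in I_\alpha}Y_i$ and the sheaves $\omega_{Y^J}$, $\omega_{Y^J}(\ast D^J)$ replaced by $\cV_{\alpha,J}$, $\cV^*_{\alpha,J}$. First I would twist the short exact sequence
\[
0\to \Omega^{\bullet+n}_{X/\Delta}(\log Y)\xrightarrow{\frac{dt}{t}\wedge}\Omega^{\bullet+n+1}_X(\log Y)\to \Omega^{\bullet+n+1}_{X/\Delta}(\log Y)\to 0
\]
by $\sO_X(-\ceil*{\alpha Y})\otimes\sO_{Y_{I_\alpha}}$, tensor with $\sD_X$, and pass to the long exact sequence in cohomology. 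The connecting morphism is identified, by the same mapping-cone argument as in Lemma~\ref{nabla} and~\eqref{dttr}, with $\pm R_\alpha$, so the resulting map $\tfrac{dt}{t}\wedge\colon \cM_\alpha\to \sH^0(B^\bullet_\alpha\otimes\sD_X)$ (where $B^\bullet_\alpha=\Omega^{\bullet+n+1}_X(\log Y)(-\ceil*{\alpha Y})\otimes \sO_{Y_{I_\alpha}}$) vanishes on the image of $R_\alpha$. For each $J\subset I_\alpha$ of cardinality $r+1$, I then postcompose with the residue morphism
\[
\Res_{Y^J}\colon \Omega^{\bullet+n+1}_X(\log Y)(-\ceil*{\alpha Y})\to \Omega^{\bullet+n-r}_{Y^J}(\log D^J)\otimes \sV_{\alpha,J},
\]
obtained by combining the usual residue with the canonical identification $\sO_X(-\ceil*{\alpha Y})|_{Y^J}\cong \sV_{\alpha,J}$. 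After tensoring with $\sD_X$ and taking $\sH^0$, this lands in $\tau^J_+\cV^*_{\alpha,J}(-r)$ via a resolution analogous to Example~\ref{starex}.

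Next I would carry out the key local calculation. A class in $\ker R^{r+1}_\alpha$ is locally generated, by Corollary~\ref{cor:locgenR}, by $[\zeta_\alpha\otimes z_{\bar J}P]$ where $\bar J=I_\alpha\setminus J$. In local frames,
\[
\Res_{Y^J}\!\left(\tfrac{dt}{t}\wedge z_{\bar J}\zeta_\alpha\otimes P\right)=\pm e_0\bigwedge_{j\in \bar J}dz_j\wedge\bigwedge_{i\in I\setminus I_\alpha}\frac{dz_i}{z_i}\wedge dz_{k+1}\wedge\cdots\wedge dz_n\otimes s|_{Y^J}\otimes P,
\]
where the factor $z_{\bar J}$ cancels the log poles along $Y_i$ for $i\in \bar J$, leaving only poles along $E^{\alpha,J}$. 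Hence the image actually factors through $\tau^J_+\cV_{\alpha,J}(-r)\hookrightarrow \tau^J_+\cV^*_{\alpha,J}(-r)$ by the characterization in Theorem~\ref{cycinfor}. The map also vanishes on $\ker R^r_\alpha$: its generators are monomials $z_{\bar K}$ of degree $\mu-r+1$, and $|\bar K|>|\bar J|=\mu-r$ forces $\bar K\cap J\neq \emptyset$ for every $J$ of cardinality $r+1$, whence $z_{\bar K}|_{Y^J}=0$. Summing over $J\subset I_\alpha$ with $|J|=r+1$ yields the filtered morphism $\phi_{\alpha,r}\colon \cP_{\alpha,r}\to \cV_{\alpha,r}(-r)$.

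Filtered surjectivity is then direct: any local section of $F_\ell\cV_{\alpha,J}(-r)$ is represented by a form of the above shape multiplied by an operator $P\in F_{\ell+n}\sD_X$, and $[\zeta_\alpha\otimes z_{\bar J}P]\in F_\ell\ker R^{r+1}_\alpha$ is an explicit lift. To promote surjectivity to an isomorphism, I would compare characteristic cycles: from Theorem~\ref{cycinfor} and the fact that $\tau^J$ is a closed immersion,
\[
cc(\cV_{\alpha,r})=\sum_{L\subset I,\ |L\cap I_\alpha|=r+1}[T^*_{Y^L}X],
\]
so weighting by $r+1$ and summing over $r$ gives $\sum_L|L\cap I_\alpha|\,[T^*_{Y^L}X]=cc(\cM_\alpha)$ by Theorem~\ref{Alpha}, which equals $\sum_r(r+1)cc(\cP_{\alpha,r})$ by the Lefschetz decomposition. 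Surjectivity of $\phi_{\alpha,r}$ gives $cc(\cP_{\alpha,r})\geq cc(\cV_{\alpha,r})$ term by term, so all inequalities saturate and each $\phi_{\alpha,r}$ is a filtered isomorphism by~\cite[Proposition 3.1.2]{HTT}.

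The main obstacle is the residue computation in the presence of the non-trivial twist $\sO_X(-\ceil*{\alpha Y})$: one must track the abstract frame $s$ of this line bundle rather than its image $z^{\ceil*{\alpha\mathbf{e}}}_I$ under the embedding into $\sO_X(\ast Y)$, since that image vanishes on $Y^J$ when $\alpha>0$ while $s|_{Y^J}$ remains a non-vanishing frame of $\sV_{\alpha,J}$. Separating the integer part $\alpha e_j$ for $j\in J$ (absorbed into the restriction of $s$) from the fractional part $\{-\alpha e_i\}$ for $i\in I\setminus I_\alpha$ (which produces the twisted log connection on $\sV_{\alpha,J}$ from $\S\ref{subsec:sp}$) is precisely where the cyclic-cover construction enters, and is what makes the non-reduced case more delicate than the reduced one.
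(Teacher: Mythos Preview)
Your proposal is correct and follows essentially the same route as the paper's own proof: twist the short exact sequence by $\sO_X(-\ceil*{\alpha Y})\otimes\sO_{Y_{I_\alpha}}$, use the long exact sequence to see that $\tfrac{dt}{t}\wedge$ kills the image of $R_\alpha$, compose with the residue to land in $\cV^*_{\alpha,r}(-r)$, check via the local computation and Theorem~\ref{cycinfor} that the image of $\ker R_\alpha^{r+1}$ actually sits in $\cV_{\alpha,r}(-r)$, verify vanishing on $\ker R_\alpha^r$, establish filtered surjectivity by explicit lifts, and finish with the characteristic cycle count using Theorem~\ref{Alpha}. Your remark about tracking the abstract frame $s$ of $\sO_X(-\ceil*{\alpha Y})$ rather than its image $z_I^{\ceil*{\alpha\mathbf e}}$ is exactly the point the paper handles by introducing the local frame $s_{\alpha,J}$ in~\eqref{eq:imageresn}.
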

\begin{proof}
Because the log connection~\eqref{conn} we constructed on \(\sO_X(-\ceil{\alpha Y})\) has zero residue on \(Y_i\) for \(i\in I_\alpha\), we have the residue morphism between log de Rham complexes.
\[
\begin{aligned}
\Res_{Y^J}: \Omega^{\bullet+n+1}_X(\log Y)\otimes \sO_X(-\ceil{\alpha Y})|_{Y_{I_\alpha}} &\to \Omega^{\bullet+n-r}_{Y^J}(\log D^J)\otimes \sV_{\alpha,J},  
\end{aligned}
\]
where $D^J$ is the pullback of $Y_{I\setminus J}$ and \(J\subset I_\alpha\) of cardinality $r+1$,
up to a sign depending on the order of the indices. Denote by $B^\bullet_\alpha$ the log de Rham complex $ \Omega^{\bullet+n+1}_X(\log Y)\otimes \sO_X(-\ceil{\alpha Y})$ of $\sO_X(-\ceil{\alpha Y})$. The residue morphism $\Res_{Y^J}$ extends to a morphism of the complexes of induced $\sD_X$-modules
\[
\Res_{Y^J}: B^\bullet_\alpha|_{Y_{I_\alpha}}\otimes \sD_X\to \Omega^{\bullet+n-r}_{Y^J}(\log D^J)\otimes \sV_{\alpha,J}\otimes\sD_X.
\] 
Let $\cH^\ell_\alpha\colon=\cH^\ell\left(B^\bullet_\alpha|_{Y_{I_\alpha}}\otimes \sD_X\right)$. Then we obtain a filtered morphism $\Res_{Y^J}:\cH^0_\alpha \to \cV^*_{\alpha, J}$ by taking cohomology. Let $\Res_{\alpha,r}=\bigoplus \Res_{Y^J}:\cH^0_\alpha\to \cV^*_{\alpha,r}(-r)$ where $\cV^*_{\alpha,r}= \bigoplus_J \cV^*_{\alpha, J}$ for $J$ running over cardinality $r+1$ subsets of $I_\alpha$. Because $\frac{dt}{t}\wedge: \Omega^{\bullet+n}_{X/\Delta}(\log Y)(-\ceil{\alpha Y})\to \Omega^{\bullet+n+1}_X(\log Y)(-\ceil{\alpha Y})$ also extends to the complexes of the induced $\sD_X$-modules, we get a short exact sequence 
\[
\begin{tikzcd}[column sep=small]
0 \arrow{r} &C^{\bullet-1}_\alpha\otimes\sD_X \arrow{r}{\frac{dt}{t}\wedge}  & B^{\bullet-1}|_{Y_{I_\alpha}}\otimes \sD_X \arrow{r} & C^{\bullet}_\alpha\otimes\sD_X \arrow{r} & 0.
\end{tikzcd}
\]
The associated long exact sequence gives
\begin{equation}\label{eq:lesnr}
\begin{tikzcd}
  0 \arrow[r]
    & \cH^{-1}_\alpha \arrow[r]
        \arrow[d, phantom, ""{coordinate, name=Z}]
      & \cM_\alpha \arrow[dll,
                 "R_\alpha",
    rounded corners,
    to path={ -- ([xshift=2ex]\tikztostart.east)
        |- (Z) [near end]\tikztonodes
      -| ([xshift=-2ex]\tikztotarget.west) 
      -- (\tikztotarget)}] \\
         \cM_\alpha \arrow[r,"\frac{dt}{t}\wedge"]
    & \cH^0_\alpha \arrow[r]
& 0.
\end{tikzcd}
\end{equation}
 By pre-composing \(\frac{dt}{t}\wedge\), we get a morphism 
 \[
 \Res_{\alpha,J}\circ \frac{dt}{t}\wedge:\cM_\alpha\to \cV^*_{\alpha,r}(-r), \quad [\zeta_\alpha\otimes P]\to [\Res_{\alpha,J}\frac{dt}{t}\wedge\zeta_\alpha\otimes P].
 \] 
Recall that every element in $\cM_\alpha$ is locally represented by $\zeta_\alpha \otimes P$ for $\zeta_\alpha=z^{\ceil{\alpha\mathbf{e}}}_I\frac{dz_1}{z_1}\wedge \frac{dz_2}{z_2}\wedge \cdots\wedge \frac{dz_k}{z_k}\wedge\cdots\wedge dz_n$ given that locally $I=\{0,1,\dots, k\}$, and $P\in\sD_X$. By Corollary~\ref{cor:locgenR}, every class in $\ker R^{r+1}_\alpha$ is represented by a linear combination of $\zeta_\alpha\otimes z_{\overline J} P$ for some ordered index subset $J$ of $I_\alpha$ of cardinality $r+1$ and $\overline J$ is the complement of $J$ in $I_\alpha$ and $z_{\overline J}=\prod_{j\in \overline J}z_j$. Thus, its image under the above morphism is only contained in the component $\cV^*_{\alpha,J}(-r)$ because $z_{\overline J}$ vanishes on other components. The image is the class represented by
\begin{equation}\label{eq:imageresn}
\begin{aligned}
&\Res_{\alpha, J} \frac{dz_0}{z_0}\wedge \frac{dz_1}{z_1}\wedge\cdots\wedge\frac{dz_k}{z_k}\wedge\cdots\wedge dz_n z^{\ceil{\alpha\mathbf{e}}}_I\otimes z_{\overline J}P \\
&=\pm \frac{dz_{I\setminus J}}{z_{I\setminus J}}\wedge dz_{k+1}\wedge \cdots \wedge dz_n \otimes s_{\alpha, J}\otimes z_{\overline J}P\in \Omega^{n-r}_{Y^J}\otimes\sV_{\alpha,J}\otimes \sD_X,
\end{aligned}
\end{equation}
where $s_{\alpha,J}$ is the local frame of $\sV_{\alpha,J}$ by restricting $z^{\ceil{\alpha\mathbf{e}}}_I$ and the sign is depending on the order of $J$. It also follows from the calculation that the image does not have a pole along the pull-back of $Y_{\overline J}$. Hence, it is contained in the subsheaf consisting of classes represented by $\Omega^{n-r}_{Y^J}(\log E^{\alpha,J})\otimes \sV_{\alpha,J}\otimes\sD_X$, where $E^{\alpha,J}$ is the pull-back of $Y_{I\setminus I_\alpha}$ so that $D^J-E^{\alpha,J}$ is the pull-back of $Y_{\overline J}$. This means that the image of the class represented by~\eqref{eq:imageresn} is also in the image of the canonical inclusion:  
\[
\begin{aligned}
\tau^{J}_+\cV_{\alpha,J}(-r) \hookrightarrow \tau^{J}_+ \cV^*_{\alpha,J}&(-r),  \\
[dz_{\overline J}\wedge \frac{dz_{I\setminus I_\alpha}}{z_{I\setminus I_\alpha}}\wedge dz_{k+1}\wedge\cdots \wedge dz_n \otimes s_{\alpha, J}\otimes P] &\mapsto \\
[\frac{dz_{\overline J}}{z_{\overline J}}\wedge \frac{dz_{I\setminus I_\alpha}}{z_{I\setminus I_\alpha}}\wedge dz_{k+1}\wedge &\cdots \wedge dz_n \otimes s_{\alpha ,J}\otimes z_{\overline J}P].
\end{aligned}
\]
See Theorem~\ref{cycinfor}. Therefore, there is a filtered morphism $\rho_{\alpha,r}$ making the following diagram commute.
\[
\begin{tikzcd}
\ker R_\alpha^{r+1} \arrow[dashed]{rr}{\rho_{\alpha,r}} \arrow[hook]{d} & & \cV_{\alpha,r}(-r) \arrow[hook]{d}\\
\cM_\alpha \arrow{r}{{\frac{dt}{t}\wedge}} &   \cH^0_\alpha \arrow{r}{\Res_{\alpha,r}}  & \cV^*_{\alpha,r}(-r)
\end{tikzcd}  
\]
The kernel of $\rho_{\alpha,r}$ contains $\ker R_\alpha^{r}$: for an element in $\ker R_\alpha^{r}$ locally represented by $\zeta_\alpha \otimes z_K P$ for $K$ a subset of $I_\alpha$ such that the cardinality of $I_\alpha \setminus K$ is $r$, its image under $\rho_{\alpha,r}$ is zero because $z_K$ annihilates all $\Omega^{n-r}_{Y^J}(\log D^{J})\otimes \sV_{\alpha,J}$ for any $J\subset I_\alpha$ of cardinality $r+1$. The morphism $\rho_{\alpha, r}$ also kills $R_\alpha \ker R^{r+2}_\alpha$ because $\frac{dt}{t}\wedge$ vanishes on the image of $R_\alpha$ by~\eqref{eq:lesnr}. It follows that $\rho_{\alpha,r}$ factors through a filtered morphism
\[
\phi_{\alpha,r}:\cP_{\alpha,r} = \frac{\ker R^{r+1}_\alpha}{\ker R^r_\alpha+R_\alpha \ker R^{r+2}_\alpha} \to  \cV_{\alpha,r}(-r).
\]
For $dz_{\overline J}\wedge \frac{dz_{I\setminus I_\alpha}}{z_{I\setminus I_\alpha}}\wedge dz_{k+1}\wedge\cdots \wedge dz_n \otimes s_{\alpha, J}\otimes P\in \Omega^{n-r}_{Y^J}(\log E^{\alpha,J})\otimes \sV_{\alpha,J} \otimes F_{\ell}\sD_X$ representing a class in $F_\ell\tau^J_+\cV_{\alpha, J}(-r)$ where $J\subset I_\alpha$ of cardinality $r+1$, we can find a lifting represented by $\zeta_{\alpha}\otimes z_{\overline J} P$ in $F_\ell\ker R^{r+1}_\alpha$, which means
\[
F_\ell\ker R_\alpha^{r+1} \to  F_{\ell+r}\cV_{\alpha,r} 
\] 
is surjective, i.e. the morphism $\phi_{\alpha,r}$ is filtered surjective. We prove that $\phi_{\alpha,r}$ is a (filtered) isomorphism by counting the characteristic cycles as in Theorem~\ref{iden}. Because $\phi_{\alpha,r}$ is surjective, one gets
\[
cc(\cP_{\alpha,r})\geq cc(\cV_{\alpha,r}).
\]
It follows from Corollary~\ref{cycinfor} that
\[
cc(\cV_{\alpha,r})=\sum_{\substack{J\subset I_\alpha,\\ |J|=r+1}}cc(\tau^J_{+}\cV_{\alpha,J})=\sum_{\substack{J\subset I_\alpha,\\  |J|=r+1}}\sum_{K\subset I\setminus I_\alpha} \left[ T^*_{Y^{J\cup K}}X \right]=\sum_{\substack{J\subset I,\\ |J\cap I_\alpha|=r+1}} \left[ T^*_{Y^J}X \right].
\]
On the other hand, by the Lefschetz decomposition and Theorem~\ref{Alpha},
\[
\begin{aligned}
\sum_{J\subset I} |J\cap I_\alpha|\cdot \left[T^*_{Y^J}X \right] &=cc(\cM_\alpha) =cc(\gr^W \cM_\alpha)=\sum_{r\geq 0}(r+1)cc(\cP_{\alpha,r}) \\
&\geq\sum_{r\geq 0}(r+1)cc(\cV_{\alpha,r}) \\
  &=\sum_{r\geq 0}\sum_{\substack{J\subset I,\\   |J\cap I_\alpha|=r+1}}(r+1) \left[ T^*_{Y^J}X \right]=\sum_{J\subset I} |J\cap I_\alpha|\cdot \left[T^*_{Y^J}X \right].
  \end{aligned}
\]
It follows that all inequalities must be equalities and in particular, 
\[
cc(\cP_{\alpha,r})=cc(\cV_{\alpha,r})
\] 
from which we conclude that $\phi_{\alpha,r}$ is an isomorphism between (filtered) $\sD_X$-modules.
\end{proof}

\section{Non-reduced case: Sesquilinear pairing and limiting mixed Hodge structure}\label{sec:sesquil}

\subsection{K\"ahler package of cyclic covering}\label{subsec:kpcyc}

To accomplish our goal, we need to show that the graded vector space
\[
\bigoplus_{\ell\in\Z} \bH^\ell \left (Y^J,\Omega^{\bullet+n-r}_{Y^J}(\log E^{\alpha,J})\otimes \sV_{\alpha,J} \right)
\]
underlies a polarized Hodge-Lefschetz structure and satisfies the Hard Lefschetz property. This will allow us to establish the hypercohomology of the de Rham complex of the primitive part $\cP_{\alpha,r}$ inherits these properties by Theorem~\ref{cycinfor} and Theorem~\ref{idenN}. To achieve this, we will utilize the geometry of cyclic coverings.

We first give another description of the integrable log connection~\eqref{conn} using cyclic coverings. Fix a rational number $\alpha$ in $[0,1)$, Because the isomorphism, 
\[
\cL^{N}=\sO_X \left(-\sum_{i\in I_\alpha} e_iY_i\right) \to \sO_X\left(\sum_{i\in I\setminus I_\alpha} e_i Y_i\right),
\]
we obtain a cyclic covering $\pi_\alpha: X_\alpha \to X$ by taking the $N$-th roots out of $\sum_{i\in I\setminus I_\alpha} e_i Y_i$ and normalizing it. The direct image ${\pi_\alpha}_*\sO_{X_\alpha}$ decomposes into eigenspaces with respect the Galois action as well as the direct image of exterior differential ${\pi_\alpha}_* \sO_{X_\alpha}\to {\pi_\alpha}_* \Omega_{X_\alpha}$~\cite[Theorem 3.2]{EV}. The line bundle
\[
\cL^{\alpha {N}}\left(-\sum_{i\in I\setminus I_\alpha} \ceil*{\alpha e_i Y_i}\right),
\] 
is the {$\alpha$-eigenspaces} of ${\pi_\alpha}_* \sO_{X_\alpha}$ for some suitable choice of a generator of the Galois group. Because the decomposition respects the exterior differential, we obtained an integrable log connection with eigenvalues $\{\alpha e_i\}$ along $Y_i$ for each $i\in I_\alpha$. Note that $X_\alpha$ might be singular.

Let $J\subset I_\alpha$ be a subset of cardinality $r+1$. Since $Y^J$ is not contained in $Y_{I\setminus I_\alpha}$, the fiber product $Y^J_\alpha=X_\alpha\times_X Y^J$ is again a cyclic covering of $Y^J$  by taking the $N$-th roots out of $\sum_{i\in I\setminus I_\alpha} e_i Y_i\cap Y^J$. Let $\pi^J_\alpha: Y^J_\alpha\to Y^J$ be the second projection. 
\begin{equation}\label{nrootdiag}
\begin{tikzcd}
Y^J_\alpha \arrow{r} \arrow{d}{\pi^J_\alpha} & X_\alpha \arrow{d}{\pi_\alpha} \\
Y^J \arrow{r}{\tau^J} & X
\end{tikzcd}
\end{equation}
We conclude that $(\sV_{\alpha,J},\nabla)$ is the $\alpha$-eigenspace of ${\pi^J_\alpha}_*(\sO_{Y^J_\alpha},d)$. The log de Rham complex of $(\sV_{\alpha,J},\nabla)$ is a summand of the direct image of the de Rham complex ${\pi^J_\alpha}_*\Omega^{\bullet+n-r}_{Y^J_\alpha}$ of $Y^J_\alpha$. 

We shall work in the general setting and adopt the convention in \cite{Viehweg} and \cite{EV}. Let $\cL$ be a line bundle on a K\"ahler manifold $Z$ with a K\"ahler form $\omega$ and $D=\sum_{i}\nu_iD_i$ be a simple normal crossings divisor such that for some $N>1$ one has $\cL^N=\sO_Z(D)$. Define $\cL^{(j)}=\cL^j(-\floor{\frac{jD}{N}})$ for $1\leq j\leq N-1$. One puts an integrable logarithmic connection on $\cL^{(j)}$ with poles along $D^{(j)}$, where 
\[
D^{(j)}=\sum_{\frac{j\nu_i}{N}\notin \Z}D_i.
\] 
Let $\iota:U\hookrightarrow Z$ be the complement of $D$ and $\mathbb V$ is the underlying local system of $\cL|_U$. Let $\tau: Z'\to Z$ be the cyclic covering obtained by first taking $N$-th root out of $D$ then taking the normalization and $\pi: \tilde Z\to Z'$ be a log resolution of singularity equivariant with respect to the Galois group $\Gal(Z'/Z)=\langle \sigma \rangle$ and let $E$ be the simple normal crossing exceptional divisor. 
\[
\begin{tikzcd}
\tilde Z \arrow{r}{\pi}\arrow[bend left]{rr}{\eta} & Z' \arrow{r}{\tau} & Z
\end{tikzcd}
\]
Note that $\tilde Z$ is K\"ahler because it is a resolution of a subvariety of the geometric line bundle of $\cL$, which is K\"ahler, although the induced K\"ahler class does not relate well with $\omega$ on $X$. The pull-back $\eta^*\omega$ is only positive over $\tilde U=\eta^{-1}(U)$, but one can still cook up a K\"ahler class by adding a small multiple of the first Chern class $\Theta\in H^2(\tilde Z,\Z(1))$ of the relative ample line bundle of the projective morphism $\pi:\tilde Z\to Z'$. We can assume \(\Theta\) is invariant under $\sigma$ by averaging it. 

\begin{lem}
In the given notation, the cohomology class $[\eta^*\omega]+\lambda(2\pi\sqrt{-1})^{-1}\Theta\in H^{1,1}(Z)\cap H^2(Z,\R)$ is an invariant K\"ahler class if $\lambda$ is a sufficient small positive number.
\end{lem}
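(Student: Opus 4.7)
The argument naturally splits into two parts: $\sigma$-invariance and the Kähler property.

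For invariance, observe that $\sigma_{\tilde Z}$ lifts the deck transformation of $\tau:Z'\to Z$ equivariantly through the resolution, so $\eta\circ\sigma_{\tilde Z}=\tau\circ\pi\circ\sigma_{\tilde Z}=\tau\circ\sigma_{Z'}\circ\pi=\tau\circ\pi=\eta$. Consequently $\sigma_{\tilde Z}^{\ast}(\eta^{\ast}\omega)=\eta^{\ast}\omega$, and $\Theta$ is already $\sigma$-invariant by construction. The averaging step preserves $\pi$-relative ampleness because $\sigma$ maps $\pi$-fibers to $\pi$-fibers isomorphically (as $\pi\circ\sigma_{\tilde Z}=\sigma_{Z'}\circ\pi$), so the averaged fiberwise-positive curvature remains fiberwise-positive. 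Hence the class $[\eta^\ast\omega]+\lambda(2\pi\sqrt{-1})^{-1}\Theta$ is $\sigma$-invariant for every $\lambda\in\R$.

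For the Kähler property, the plan is to produce an explicit positive representative. Choose a $\sigma$-invariant smooth Hermitian metric on the $\pi$-relatively ample line bundle $L$ with $c_1(L)=(2\pi\sqrt{-1})^{-1}\Theta$; its Chern curvature form $\theta$ is a real smooth $(1,1)$-form that is $\sigma$-invariant and strictly positive along every $\pi$-fiber. The form $\eta^\ast\omega$ is smooth, $\sigma$-invariant, and semi-positive on $\tilde Z$, strictly positive on $\tilde U=\eta^{-1}(U)$. A compactness argument on the projectivized tangent bundle $\mathbb P(T\tilde Z)$ then shows that there exists $\lambda_0>0$ such that $\eta^\ast\omega+\lambda\theta$ is strictly positive everywhere for all $\lambda\in(0,\lambda_0)$: along the exceptional locus of $\pi$ the degeneracy of $\eta^\ast\omega$ is confined to $\pi$-fiber directions where $\theta$ is positive, while away from the exceptional locus $\eta^\ast\omega$ is bounded below by a positive constant modulo a neighborhood of the $\tau$-ramification locus.

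The main obstacle is controlling positivity at points where $\tau$ is ramified but $\pi$ is a local isomorphism: the degenerate direction of $\eta^\ast\omega$ there is the $\tau$-ramification direction, which is not a $\pi$-fiber direction, and $\theta$ need not be positive there a priori. This is overcome by modifying the representative $\theta$ within its cohomology class by $i\partial\bar\partial\phi$ for a $\sigma$-invariant smooth function $\phi$ supported in a small neighborhood of the $\tau$-ramification locus; one chooses $\phi$ strictly plurisubharmonic in the directions transverse to the ramification so that $\theta+i\partial\bar\partial\phi$ acquires positivity in the missing directions while retaining positivity along $\pi$-fibers. The global $\partial\bar\partial$-lemma on the Kähler manifold $\tilde Z$ ensures that this modification does not change the cohomology class, and applying the compactness argument to the modified representative concludes the proof.
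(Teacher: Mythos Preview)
Your proof is correct in outline and reaches the same conclusion, but the route differs from the paper's in one notable way. The paper places the $\partial\bar\partial$-exact correction on the $\eta^*\omega$ side rather than on the $\theta$ side: it sets
\[
\omega'=\eta^*\omega+\sum_i \tfrac{\sqrt{-1}}{2\pi}\partial\bar\partial\bigl(\rho_i\, h_i(s_i,s_i)\bigr),
\]
where $s_i$ is a defining section of the strict transform $\tilde D_i$ of $\tau^{-1}(D_i)$, $h_i$ a Hermitian metric, and $\rho_i$ a small bump function near $\tilde D_i$. This is the classical trick that makes a pulled-back K\"ahler form positive transversally to a ramification divisor; one then gets $\omega'$ positive on $\tilde Z\setminus E$ and semi-positive along $E$, so that adding $\lambda\theta$ (positive in the $\pi$-fiber directions) finishes the job. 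The advantage of this ordering is that the correction is $\lambda$-independent, so the final compactness step is the elementary one ``semi-positive plus small multiple of something positive on the null directions is positive''. In your version the correction is scaled by $\lambda$, which still works via your $\mathbb P(T\tilde Z)$ argument but requires a slightly more delicate uniformity check.

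Two minor remarks. First, invoking the global $\partial\bar\partial$-lemma is unnecessary: $i\partial\bar\partial\phi$ is visibly $d$-exact for any smooth $\phi$, so the cohomology class is unchanged automatically. Second, your description of $\phi$ as ``strictly plurisubharmonic in the directions transverse to the ramification'' is a bit vague; the paper's explicit choice $\rho_i|s_i|^2_{h_i}$ makes the transverse positivity at $\tilde D_i$ immediate (it is $e^{-\phi}\,idz\wedge d\bar z$ to leading order), and this is what one should have in mind.
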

\begin{proof}
Let $\tilde D_i$ be the strict transformation of $\tau^{-1}(D_i)$ and $s_i\in H^0(\tilde Z,\sO_{\tilde Z}(\tilde D_i))$ whose zero locus is $\tilde D_i$. Let $h_i$ be a Hermitian metric on each line bundle $\sO_{\tilde Z}(\tilde D_i)$ and $\rho_i$ be sufficient small positive bump function supported in a small neighborhood of $\tilde D_i$ for each $i$. Then the $(1,1)$-form 
\[
\eta^*\omega+\sum_i \frac{\sqrt{-1}}{2\pi}\partial\bar\partial \rho_i h_i(s_i,s_i)
\]
is positive on $\tilde Z-E$ but only semi-positive over $E$. However, the class $(2\pi\sqrt{-1})^{-1}\Theta$ is positive over $E$. Therefore, for $\lambda$ sufficient small positive, the class of 
\[
\eta^*\omega+\sum_i \frac{\sqrt{-1}}{2\pi}\partial\bar\partial \rho_i h_i(s_i,s_i)+\lambda(2\pi\sqrt{-1})^{-1}\Theta
\]
is a K\"ahler class. But $\partial\bar\partial \rho_i h_i(s_i,s_i)$ is exact. The cohomology class of above just equals $[\eta^*\omega]+\lambda(2\pi\sqrt{-1})^{-1}\Theta\in H^{1,1}(\tilde Z)\cap H^2(Z,\R)$. It is invariant because both $[\eta^*\omega]$ and $\Theta$ are invariant.
\end{proof}

\begin{lem} \label{lem:subhodge}
The hypercohomology $\bH^k\left(Z,\Omega^\bullet_Z(\log D^{(j)})\otimes {\cL^{(j)}}^{-1}\right)$ is a summand of $\xi^{-j}$-eigenspace of $H^k(\tilde Z)$, and thus it is a sub-Hodge structure of weight $k$.
\end{lem}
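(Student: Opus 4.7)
The plan is to realize the hypercohomology $H^k(Z,\Omega^\bullet_Z(\log D^{(j)})\otimes(\cL^{(j)})^{-1})$ as a direct summand of the $\xi^{-j}$-eigenspace of $H^k(\tilde Z,\C)$ under the Galois action, and then invoke the invariant K\"ahler class from the preceding lemma to conclude that this eigenspace, and hence the summand, is a sub-Hodge structure of weight $k$. The argument is essentially the one in Esnault--Viehweg adapted to our setup, and the work splits into a decomposition step, a splitting step, and a Hodge-theoretic step.

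First, I would establish the relevant Esnault--Viehweg decomposition. Let $\tilde D\subset\tilde Z$ denote the reduced preimage $\eta^{-1}(D)_{\mathrm{red}}$, which is a simple normal crossing divisor containing both the strict transform of $\tau^{-1}(D)$ and the exceptional divisor $E$. The Galois group $\langle\sigma\rangle$ acts equivariantly on $\eta_*\Omega^\bullet_{\tilde Z}(\log\tilde D)$, and by the standard cyclic-covering computation one obtains a $\sigma$-equivariant quasi-isomorphism
\[
\eta_*\Omega^\bullet_{\tilde Z}(\log\tilde D)\simeq\bigoplus_{j=0}^{N-1}\Omega^\bullet_Z(\log D^{(j)})\otimes(\cL^{(j)})^{-1},
\]
with $\sigma$ acting on the $j$-th summand by $\xi^{-j}$. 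Passing to hypercohomology identifies $H^k(\tilde Z\setminus\tilde D,\C)^{(-j)}$ with $H^k(Z,\Omega^\bullet_Z(\log D^{(j)})\otimes(\cL^{(j)})^{-1})$.

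Next, I would upgrade this identification from a subspace of $H^k(\tilde Z\setminus\tilde D,\C)$ to a direct summand of $H^k(\tilde Z,\C)$. The crucial point is that the eigenvalues of the induced log connection on $(\cL^{(j)})^{-1}$ along the components of $D^{(j)}$ all lie in $(0,1)$ for $1\le j\le N-1$; consequently, by Deligne's theory of canonical extensions, the natural inclusion of the holomorphic de Rham complex (twisted by the appropriate line bundle on $\tilde Z$) into its log counterpart is a quasi-isomorphism on the $\xi^{-j}$-eigenspace. This exhibits $H^k(Z,\Omega^\bullet_Z(\log D^{(j)})\otimes(\cL^{(j)})^{-1})$ as a direct summand of the $\xi^{-j}$-eigenspace of $H^k(\tilde Z,\C)$.

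Finally, by the preceding lemma there is a $\sigma$-invariant K\"ahler class on $\tilde Z$, so the Hodge decomposition of $H^k(\tilde Z,\C)$ is stable under $\sigma$. Each eigenspace is therefore a sub-Hodge structure of weight $k$, and so is any direct summand of it; this yields the claim. The main obstacle I expect is precisely the splitting statement in the second step: one must verify not merely that the log complex maps into the eigenspace of $H^k(\tilde Z)$, but that its image is a direct summand, and this requires a careful bookkeeping of how the exceptional divisors in $\tilde D$ interact with the fractional shifts $\lfloor jD/N\rfloor$ that enter the definition of $\cL^{(j)}$. Once this is arranged, the Hodge-theoretic conclusion is automatic from the K\"ahler structure on $\tilde Z$.
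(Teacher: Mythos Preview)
Your overall strategy is sound, and step~1 is essentially what the paper does as well. The issue is step~2: as written, it is vague to the point of being a gap. On $\tilde Z$ there is no line bundle to twist by---the Galois action is on the space itself---so ``the inclusion of the holomorphic de Rham complex (twisted by the appropriate line bundle on $\tilde Z$) into its log counterpart'' does not have a clear meaning. If you push everything forward to $Z$ and try to compare the $(-j)$-eigensheaves of $R\eta_*\Omega^\bullet_{\tilde Z}$ and $R\eta_*\Omega^\bullet_{\tilde Z}(\log\tilde D)$, you run into exactly the bookkeeping with exceptional divisors that you flag as the main obstacle, and you have not indicated how to carry it out.

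The paper bypasses this entirely with a clean topological argument. The key input (from \cite{Viehweg}, (1.6)) is that because the residues of the log connection on $(\cL^{(j)})^{-1}$ lie in $(0,1)$, one has
\[
R\iota_!\,\mathbb V^{-j}\;\simeq\;\Omega^\bullet_Z(\log D^{(j)})\otimes(\cL^{(j)})^{-1}\;\simeq\;R\iota_*\,\mathbb V^{-j},
\]
so the hypercohomology equals both $H^k_c(U,\mathbb V^{-j})$ and $H^k(U,\mathbb V^{-j})$. Since $\eta$ is \'etale over $U$, these are precisely the $\xi^{-j}$-eigenspaces of $H^k_c(\tilde U,\C)$ and $H^k(\tilde U,\C)$. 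Now the canonical morphisms of mixed Hodge structures
\[
H^k_c(\tilde U)\longrightarrow H^k(\tilde Z)\longrightarrow H^k(\tilde U)
\]
are $\sigma$-equivariant, and on the $\xi^{-j}$-eigenspace the composite is the isomorphism $H^k_c(U,\mathbb V^{-j})\xrightarrow{\sim}H^k(U,\mathbb V^{-j})$ just established. This immediately exhibits the hypercohomology as a direct summand of $H^k(\tilde Z)^{(\xi^{-j})}$, with no need to track exceptional divisors or compare sheaf-level de Rham complexes on $\tilde Z$. Your final step (that the eigenspace is a sub-Hodge structure) is then automatic, and the paper doesn't even need the invariant K\"ahler class for this lemma---it's enough that the maps above are morphisms of mixed Hodge structures and $H^k(\tilde Z)$ is pure.
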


\begin{proof}
By $(1.6)$ in \cite{Viehweg} that $R\iota_! \mathbb V^{-j}$, $R\iota_* \mathbb V^{-j}$ and $\Omega^\bullet_{Z}(\log D^{(j)})\otimes {\cL^{(j)}}^{-1}$ are all quasi-isomorphic, which implies that 
\[
\bH^k \left(Z,\Omega^\bullet_{Z}(\log D^{(j)})\otimes {\cL^{(j)}}^{-1}\right)\simeq H^k_c(U, \mathbb V^{-j})\simeq H^k(U, \mathbb V^{-j}).
\]
Because $\eta$ is \'etale over $U$, $H^k(U,\mathbb V^j)$ (resp. $H^k_c(U, \mathbb V^j)$) is a $\xi^j$-eigenspace of $H^k(\tilde U, \C)$ (resp. $H^k_c(\tilde U,\C)$) for the cyclic action $\sigma$, where $\xi$ is a $N$-th root of unity. Then the canonical morphisms of mixed Hodge structures
\begin{equation}\label{eq:mmhs}
H^k_c(\tilde U) \to  H^k(\tilde Z) \to H^k(\tilde U)
\end{equation}
respect the eigenspaces decomposition because we make $\tilde Z$ equivariant.
\end{proof}

\begin{lem} 
Let $\sfX={2\pi\sqrt{-1}}L$ where $L=[\omega]\wedge$ is the Lefschetz operator on $Z$. For $0\leq k \leq n$, the following two statements hold:
\begin{enumerate}
  \item Hard Lefschetz is valid on the hypercohomology, i.e. 
  \[
  \sfX^k: \bH^{n-k}\left(Z,\Omega^\bullet_Z(\log  D^{(j)})\otimes {\cL^{(j)}}^{-1}\right)\to \bH^{n+k}\left(Z,\Omega^\bullet_Z(\log  D^{(j)})\otimes {\cL^{(j)}}^{-1}\right)(k)
  \]
  is an isomorphism of Hodge structures.
  \item The pairing 
  \begin{equation} 
  (m',\overline{m''})\mapsto \frac{\varepsilon(n-k+1)}{(2\pi\sqrt{-1})^{n}}\int_{\tilde Z} \eta^* \left(\sfX^{k} m'\wedge \overline{m''}\right)
  \end{equation}
   is a polarization on the primitive part of $\bH^{n-k}\left(Z,\Omega^\bullet_Z(\log  D^{(j)})\otimes {\cL^{(j)}}^{-1}\right)$, where $\eta^* \left(\sfX^{k} \alpha\wedge \overline{\beta}\right)$ is the top form determined by the inclusion 
   \[
   \eta^*\Omega^{n}_Z(\log  D^{(j)})\otimes {\cL^{(j)}}^{-1}\subset \omega_{\tilde Z}.
   \]
\end{enumerate}
\end{lem}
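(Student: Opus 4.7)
The plan is to reduce both assertions to the K\"ahler package on the smooth K\"ahler manifold $\tilde Z$ via the embedding $H^k(Z,\Omega^\bullet_Z(\log D^{(j)})\otimes{\cL^{(j)}}^{-1})\hookrightarrow H^k(\tilde Z)_{-j}$ provided by Lemma~\ref{lem:subhodge}. Looking at the proof of that lemma, this sub-Hodge structure identifies with the image of the canonical morphism $H^k_c(\tilde U)_{-j}\to H^k(\tilde Z)_{-j}$, which is injective because the quasi-isomorphism $R\iota_!\mathbb V^{-j}\simeq R\iota_*\mathbb V^{-j}$ forces all the involved Hodge structures on $H^k_c(\tilde U)_{-j}$ and $H^k(\tilde U)_{-j}$ to be pure of weight $k$. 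The classes $[\eta^*\omega]$ and $\Theta$ are both Galois-invariant, so cup product with each preserves the eigenspace decomposition, and pullback via $\eta$ identifies $\sfX=(2\pi\sqrt{-1})L$ on $Z$ with cup product with $(2\pi\sqrt{-1})[\eta^*\omega]$ on $\tilde Z$.

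The key intermediate step is to show that $\Theta\cup:H^k(\tilde Z)\to H^{k+2}(\tilde Z)$ annihilates the image of $H^k_c(\tilde U)\to H^k(\tilde Z)$. Since $\tau$ is \'etale over $U$ and $\pi$ is an isomorphism away from $E$, we have $\tilde U\subset\tilde Z\smallsetminus E$, so any compactly supported class on $\tilde U$ has a smooth representative $\alpha$ vanishing in some open neighborhood of $E$. Meanwhile, the $\pi$-relative ample line bundle whose first Chern class is $\Theta$ trivializes on $\tilde Z\smallsetminus E$, so $\Theta$ has a Chern-form representative $\theta$ supported in an arbitrarily small neighborhood of $E$. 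Choosing this neighborhood to lie inside the vanishing locus of $\alpha$ gives $\theta\wedge\alpha=0$ pointwise, hence $\Theta\cup[\alpha]=0$.

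With this vanishing in hand, statement (1) is immediate from Hard Lefschetz on $\tilde Z$ for the K\"ahler class $[\eta^*\omega]+\lambda(2\pi\sqrt{-1})^{-1}\Theta$: the induced isomorphism $(\sfX+\lambda\Theta)^k$ on $H^{\dim Z-k}(\tilde Z)_{-j}\to H^{\dim Z+k}(\tilde Z)_{-j}(k)$ restricts on the sub-Hodge structure to $\sfX^k$, which is therefore itself an isomorphism. For (2), the Hodge-Riemann bilinear relations on $\tilde Z$ for the same K\"ahler class polarize the corresponding primitive part of $H^k(\tilde Z)_{-j}$; on our subspace the $(\sfX+\lambda\Theta)$-primitive part coincides with the $\sfX$-primitive part, and the integrand $\eta^*((\sfX+\lambda\Theta)^{\dim Z-k}m'\wedge\overline{m''})$ collapses to $\eta^*(\sfX^{\dim Z-k}m'\wedge\overline{m''})$. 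Matching the normalization constant with the convention of Example~\ref{sign} recovers precisely~\eqref{cyclicpairing}. The main obstacle is the $\Theta$-vanishing step, which is the only place where the detailed geometry of the log resolution $\pi:\tilde Z\to Z'$ enters; once that is settled the remainder is a mechanical restriction of the K\"ahler package on $\tilde Z$ to the sub-Hodge structure.
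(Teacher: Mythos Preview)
Your approach is the same one the paper takes: embed into $H^\bullet(\tilde Z)$ via the preceding lemma, apply Hard Lefschetz on $\tilde Z$ for the invariant K\"ahler class $[\eta^*\omega]+\lambda(2\pi\sqrt{-1})^{-1}\Theta$, and use that $\Theta$ acts trivially on the image of $H^\bullet_c(\tilde U)$ to replace $\tilde\sfX$ by $\sfX$. The $\Theta$-vanishing step you isolate is exactly what the paper expresses by ``$\Theta$ is supported on $E$''.

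There is, however, a genuine gap in your argument for (1). From the fact that $(\sfX+\lambda\Theta)^k$ is an isomorphism on the full eigenspace $H^{\dim Z-k}(\tilde Z)_{-j}\to H^{\dim Z+k}(\tilde Z)_{-j}$ and agrees with $\sfX^k$ on the sub-Hodge structure, you may only conclude that $\sfX^k$ is \emph{injective} on the subspace; the sentence ``which is therefore itself an isomorphism'' does not follow. Restriction of an isomorphism to a pair of proper subspaces need not be surjective. The paper supplies the missing step by a dimension count: Poincar\'e duality on $\tilde U$ shows that the $\xi^{-j}$-eigenspace of $H^k_c(\tilde U)$ is dual to the $\xi^{j}$-eigenspace of $H^{2\dim Z-k}(\tilde U)$, while complex conjugation exchanges the $\xi^{j}$- and $\xi^{-j}$-eigenspaces. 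Combined with the isomorphism $H^\bullet_c(\tilde U)_{-j}\simeq H^\bullet(\tilde U)_{-j}$ already established, this gives
\[
\dim H^{\dim Z-k}_c(\tilde U)_{-j}=\dim H^{\dim Z+k}_c(\tilde U)_{-j},
\]
so the injective map $\sfX^k$ between the two sub-Hodge structures is an isomorphism. Without this step the conclusion of (1), and hence the identification of the $\sfX$-primitive part with a sub-Hodge structure of $H^{\dim Z-k}_{\prim}(\tilde Z)$ needed for (2), is unjustified.

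A minor point on the $\Theta$-vanishing: the assertion that the $\pi$-relative ample line bundle ``trivializes on $\tilde Z\smallsetminus E$'' is not automatic for an arbitrary relatively ample class. What one actually uses is that the relatively ample line bundle may be chosen of the form $\sO_{\tilde Z}(D)$ with $D$ supported on $E$, so that $\Theta=[D]$ restricts to zero in $H^2(\tilde Z\smallsetminus E)$; your form-level argument then goes through.
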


\begin{proof}
Let $\tilde L=[\eta^*\omega+\lambda\Theta]\wedge$ be the Lefschetz operator on $\tilde Z$. Then the Hard Lefschetz on $\tilde Z$ says 
\[
\tilde \sfX^k :H^{n-k}(\tilde Z)\to H^{n+k}(\tilde Z)(k)
\]
is an isomorphism, where $\tilde \sfX\colon =2\pi\sqrt{-1}\tilde L$. Because $\tilde L$ is invariant and respects the morphisms in~\eqref{eq:mmhs}, the above isomorphism is compatible with eigenspaces decomposition, it follows that
\begin{equation}\label{eq:kpeg}
{\tilde \sfX}^k: \bH^{n-k}\left(Z,\Omega^\bullet_Z(\log  D^{(j)})\otimes {\cL^{(j)}}^{-1}\right)\to \bH^{n+k}\left(Z,\Omega^\bullet_Z(\log  D^{(j)})\otimes {\cL^{(j)}}^{-1}\right)\left(k\right)
\end{equation}
is injective by Lemma~\ref{lem:subhodge}. In fact, the $\xi^i$-eigenspace of $H^k_c(\tilde U)$ is orthogonal to the $\xi^j$-eigenspace of $H^{2n-k}(\tilde U)$ with respect to Poincar\'e pairing unless $i+j\equiv 0\, (\mathrm{mod}\, N)$: for $a$ in the $\xi^i$-eigenspace of $H^k_c(\tilde U)$ and $b$ in the $\xi^j$-eigenspace of $H^{2n-k}(\tilde U)$ then
\[
\xi^i \int_{\tilde U} a\wedge b=\int_{\tilde U} \sigma^* a\wedge b = \int_{\tilde U} a\wedge (\sigma^{-1})^* b=\xi^{-j} \int_{\tilde U} a\wedge b, 
\]
which means $\int_{\tilde U} a\wedge b$ is zero unless $i+j\equiv 0\, (\mathrm{mod}\, N)$. Hence, the $\xi^i$-eigenspace of $H^k_c(\tilde U)$ is Poincar\'e dual to the $\xi^{-i}$-eigenspace of $H^{2n-k}(\tilde U)$. On the other hand, since the $\xi^i$-eigenspace is complex {conjugate} to the $\xi^{-i}$-eigenspace,  the $\xi^i$-eigenspace of $H^k_c(\tilde U)$ and the $\xi^i$-eigenspace of $H^{2n-k}(\tilde U)$ have the same dimension. We deduce that the morphism~\eqref{eq:kpeg} must be an isomorphism. 

We have $\tilde L=\eta^*L$ on $H^\bullet_c(\tilde U)$, because $\Theta$ is supported on $E$. Therefore, 
\[
\sfX^k: \bH^{n-k}\left(Z,\Omega^\bullet_Z(\log  D^{(j)})\otimes {\cL^{(j)}}^{-1}\right)\to \bH^{n+k}\left(Z,\Omega^\bullet_Z(\log  D^{(j)})\otimes {\cL^{(j)}}^{-1}\right)(k)
\]
is an isomorphism. We conclude $(1)$. It also follows that $\eta^*$ identifies the primitive part of $\sfX$
\[
\bH^{n-k}_{\prim}\left(Z,\Omega^\bullet_Z(\log  D^{(j)})\otimes {\cL^{(j)}}^{-1}\right)
\]
with the primitive part of $\tilde \sfX$
\[
\ker \left({\tilde \sfX}^{k+1}:\bH^{n-k}\left(Z,\Omega^\bullet_Z(\log  D^{(j)})\otimes {\cL^{(j)}}^{-1}\right)\to \bH^{n+k+2}\left(Z,\Omega^\bullet_Z(\log D^{(j)})\otimes {\cL^{(j)}}^{-1}\right) \right).
\]
Thus, $\bH^{n-k}_{\prim}\left(Z,\Omega^\bullet_Z(\log D^{(j)})\otimes {\cL^{(j)}}^{-1}\right)$ is a sub-Hodge structure of $H^{n-k}_{\prim}(\tilde Z)$. As the restriction of the polarization to a sub-Hodge structure still a polarization. We have proved $(2)$.
\end{proof}

The above two lemmas indicate that the graded vector space
\[
\bigoplus_{k\in\Z}\bH^k\left(Z,\Omega^\bullet_Z(\log D^{(j)})\otimes {\cL^{(j)}}^{-1}\right)
\]
is a polarized sub-Hodge-Lefschetz structure of $\bigoplus_{k\in\Z}H^k(\tilde Z,\C)$. In practice, it is more convenient to make the polarization independent of the resolution of singularities and intrinsic on $Z$. Heuristically, the local system $\mathbb V^{-j}$ over $U$ inherits a pairing from $\C_{\tilde U}$ and it has a Hodge theoretic extension on its canonical extension. First, we can resolve $\Omega^\bullet_Z(\log D^{(j)})$ using $\cA^\bullet_Z(\log D^{(j)})$, the complex of $\mathscr C^\infty$-forms with log poles along $D^{(j)}$. Note that we have the inclusion of sheaves
\[
\cA^{n+k}_Z(\log D^{(j)})\otimes {\cL^{(j)}}^{-1} \wedge \overline{\cA^{n-k}_Z(\log D^{(j)})\otimes {\cL^{(j)}}^{-1}}\subset \cA^{2n}_Z\otimes {\cL^{(j)}}^{-1} (D^{(j)}) \tens{\C}{} \overline{ {\cL^{(j)}}^{-1}(D^{(j)}) }. 
\]
Since $\cL^N\simeq \sO_Z(D)$, picking local section of $l$ such that $l^N=\prod_i x^{-\nu_i}_i$ we can put a canonical singular Hermitian metric on $\cL$ by setting the weight function as 
\[
|l|_h=\prod_{i}|x_i|^{{-\nu_i}/{N}}, \quad \text{where }x_i \text{ is the local defining equation of }D_i.
\]
The induced singular Hermitian metric on ${\cL^{(j)}}^{-1}(D^{(j)})=\cL^{-j}(\floor{\frac{jD}{N}}+D^{(j)})$ locally is
\[
\left|l^{-j} \prod_{i}x^{-\floor{j\mu_i/N}}_i \prod_{j\nu_i/N\notin\Z} x^{-1}_i \right|_h=\prod_{i}|x_i|^{{j\nu_i}/{N}-\floor{{j\nu_i}/{N}}}\prod_{j\nu_i/N\notin\Z} |x_i|^{-1}=\prod_{i}|x_i|^{-\{-j\nu_i/N\}}.
\]
For any smooth top form $\Upsilon$ with values in ${\cL^{(j)}}^{-1} (D^{(j)}) \otimes_\C \otimes \overline{ {\cL^{(j)}}^{-1}(D^{(j)})}$ we can associate an integrable top form $(\Upsilon)_h=f\overline g |s|^2_h\, \mathrm{vol}(Z)$ by fixing a volume form $\mathrm{vol}(Z)$ on $Z$ and writing locally $\Upsilon=fs\otimes \overline{gs}\, \mathrm{vol}(Z)$ for $s$ a local frame of ${\cL^{(j)}}^{-1} (D^{(j)})$. Therefore, we obtain a well-defined pairing, 
\begin{equation}\label{eq:cyclpo}
\begin{aligned}
\cA^{k}_Z(\log D^{(j)})\otimes {\cL^{(j)}}^{-1} \wedge \overline{\cA^{k}_Z(\log D^{(j)})\otimes {\cL^{(j)}}^{-1}} \to \C, \\
(m', \overline{m''}) \mapsto \frac{\varepsilon( k+1)}{(2\pi\sqrt{-1})^{n}}\int_{ Z}  \left(\sfX^{n-k} m'\wedge \overline{m''}\right)_h.  
\end{aligned}
\end{equation}
Since $\eta:\tilde Z\to Z$ is generic finite, we deduce from 
\[
\int_{\tilde{Z}} \eta^*\left(\sfX^{n-k} m'\wedge \overline{m''} \right)=N\int_Z   \left(\sfX^{n-k} m' \wedge \overline{m''}\right)_h
\]
that~\eqref{eq:cyclpo} induces the same polarization in the statement $(2)$ of the above lemma except for the constant $N$.

Back to our situation, we obtain that $\sV_{\alpha,J}(E^{\alpha,J})$ carries a canonical singular Hermitian metric $|-|_h$  with local {weight functions} $\prod_{j\in I\setminus I_\alpha}|z_j|^{-\{\alpha e_j\}}$ restricted on $Y^J$, where $z_i$ is the defining equation of $Y_i$. Provided the above two lemmas, the graded vector space
\[
\bigoplus_{k\in\Z} \bH^k\left(Y^J,\Omega_{Y^J}^{\bullet+\dim Y^J}(\log E^{\alpha,J})\otimes \sV_{\alpha,J}\right)
\]
is a polarized Hodge-Lefschetz structure of central weight $\dim Y^J$ for any non-empty subset $J$ of $I_\alpha$. Similarly to Example~\ref{sign} this is also determined by the filtered $\sD_{Y^J}$-module $(\cV_{\alpha,J},F_\bullet\cV_{\alpha,J})$ with the sesquilinear pairing 
$S_{\alpha,J}:\cV_{\alpha,J}\otimes_{\C}\overline{\cV_{\alpha,J}}\to \mathfrak C_{Y^J}$ is given by
\begin{equation}\label{cyclicpairing}
\left(\left[s_1 \otimes P_1\right], \overline{\left[s_2 \otimes P_2\right]}\right)\mapsto \frac{\varepsilon(\dim Y^J+1)}{(2\pi\sqrt{-1})^{\dim Y^J}}\int_{Y^J}  (P_1\overline{P_2}-) \left(s_1\wedge\overline{s_2}\right)_h \ 
\end{equation}
for local sections of $\cV_{\alpha,J}$ (see~\eqref{nrootdiag}) represented by $s_i\otimes P_i$ such that $s_i$ local sections of 
\[
\omega_{Y^J}(\log E^{\alpha,J})\otimes \sV_{\alpha,J}=\omega_{Y^J}\otimes \sV_{\alpha,J}(E^{\alpha,J})
\]
and $P_i$ is a differential operator $i=1,2$. Here, $(s_1\wedge \overline{s_2})_h$ is the top form induced by the singular Hermitian metric on $\sV_{\alpha,J}(E^{\alpha,J})$. Summarizing the results we have proved in this subsection:

\begin{cor}\label{cycHS}
In the given notation, the graded vector space 
\[
\bigoplus_{\ell\in \Z}\bH^\ell \left(Y^J,\DR_{Y^J}\cV_{\alpha,J}\right)
\] 
underlies a polarized Hodge-Lefschetz structure of central weight $\dim Y^J$ with the Hodge filtration induced by the sub complexes $F_\bullet\DR_{Y^J}\cV_{\alpha,J}$ and the polarization given by $\bigoplus_k S_{k}=\bigoplus_k\varepsilon(k)S_{k}'$, where
\[
\begin{aligned}
  S_{k}'\colon= \bH^k(Y^J,\DR_{Y^J}\cV_{\alpha,J})\otimes \overline{\bH^{-k}(Y^J,\DR_{Y^J}\cV_{\alpha,J})} &\to \bH^0(Y^J,\DR_{Y^J,\overline{Y^J}}\cV_{\alpha,J}\otimes_\C \overline{\cV_{\alpha,J}})\\
  & \xrightarrow{S_{\alpha,J}}  \bH^0(Y^J,\DR_{Y^J,\overline{Y^J}}\mathfrak{C}_{Y^J})\simeq \C.
\end{aligned}
\] 
\end{cor}

\begin{rem}
We cannot make the Hodge structure in the above corollary over $\Q$ because there is no eigenvalue decomposition of $\Q$-structure.
\end{rem}

\subsection{Sesquilinear pairing}

As in the reduced case, we need a sesquilinear pairing to construct the limiting mixed Hodge structure. The construction for the reduced case still works with a little modification. Note that for any test function \(\eta\) over a local chart $U$ and two local sections $\zeta_1\otimes P_1,\zeta_2\otimes P_2 \in \bH^0\left(U,\Omega^n_{X/\Delta}(\log Y)(-\ceil{\alpha Y})\otimes \sD_X\right)$, the function
\[
t\mapsto \frac{\varepsilon(n+1)}{(2\pi\sqrt{-1})^n}\int_{X_t} P_1\overline{P_2}(\eta) \zeta_1\wedge \overline{\zeta_2}.
\] 
may have order approximately at most ${|t|^{2\alpha}\left(-\log |t^2| \right)^{k}}$ near $t=0$ where $k+1$ is the number of components of $Y_{I_\alpha}$ that intersect in $U$. This suggests that we can define the pairing $S_\alpha$ on $\cM_\alpha$ by
\[
\begin{aligned}
&\langle S_\alpha([\zeta_1\otimes P_1],\overline{[\zeta_2\otimes P_2]}), \eta\rangle \\
&\colon = \Res_{s=-\alpha}\frac{\varepsilon(n+2)}{(2\pi\sqrt{-1})^{n+1}}\int_X |t|^{2s}P_1\overline{P_2}(\eta)\frac{dt}{t}\wedge \zeta_1 \wedge\overline{\frac{dt}{t}\wedge \zeta_2} \\
  &=\Res_{s=-\alpha}\frac{\varepsilon(2)}{2\pi\sqrt{-1}} \int_\Delta |t|^{2s} \frac{dt}{t}\wedge \overline{\frac{dt}{t}}\left(\frac{\varepsilon(n+1)}{(2\pi\sqrt{-1})^n}\int_{X_t} P_1\overline{P_2}(\eta) \zeta_1\wedge \overline{\zeta_2}\right).
\end{aligned}
\]
We have not yet shown that $S_\alpha$ is well-defined but let us perform some local calculations to gain some heuristic understanding.
\begin{ex}
Suppose $Y=2Y_0$ for $Y_0$ is a smooth manifold and $t$ is equal to $z^2_0$ on $X$. Then $R$ satisfies the equation $R(R-\frac{1}{2})=0$. We deduce that $\cM$ has two eigenspaces $\cM_0$ and $\cM_{\frac{1}{2}}$ by~\eqref{poly}. Then for any local sections $\zeta_i\otimes P_i=dz_1\wedge dz_2\wedge\cdots\wedge dz_n\otimes P_i$ of $\Omega^n_{X/\Delta}(\log Y)\otimes\sD_X$, $i=1,2$ representing classes of $\cM_0$, the calculation of the pairing $S_0([\zeta_1\otimes P_1],\overline{[\zeta_2\otimes P_2]})$ is exactly as in the reduced case and as it turned out 
\[
S_0([\zeta_1\otimes P_1],\overline{[\zeta_2\otimes P_2]})={i_{Y_0}}_+S_{Y_0}\left([\zeta_1\otimes P_1],\overline{[\zeta_2\otimes P_2]}\right).
\]

By Theorem~\ref{genR} $\cM_{\frac{1}{2}}$ is locally generated by the class represented by $dz_1\wedge dz_2\wedge\cdots\wedge dz_n\otimes z_0$. Now for any local sections $\zeta\otimes z_0P_i=dz_1\wedge dz_2\wedge\cdots\wedge dz_n\otimes z_0P_i$ representing classes of $\cM_{\frac{1}{2}}$, we have 
\[
\begin{aligned}
&\langle S_{\frac{1}{2}}([\zeta \otimes z_0 P_1], \overline{[\zeta \otimes z_0 P_2]}),\eta \rangle \\
&= \Res_{s=-\frac{1}{2}} \int_X |z_0|^{4s} P_1\overline{P_2}(\eta)  \bigwedge^n_{i=0}\left(\frac{\sqrt{-1}}{2\pi}dz_i\wedge d\overline{z_i}\right) \\ 
  &=  \int_X \frac{1}{2}\log |z_0|^2 \partial_0\overline{\partial_0} P_1\overline{P_2}(\eta)  \bigwedge^n_{i=0}\left( \frac{\sqrt{-1}}{2\pi} dz_i\wedge d\overline{z_i} \right) \\
  &= \int_{Y_0} \frac{1}{2}   P_1\overline{P_2}(\eta)  \bigwedge^n_{i=1}\left( \frac{\sqrt{-1}}{2\pi} dz_i\wedge d\overline{z_i} \right) \\
    & \quad  \text{by Poincar\'e-Lelong equation~\cite[Page 388]{GH}} \\
  & = \frac{1}{2}\langle {i_{Y_0}}_+S_{Y_0} (\left[\zeta_1\otimes P_1\right], \overline{\left[\zeta_2\otimes P_2\right]}),\eta \rangle \\
  & =\frac{1}{2}\langle {i_{Y_0}}_+S_{\frac{1}{2},\{0\}}(\left[\zeta_1\otimes z_0P_1\right], \overline{\left[\zeta_2\otimes z_0P_2\right]}),\eta \rangle,
\end{aligned} 
\] 
Recall $S_{\frac{1}{2},\{0\}}$ defined in~\eqref{cyclicpairing}: since we have the isomorphism $\sO_{Y_0}(2Y_0)= \sO_{Y_0}(Y)\simeq \sO_{Y_0}$ there exists a canonical singular Hermitian metric (this case is smooth) $|-|_h$ on $\sO_{Y_0}(-Y_0)$ by setting the local frame ${z_0}$ has norm $1$ so that 
\[
\begin{aligned}
&{i_{Y_0}}_+S_{\frac{1}{2},\{0\}}(\left[\zeta_1\otimes z_0P_1\right], \overline{\left[\zeta_2\otimes z_0P_2\right])},\eta \rangle\\
&=\int_X |z_0|^2_h P_1\overline{P_2}(\eta)\bigwedge^n_{i=0}\left( \frac{\sqrt{-1}}{2\pi} dz_i\wedge d\overline{z_i} \right) \\
&={i_{Y_0}}_+S_{Y_0} (\left[\zeta_1\otimes P_1\right], \overline{\left[\zeta_2\otimes P_2\right]}),\eta \rangle.  
\end{aligned}
\]
The above equality can also be explained as follows: the cyclic covering constructed by taking out of the second root of the constant section of  $\sO_{Y_0}(2Y_0)\simeq \sO_{Y_0}$ has two connected components and each component is isomorphic to $Y_0$.
\end{ex}

Let $\eta$ be a test function over an open subset $U$. For any two sections $m_1,m_2\in H^0(U, \Omega^{n}_{X/\Delta}(\log Y)(-\ceil{\alpha Y})\otimes \sD_X)$, the $(2n+2)$-form $\frac{dt}{t}\wedge m_1 \wedge \overline{\frac{dt}{t}\wedge m_2}$ is smooth of outside $Y$ and has pole along $Y$. Locally, the $(2n+2)$-form just is $|z_I|^{2\ceil{\alpha \mathbf{e} }}P_1\overline{P_2}(\eta)\frac{dt}{t}\wedge\zeta \wedge \overline{\frac{dt}{t}\wedge \zeta}$, where $m_j=\zeta\otimes z^{\ceil{\alpha \mathbf{e} }}_I P_j$ for $\zeta=\frac{dz_1}{z_1}\wedge\frac{dz_2}{z_2}\wedge\cdots \wedge \frac{dz_k}{z_k}\wedge\cdots \wedge dz_n$ and $j=1,2$. Let $F(s)=F(s,m_1,m_2,\eta)$ be the meromorphic extension of 
\[
\frac{\varepsilon(n+2)}{(2\pi\sqrt{-1})^{n+1}}\int_X |t|^{2s} \frac{dt}{t}\wedge m_1 \wedge \overline{\frac{dt}{t}\wedge m_2}(\eta)
\]
via integration by parts. The function \(F(s)\) is well defined when \(\mathrm{Re\,} s>-\alpha\) and has a pole at $s=-\alpha$. We only care about the polar part of $F(s)$ at $s=-\alpha$.

\begin{thm}
The polar part of $F(s)$ at $s=-\alpha$ only depends on the classes of $m_1$ and $m_2$ in $\cM_\alpha$.
\end{thm}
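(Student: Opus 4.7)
The plan is to adapt the argument for the analogous statement in the reduced case (the theorem containing Lemma~\ref{func}). A partition of unity argument reduces the verification to a single local chart, on which $m_j=\zeta_\alpha \otimes P_j$ with $\zeta_\alpha = z^{\ceil{\alpha\mathbf{e}}}_I \frac{dz_1}{z_1}\wedge\cdots\wedge\frac{dz_k}{z_k}\wedge dz_{k+1}\wedge\cdots\wedge dz_n$ and $P_j\in\sD_X$. After substituting $u=s+\alpha$, the integrand of $F(s)$ becomes
\[
\prod_{i\in I_\alpha}|z_i|^{2ue_i-2}\cdot\prod_{i\in I\setminus I_\alpha}|z_i|^{2ue_i-2\{\alpha e_i\}}\cdot P_1\overline{P_2}(\eta)\bigwedge_{i}\frac{\sqrt{-1}}{2\pi}dz_i\wedge d\bar z_i,
\]
and since $\{\alpha e_i\}\in(0,1)$ for $i\in I\setminus I_\alpha$, the potential singularity at $u=0$ lives entirely in the variables indexed by $I_\alpha$.

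By Theorem~\ref{Alpha} together with~\eqref{eq:ca}, modifying a local lift of $[m_j]$ from $\cM_\alpha$ to $\Omega^n_{X/\Delta}(\log Y)(-\ceil{\alpha Y})\otimes \sD_X$ amounts to adding a term of one of two types: (i) a section of the form $\zeta_\alpha \otimes t_\alpha P'_j$, where $t_\alpha=\prod_{i\in I_\alpha}z_i$ (making the class drop into $\cM_{>\alpha}$, hence vanish in $\cM_\alpha$); or (ii) a differential $\zeta_\alpha \otimes (D_\ell+\alpha_\ell)Q$ with $\alpha_\ell=\ceil{\alpha e_\ell}/e_\ell - \ceil{\alpha e_0}/e_0$, for some $\ell\in\{1,\dots,n\}$. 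I then intend to establish a non-reduced analogue of Lemma~\ref{func} consisting of: (1) the pole of $F(s)$ at $s=-\alpha$ has order at most $\#I_\alpha$, obtained by iterated integration by parts in the $I_\alpha$-variables; (2) $F(s)$ is holomorphic at $s=-\alpha$ whenever one of the $P_j$ factors through $t_\alpha$, because the extra $|t_\alpha|^{2}$ factor makes the integrand locally integrable at $u=0$; and (3) the identity $F(s,\zeta_\alpha\otimes P_1,\zeta_\alpha\otimes (D_\ell+\alpha_\ell)P_2,\eta)=0$ for every $\ell\in\{1,\dots,n\}$, together with the symmetric statement in the first slot.

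The key calculation for item (3) --- assuming the base index $0$ has been chosen inside $I_\alpha$, which is always possible near any point of the support of $\cM_\alpha$ --- is to integrate $\overline{\tfrac{1}{e_\ell}z_\ell\partial_\ell}$ and $-\overline{\tfrac{1}{e_0}z_0\partial_0}$ by parts against the singular weight. Computing the eigenvalues of $\bar z_i\bar\partial_i$ on the factors $|z_i|^{2se_i+2\ceil{\alpha e_i}-2}$ and combining with the boundary terms $\bar\partial_i\bar z_i=1$ produces the scalars $-(s+\ceil{\alpha e_\ell}/e_\ell)$ and $+(s+\alpha)$, whose sum is exactly $-\alpha_\ell$; the correction $\alpha_\ell$ inside $D_\ell+\alpha_\ell$ cancels this precisely, yielding identically zero. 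Combined with (1) and (2), additions of type (i) change $F(s)$ by a function holomorphic at $s=-\alpha$, while additions of type (ii) do not change $F(s)$ at all, so the polar part at $s=-\alpha$ is a well-defined function of $[m_1]$ and $[m_2]$ in $\cM_\alpha$.

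The main obstacle is the bookkeeping in (3): one has to verify that the scalar that emerges from the integration-by-parts calculation is \emph{exactly} $\alpha_\ell$, the same correction that appears in the differential of $C^\bullet_\alpha\otimes \sD_X$. This equality is not accidental but encodes the compatibility between the analytic Mellin-transform regularization and the algebraic definition of the $\alpha$-generalized eigen-module, and making it transparent requires carefully tracking the residues of the log connection on $\sO_X(-\ceil{\alpha Y})$ along the components indexed by $I\setminus I_\alpha$; once this is clean, the rest of the argument is a formal termwise application of the three bullet points above.
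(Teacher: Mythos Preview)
Your proposal is correct and follows essentially the same approach as the paper: a partition of unity reduction to a local chart, followed by the three-part lemma (pole order bound, holomorphicity under a $t_\alpha$ factor, and the integration-by-parts identity), applied to the local presentation $\zeta_\alpha\otimes\sD_X/(t_\alpha,D_1+\alpha_1,\dots,D_n+\alpha_n)\sD_X$ of $\cM_\alpha$. The only cosmetic difference is that the paper records the individual identities $F(s,\zeta_\alpha\otimes P_1,\zeta_\alpha\otimes \tfrac{1}{e_j}z_j\partial_j P_2,\eta)=-(s+\ceil{\alpha e_j}/e_j)F(s,\dots)$ for each $j$ and then combines them to handle $D_\ell+\alpha_\ell$, whereas you package the cancellation directly; your pole-order bound $\#I_\alpha$ is also slightly sharper than the paper's $k+1$, but neither difference affects the argument.
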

\begin{proof}
Let $\{\rho_\lambda\}$ be a partition of unity of the open covering $\{U_\lambda\}$ by local charts. Then 
\[
F(s)=\sum_\lambda\frac{\varepsilon(n+2)}{\left(2\pi\sqrt{-1}\right)^{n+1}}\int_{U_\lambda} |t|^{2s} \frac{dt}{t}\wedge m_1\wedge \overline{\frac{dt}{t}\wedge m_2}(\rho_\lambda \eta).
\]
Since $\rho_\lambda\eta$ is a test function over local chart $U_\lambda$, we can assume $U$ itself is a local chart. We assume $k+1$ components of $Y$ intersect in $U$.

\begin{lem} \label{func}
Under the assumption that $m_i=\zeta_\alpha \otimes P_i$ for $\zeta_\alpha=z^{\ceil{\alpha \mathbf{e} }}_I \frac{dz_1}{z_1}\wedge\frac{dz_2}{z_2}\wedge\cdots\wedge\frac{dz_k}{z_k}\wedge dz_{k+1}\wedge \cdots\wedge dz_n$ and for $i=1,2$, the followings are valid.
\begin{enumerate}
  \item the order of the pole of $F(s)$ at $s=-\alpha$ is at most $k+1$;
  \item if $P_i=t_\alpha P'_i$ for one of $i=1,2$, then $F(s)$ is holomorphic at $s=-\alpha$;
  \item for $0\leq j\leq k$ we have, 
    \[
    \begin{aligned}
    &F\left(s,\zeta_\alpha\otimes P_1, \overline{\zeta_\alpha\otimes\frac{1}{e_j}z_j\partial_j P_2},\eta\right) \\
    =& F\left(s, \zeta_\alpha\otimes \frac{1}{e_j}z_j\partial_j P_1,\overline{\zeta_\alpha\otimes P_2},\eta\right)  \\
     =&-\left(s+\frac{\ceil{\alpha e_j}}{e_j}\right) F(s, \zeta_1\otimes P_1,\overline{\zeta_2\otimes P_2},\eta).
    \end{aligned}
  \]
\end{enumerate}
\end{lem}

\begin{proof}[Proof of the lemma]
We work out Laurent series of $F(s)$ at $s=-\alpha$:
\[
\begin{aligned}
F(s) &= \int_X |z_I|^{2s\mathbf{e}+2\ceil{\alpha \mathbf{e} }-2 \cdot \mathbf{1}} P_1\overline{P_2}(\eta) \bigwedge^{n}_{i=0} (\frac{\sqrt{-1}}{2\pi}dz_i\wedge d\overline{z_i}) \\
  &= \int_X |z_I|^{2(s+\alpha) \mathbf{e}-2 \cdot \mathbf{1}}|z_I|^{2\{-\alpha \mathbf{e}\}} P_1\overline{P_2}(\eta) \bigwedge^{n}_{i=0} (\frac{\sqrt{-1}}{2\pi}dz_i\wedge d\overline{z_i}) \\
  &= \int_X (s+\alpha)^{-2(k+1)} |z_I|^{2(s+\alpha)\mathbf{e} }\eta' \bigwedge^{n}_{i=0} (\frac{\sqrt{-1}}{2\pi}dz_i\wedge d\overline{z_i})   \\
  &= \sum^{\infty}_{\ell=0} \frac{1}{\ell !} (s+\alpha)^{\ell-2(k+1)}\int_X (\log |z_I|^{2\mathbf{e} })^\ell  \eta'  \bigwedge^{n}_{i=0} (\frac{\sqrt{-1}}{2\pi}dz_i\wedge d\overline{z_i}),
\end{aligned}
\]
where $\eta'\colon= {\left(\prod_{i\in I}e_i^{-2}\right)\partial_I\overline{\partial_I} \left(|z_I|^{2\{-\alpha \mathbf{e}\}}P_1\overline{ P_2}\eta\right)}$ When $\ell < k+1$, then the form 
\[
(\log |z_I|^{2\mathbf{e} })^\ell  \eta'  \bigwedge^{n}_{i=0} (\frac{\sqrt{-1}}{2\pi}dz_i\wedge d\overline{z_i}).
\]
is actually exact because one of the $a_i$ must be zero in the expansion of $(\log |z_I|^{2\mathbf{e} })^\ell$ into the linear combination of $\prod^{k}_{i=0} \left(\log |z_i|^{2e_i}\right)^{a_i}$ such that $\sum^k_{i=0}{a_i}=\ell$. Therefore, the order of the pole at $s=-\alpha$ is at most $k+1$.

When $P_1=t_\alpha P'_1$, the form 
\[
|z_I|^{2(s+\alpha) \mathbf{e}-2 \cdot \mathbf{1}}|z_I|^{2\{-\alpha \mathbf{e}\}} t_\alpha P'_1\overline{P_2}(\eta) \bigwedge^{n}_{i=0} (\frac{\sqrt{-1}}{2\pi}dz_i\wedge d\overline{z_i})
\]
is integrable when $s=-\alpha$ where $\{-\alpha \mathbf{e}\}$ is the multi-index such that $\{-\alpha \mathbf{e}\}_i=\{-\alpha e_i\}$. Therefore, $F(s)$ is holomorphic at $s=-\alpha$. It is the same when $P_2=t_\alpha P'_2$.  

Lastly, by linearity, we can assume that $P_1=P_2=1$.
\begin{equation}\label{conju}
\begin{split}
  &F(s, \zeta_\alpha\otimes 1, \overline{\zeta_\alpha\otimes \frac{1}{e_j}z_j\partial_j},\eta) \\ 
  =& \frac{\varepsilon(n+2)}{(2\pi\sqrt{-1})^{n+1}}\int_X |t|^{2s}\left(\frac{1}{e_j}\overline{z_j\partial_j}\eta\right)\frac{dt}{t}\wedge \zeta_\alpha\wedge\overline{\frac{dt}{t}\wedge \zeta_\alpha}  \\
  =& \int_X\prod_{i\in I\setminus \{j\}}|z_i|^{2se_i+2\ceil{\alpha e_i}-2}  z^{se_j+\ceil{\alpha e_j}-1}_j\frac{1}{e_j}\overline{z^{se_j+\ceil{\alpha e_j}}_j \partial_j}\eta\bigwedge^n_{i=0}(\frac{\sqrt{-1}}{2\pi}dz_i\wedge{d\overline{z_i}})   \\
  =& \int_X -\left(s+\frac{\ceil{\alpha e_j}}{e_j}\right)\prod_{i\in I}|z_i|^{2se_i+2\ceil{\alpha e_i}-2}\eta\bigwedge^n_{i=0}(\frac{\sqrt{-1}}{2\pi}dz_i\wedge d\overline{z_i}) \\
  =& -\left(s+\frac{\ceil{\alpha e_j}}{e_j}\right)\frac{\varepsilon(n+2)}{(2\pi\sqrt{-1})^{n+1}} \int_X  |t|^{2s}\eta\frac{dt}{t}\wedge \zeta_\alpha\wedge\overline{\frac{dt}{t}\wedge \zeta_\alpha}. \\
  =& -\left(s+\frac{\ceil{\alpha e_j}}{e_j}\right)F(s, \zeta_\alpha\otimes 1,\overline{\zeta_\alpha\otimes 1},\eta). 
\end{split}
\end{equation}
The other equality in $(3)$ holds similarly. 
\end{proof}
Returning to the proof of the theorem. Since $\cM_\alpha$ is locally represented by 
\[
\zeta_\alpha\otimes \sD_X/(t_\alpha, D_1+\alpha_1,D_2+\alpha_2,\dots,D_n+\alpha_n)\sD_X
\] 
(see the proof of Theorem~\ref{Alpha}), and $(2)$ and $(3)$ in the lemma say that when one of $m_1$ and $m_2$ is in 
\[
\zeta_\alpha\otimes (t_\alpha, D_1+\alpha_1,D_2+\alpha_2,\dots, D_n+\alpha_n)\sD_X
\] 
then $F(s)$ is holomorphic since $\alpha_i$ equals $\ceil{\alpha e_i}/e_i-\ceil{\alpha e_0}/e_0$ for $1\leq i\leq k$ and equals zero otherwise.
\end{proof}

For two sections $\gamma_1,\gamma_2\in H^0(U, \cM)$ and any test function $\eta$ over $U$, we define the pairing $S_\alpha: \cM_\alpha\otimes_\C \overline{\cM_\alpha}\to \mathfrak C_X$ by 
\[
\langle S_\alpha ( \gamma_1, \overline{\gamma_2}),\eta \rangle=\Res_{s=-\alpha} \sum_{\lambda}F(s, \tilde \gamma_1, \overline{\tilde \gamma_2},\rho_\lambda\eta),
\]
 where $\{\rho_\lambda\}$ is a partition of unity with respect to an open covering by local charts $\{U_\lambda\}$ such that $\gamma_i$ has a local lifting of $\tilde \gamma_i$ over $U_\lambda$ for $i=1,2$. It is obvious that $S_\alpha$ is \(\sD_{X,\overline X}\)-linear. Thus, it is a sesquilinear pairing.  
As a corollary of Lemma~\ref{func}, we have
\begin{cor} \label{cor:srn}
We have \(S_\alpha\circ(\id\otimes_\C \overline{R_\alpha})=S_\alpha \circ({R_\alpha}\otimes_\C\id)\). 
\end{cor}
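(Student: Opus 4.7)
The plan is to derive this corollary directly from Lemma~\ref{func}(3), which is the engine behind the entire construction of $S_\alpha$. Since $S_\alpha$ is defined as a residue of a meromorphic extension computed chart-by-chart and glued by a partition of unity, self-adjointness is a local question: it suffices to verify the equality $\langle S_\alpha(\gamma_1,R_\alpha\gamma_2),\eta\rangle=\langle S_\alpha(R_\alpha\gamma_1,\gamma_2),\eta\rangle$ for a test function $\eta$ supported in a single local chart. On such a chart I use the explicit cyclic presentation $\cM_\alpha\simeq \zeta_\alpha\otimes\sD_X/(t_\alpha,D_1+\alpha_1,\dots,D_n+\alpha_n)\sD_X$ furnished by $\psi_\alpha$ (Theorem~\ref{Alpha}). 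If $I_\alpha$ is empty on the chart then $t_\alpha$ is a unit and $\cM_\alpha$ vanishes there, so there is nothing to prove; otherwise pick any $j\in I_\alpha$.

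For this $j$, the integer $\alpha e_j$ coincides with $\lceil\alpha e_j\rceil$, so $\lceil\alpha e_j\rceil/e_j=\alpha$ and the constant in Lemma~\ref{func}(3) collapses to $-(s+\alpha)$. In parallel, the local formula already established for $R_\alpha$ says that on representatives of the form $\zeta_\alpha\otimes P$ the operator $R_\alpha$ acts as left multiplication by $\frac{1}{e_j}z_j\partial_j$. Plugging into Lemma~\ref{func}(3) yields the chain
\[
F(s,\zeta_\alpha\otimes P_1,\zeta_\alpha\otimes R_\alpha P_2,\eta)=-(s+\alpha)\,F(s,\zeta_\alpha\otimes P_1,\zeta_\alpha\otimes P_2,\eta)=F(s,\zeta_\alpha\otimes R_\alpha P_1,\zeta_\alpha\otimes P_2,\eta)
\]
as meromorphic functions of $s$. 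Taking $\Res_{s=-\alpha}$ of the outer two expressions, and then summing against a partition of unity subordinate to a cover by such local charts, gives the required equality $S_\alpha\circ(\id\otimes_\C R_\alpha)=S_\alpha\circ(R_\alpha\otimes_\C \id)$.

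There is no genuine obstacle here: the heavy lifting was already done in Lemma~\ref{func}(3) and in the identification of the local action of $R_\alpha$. The only mild point to check is that the calculation is insensitive to which $j\in I_\alpha$ is chosen on a chart and to overlaps between charts, but this is automatic because the chain of equalities above holds verbatim for every $j\in I_\alpha$, and the residue at $s=-\alpha$ depends only on the classes $\gamma_1,\gamma_2\in\cM_\alpha$ (not on the chosen lifts) by the preceding theorem on well-definedness of $F$.
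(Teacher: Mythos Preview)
Your proof is correct and follows exactly the approach the paper intends: the paper simply states the corollary as an immediate consequence of Lemma~\ref{func}, and you have spelled out the details—using part~(3) with $j\in I_\alpha$ so that $\lceil\alpha e_j\rceil/e_j=\alpha$, together with the local identification of $R_\alpha$ with left multiplication by $\frac{1}{e_j}z_j\partial_j$ from the corollary after Theorem~\ref{Alpha}.
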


Because of the corollary, the sesquilinear pairing $S_\alpha$ induces pairings on the associated graded quotient of the weight filtration 
\[
S_\alpha:\gr^W_k\cM_\alpha \otimes_\C \overline{\gr^W_{-k}\cM_\alpha}\to \mathfrak{C}_X,
\]
as well as on the primitive part
\[
P_{R_\alpha}S_r=S_\alpha \circ \left(\id\otimes_\C \overline{R^r_\alpha}\right): \cP_{\alpha,r} \otimes_\C \overline{\cP_{\alpha,r}}\to \mathfrak{C}_X.
\]

\begin{thm}\label{mainN}
The isomorphism $\phi_{\alpha,r}: (\cP_{\alpha,r}, F_\bullet\cP_{\alpha,r})\to \cV_{\alpha,r}(-r)$ in Theorem~\ref{idenN} respects the sesquilinear pairings up to a constant scalar. More concretely, 
\[
P_{R_\alpha}S_r(m_1, \overline{m_2})=\bigoplus_{\substack{J\subset I_\alpha,\\ |J|=r+1}}\frac{(-1)^r}{(r+1)! C_J} \tau^{J}_+S_{\alpha,J}(\phi_{\alpha,r}m_1,\overline{\phi_{\alpha,r}m_2})
\]
for any local sections $m_1,m_2\in\cP_{\alpha,r}$ and $C_J=\prod_{j\in J}e_j$, where the pairing $S_{\alpha,J}:\cV_{\alpha,J}\otimes_{\C}\overline{\cV_{\alpha,J}}\to \mathfrak C_{Y^J}$ is defined in~\eqref{cyclicpairing}.
\end{thm}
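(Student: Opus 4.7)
The proof follows the template of Theorem~\ref{main}, with the Mellin-transform regularization handling the potentially divergent integrals arising when $\alpha>0$ and the singular Hermitian metric on $\sV_{\alpha,J}(E^{\alpha,J})$ absorbing the non-integer exponents $\{-\alpha e_i\}$ for $i\in I\setminus I_\alpha$. By a partition of unity subordinate to a covering by coordinate charts, we reduce to a local statement. On any such chart, every class in $\ker R_\alpha^{r+1}$ is represented by $\zeta_\alpha\otimes z_{\overline J}$ for subsets $J\subset I_\alpha$ of cardinality $r+1$, by Corollary~\ref{cor:locgenR}. For mismatched indices $J_1\ne J_2$, choose $v\in J_2\setminus J_1$ and apply self-adjointness (Corollary~\ref{cor:srn}) together with the local formula $R_\alpha=\frac{1}{e_v}z_v\partial_v$ for $v\in I_\alpha$ to produce a factor $t_\alpha$ in the integrand; Lemma~\ref{func}(2) then forces the Mellin transform to be holomorphic at $s=-\alpha$, so the residue vanishes.

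For the diagonal case $J_1=J_2=J$ and $m=[\zeta_\alpha\otimes z_{\overline J}]$, Lemma~\ref{func}(3) applied $r$ times gives
\[
\langle P_{R_\alpha}S_r(m,m),\eta\rangle=\mathrm{Res}_{s=-\alpha}(-1)^r(s+\alpha)^r F(s,m,m,\eta).
\]
After integrating by parts in each $z_i$-direction for $i\in J$ (the only indices where the integrand is non-integrable at $s=-\alpha$), one obtains the prefactor $(s+\alpha)^{-2(r+1)}C_J^{-2}$. A Laurent expansion of $\prod_{i\in I_\alpha}|z_i|^{2(s+\alpha)e_i}$ about $s=-\alpha$ extracts the residue from the $\ell=r+1$ term $\tfrac{1}{(r+1)!}\bigl(\log\prod_{i\in I_\alpha}|z_i|^{2e_i}\bigr)^{r+1}$; multinomial expansion together with Poincar\'e-Lelong retains only the squarefree monomial $\prod_{i\in J}\log|z_i|^{2e_i}$, contributing an additional factor $C_J=\prod_{i\in J}e_i$ from the identity $\log|z_i|^{2e_i}=e_i\log|z_i|^2$, and the $X$-integral collapses to an integral over $Y^J$ of $\prod_{i\in I\setminus I_\alpha}|z_i|^{-2\{\alpha e_i\}}\cdot\eta$. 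The non-integer exponents $-2\{\alpha e_i\}$ are precisely the squared weights of the canonical singular Hermitian metric $|-|_h$ on $\sV_{\alpha,J}(E^{\alpha,J})$ described in \S\ref{subsec:kpcyc}, and the polar form $dz_{\overline J}\wedge\tfrac{dz_{I\setminus I_\alpha}}{z_{I\setminus I_\alpha}}\wedge dz_{k+1}\wedge\cdots\wedge dz_n$ describing $\phi_{\alpha,r}m$ via~\eqref{eq:imageresn} provides the matching top form on $Y^J$; combining yields the defining integrand of $S_{\alpha,J}$ in~\eqref{cyclicpairing}.

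The main technical obstacle is the bookkeeping of normalization constants: the $(r+1)!^{-1}$ from the Laurent expansion, the multinomial coefficient from expanding $(\log\prod|z_i|^{2e_i})^{r+1}$, the $C_J^{-2}$ from integration by parts, and the $C_J$ from Poincar\'e-Lelong must, together with the sign conventions in the definitions of $S_\alpha$ and $S_{\alpha,J}$, assemble into the claimed factor $(-1)^r/((r+1)!\,C_J)$, consistent with the reduced-case constant $(-1)^r/(r+1)!$ of Theorem~\ref{main} after specializing to $e_i\equiv 1$ so that $C_J=1$. A secondary subtlety is verifying that the local weight function $\prod_{i\in I\setminus I_\alpha}|z_i|^{-\{\alpha e_i\}}$ of $|-|_h$ appearing in the cyclic-covering description of \S\ref{subsec:kpcyc} transforms correctly under changes of coordinates, so that the pointwise matching on each chart assembles into a global identity of sesquilinear pairings.
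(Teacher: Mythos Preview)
Your proposal is correct and follows essentially the same route as the paper's proof: reduce to local monomial generators via Corollary~\ref{cor:locgenR}, kill the off-diagonal terms by an integrability argument at $s=-\alpha$, and for the diagonal term combine Lemma~\ref{func}(3), integration by parts in the $J$-directions, a Laurent expansion, the multinomial reduction to the single squarefree term, and Poincar\'e--Lelong to land on the defining integral of $S_{\alpha,J}$ in~\eqref{cyclicpairing}.

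Two small points to fix when you write out the details. First, in the off-diagonal case the strategic choice of $R_\alpha^r$ as a product of $\frac{1}{e_j}z_j\partial_j$ over $r$ indices produces only a factor $t_\alpha/z_v$ on one side (paired with $\overline{z_v}$ from the other), not a full $t_\alpha$; so Lemma~\ref{func}(2) does not apply literally and the paper instead checks directly that the resulting form is integrable at $s=-\alpha$. Second, the paper regroups the post-IBP integrand as $|t|^{2(s+\alpha)}\cdot\prod_{i\in I\setminus I_\alpha}|z_i|^{-2\{\alpha e_i\}}$ (with the second factor $s$-independent) before Laurent-expanding, rather than expanding only $\prod_{i\in I_\alpha}|z_i|^{2(s+\alpha)e_i}$; your version works too, but you then have to argue separately that the cross-terms involving $\log|z_i|^2$ for $i\in I\setminus I_\alpha$ vanish by the same exactness reasoning.
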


\begin{proof}
Because the generators of $\cP_{\alpha,r}$ are all monomials dividing \(t_\alpha\) of degree \(\mu-r\) by Corollary~\ref{cor:locgenR}, it suffices to prove the theorem in the case when $m_i$ is represented by 
\[
\zeta_\alpha\otimes z_{{K_i}}=z^{\ceil{\alpha \mathbf{e}}}_{I} \frac{dz_1}{z_1}\wedge\cdots\wedge \frac{dz_k}{z_k}\wedge\cdots\wedge dz_n\otimes z_{K_i} \quad 
\]
where $K_i\subset I_\alpha$ with $|K_i|=\mu-r$ and $i=1,2$. Let $\eta$ be a test function over $U$. Then we have
\[
\langle S_\alpha ( m_1, \overline{R^r_\alpha m_2}),\eta\rangle= \Res_{s=-\alpha} (-(s+\alpha))^{r} \int_X |z_I|^{2s\mathbf{e}+2\ceil{\alpha\mathbf{e}}-2\cdot\mathbf{1}} z_{K_1}\overline{z_{K_2}} \bigwedge^n_{i=0} (\frac{\sqrt{-1}}{2\pi}dz_i\wedge\overline{dz_i}).
\]
If $m_1\neq m_2$, then the above is zero. Indeed, for $v\in K_2\setminus K_1$ we can make $R^r_\alpha$ be the left multiplication by $1\otimes \prod_{i\in I\setminus K_1\setminus \{v\}}\frac{1}{e_i}z_i\partial_i$ and then
\[
\langle S(R^r_\alpha m_1, \overline{m_2}),\eta \rangle=\Res_{s=-\alpha}  \int_X |z_I|^{2s\mathbf{e}-2\mathbf\cdot \mathbf{1}} |z_I|^{2\ceil{\alpha \mathbf{e}}} \frac{t_\alpha}{z_{v}}\overline{z_{v}}\tilde \eta \bigwedge^n_{i=0}(\frac{\sqrt{-1}}{2\pi}dz_i\wedge d\overline{z_i})
\]
where $\tilde \eta=C^{-1}_{I\setminus K_1\setminus \{v\}}\partial_{I\setminus K_1\setminus \{v\}}{\overline{z_{K_2}}}{\left(\overline{z_{v}}\right)^{-1}}\eta$ is a smooth function with compact support. The function 
\[
 \int_X |z_I|^{2s\mathbf{e}-2\mathbf\cdot \mathbf{1}} |z_I|^{2\ceil{\alpha \mathbf{e}}} \frac{t_\alpha}{z_{v}}\overline{z_{v}}\tilde \eta \bigwedge^n_{i=0}(\frac{\sqrt{-1}}{2\pi}dz_i\wedge d\overline{z_i})
\]
is holomorphic at $s=-\alpha$ because by setting $s=-\alpha$ the form
\[
 |z_I|^{-2\alpha\mathbf{e}-2\mathbf\cdot \mathbf{1}} |z_I|^{2\ceil{\alpha \mathbf{e}}} \frac{t_\alpha}{z_{v}}\overline{z_{v}}\tilde \eta \bigwedge^n_{i=0}(\frac{\sqrt{-1}}{2\pi}dz_i\wedge d\overline{z_i})=|z_{I\setminus I_\alpha}|^{-2\{\alpha \mathbf{e}\}}\frac{1}{\overline{t_\alpha}} \frac{\overline z_v}{z_v}\tilde \eta \bigwedge^n_{i=0}(\frac{\sqrt{-1}}{2\pi}dz_i\wedge d\overline{z_i})
\]
is integrable.

Therefore, we reduce the proof to the case when $m_1=m_2=m$ represented by $\zeta_\alpha\otimes z_K$. We shall prove that 
\[
S_\alpha(m, \overline{R^r_{\alpha}m})=\frac{(-1)^{r}}{(r+1)!C_{\overline{K}}} \tau^{\overline K}_+S_{\alpha,\overline{K}} (\phi_{\alpha,r}m, \overline{\phi_{\alpha,r}m}),
\]
where $\overline K$ is the complement of $K$ in $I_\alpha$. Without loss of generality, we can assume that $K=\{r+1,r+2,\dots,\mu\}$ and $\overline K=\{0,1,\dots,r\}$ so that $z_{K}=z_{r+1}z_{r+2}\cdots z_{\mu}$. We have
\begin{equation}\label{eq:resmnon}
   \langle S(m, \overline{R^r_\alpha m}),\eta\rangle= \Res_{s=-\alpha} g(s)    
 \end{equation}
 if we put 
 \[
  g(s)\colon=(-(s+\alpha))^r \int_X |z_K|^{2(s+\alpha) \mathbf{e}_K} |z_{I\setminus K}|^{2s \mathbf e_{I\setminus K}+2\ceil{\alpha \mathbf e_{I\setminus K}}-2}\eta \bigwedge^n_{i=0}(\frac{\sqrt{-1}}{2\pi}dz_i\wedge d\overline{z_i})
 \]
 where, for any index subset $J\subset I$, the $j$-th component of $\ceil{\alpha \mathbf e_J}$ is $\ceil{\alpha e_j}$ if $j\in J$ or zero otherwise.
Integration by parts for $\{dz_i,d\bar z_i\}_{i\in \overline{K}}$, 
\[
\begin{aligned}
g(s)=\int_X \frac{|z_{I_\alpha}|^{2(s+\alpha) \mathbf e_{I_\alpha}}}{C^2_{\overline K}(s+\alpha)^{2r+2}}|z_{I\setminus I_\alpha }|^{2s \mathbf e_{I\setminus I_\alpha}+2\ceil{\alpha \mathbf e_{I\setminus I_\alpha}}-2}\left(\partial_{\overline K}\overline{\partial_{\overline K}} \eta\right)\bigwedge^n_{i=0}(\frac{\sqrt{-1}}{2\pi} dz_i\wedge d\overline{z_i}) 
\end{aligned}
\]
\begin{equation} \label{eq:resmnon1}
\begin{aligned}
  = \frac{(-1)^r}{C^2_{\overline K}(s+\alpha)^{r+2}} \int_X  {  |t|^{2 (s+\alpha )}} \prod_{j\in I\setminus I_\alpha}|z_j|^{-2\{\alpha e_j\}}\left(\partial_{\overline K}\overline{\partial_{\overline K}}\eta\right)\bigwedge^n_{i=0}(\frac{\sqrt{-1}}{2\pi}dz_i\wedge d\overline{z_i}), 
\end{aligned}
\end{equation}
where $\partial_{\overline K}\overline{\partial_{\overline K}}= \prod_{j\in \overline{K}}\partial_j\overline{\partial_j}$. Because of the expansion
\[
|t|^{2(s+\alpha)}=\exp\left({\log |t|^2 \left(s+\alpha\right)}\right)=\sum^\infty_{\ell=0} \frac{\left( \log |t|^2\right)^\ell(s+\alpha)^\ell}{\ell!},
\]
we find that by~\eqref{eq:resmnon1} 
\begin{equation} \label{eq:resmnon2}
\begin{aligned}
  &\Res_{s=-\alpha} g(s)= \\
   &\frac{(-1)^r}{C^2_{\overline K} (r+1)!} \int_X {\left(\log |t|^2 \right)^{r+1}} \prod_{j\in I\setminus I_\alpha}|z_j|^{-2\{\alpha e_j\}} \left(\partial_{\overline K}\overline{\partial_{\overline K}}\eta\right)\bigwedge^n_{i=0}(\frac{\sqrt{-1}}{2\pi} dz_i\wedge d\overline{z_i}) 
\end{aligned}
\end{equation}
The expansion of \(\left(\log |t|^2 \right)^{r+1}\) is a linear combination of 
\[
\prod_{i\in I} \left(\log |z_{i}|^2 \right)^{a_i}
\]
for all partitions \(\sum_{i\in I}a_i=r+1\), but the differential form
\[
\prod_{i\in I} \left(\log |z_{i}|^2 \right)^{a_i}\prod_{j\in I\setminus I_\alpha}|z_j|^{-2\{\alpha e_j\}}\left(\partial_{\overline K}\overline{\partial_{\overline K}}\eta\right)\bigwedge^n_{i=0}(\frac{\sqrt{-1}}{2\pi}dz_i\wedge d\overline{z_i})
\]
is exact unless \(a_i\neq 0\) for any $i\in \overline K$, which is equivalent to $a_i=1$ for $i\in \overline K$ and $a_i=0$ for $i\notin \overline K$. It follows from~\eqref{eq:resmnon2} that $\Res_{s=-\alpha} g(s)$ is equal to
\[
\frac{(-1)^r}{C_{\overline K}(r+1)!} \int_X { \prod_{j\in \overline K}\log |z_j|^2}\prod_{j\in I\setminus I_\alpha}|z_j|^{-2\{\alpha e_j\}}\left(\partial_{\overline K}\overline{\partial_{\overline K}}\eta \right)\bigwedge^n_{i=0}(\frac{\sqrt{-1}}{2\pi} dz_i\wedge d\overline{z_i}).
\]
An application of Poinc\'are-Lelong equation~\cite[Page 388]{GH} gives 
\begin{equation}\label{eq:resmnon3}
 \Res_{s=-\alpha} g(s)=\frac{(-1)^r}{(r+1)!C_{\overline K}} \int_{Y^{\overline K}} \prod_{j\in I\setminus I_\alpha}|z_j|^{-2\{\alpha e_j\}}\eta\bigwedge^n_{i=r+1}(\frac{\sqrt{-1}}{2\pi}dz_i\wedge d\overline{z_i}) 
\end{equation}
Since $\phi_{\alpha,\overline K}m=\pm \frac{dz_{I\setminus K}}{z_{I\setminus K}}\wedge dz_{k+1}\wedge \cdots \wedge dz_n\otimes s_{\alpha,\overline K}$ is a local section of $\omega_{Y^{\overline K}}(E^{\alpha,\overline K})\otimes \sV_{\alpha,\overline K}$, we deduce that
\[
(\phi_{\alpha,\overline K}m\wedge \overline{\phi_{\alpha,\overline K}m})_h= \prod_{j\in I\setminus I_\alpha}|z_j|^{-2\{\alpha e_j\}}\bigwedge^n_{i=r+1}(\frac{\sqrt{-1}}{2\pi}dz_i\wedge d\overline{z_i}) 
\] 
from which we conclude that \eqref{eq:resmnon3} is equal to 
\[
  \frac{(-1)^r}{(r+1)!C_{\overline K}} \int_{Y^{\overline K}} \eta (\phi_{\alpha,\overline K}m\wedge \overline{\phi_{\alpha,\overline K}m})_h=\frac{(-1)^r}{(r+1)!C_{\overline K}} \langle S_{\alpha, \overline{K}}(\phi_{\alpha,\overline K}  m ,\phi_{\alpha,\overline{K}}  m),\eta\rangle.
\]
See~\eqref{cyclicpairing}. We have concluded the proof.
\end{proof}

\subsection{Construction of the limiting mixed Hodge structure}

We begin to construct a polarized bigraded Hodge-Lefschetz structure on $\gr^W \bH^\bullet(X,\DR_X\cM_\alpha)$. Fix a K\"ahler class $\omega$ on $X$ and let $L=\omega\wedge:\DR_X\cM_\alpha \to \DR_X\cM_\alpha [2]$ be the Lefschetz operator and $\sfX_1=2\pi\sqrt{-1}L$. Relabel the graded pieces of the first page of the weight spectral sequence by
\[
V^\alpha_{\ell,k}=\bH^{\ell}(X,\gr^W_k\DR_X\cM_\alpha)=\prescript{W}{}{E^{-k,\ell+k}_1}.
\]
Let $V^\alpha=\bigoplus_{\ell,k\in\Z}V^\alpha_{\ell,k}$ with the filtration $F_\bullet V^\alpha$ induced by $F_\bullet\cM_\alpha$. Denote by $E_i(R_\alpha)$ the induced operator by $R_\alpha$ on $\prescript{W}{}{E_i}$ and let $\sfY_2=E_1(R_\alpha)$. Denote by $S_{\ell,k}=\varepsilon(\ell)S_{\ell,k}'$ the induced pairing on $V^\alpha_{\ell,k}\otimes \overline{V^\alpha_{-\ell,-k}}$ with
\[
\begin{aligned}
  S_{\ell,k}'\colon &\bH^{\ell}(X,\gr^W_k\DR_X\cM_\alpha)\otimes \overline {\bH^{-\ell}(X,\gr^W_{-k}\DR_X\cM_\alpha}) \to \\
  &\bH^0(X,\DR_{X,\overline X}\gr^W_k\cM_\alpha\otimes_\C \overline{\gr^W_{-k}\cM_\alpha}) \to \bH^0_c(X, \DR_{X,\overline X}\mathfrak {C}_X) \simeq \C.
\end{aligned}
\]
Let $d_1$ be the differential of the first page of the spectral sequence. In terms of relabeling, we have 
\[
d_1: (V^\alpha_{\ell,k},F_\bullet V^\alpha_{\ell,k}) \to (V^\alpha_{\ell+1,k-1},F_\bullet V^\alpha_{\ell+1,k-1}).
\]
Exactly same to Theorem~\ref{rd1pbhl} and Corollary~\ref{rmhc} in the reduced case, we conclude that 

\begin{thm} \label{d1pbhl}
The tuple $(V^\alpha,\sfX_1,\sfY_2,F_\bullet V, \bigoplus S_{j,k}, d_1)$ gives a differential polarized bigraded Hodge-Lefschetz structure of central weight $n$. 
\end{thm}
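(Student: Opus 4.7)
The plan is to mimic the proof of Theorem~\ref{rd1pbhl} from the reduced case, using the generalized-eigenspace analogues established in the preceding sections. Specifically, I will verify the three conditions $(\mathrm{pbHL1})$--$(\mathrm{pbHL3})$ of Theorem~\ref{bphl} for the tuple, and then check that $d_1$ is a differential in the sense defined before Theorem~\ref{GA90}. The two main inputs replacing their reduced-case counterparts are: the filtered isomorphism $\phi_{\alpha,r}: \cP_{\alpha,r}\to \cV_{\alpha,r}(-r)$ of Theorem~\ref{idenN}, and its compatibility with the sesquilinear pairings from Theorem~\ref{mainN}. The Hodge-theoretic substance ultimately comes from Corollary~\ref{cycHS}, which provides the polarized Hodge-Lefschetz structure on the hypercohomology of $\DR_{Y^J}\cV_{\alpha,J}$ arising from cyclic coverings.

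For $(\mathrm{pbHL1})$, the commutation of $\sfX_1$ and $\sfY_2$ is automatic since $\sfX_1$ is induced by a Lefschetz operator on $\DR_X\cM_\alpha$ while $\sfY_2$ is induced by $R_\alpha$, which commutes with the de Rham differential and respects $F_\bullet$. The isomorphism $\sfY_2^k: F_\bullet V^\alpha_{\ell,k}\to F_{\bullet+k}V^\alpha_{\ell,-k}$ follows from the filtered isomorphism of Corollary~\ref{FWN} together with the Lefschetz decomposition of $\gr^W\cM_\alpha$. For the filtered isomorphism $\sfX_1^\ell: F_\bullet V^\alpha_{-\ell,r}\to F_{\bullet-\ell}V^\alpha_{\ell,r}$, I first apply Theorem~\ref{idenN} to reduce to proving the Hard Lefschetz isomorphism on the $\sfY_2$-primitive part $P_{\sfY_2}V^\alpha_{-\ell,r}=H^{-\ell}(X,\DR_X\cP_{\alpha,r})\simeq \bigoplus_{J} H^{-\ell}(Y^J,\DR_{Y^J}\cV_{\alpha,J})(-r)$, and then invoke the Hard Lefschetz assertion of Corollary~\ref{cycHS}; the full statement then follows via the Lefschetz decomposition with respect to $\sfY_2$.

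Condition $(\mathrm{pbHL2})$ splits into two assertions. That $S_{\ell,k}$ vanishes except on $V^\alpha_{\ell,k}\otimes\overline{V^\alpha_{-\ell,-k}}$ follows because the pairing factors through $\gr^W_k\cM_\alpha\otimes_\C \overline{\gr^W_{-k}\cM_\alpha}$ by Corollary~\ref{cor:srn}, and the Lefschetz operator $\sfX_1$ being of type $(1,1)$ gives the adjointness $S_{\ell,k}(\sfX_1-,-)=S_{\ell-2,k}(-,\sfX_1-)$ via the abstract formula~\eqref{eq:sx} that was established earlier in Section~\ref{sec:redses}; the relation $S_{\ell,k}(-,\sfY_2-)=S_{\ell,k-2}(\sfY_2-,-)$ is the self-adjointness of $R_\alpha$ (Corollary~\ref{cor:srn}) passed to cohomology. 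For $(\mathrm{pbHL3})$, I use Theorem~\ref{mainN} to identify the pairing $P_{\sfY_2}S_{-\ell,r}=S_{-\ell,r}(-,\sfY_2^r-)$ on the $\sfY_2$-primitive part with the direct sum of pairings $\frac{(-1)^r}{(r+1)!C_J}\tau^{J}_+S_{\alpha,J}$ on $\bigoplus_J H^{-\ell}(Y^J,\DR_{Y^J}\cV_{\alpha,J})(-r)$; then Corollary~\ref{cycHS} (combined with the $\sfX_1^\ell$-twist coming from Hard Lefschetz) furnishes the polarization on the bi-primitive part $P_{-\ell,r}=\ker\sfX_1^{\ell+1}\cap\ker\sfY_2^{r+1}\cap V^\alpha_{-\ell,r}$, which under $\phi_{\alpha,r}$ becomes a direct sum of $\sfX_1$-primitive pieces in cyclic-covering cohomologies of weight $n-\ell+r$.

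Finally, the conditions on $d_1$ are formal. The bidegree $(1,-1)$ follows from the definition of the spectral sequence differential; skew-symmetry with respect to $\bigoplus S_{\ell,k}$ is precisely the formal calculation carried out abstractly at the start of Section~\ref{sec:redses} and uses the compatibility of $d$ with the weight filtration plus the sign-rule identity~\eqref{eq:dsign}; the commutations $[\sfX_1,d_1]=[\sfY_2,d_1]=0$ are immediate since $\sfX_1$ acts on the complex $\DR_X\cM_\alpha$ commuting with its differential and $R_\alpha$ acts on $\cM_\alpha$ strictly. The main technical obstacle, as in the reduced case, is ensuring that the constants in Theorem~\ref{mainN} do not destroy the positivity of the polarization on bi-primitive parts; this is harmless since each $C_J>0$ and the sign $(-1)^r/(r+1)!$ is absorbed into the standard positivity conventions of Corollary~\ref{cycHS} after twisting by $(-\sfY_2)^r$. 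Once all four conditions are checked, we obtain the desired differential polarized bigraded Hodge-Lefschetz structure of central weight $n$, which upon applying Theorem~\ref{GA90} will yield the non-reduced analogue of Corollary~\ref{rmhc} and thereby complete the proof of Theorem~\ref{thm:main}.
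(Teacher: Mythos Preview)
Your proposal is correct and follows exactly the approach the paper intends: the paper's own proof of Theorem~\ref{d1pbhl} consists solely of the sentence ``Exactly same to Theorem~\ref{rd1pbhl} and Corollary~\ref{rmhc} in the reduced case,'' and you have faithfully carried out that parallel argument, substituting Corollary~\ref{FWN}, Theorem~\ref{idenN}, Theorem~\ref{mainN}, Corollary~\ref{cor:srn}, and Corollary~\ref{cycHS} for their reduced-case counterparts. Your remark about the positive constants $C_J$ and the sign $(-1)^r/(r+1)!$ being harmless for the polarization is the only point requiring explicit mention beyond the reduced case, and you have handled it correctly.
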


\begin{cor}\label{mhc}
We have the following 
\begin{enumerate}
  \item the Hodge spectral sequence degenerates at $\prescript{}{F}{E_1}$;
  \item the weight spectral sequence degenerates at $\prescript{W}{}{E_2}$;
  \item the tuple $(\bigoplus_{\ell\in \Z}\gr^W\bH^\ell(X,\DR_X\cM_\alpha), \sfX_1,\sfY_2,F_\bullet)$ together with the pairing induced by $S_\alpha$ is a polarized bigraded Hodge-Lefschetz structure of central weight $n$.
\end{enumerate}
\end{cor}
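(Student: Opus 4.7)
The strategy is to mimic the proof of Corollary~\ref{rmhc} verbatim, with $\cM$ replaced by $\cM_\alpha$ and $R$ replaced by the nilpotent operator $R_\alpha$, so that Theorem~\ref{d1pbhl} and Theorem~\ref{GA90} together do all the work. The fact that $\alpha\neq 0$ never interferes, because after passing to the generalized eigenspace $\cM_\alpha$ the operator $R_\alpha$ is nilpotent, and the entire formal package (strictness, Lefschetz decomposition, compatibility of $F_\bullet$ with $\sfY_2$, and skew-symmetry of $d_1$ with respect to $\bigoplus S_{\ell,k}$) is in place.

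For statement (3), I would simply invoke Theorem~\ref{GA90} on the differential polarized bigraded Hodge–Lefschetz structure produced by Theorem~\ref{d1pbhl}: its cohomology $\ker d_1/\operatorname{im}d_1$ is a polarized bigraded Hodge–Lefschetz structure of central weight $n$, and by construction this cohomology is exactly $\bigoplus_{\ell,r} \gr^W_r H^\ell(X,\DR_X\cM_\alpha)$ with $\sfX_1=2\pi\sqrt{-1}L$, $\sfY_2$ induced by $R_\alpha$, and polarization induced by $S_\alpha$. For statement (2), I would run the inductive argument from Corollary~\ref{rmhc}: denoting by $V^k_{\ell,r}=\prescript{W}{}{E_k}^{-r,\ell+r}$ the $k$-th page with its induced filtration and induced Hermitian pairing $S^k_{\ell,r}$, the claim is that $(V^k_{\ell,r},F_\bullet,S^k_{\ell,r}\circ(\id\otimes \sfw))$ is a pure polarized Hodge structure of weight $n+\ell+r$ and $d_k$ is a morphism of Hodge structures. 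The base case $k=1$ is Theorem~\ref{d1pbhl}. The inductive step uses that skew-symmetry $S^k_{\ell,r}(d_k-,-)+S^k_{\ell+1,r-k}(-,d_k-)=0$ together with the fact that $d_k$ preserves $F_\bullet$ imply that $d_k$ is bistrict and sends $(p,q)$-pieces to $(p,q)$-pieces on $V^{k+1}$ (Remark~\ref{dmorphism}), so the Hermitian form on $V^{k+1}_{\ell,r}$ remains a polarization. Then $d_k$ for $k\geq 2$ would be a morphism between Hodge structures of different weights, hence zero, giving $E_2$-degeneration.

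Statement (1) then follows formally from (2) and from the purity of each bigraded piece: the identification $\gr^W_r\gr^F\DR_X\cM_\alpha=\gr^F\gr^W_r\DR_X\cM_\alpha$ combined with compatibility of $d_1$ with $F_\bullet$ gives an isomorphism
\[
\gr^W_r H^\ell(X,\gr^F\DR_X\cM_\alpha)\xrightarrow{\sim}\gr^W_r H^\ell(X,\DR_X\cM_\alpha),
\]
and summing over $r$ together with a dimension count forces $H^\ell(X,\gr^F\DR_X\cM_\alpha)\xrightarrow{\sim} H^\ell(X,\DR_X\cM_\alpha)$, i.e.\ $E_1$-degeneration of the Hodge spectral sequence.

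Essentially nothing is new here beyond what was done in the reduced case; the only place where the non-reduced hypothesis enters is in identifying the bi-primitive pieces of $V^\alpha$ with polarized Hodge structures coming from the cyclic coverings, and that has already been absorbed into Theorem~\ref{d1pbhl} via Theorem~\ref{idenN}, Theorem~\ref{mainN}, and Corollary~\ref{cycHS}. The only subtle point to double-check is the sign bookkeeping in the inductive step for (2), ensuring that the polarization condition $(-1)^q S^k_{\ell,r}\circ(\id\otimes\sfw)>0$ propagates correctly from $V^k$ to $V^{k+1}$; this is routine because passing to $\ker d_k/\operatorname{im}d_k$ preserves positivity on the orthogonal complement of $\operatorname{im}d_k$ inside $\ker d_k$, which is exactly the argument encoded in the proof of Theorem~\ref{GA90} via the Laplace operator $\Delta=dd^*+d^*d$.
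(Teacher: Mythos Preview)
Your proposal is correct and matches the paper's approach exactly: the paper does not give a separate proof of Corollary~\ref{mhc} but simply says it is ``exactly same'' as Corollary~\ref{rmhc}, and your write-up reproduces that argument faithfully. The only cosmetic point is ordering: the identification of $\ker d_1/\mathrm{im}\,d_1$ with $\bigoplus_{\ell,r}\gr^W_r H^\ell(X,\DR_X\cM_\alpha)$ in your paragraph on~(3) presupposes the $E_2$-degeneration of~(2), so logically~(2) should come first (as in the paper), but you clearly have the right argument for~(2) in hand.
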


The last statement in Corollary~\ref{mhc} implies that the monodromy filtration associated to $R_\alpha$ on $\bH^\ell(X,\DR_X\cM_\alpha)$ is the correct weight filtration. Thus, we have established Theorem~\ref{thm:main}.

The following is an immediate consequence of the existence of the bigraded limiting Hodge-Lefschetz structure.

\begin{cor}[Hard Lefschetz]
The Lefschetz operator induces an isomorphism between $\sO_\Delta$-modules
\[
\left(2\pi\sqrt{-1}L\right)^k: F_\ell \bR^{-k}f_* \relome{\bullet+n} \simeq F_{\ell-k} \bR^{k}f_* \relome{\bullet+n} \quad \text{for any integer } \ell.
\]
 As a result, we have the following decomposition in the derived category of coherent $\sO_\Delta$-modules:
\[
\bR f_* F_\ell \relome{\bullet+n} \simeq \bigoplus_{k \in\Z} F_\ell \bR^k f_*\relome{\bullet+n}[-k] \quad \text{ for any integer } \ell.
\]
\end{cor}
\begin{proof}
The first statement follows from the Hard Lefschetz on each fiber 
\[
\left(2\pi\sqrt{-1}L\right)^k:  F_\ell \bR^{-k} f_* \relome{\bullet+n}\otimes \C(p) \simeq F_{\ell-k}\bR^{k}f_* \relome{\bullet+n}\otimes \C(p),
\]
for every $p\in\Delta$. The second statement follows from the first one plus the main theorem in~\cite{Deligne68}.
\end{proof}

\section{Local invariant cycle theorem}\label{sec:app}

Let $Z$ be a complex a complex manifold of $\dim Z=n$ and $D=\sum_{i\in K} D_i$ be a simple normal crossing divisor where $D_i$'s are smooth divisors. Let $D_J=\sum_{j\in J} D_j$ for some index set $J$. The log de Rham complex induced by the canonical morphism $\sO_Z(-D_J)\otimes \sD_Z\to \Omega_Z(\log D_J)(-D_J)\otimes\sD_Z$:
\begin{equation}\label{eq:!log}
\Omega^{n+\bullet}_Z(\log D_J)(-D_J)\otimes \sD_Z
\end{equation}
is a filtered resolution of a holonomic $\sD_Z$-module $\omega_Z(!D_J)$, which can be proved using the idea in the proof of Lemma~\ref{lem:filresolution}.  Indeed, $\omega_Z(!D_J)$ is the holonomic dual of $\omega_Z(*D_J)$, by noticing that  $\Omega^{k}_Z(\log D_J)(-D_J) \simeq \bR\cHom_{\sO_Z}(\Omega^{m-k}_Z(\log D_J),\omega_Z)$ but we will not use this fact. The short exact sequence
\[
\begin{aligned}
0 \to \Omega^{n+\bullet}_Z(\log D_{J\cup \{k\}})(-D_{J\cup \{k\}})&\otimes \sD_Z \to \Omega^{n+\bullet}_Z(\log D_J)(-D_J)\otimes \sD_Z\\
 &\to \Omega^{n+\bullet}_{D_k}(\log D_{J}\cup {D_k})(-\log D_{J}\cup {D_k})\otimes\sD_Z \to 0
\end{aligned}
\]
indicates that, by taking the cohomology of the induced long exact sequence, 
\begin{equation}\label{eq:cone!}
0 \to \tau^{k}_+\omega_{D_k}(!  D_{J}\cap {D_k}) \to \omega_Z(! D_{J\cup \{k\}}) \to \omega_Z(! D_J) \to 0
\end{equation}
is a filtered exact sequence. Here, $\tau^{k}:D_k\to Z$ is the closed embedding. 

We also need that the morphism of $\omega_Z(!D) \to \omega_Z(*D)$ factors through $\omega_Z$ because 
\[
\Omega_Z^{n+\bullet}(\log D)(-D)\otimes \sD_Z\to \Omega_Z^{n+\bullet}(\log D)\otimes \sD_Z
\]
factors through $\Omega_Z^{n+\bullet}\otimes\sD_Z$. Since $\omega_Z$ is simple on each connected component, we have 
\begin{equation}\label{eq:st18}
\omega_Z(!D) \twoheadrightarrow \omega_Z \hookrightarrow \omega_Z(*D).
\end{equation}
Moreover, all morphisms strictly respect the good filtrations thanks to
\[
F_\ell\omega_Z(!D)=F_\ell\omega_Z(*D)=F_\ell\omega_Z=0
\] 
for $\ell< -n$ and 
\[
\underbrace{\Omega^n_Z(\log D)(-D)}_{=F_{-n} \omega_Z(!D)} \twoheadrightarrow \omega_Z \hookrightarrow \underbrace{\Omega_Z^n(\log D)}_{=F_{-n}\omega_Z(*D)}
\]
by~\eqref{eq:!log} and~\eqref{starex}.

Now we present the proof of Theorem~\ref{thm:lic}, which is equivalent to the following statement, using the same notation introduced in \S\ref{sec:malpha}:

\begin{thm}\label{loc}
The following sequence is exact in the category of mixed Hodge structures:
\[
\begin{tikzcd}[column sep=small]
H^{\ell}(Y,\C[n])\arrow{r} & {\bH^\ell}(X,\DR_X\cM) \arrow{r}{R} & {\bH^\ell}(X,\DR_X\cM)(-1).
\end{tikzcd}
\]
\end{thm}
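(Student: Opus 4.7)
The plan is to adapt the classical strategy of the local invariant cycle theorem to our $\sD$-module framework, combining the generalized eigenspace decomposition of $R$, the polarized bigraded Hodge-Lefschetz structure of Corollary~\ref{mhc}, and the identification of primitive parts in Theorems~\ref{iden} and~\ref{idenN}.

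First I would construct the map $\phi:H^\ell(Y,\C)\to H^\ell(X,\relome{\bullet+n}|_Y)$ as the morphism induced on hypercohomology by the canonical inclusion of sheaves $\C_Y\hookrightarrow \sO_Y=\Omega^0_{X/\Delta}(\log Y)|_Y$, extended to a morphism of complexes with the appropriate shift. Compatibility of $\phi$ with the Hodge filtration is immediate from the stupid filtration, and compatibility with the weight filtration follows from the fact that the image lies in the ``no log poles'' subcomplex, the weight-zero piece $W_0$ of the monodromy filtration on $\cM_0$; hence $\phi$ is a morphism of mixed Hodge structures. The vanishing $R\circ\phi=0$ is then immediate from the local description of $R$ in Theorem~\ref{endm}: in coordinates, $R$ acts as $\tfrac{1}{e_0}z_0\partial_0\cdot -$, which annihilates classes of the form $[\zeta\otimes c]$ with $c$ constant; equivalently, $\mathrm{im}\,\phi$ lies inside $\ker R_n\cap W_0$ and inside the $\alpha=0$ generalized eigenspace.

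The substantive point is exactness at the middle term, $\ker R\subseteq \mathrm{im}\,\phi$. Because $R$ has eigenvalues in $[0,1)\cap\Q$ and is invertible on $H^\ell(X,\DR_X\cM_\alpha)$ for $\alpha\neq 0$, we have $\ker R=\ker R_n\cap H^\ell(X,\DR_X\cM_0)$. By Corollary~\ref{mhc}, the weight filtration on $H^\ell(X,\DR_X\cM_0)$ is the monodromy filtration of $R_n$, and $\fsl_2(\C)$-representation theory applied to the polarized bigraded Hodge-Lefschetz structure identifies $\ker R_n$ with the direct sum of the $R_n$-lowest-weight components of the Lefschetz decomposition. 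By Theorem~\ref{idenN} applied at $\alpha=0$ (where $I_0=I$ and the cyclic-cover data $\sV_{0,J}$ trivialize to $\sO_{Y^J}$), each primitive part is $\cP_{0,r}\cong \bigoplus_{|J|=r+1}\tau^J_+\omega_{Y^J}(-r)$, so piecing the hypercohomologies of these primitives along the $R_n$-lowest-weight strand of the weight spectral sequence gives the spectral sequence associated to the simplicial resolution $\tilde Y^{(\bullet+1)}=\coprod_{|J|=\bullet+1}Y^J\to Y$, which converges to $H^\ell(Y,\C)$. Identifying $\mathrm{im}\,\phi$ with the $E_\infty$-page of this spectral sequence yields $\mathrm{im}\,\phi=\ker R$.

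The main obstacle is the combinatorial compatibility: one must verify that the $d_1$-differential of the weight spectral sequence, restricted to the $R_n$-lowest-weight strand, agrees with the alternating-sum differential of the simplicial resolution $\tilde Y^{(\bullet+1)}\to Y$ up to sign and Tate twist. In the reduced case this is essentially Steenbrink's computation, with the sesquilinear pairings of Theorem~\ref{main} recording the signs that would otherwise be provided by a $\Q$-structure; in the non-reduced case one reduces to the reduced case using $I_0=I$ and the triviality of $\sV_{0,J}$, with Theorem~\ref{mainN} handling the signs.
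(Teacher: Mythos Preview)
Your approach is essentially the paper's: reduce to $\alpha=0$ (since $R$ is invertible on $\cM_\alpha$ for $\alpha\neq 0$), identify $\gr^W_{-j}\ker R\cong\tau^{(j+1)}_+\omega_{\tilde Y^{(j+1)}}$ via the primitive-part isomorphisms at $\alpha=0$ (where $I_0=I$ and $\sV_{0,J}=\sO_{Y^J}$), and compare the $E_1$-page of the weight spectral sequence for $\DR_X\ker R$ with the simplicial $E_1$-page computing $H^\ell(Y,\C)$. The paper carries out the $d_1$-matching you flag as the main obstacle by an explicit local diagram chase on the de Rham complex (the multiplicities $e_i$ appear as harmless nonzero scalar factors); this computation does not actually use the sesquilinear pairings, contrary to your remark, and your earlier claim that $\phi$ respects the weight filtration because ``the image lies in $W_0$'' is imprecise---the weight filtration on $H^\ell(Y,\C)$ is nontrivial---but this is superseded by the spectral-sequence comparison, which is what actually establishes the MHS compatibility.
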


\begin{proof}
Note that \({\ker } R\) is contained in \(\cM_0\) and \(W_{-j}{\ker } R=R^{j}{\ker } R^{j+1}\) for \(j\geq 0\) and is trivial for \(j< 0\) where $W$ is the filtration induced by \(W=W(R)\) on \(\cM_0\). It follows that $(\gr^W_{-j}\ker  R,F)\simeq (\omega_{\tilde Y^{(j+1)}},F)$ for $j\geq 0$ by Theorem~\ref{idenN}. Since $\gr^W_{-j} {\ker } R$ is a summand of $\gr^W_{-j}\cM_0$ for $j\geq 0$ according to the Lefschetz decomposition on $\gr^W\cM_0$, we have the following short exact sequence of Hodge structures on the first page of the weight spectral sequences:
\[
\begin{aligned}
  0\to {\bH^{\ell+\bullet}}(X,\gr^W_{-j-\bullet} \DR_X{\ker } R) \to E_1^{j+\bullet, \ell-j}  \xrightarrow{R}  E_1^{j+2+\bullet, \ell-j}(-1) \to 0,
\end{aligned}
\]
for $-j-\bullet\leq 1$ where $E_1^{j+\bullet, \ell-j}={\bH^{\ell+\bullet}}(X,\gr^W_{-j-\bullet} \DR_X\cM_0)$ is a Hodge structure of weight $n+\ell-j$. The associated long exact sequence gives the relation between the second page of the weight spectral sequences for $j\geq -1$: 
\[
 \to  \gr^W_{-j}{\bH^\ell}(X,\DR_X{\ker } R) \to \gr^W_{-j}{\bH^\ell}(X,\DR_X\cM_0)\xrightarrow{R} \gr^W_{-j-2}{\bH^\ell}(X,\DR_X\cM_0)(-1)\to .
\]
Then the proof would be completed if we can show that there is canonical quasi-isomorphism  $\DR_X\ker R \simeq \C_Y[n]$ and 
\[
(\bH^\ell(X,\DR_X\ker R),F,W) \simeq (\bH^\ell(X,\C_Y[n]),F,W)
\]  
as mixed Hodge structures.

We make use of the notation in the proof of Theorem~\ref{idenN}. Taking $\alpha=0$ in~\eqref{eq:lesnr} gives an exact sequence:
  \[
  0\to \cH^{-1}_0  \to \cM_0 \xrightarrow{R} \cM_0(-1) \to \cH^0_0(-1) \to 0,
  \]
  which implies that $(\cH^{-1}_0,F) \simeq (\ker R,F)$. Recall that $\cH^{j}_0=\sH^j(\Omega_X^{\bullet+n+1}(\log Y)|_{Y_{\rm red}}\otimes\sD_X)$. On the other hand, the filtered $\sD_X$-modules $\cH^{j}_0$ also fit into the exact sequence of filtered $\sD_X$-modules:
  \[
  0\to \cH^{-1}_0 \to \omega_X(!Y) \to \omega(*Y) \to \cH^0_0 \to 0
  \]
  because of~\eqref{eq:!log},~\eqref{starex} and the short exact sequence
  \[
  0\to \Omega_X^{\bullet+n}(\log Y)(-Y_{\rm Red})\otimes\sD_X \to \Omega_X^{\bullet+n}(\log Y)\otimes\sD_X \to \Omega_X^{\bullet+n}(\log Y)|_{Y_{\rm Red}}\otimes\sD_X \to 0.
  \] 
As the image of $\omega_X(!Y)\to \omega(*Y)$ is precisely $\omega_X$ by~\eqref{eq:st18}, we obtain a filtered short exact sequence
  \[
  0\to \cH^{-1}_0 \to \omega(!Y) \to  \omega_X\to 0.
  \]
  
Now we describe a weight filtration on $\omega_X(!Y)$ which also induces a weight filtration on $\cH^{-1}_0$. Define the \v{C}ech complex
\[
(K_J,F)=\left\{(\omega(!Y_J),F) \xrightarrow{d'} \bigoplus_{i\in J} (\omega_X(! Y_{J\setminus\{i\}}),F) \xrightarrow{d'} \cdots \xrightarrow{d'} (\omega_X,F) \right\}
\]
placed in degree $-|J|,\dots, -1,0$ for any subset $J$ of $I$, recalling that $Y_J=\bigcup_{j\in J}Y_j$. Indeed, $(K_J,F)\simeq (\tau^J_+ \omega_{Y^J},F)[|J|]$ by~\eqref{eq:cone!} because $(K_J,F)$ is the iterated mapping cones (up to a shift) of the canonical maps $\omega_X(!Y_i)\to \omega_X$. It follows that
\begin{equation}\label{eq:ceck}
(\DR_X K_J,F) \simeq \left(\Omega^{n+1+\bullet}_{Y^J},F\right).
\end{equation}
Consider the double complex
\[
K^{\bullet,\bullet}=\left\{\omega_X\xrightarrow{d''} \bigoplus_{i\in I} K_{\{i\}} \xrightarrow{d''} \bigoplus_{i,j\in I} K_{\{i,j\}}\xrightarrow{d''} \cdots \xrightarrow{d''} K_I \right\}
\]
where the $p$-th column $K^{\bullet,p}=\bigoplus_{|J|=p+1}K_{J}$ if $p\geq 0$, $K^{\bullet,-1}=\omega_X$ and the $q$-th row $K^{q,\bullet}$ is given by iterating the mapping cone (up to a shift) of the identity map $\omega_X(!Y_J) \to \omega_X(!Y_J)$ exactly $(|I|+q)$-many times (hence filtered acyclic):
\[
\begin{aligned}
\bigoplus_{|J|=-q}\left\{\omega_X(!Y_J)\xrightarrow{d''} \bigoplus_{|I|+q} \omega_X(!Y_J) \xrightarrow{d''} \cdots \xrightarrow{d''} \omega(!Y_J)\right\}[q+1] \quad \text{for} \quad -|I|<q\leq 0 
\end{aligned}
\]
and $\omega_X(!Y)[-|I|+1]$ for $(q=-|I|)$. It follows that the total complex $K\colon = \text{Tot}(K^{\bullet,\bullet})$ is filtered quasi-isomorphic to $K^{-|I|, |I|-1}[1]=\omega_X(!Y)[1]$ because all other the rows are filtered acyclic. We define the weight filtration by $W_{-j} K\colon=\text{Tot}(K^{\bullet, \geq j})$ and it induces a weight filtration on $\cH^{-1}(K)=\omega_X(!Y)$. It follows from the short exact sequence
\[
0\to W_{0} K \to \underbrace{\omega_X(!Y)[1]}_{=W_{1}K} \to \underbrace{\omega_X[1]}_{=\gr^W_1 K} \to 0
\]
that $(W_0K,F)\simeq (\cH^{-1}_0[1],F)$. Moreover, we see that $W_{-j}K$ is supported on cohomological degree $1$ for any $j\geq 0$ by descending induction and the short exact sequence
\[
0\to W_{-j-1} K \to W_{-j}K \to \underbrace{\tau^{(j+1)}_+ \omega_{\tilde Y^{(j+1)}}[1]}_{=\gr^W_{-j} K} \to 0.
\]
Therefore, $(W_{-j}\cH^{-1}_0[1],F)\simeq (W_{-j}K,F)$ for $j\leq 0$. 

A diagram chasing implies 
\begin{equation}\label{eq:weightr}
  W_{-j}\cH^{-1}_0\simeq \sH^{-1}(W_{-j}K) =\ker\left(\omega_X(!Y) \to \bigoplus_{J\subset I, |J|=j} \omega_X(! Y_J) \right).
\end{equation}
Indeed, since $d''\colon \omega_X(!Y_J)\to \bigoplus_{|I|+q} \omega_X(!Y_J)$ is injective for any subset $J\subset I$ with $|J|=-q$, for any element $m \in W_{-j}\omega_X(!Y)$, it has a unique lifting in $\sH^{-1}(W_{-j}K)$ represented by the cocycle
\[
m_{-j}+m_{-j-1}+\cdots +m_{-|I|+1}
\]
where $m_\ell\in K^{-\ell,\ell-1}$ is the image of $m$ under the morphism 
\[
\omega_X(!Y) \to \bigoplus_{J\subset I, |J|=-\ell+1} \omega_X(! Y_J)
\]
up to a sign (determined by the cocycle condition). The cocycle condition also imposes that $d'm_{-j}=0\in K^{j,-j}= \bigoplus_{|J|=j+1}\bigoplus_{i\in J} \omega_X(! Y_{J\setminus\{i\}})$ and hence, we have proved~\eqref{eq:weightr}.

By definition $\DR_X W_{0}K$ with the weight filtration $\DR_X W_{-j} K$ for $j\geq 0$ bifiltered quasi-isomorphic, by~\eqref{eq:ceck}, to the total complex of
\[
\left\{\bigoplus_{i\in I} \Omega^{n+1+\bullet}_{Y^{\{i\}}} \to \bigoplus_{i,j\in I} \Omega^{n+1+\bullet}_{Y^{\{i,j\}}}\to \cdots \to \Omega^{n+1+\bullet}_{Y^I} \right\}
\]
with the standard filtrations computing the mixed Hodge structure on the cohomology of $\C_Y[n+1]$; see~\cite[(4.2)]{GS75} or~\cite[(3.5)]{Ste76}. More precisely, we have
\[
(\bH^\ell(X,\DR_X\cH^{-1}_0),W,F) \simeq (H^\ell(Y,\C_Y[n]),F,W)
\]
as mixed Hodge structures.

Lastly, we prove that the isomorphism $\ker R \to\cH^{-1}_0$ preserves the weight filtrations, i.e., $W_{-j}\ker R \to W_{-j}\cH^{-1}_0$ is an isomorphism, which will imply that 
\[
(\bH^\ell(X,\DR_X\ker R),F,W) \simeq (H^\ell(Y,\C_Y[n]),F,W)
\] 
as mixed Hodge structures. To prove this, it suffices to show that the image of $W_{-j}\ker R\to \omega_X(!Y)$ lies in $W_{-j}\cH^{-1}_0$ because of descending induction: if we assume that $W_{-j}\ker R \to W_{-j}\cH^{-1}_0$ is an isomorphism then 
\[
\gr^W_{-j}\ker R \to \gr^W_{-j}\cH^{-1}_0
\]
is a surjection, which automatically implies it is an isomorphism because the characteristic cycles of the source and the target are the same. Thus, $W_{-j-1}\ker R \to W_{-j-1}\cH^{-1}_0$ is an isomorphism. 

It remains to show that the image of $W_{-j}\ker R$ is in  $W_{-j}\cH^{-1}_0$. We must show that $W_{-j}\ker R \to \omega_X(!Y)$ is contained in $\ker\left(\omega_X(!Y) \to \omega_X(!Y_J) \right)$ for any subset $J\subset I$ of cardinality $j$. Using a similar idea of the proof for Lemma~\ref{lem:strictex} the complex of $\sD_X$-modules $\Omega^{n+1+\bullet}_X(\log Y)(-Y_J)\otimes \sD_X$ is a filtered resolution of a filtered $\sD_X$-module $\omega_X(!Y_J)^*$ and $\omega_X(!Y_J) \to \omega_X(!Y_J)^*$ is a strict inclusion. Therefore, 
\[
\ker(\omega_X(!Y)\to \omega_X(!Y_J)^*)=\ker(\omega_X(!Y)\to \omega_X(!Y_J)).
\] 
The short exact sequence
\[
\begin{aligned}
0\to \Omega^{n+1+\bullet}_X(\log Y)(-Y)\otimes \sD_X &\to \Omega^{n+1+\bullet}_X(\log Y)(-Y_J)\otimes \sD_X  \to T^\bullet \to 0
\end{aligned}
\] 
where $T^\bullet\colon=\Omega^{n+1+\bullet}_X(\log Y)(-Y_J)|_{Y_{I\setminus J}}\otimes \sD_X$, implies that $\ker(\omega_X(!Y)\to \omega_X(!Y_J)^*)\simeq \sH^0\left(T^\bullet\right)$. Therefore, it suffices to prove that $R^j\ker R^{j+1}$ is contained in the image of 
\begin{equation}\label{eq:last}
\sH^0\left(T^\bullet\right)  \to \cH^{-1}_0 \to \cM.
\end{equation}
We can verify this using local coordinates. For any local section $m\in R^j\ker R^{j+1}$, it can be locally represented by 
\[
m=\zeta_0 \otimes z_{I\setminus \{a\}} \partial_{I\setminus \{a\}\setminus J}\cdot P
\] 
for some $a\notin J$. Then we consider the form
\[
\frac{dz_0}{z_0}\wedge \cdots \wedge \hat{\frac{dz_a}{z_a}} \wedge \cdots \wedge \frac{dz_k}{z_k} \wedge \cdots {dz_n} \otimes z_{I\setminus \{a\}} \partial_{I\setminus \{a\}\setminus J}\cdot P
\]
where the hat denotes the omission of that term. This form defines a class in $\sH^0\left(T^\bullet\right)$ and its image under~\eqref{eq:last} is precisely $m$, which completes the proof.
\end{proof}


\bibliography{bib}

\begin{thebibliography}{KKMSD73}

\bibitem[Be{\u{\i}}87]{Be87}
A.~A. Be{\u{\i}}linson.
\newblock How to glue perverse sheaves.
\newblock In {\em {$K$}-theory, arithmetic and geometry ({M}oscow,
  1984--1986)}, volume 1289 of {\em Lecture Notes in Math.}, pages 42--51.
  Springer, Berlin, 1987.

\bibitem[Cle77]{Cle1977}
C.~H. Clemens.
\newblock Degeneration of {K}\"{a}hler manifolds.
\newblock {\em Duke Math. J.}, 44(2):215--290, 1977.

\bibitem[Del68]{Deligne68}
Pierre Deligne.
\newblock Th\'eor\`eme de lefschetz et crit\`eres de d\'eg\'en\'erescence de
  suites spectrales.
\newblock {\em Publications Math\'ematiques de l'IH\'ES}, 35:107--126, 1968.

\bibitem[Del70]{DP70}
Pierre Deligne.
\newblock {\em \'{E}quations diff\'{e}rentielles \`a points singuliers
  r\'{e}guliers}.
\newblock Lecture Notes in Mathematics, Vol. 163. Springer-Verlag, Berlin-New
  York, 1970.

\bibitem[Del71]{Hodge2}
Pierre Deligne.
\newblock Th\'{e}orie de {H}odge. {II}.
\newblock {\em Inst. Hautes \'{E}tudes Sci. Publ. Math.}, (40):5--57, 1971.

\bibitem[EV86]{Viehweg}
H\'{e}l\`ene Esnault and Eckart Viehweg.
\newblock Logarithmic de {R}ham complexes and vanishing theorems.
\newblock {\em Invent. Math.}, 86(1):161--194, 1986.

\bibitem[EV92]{EV}
H\'{e}l\`ene Esnault and Eckart Viehweg.
\newblock {\em Lectures on vanishing theorems}, volume~20 of {\em DMV Seminar}.
\newblock Birkh\"{a}user Verlag, Basel, 1992.

\bibitem[Fuj99]{Taro99}
Taro Fujisawa.
\newblock Limits of {H}odge structures in several variables.
\newblock {\em Compositio Math.}, 115(2):129--183, 1999.

\bibitem[Fuj08]{Taro08}
Taro Fujisawa.
\newblock Mixed {H}odge structures on log smooth degenerations.
\newblock {\em Tohoku Math. J. (2)}, 60(1):71--100, 2008.

\bibitem[Fuj14]{Taro14}
Taro Fujisawa.
\newblock Polarizations on limiting mixed {H}odge structures.
\newblock {\em J. Singul.}, 8:146--193, 2014.

\bibitem[GH14]{GH}
P.~Griffiths and J.~Harris.
\newblock {\em Principles of Algebraic Geometry}.
\newblock Wiley Classics Library. Wiley, 2014.

\bibitem[Gin86]{Gins}
V.~Ginsburg.
\newblock Characteristic varieties and vanishing cycles.
\newblock {\em Invent. Math.}, 84(2):327--402, 1986.

\bibitem[GNA90]{hl}
F.~Guill\'{e}n and V.~Navarro~Aznar.
\newblock Sur le th\'{e}or\`eme local des cycles invariants.
\newblock {\em Duke Math. J.}, 61(1):133--155, 1990.

\bibitem[GS75]{GS75}
Phillip Griffiths and Wilfried Schmid.
\newblock Recent developments in {H}odge theory: a discussion of techniques and
  results.
\newblock In {\em Discrete subgroups of {L}ie groups and applicatons to moduli
  ({I}nternat. {C}olloq., {B}ombay, 1973)}, pages 31--127. 1975.

\bibitem[HTT08]{HTT}
Ryoshi Hotta, Kiyoshi Takeuchi, and Toshiyuki Tanisaki.
\newblock {\em {$D$}-modules, perverse sheaves, and representation theory},
  volume 236 of {\em Progress in Mathematics}.
\newblock Birkh\"{a}user Boston, Inc., Boston, MA, 2008.
\newblock Translated from the 1995 Japanese edition by Takeuchi.

\bibitem[Kas84]{KM84}
Masaki Kashiwara.
\newblock The {R}iemann-{H}ilbert problem for holonomic systems.
\newblock {\em Publ. Res. Inst. Math. Sci.}, 20(2):319--365, 1984.

\bibitem[KKMSD73]{KKMS73}
G.~Kempf, Finn~Faye Knudsen, D.~Mumford, and B.~Saint-Donat.
\newblock {\em Toroidal embeddings. {I}}.
\newblock Lecture Notes in Mathematics, Vol. 339. Springer-Verlag, Berlin-New
  York, 1973.

\bibitem[Meb84]{MZ84}
Zoghman Mebkhout.
\newblock Une autre \'{e}quivalence de cat\'{e}gories.
\newblock {\em Compositio Math.}, 51(1):63--88, 1984.

\bibitem[MS]{MS}
Philippe Maisonobe and Claude Sabbah.
\newblock Aspects of the theory of {D}-modules (kaiserslautern 2002), version
  july 2011.

\bibitem[Nak21]{Nak21}
Yukiyoshi Nakkajima.
\newblock An ideal proof for {F}ujisawa's result and its generalization, 2021.

\bibitem[PS08]{PS08}
Chris A.~M. Peters and Joseph H.~M. Steenbrink.
\newblock {\em Mixed {H}odge structures}, volume~52 of {\em Ergebnisse der
  Mathematik und ihrer Grenzgebiete. 3. Folge. A Series of Modern Surveys in
  Mathematics [Results in Mathematics and Related Areas. 3rd Series. A Series
  of Modern Surveys in Mathematics]}.
\newblock Springer-Verlag, Berlin, 2008.

\bibitem[Sab02]{SC02}
C.~Sabbah.
\newblock Vanishing cycles and {H}ermitian duality.
\newblock {\em Tr. Mat. Inst. Steklova}, 238(Monodromiya v Zadachakh Algebr.
  Geom. i Differ. Uravn.):204--223, 2002.

\bibitem[Sai88]{Sai88}
Morihiko Saito.
\newblock Modules de {H}odge {P}olarisables.
\newblock {\em Publications of the Research Institute for Mathematical
  Sciences}, 24(6):849--995, 1988.

\bibitem[Sai90]{Sai90}
Morihiko Saito.
\newblock Mixed {H}odge {M}odules.
\newblock {\em Publ. Res. Inst. Math. Sci.}, 26(2):221--233, May 1990.

\bibitem[Sch73]{Schmid}
Wilfried Schmid.
\newblock Variation of {H}odge structure: the singularities of the period
  mapping.
\newblock {\em Invent. Math.}, 22:211--319, 1973.

\bibitem[Ste95]{SteLog}
J.~H.~M. Steenbrink.
\newblock Logarithmic embeddings of varieties with normal crossings and mixed
  {H}odge structures.
\newblock {\em Math. Ann.}, 301(1):105--118, 1995.

\bibitem[Ste76]{Ste76}
J.~H.~M. Steenbrink.
\newblock Limits of {H}odge structures.
\newblock {\em Invent. Math.}, 31(3):229--257, 1975/76.

\end{thebibliography}
\bibliographystyle{alpha}

\Addresses

\end{document}